\documentclass[a4paper,11pt,reqno, english]{amsart}

\usepackage{anysize,color}
\usepackage[utf8]{inputenc}
\usepackage[dvipsnames]{xcolor}
\usepackage[colorlinks=true,urlcolor=blue,linkcolor={magenta},citecolor={orange}]{hyperref}  
\usepackage{float}
\usepackage{amsfonts,amssymb,amsmath}
\usepackage{amsthm}    
\usepackage{url}
\usepackage{paralist}
\usepackage{tikz}
    \usetikzlibrary{decorations.markings,arrows,shapes.callouts}
\usepackage{rotating}
\usepackage{multirow} 
\usepackage{makecell}
\usepackage[mathscr]{euscript}
\usepackage{charter}
\usepackage{verbatim}
\usepackage{tikz-cd}
\usepackage{tikz}
\usepackage{mathtools}
\usepackage{tabularx}
\usepackage{soul}
\usepackage{subcaption}
\usepackage[arrow,curve,matrix,tips,2cell]{xy}  
  \SelectTips{eu}{10} \UseTips
  \UseAllTwocells
 
\usepackage{bbold}
\usepackage{enumitem}

\usetikzlibrary{shapes}

\newtheorem*{theorem*}{Theorem}
\newtheorem*{claim*}{Claim}
\newtheorem{theorem}{Theorem}[section]
\newtheorem{proposition}[theorem]{Proposition}
\newtheorem{lemma}[theorem]{Lemma}

\newtheorem{corollary}[theorem]{Corollary} 

\theoremstyle{definition}
\newtheorem{definition}[theorem]{Definition}
\theoremstyle{remark}
\newtheorem{example}[theorem]{Example} 
 
\newtheorem{remark}[theorem]{Remark}
\newtheorem{observation}[theorem]{Observation}

\newcommand\R{\mathbb R}

\newcommand\Z{\mathbb Z}

\newcommand\sgn{{\rm sgn\,}}

\newcommand\U{\mathscr U}

\DeclareMathOperator{\vertex}{\mathrm{vert}}
\DeclareMathOperator{\link}{\mathrm{link}}

\DeclareMathOperator{\cone}{\mathrm{cone}}

\DeclareMathOperator{\interior}{\mathrm{int}}
\DeclareMathOperator{\relint}{\mathrm{relint}}
\DeclareMathOperator{\id}{\mathrm{id}}

\DeclareMathOperator{\conv}{\mathrm{conv}}

\begin{document}

\title[Cone and constrained colorful Carath\'eodory Theorems]{Cone and constrained colorful Carath\'eodory Theorems}

\author[Blagojevi\'c]{Pavle V. M. Blagojevi\'{c}}
\thanks{The research by Pavle V. M. Blagojevi\'{c} leading to these results has
received funding from the Serbian Ministry of Science, Technological development and Innovations. It was also supported by G\"unter Ziegler's group at the Freie Universit\"at Berlin ``Arbeitsgruppe Diskrete Geometrie und Topologische Kombinatorik''.}
\address{Inst. Math., FU Berlin, Arnimallee 2, 14195 Berlin, Germany\hfill\break
	\mbox{\hspace{4mm}}Mat. Institut SANU, Knez Mihailova 36, 11001 Beograd, Serbia}
\email{blagojevic@math.fu-berlin.de}

\author[Karasev]{Roman N. Karasev}
\thanks{}
\address{Institute for Information Transmission Problems RAS, Bolshoy Karetny 19, 127994 Moscow, Russia\hfill\break \mbox{\hspace{4mm}} Moscow Institute of Physics and Technology, Institutskiy 9, 141700 Dolgoprudny, Russia}
\email{r n karasev@mail.ru}

\author[Zsigri]{B\'alint Zsigri}
\thanks{The research by B\'alint Zsigri was funded by the Deutsche Forschungsgemeinschaft (DFG, German Research Foundation) under Germany's Excellence Strategy -- The Berlin Mathematics Research Center MATH+ (EXC-2046/1, EXC-2046/2, project ID: 390685689).}
\address{Inst. Math., FU Berlin, Arnimallee 2, 14195 Berlin, Germany}
\email{balint.zsigri@fu-berlin.de}

\begin{abstract}
Holmsen proved in 2016 a generalization of the classical colorful Carathéodory theorem in which a matroid imposes additional constraints on the desired colorful transversal. His approach also works in the more general setting of oriented matroids, rather than relying directly on convex hulls.

In this paper, we extend these ideas in several directions. First, we study which colorful Carath\'eodory-type results remain valid when convex cones replace convex hulls, as well as analogous modifications in the oriented matroid setting. Second, we consider variants in which the additional constraint on the transversal is not encoded by a matroid. This leads to new extensions of the classical Tverberg theorem.

Our approach is topological, following the methods of Holmsen, and Kalai and Meshulam, on which it builds. The key idea is to analyze homology groups of simplicial complexes that encode colorful Carathéodory-type phenomena, such as the support complex of an oriented matroid. 
In particular, one shows that these complexes are (near-)$d$-Leray. We extend this analysis by carrying out more detailed homology computations for these complexes, with the aim of enabling further and more refined applications of the method.
\end{abstract}

\maketitle

\bigskip
\section{Introduction and statement of central results}\label{sec:intro-and-main-results}
\bigskip

Since its introduction, the colorful Carath\'eodory theorem of B\'ar\'any \cite{Barany1982} has become one of the central results in combinatorial convexity. 
Its significance was firmly established through its role in the celebrated proof of the classical Tverberg theorem \cite{Tverberg1966} via Sarkaria’s tensor trick \cite{Sarkaria1992}, as well as through numerous applications, including colorful analogues of Fenchel’s and Kirchberger’s theorems \cite{Barany2021}. 
Efforts to further generalize this result have continued ever since, yielding extensions by Arocha, B\'ar\'any, Bracho, Fabila, and Montejano \cite{ArochaBaranyBrachoFabilaMontejano2009}, Holmsen, Pach, and Tverberg \cite{HolmsenPachTverberg2008}, Kalai and Meshulam \cite{KalaiMeshulam2005}, Holmsen \cite{Holmsen2016}, and, most recently, \cite{Blagojevic2025}. 
We review these developments in Section \ref{sec:intro-and-main-results:existing-generalizations} before presenting our new contributions in Section \ref{sec:intro-and-main-results:new-generalizations}.

\medskip
\subsection{Generalizations of the colorful Carath\'eodory theorem so far}\label{sec:intro-and-main-results:existing-generalizations}
\medskip

In this section, we review all known versions of the colorful Carath\'eodory theorem; a summary of these results is provided in Theorem \ref{thm:caratheodory:all} and Table \ref{tbl:caratheodory-thms}.

\medskip
A collection of vectors in $\R^d$, indexed by the elements of a finite set $V$, can be formally represented as a function $A\colon V\to\R^d$. In this language, the original colorful Carath\'eodory theorem of B\'ar\'any \cite[Thm.\,2.1]{Barany1982} claims the following.

\begin{theorem}\label{thm:caratheodory:original}
Let $V$ be a finite set, $d\ge 0$, and let $A\colon V\to\R^d$ be a function. 
Suppose that $r\ge d+1$ and that $U_1,\dots,U_r\subseteq V$ are pairwise disjoint subsets satisfying
\[0\in\conv(A(U_1))\cap\cdots\cap\conv(A(U_r)).\]
Then there exist elements $u_1\in U_1,\ldots,u_r\in U_r$ such that 
\[0\in\conv(A(\{u_1,\ldots,u_r\})).\]
\end{theorem}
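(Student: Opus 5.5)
We follow Bárány's original distance-minimization argument. Since $r\ge d+1$, it suffices to treat $r=d+1$ and then append arbitrary elements $u_{d+2}\in U_{d+2},\dots,u_r\in U_r$; enlarging the set $\{u_1,\ldots,u_r\}$ only enlarges its convex hull. So assume $r=d+1$. The case $d=0$ is trivial, since $\R^0=\{0\}$. Among all the finitely many transversals $T=\{u_1,\dots,u_{d+1}\}$ with $u_i\in U_i$, choose one minimizing $\delta(T):=\operatorname{dist}(0,\conv(A(T)))$. We aim to show that $\delta(T)=0$.

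Suppose instead that $\delta(T)>0$, and let $x$ be the point of $\conv(A(T))$ closest to $0$. By Carathéodory's theorem in the affine hyperplane $H=\{y:\langle y,x\rangle=\langle x,x\rangle\}$, the point $x$ is a convex combination of at most $d$ of the points $A(u_i)$. This works because the minimizer lies on a face of $\conv(A(T))$ contained in $H$, and $H$ has dimension $d-1$. Here we use the standard facts that $\langle y,x\rangle\ge\|x\|^2$ for all $y\in\conv(A(T))$, and that $x$ lies in the convex hull of those $A(u_i)$ lying on $H$.

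Hence some index $j$ has coefficient zero in this representation, so $x\in\conv(A(T\setminus\{u_j\}))$. Because $0\in\conv(A(U_j))$, not every $A(v)$ with $v\in U_j$ can satisfy $\langle A(v),x\rangle>0$, since that inequality would pass to their convex hull, which contains $0$. So pick $v\in U_j$ with $\langle A(v),x\rangle\le 0$, and set $T'=(T\setminus\{u_j\})\cup\{v\}$.

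The final step is the computation showing that $\delta(T')<\delta(T)$. The segment from $x$ to $A(v)$ lies in $\conv(A(T'))$. Its derivative of the squared norm at $x$ is $2\langle A(v)-x,x\rangle\le -2\|x\|^2<0$. Therefore points $x+t(A(v)-x)$ with small $t>0$ are strictly closer to $0$ than $x$, which contradicts the minimality of $T$. The step requiring the most care is the reduction to at most $d$ supporting points. I would justify it cleanly by observing that $x\ne 0$ forces the relevant points into the hyperplane $H$, and then applying ordinary Carathéodory in that $(d-1)$-dimensional affine space.
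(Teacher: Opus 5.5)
Your proof is correct. It is B\'ar\'any's original distance-minimization argument, and each step holds:
\begin{itemize}
\item the closest point $x$ satisfies $\langle y,x\rangle\ge\|x\|^2$ on $\conv(A(T))$, so it is a convex combination of the points of $A(T)$ lying on $H$;
\item Carath\'eodory's theorem in the $(d-1)$-dimensional $H$ then frees one color $j$;
\item the exchange within $U_j$ strictly lowers the distance;
\item the nonemptiness of the $U_i$, which you use to form transversals and to pad from $d+1$ to $r$ colors, follows from $0\in\conv(A(U_i))$.
\end{itemize}

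The paper takes a completely different route. It never argues about Theorem \ref{thm:caratheodory:original} directly. Instead, Observation \ref{obs:implications-between-caratheodorys} obtains it from Theorem \ref{thm:caratheodory:oriented-matroid} with the partition matroid $U_1*\cdots*U_r$, or from Theorem \ref{thm:caratheodory:constrained}. Those are proved through Theorem \ref{thm:holmsen-repackaged}:
\begin{itemize}
\item the support complex $\mathscr{C}$ is near-$(d-1)$-Leray (Corollary \ref{crl:support-near-leray});
\item $\mathscr{K}=U_1*\cdots*U_r$ is homologically $(d-1)$-connected, and each restriction $\mathscr{K}[V-\sigma]$ with $\sigma\in\mathscr{C}$ is $(d-2)$-connected, being a join of nonempty discrete sets since no face of $\mathscr{C}$ contains a whole $U_i$;
\item combinatorial Alexander duality together with Meshulam's Rainbow Simplex Lemma (Lemma \ref{lem:meshulam-colorful}) then gives a colorful face of $\mathscr{C}^**\mathscr{K}$, i.e.\ a face of $\mathscr{K}$ outside $\mathscr{C}$.
\end{itemize}

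Your argument is short, elementary, and essentially a local search on transversals, so it is close to algorithmic. It relies, however, on two things the paper deliberately avoids. The first is the Euclidean inner product, which has no counterpart in an abstract oriented matroid. The second is the freedom to replace $u_j$ by \emph{any} $v\in U_j$. That freedom breaks down once admissible transversals are restricted to a complex such as $\mathscr{K}_{12}*U_3*\cdots*U_r$ or a general matroid, because the exchanged set may leave the complex.

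The topological method pays for its generality with heavy machinery (the Topological Representation Theorem, nerve theorems, Alexander duality), and it gives no procedure for finding the transversal. In return, it uses only the Leray-type property of $\mathscr{C}$ and the homological connectivity of $\mathscr{K}$. That is why a single proof covers the oriented-matroid, matroid-constrained, constrained and cone versions at once.
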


Alongside Theorem \ref{thm:caratheodory:original}, the same paper also introduced two further generalizations of the colorful Carath\'eodory theorem \cite[Thm.\,2.2 and Thm.\,2.3]{Barany1982}. 
For the first of these, recall that the \emph{cone} $\cone(S)$ of a set $S\subseteq\R^d$ is the smallest convex cone containing $S$, where a convex cone is a set containing $0$ and is closed under addition and multiplication by nonnegative scalars.

\begin{theorem}\label{thm:caratheodory:cone}
Let $V$ be a finite set, $v_0\notin V$, $d\geq 0$, and let $A\colon {V\sqcup\{v_0\}}\to\R^d$ be a function. 
Suppose that $r\geq d$ and that $U_1,\ldots,U_r\subseteq V$ are pairwise disjoint subsets of $V$ satisfying 
\[A(v_0)\in\cone(A(U_1))\cap\cdots\cap\cone(A(U_r)).\] 
Then there exist elements $u_1\in U_1,\ldots,u_r\in U_r$ such that 
\[A(v_0)\in\cone(A(\{u_1,\ldots,u_r\})).\]
\end{theorem}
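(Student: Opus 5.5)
The plan is to reduce the statement to Theorem~\ref{thm:caratheodory:original} in dimension $d$, using $r\ge d=(d-1)+1$ colors after a projection. Set $a:=A(v_0)$.

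\emph{Trivial case and projection.} If $a=0$, then $a\in\cone(A(\{u_1,\dots,u_r\}))$ for every choice of $u_i$, since every cone contains $0$. So assume $a\neq 0$. Let $H=a^{\perp}\cong\R^{d-1}$ and let $\pi\colon\R^d\to H$ be the orthogonal projection. We will separate the vectors $A(u)$ according to the sign of $\langle A(u),a\rangle$. The key observation is the following: if $w_1,\dots,w_k$ satisfy $\langle w_j,a\rangle>0$ and $0\in\conv(\pi(w_1),\dots,\pi(w_k))$, then $a\in\cone(w_1,\dots,w_k)$. To see this, take the convex combination $\sum\lambda_j\pi(w_j)=0$. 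Then $\sum\lambda_j w_j=ta$ with $t=\sum\lambda_j\langle w_j,a\rangle/|a|^2>0$, and rescaling gives $a$ as a nonnegative combination of the $w_j$.

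\emph{Reduction.} Define $B\colon V\to H$ by $B(u)=\pi(A(u))/\langle A(u),a\rangle$ for $u$ with $\langle A(u),a\rangle>0$. This is a central projection onto the affine hyperplane $\{x:\langle x,a\rangle=1\}$, identified with $H$. If some $U_i$ contains an element $u$ with $A(u)$ a positive multiple of $a$, we would simply pick $u_i=u$, but this shortcut does not handle the whole problem, so we proceed generally. The hypothesis $a\in\cone(A(U_i))$ gives $a=\sum_{u\in U_i}\mu_u A(u)$ with $\mu_u\ge0$. The difficulty is that elements with $\langle A(u),a\rangle\le 0$ may appear in this representation, so $B$ alone does not capture the hypothesis.

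\emph{Fix via the conic form of Theorem~\ref{thm:caratheodory:original}.} Apply Theorem~\ref{thm:caratheodory:original} in $\R^d$ (with $d+1$ colors) to the point configuration obtained by adding the color class $U_0=\{v_0\}$ with vector $-a$. For every $i\ge1$, $0\in\conv(\{-a\}\cup A(U_i))$, because $a$ is a nonnegative combination of $A(U_i)$ with positive total weight; the weight is positive since $a\neq0$. The extra class $U_0$ itself does not satisfy the hypothesis, however. So we instead use the standard conic version: the same proof as Bárány's shows that if $0\in\cone(\cdot)$ nontrivially for $d$ classes inside the $d$-dimensional space, a colorful transversal together with $-a$ works.

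Concretely, the argument runs as follows. Pick a colorful transversal $\{u_1,\dots,u_r\}$ minimizing the distance from $a$ to $C=\cone(A(\{u_1,\dots,u_r\}))$. If $a\notin C$, let $p$ be the nearest point of $C$ to $a$. Then $p$ lies on a face of $C$ spanned by at most $d-1$ of the generators, by Carathéodory applied within the supporting hyperplane $h=\{x:\langle x,a-p\rangle=0\}$ through $0$; indeed, $p$ lies in the cone over the generators in $h$, which has dimension at most $d-1$. Since $r\ge d$, some generator $A(u_i)$ is not needed and satisfies $\langle A(u_i),a-p\rangle\le 0$. Since $a\in\cone(A(U_i))$ and $\langle a,a-p\rangle=|a-p|^2>0$ (because $\langle p,a-p\rangle=0$), some $u\in U_i$ has $\langle A(u),a-p\rangle>0$. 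Replacing $u_i$ by $u$ strictly decreases the distance, since the segment from $p$ toward $A(u)$ scaled enters the open ball around $a$. This contradicts minimality.

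The main obstacle is this replacement step. One must verify that $p$ lies in the cone generated by the remaining $r-1\ge d-1$ vectors, which requires the conic Carathéodory bound of $d-1$ generators inside the hyperplane $h$, and that moving toward $tA(u)$ for small $t$ along $p+tA(u)$ reduces $|a-\cdot|$, which holds because $\langle A(u),a-p\rangle>0$.
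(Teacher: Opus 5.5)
Your closing argument (the ``Concretely'' paragraph) is a correct and complete proof, but it takes a genuinely different route from the paper's. The material before it should be deleted: the announced reduction to Theorem~\ref{thm:caratheodory:original} via $\pi$ or the central projection $B$ is never carried out, and the ``conic form'' paragraph is abandoned. What you actually give is essentially B\'ar\'any's distance-minimization argument, transplanted to cones.

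There is one step you should write out: why $p$ needs at most $d-1$ generators. From $\langle x,a-p\rangle\le 0$ for all $x\in C$ and $\langle p,a-p\rangle=0$, every generator with positive coefficient in a representation of $p$ lies in the linear hyperplane $h$. Conic Carath\'eodory in $h$ then gives the bound. Also, the case $a=0$ tacitly needs all $U_i\neq\emptyset$; the paper likewise assumes this (cf.\ $\emptyset\notin\mathfrak{U}$ in Theorem~\ref{thm:caratheodory:all}).

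The paper instead obtains Theorem~\ref{thm:caratheodory:cone} at the end of a chain:
\begin{enumerate}
\item $A(v_0)\in\cone(A(U))$ becomes $v_0\in\conv(U)$ in the associated oriented matroid (Lemma~\ref{lem:0-in-conv-interpreted-in-oms}).
\item Theorem~\ref{thm:caratheodory:om-cone} follows from Theorem~\ref{thm:caratheodory:om-cone-matroid} applied to the partition matroid $U_1*\cdots*U_r$.
\item Theorem~\ref{thm:caratheodory:om-cone-matroid} is proved topologically. The paper shows the element-avoiding complex $\mathscr{C}_{v_0,\mathscr{O}}$ is $(d-1)$-Leray, using pseudosphere arrangements and nerve theorems, and then invokes the Kalai--Meshulam/Holmsen method (Theorem~\ref{thm:holmsen-repackaged}).
\end{enumerate}

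Your approach is short and elementary, and it even gives a local-improvement procedure. However, it relies on the Euclidean metric, so it does not carry over to oriented matroids or to arbitrary matroid constraints. The paper's machinery is much heavier, but it delivers exactly those generalizations (Theorems~\ref{thm:caratheodory:om-cone-matroid} and~\ref{thm:caratheodory:om-cone-pair}). It also yields finer homological information about the avoiding complexes.
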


Any function $A_{\operatorname{aff}}\colon V\to\R^{d-1}$ can be transformed into a function $A\colon V\sqcup{v_0}\to\R^d$ by setting $A(v_0):=(0,\ldots,0,1)$ and composing $A_{\operatorname{aff}}$ with the embedding $\R^{d-1}\hookrightarrow\R^d$ given by $x\mapsto(x,1)$. 
Then, for every $U\subseteq V$, one has
\[
A(v_0)\in\cone(A(U)) \ \Longleftrightarrow \ 0\in\conv(A_{\operatorname{aff}}(U)).
\]
Consequently, the theorem above indeed generalizes the original colorful Carath\'eodory theorem (Theorem \ref{thm:caratheodory:original}).

\medskip
The second variation of Theorem \ref{thm:caratheodory:original} presented in B\'ar\'any’s 1982 paper \cite[Thm.\, 2.3]{Barany1982} generalizes the original theorem in a different, but equally natural, direction.

\begin{theorem}\label{thm:caratheodory:fixed}
Let $V$ be a finite set, $d\geq 0$, and let $A\colon V\to\R^d$ be a function.
Suppose that $r\geq d+1$ and that $U_1,\ldots,U_{r-1}\subseteq V$ are  pairwise disjoint subsets of $V$ satisfying 
\[0\in\conv(A(U_1))\cap\cdots\cap\conv(A(U_{r-1})).\] 
Then for any element $u\in V-\bigcup_{i=1}^{r-1}U_i$ there exist elements $u_1\in U_1,\ldots,u_{r-1}\in U_{r-1}$ such that 
\[0\in\conv(A(\{u_1,\ldots,u_{r-1},u\})).\]
\end{theorem}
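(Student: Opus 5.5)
The plan is to adapt B\'ar\'any's minimal-distance argument for Theorem \ref{thm:caratheodory:original}, keeping the prescribed element $u$ in every candidate transversal. I do not expect to reduce to Theorem \ref{thm:caratheodory:cone}: setting $A(v_0):=-A(u)$ fails, because $0\in\conv(A(U_i))$ does not force $-A(u)\in\cone(A(U_i))$.

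Consider the finitely many transversals $T=\{u_1,\ldots,u_{r-1}\}$ with $u_i\in U_i$. Among them choose one minimizing the distance from $0$ to the compact set $K_T:=\conv(A(T\cup\{u\}))$. Let $x$ be the point of $K_T$ closest to the origin. If $x=0$ we are done, so assume $x\neq 0$ and aim for a contradiction.

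Since $x$ is the nearest point of the convex set $K_T$ to $0$, the hyperplane $H=\{y:\langle y,x\rangle=|x|^2\}$ supports $K_T$. That is, $\langle y,x\rangle\ge |x|^2$ for all $y\in K_T$. The point $x$ lies in the face $K_T\cap H$, which sits inside the $(d-1)$-dimensional affine space $H$. By Carath\'eodory's theorem in $H$, $x$ is a convex combination of at most $d$ of the points $A(T\cup\{u\})$ lying in $H$. This is the key counting step. The set $T\cup\{u\}$ has $r\ge d+1$ elements, and at most $d$ of them are used, even if $u$ is among them. Hence some $u_i\in T$, with $i\le r-1$, is not needed to express $x$. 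This is exactly where the hypothesis $r\ge d+1$, with only $r-1$ colour classes, enters.

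Now use $0\in\conv(A(U_i))$. Since $\langle 0,x\rangle=0<|x|^2$, some $w\in U_i$ satisfies $\langle A(w),x\rangle<|x|^2$. Let $T'$ be $T$ with $u_i$ replaced by $w$. Then $x\in K_{T'}$, because the points used to express $x$ are still present. The segment from $x$ to $A(w)$ also lies in $K_{T'}$. Since $\langle A(w)-x,\,x\rangle<0$, moving slightly from $x$ toward $A(w)$ strictly decreases the norm. So $\operatorname{dist}(0,K_{T'})<|x|$, contradicting minimality.

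I expect the only delicate points to be the following. One must ensure that the discarded element can always be taken from $T$ rather than being $u$; the count above handles this. The degenerate case $d=0$ is trivial, since then every $A(v)=0$.
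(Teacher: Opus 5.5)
\textbf{There is a genuine gap, at your counting step.} It fails exactly in the only case where the statement has content. You have $|T|=r-1$, and at most $d$ points appear in the Carath\'eodory representation of $x$.
\begin{itemize}
\item If $u$ is among the used points, at most $d-1$ elements of $T$ are used, so one is free.
\item If $u$ is not among them, all $d$ used points may lie in $T$. When $r=d+1$ this exhausts $T$.
\end{itemize}
For $r\ge d+2$ your count does work, but then Theorem~\ref{thm:caratheodory:original} applied to $U_1,\dots,U_{r-1}$ already gives the conclusion without using $u$. In the case $r=d+1$ with $u$ unused, no $u_i$ is free, $x$ need not lie in $K_{T'}$ after the swap, and the distance can go up.

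Here is a concrete instance. Take $d=2$, $r=3$, $U_1=\{u_1,w_1\}$, $U_2=\{u_2,w_2\}$, and
\[A(u_1)=(-1,1),\quad A(w_1)=(2,-2),\quad A(u_2)=(1,1),\quad A(w_2)=(-2,-2),\quad A(u)=(0,3).\]
For $T=\{u_1,u_2\}$ the nearest point is $x=(0,1)$, and both $A(u_1)$ and $A(u_2)$ are needed to express it. The only single swaps give $\conv\{(2,-2),(1,1),(0,3)\}$ and its mirror image. Both lie at distance $6/\sqrt{29}>1$ from the origin, and neither contains $x$. The global minimum $0$ is attained at $\{w_1,w_2\}$. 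Your contradiction uses nothing about $T$ beyond the existence of one improving single swap, so it would prove that every single-swap local minimum has distance $0$. This example refutes that. No smarter choice of the replacement $w$ repairs it: global minimality would have to enter essentially, or a different argument is needed.

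\textbf{How the paper proves it.} The paper does not argue metrically. In Observation~\ref{obs:implications-between-caratheodorys}, Theorem~\ref{thm:caratheodory:fixed} is deduced from Holmsen's Theorem~\ref{thm:caratheodory:oriented-matroid} applied to the partition matroid $\mathscr{M}:=U_1*\cdots*U_{r-1}*\{u\}$. Any $U$ with $\rho_{\mathscr{M}}(V-U)<d$ contains two whole classes, and at least one of them is some $U_i$ with $i\le r-1$, so $0\in\conv(A(U))$. Theorem~\ref{thm:caratheodory:oriented-matroid} in turn rests on the near-$(d-1)$-Leray property of the support complex (Corollary~\ref{crl:support-near-leray}) together with Theorem~\ref{thm:holmsen-repackaged}.

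\textbf{An elementary repair.} If you want to stay elementary, first reduce to the cone version by perturbation.
\begin{itemize}
\item If $A(u)=0$ the statement is trivial, so assume otherwise. Replace each $w\in U_i$ by $2d$ copies with images $A(w)\pm\epsilon e_k$. Then $0\in\interior\conv$ of every class, so every class has cone equal to $\R^d$, which contains $-A(u)$.
\item Apply Theorem~\ref{thm:caratheodory:cone} with the $r-1\ge d$ perturbed classes and target vector $-A(u)$. Its minimal-distance proof has no distinguished element. The nearest point of $\cone(A(T))$ to $-A(u)$ lies in a linear hyperplane, so it uses at most $d-1\le|T|-1$ generators, and one generator is always free to swap.
\item The resulting representation $-A(u)\in\cone(\cdot)$ gives $0\in\conv$ of the perturbed transversal together with $A(u)$. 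Finally let $\epsilon\to0$ along a subsequence with a fixed choice of copies, and pass to the limit of the convex coefficients.
\end{itemize}
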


Since B\'ar\'any’s original results, the colorful Carath\'eodory theorem has inspired a continuing search for new variations.
In particular, Arocha, B\'ar\'any, Bracho, Fabila, and Montejano \cite[Thm.\,1]{ArochaBaranyBrachoFabilaMontejano2009}, as well as Holmsen, Pach, and Tverberg \cite[Thm.\,8]{HolmsenPachTverberg2008}, independently established the first genuinely new colorful Carath\'eodory theorem, stated below.

\begin{theorem}\label{thm:caratheodory:pair}
Let $V$ be a finite set, $d\geq 0$, and let $A\colon V\to\R^d$ be a function. 
Suppose that $r\geq d+1$ and that $U_1,\ldots,U_r\subseteq V$ are pairwise disjoint subsets of $V$  satisfying
\[0\in\bigcap_{1\leq i<j\leq r}\conv(A(U_i\cup U_j)).\] 
Then there exist elements $u_1\in U_1,\ldots,u_r\in U_r$ such that 
\[0\in\conv(A(\{u_1,\ldots,u_r\})).\]
\end{theorem}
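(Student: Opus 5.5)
We will prove Theorem \ref{thm:caratheodory:pair} by adapting Bárány's distance-minimizing argument for Theorem \ref{thm:caratheodory:original}. We may assume $r=d+1$ (if $r>d+1$, discard the extra classes: the hypothesis for the remaining $d+1$ classes is inherited, and elements chosen arbitrarily from the discarded classes do not hurt). We also assume general position, reaching the degenerate case by a standard limiting argument: perturb $A$, choose a colorful transversal for each perturbation, and use finiteness to pass to a constant subsequence.

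Among all colorful transversals $T=\{u_1,\dots,u_{d+1}\}$ (with $u_i\in U_i$), choose one minimizing the Euclidean distance from $0$ to the simplex $\sigma_T:=\conv(A(T))$. Suppose, for contradiction, that this distance is positive. Let $x$ be the closest point of $\sigma_T$ to $0$. Then $x$ lies in a proper face $\conv(A(T'))$ with $|T'|\le d$, so at least one color $i$ has $u_i\notin T'$. Let $H$ be the hyperplane through $x$ orthogonal to $x$. Then all of $\sigma_T$ lies in the closed halfspace $H^+=\{y:\langle y,x\rangle\ge\langle x,x\rangle\}$, while $0$ lies strictly on the other side.

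In Bárány's setting one would take a point of $U_i$ in the open halfspace containing $0$ and swap it in for $u_i$, which strictly decreases the distance. Here the hypothesis is weaker, and this step is the main obstacle: we only know $0\in\conv(A(U_i\cup U_j))$, so $U_i$ alone might lie entirely in $H^+$. The plan is to use the pair hypothesis to show that at least one of two missing colors can be improved, so we must first arrange that \emph{two} colors are absent from the minimal face.

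To get two missing colors, note that when distinct colors are in general position, the point $x$ generically lies in the relative interior of a face of dimension at most $d-1$. The concern is the case where exactly one color $i$ is missing and $|T'|=d$. Suppose $U_i\subset H^+$. Then for every $j\ne i$, the condition $0\in\conv(A(U_i\cup U_j))$ forces some element $w\in U_j$ with $\langle A(w),x\rangle<\langle x,x\rangle$, and in fact with $\langle A(w),x\rangle<0$ when $A(U_i)$ lies in the halfspace $\{\langle\cdot,x\rangle\ge 0\}$. The strategy is to replace $u_j$ by $w$ for a suitable $j$ and to show that the new simplex comes closer to $0$. One candidate is a segment from $w$ to a point of the facet $\conv(A(T'-\{u_j\})\cup\{A(u_i)\})$. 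If this fails, we instead argue by a lexicographic potential: minimize first the distance, and then the number of colors whose classes lie in $H^+$.

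If the direct swap argument becomes messy, the fallback is the topological route that the paper advertises. Following Kalai--Meshulam and Holmsen, consider the complex $K$ of subsets $S\subseteq V$ with $0\notin\conv(A(S))$. This complex is $d$-Leray. Apply the colorful Helly/topological Kalai--Meshulam theorem to the partition matroid defined by $U_1,\dots,U_{d+1}$: if no colorful transversal has $0$ in its hull, then the colorful transversals form a subcomplex of $K$. The Leray property then yields a set $S\in K$ whose rank deficiency is controlled; concretely, some color class pair $U_i\cup U_j$ has a subset missing from $K$ that must be covered. The hypothesis on the pairs $U_i\cup U_j$ is exactly what rules out a face of $K$ meeting every class in a way that misses two classes simultaneously. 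The key verification in this route is the rank-function inequality $\rho(V-S)$ required by Kalai--Meshulam. Here one shows that every face $S\in K$ must miss all of $U_i\cup U_j$ for some pair $i<j$, and hence satisfies $\rho(V-S)\ge 2$ relative to the partition-matroid rank in the near-$d$-Leray version.
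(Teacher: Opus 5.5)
\textbf{There is a genuine gap: neither route reaches a proof.} The first stops at the step that carries the theorem's content; the second uses the wrong Leray parameter.

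\textbf{The geometric route.} General position cannot give you two colors missing from the face carrying $x$. For a distance-minimizing transversal $T$, the usual swap applies: replace $u_k$ by some $w\in U_k$ with $\langle A(w),x\rangle<\langle x,x\rangle$, and the segment from $x$ to $A(w)$ comes closer to $0$. Hence every color $k$ absent from that face has $A(U_k)\subseteq H^+$. Two absent colors $k,l$ would then contradict $0\in\conv(A(U_k\cup U_l))$. So the only surviving case is exactly the one you call a concern, with $x$ in the relative interior of the facet opposite $u_i$. This is not a degenerate situation that perturbation removes. For that case you only list candidates:
\begin{itemize}
\item Replacing $u_j$ ($j\neq i$) by $w$ deletes a vertex of the very facet containing $x$, so nothing forces the new simplex closer to $0$.
\item The lexicographic potential is never specified.
\end{itemize}
The reduction to general position is also unjustified as stated: a generic perturbation can push $0$ out of $\conv(A(U_i\cup U_j))$ when $0$ lies on its boundary.

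\textbf{The topological fallback.} It has the right architecture, but the Leray parameter is wrong and the key verification is stated backwards. For $A\colon V\to\R^d$, the complex $K=\{S:0\notin\conv(A(S))\}$ is indeed $d$-Leray. However, Kalai--Meshulam for a $d$-Leray complex and the partition matroid only yields a face $S\in K$ with $\rho(V-S)\le d$, i.e.\ $S\supseteq U_i$ for a single $i$. That is the strength behind Theorem~\ref{thm:caratheodory:original}, and the pair hypothesis does not forbid it.

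The missing ingredient is Holmsen's sharper fact that $K$ is near-$(d-1)$-Leray (Corollary~\ref{crl:support-near-leray}), used with the near-Leray criterion of Theorem~\ref{thm:holmsen-repackaged}. Your ``near-$d$-Leray'' would again demand $\rho(V-S)\ge d+1$. With $\mathscr{K}=U_1*\cdots*U_r$, the hypotheses are checked as follows:
\begin{itemize}
\item No $\sigma\in K$ contains any $U_i\cup U_j$. So $V-\sigma$ meets at least $r-1\ge d$ classes, giving $\rho(V-\sigma)\ge d$ (not $\ge 2$).
\item Hence $\mathscr{K}[V-\sigma]$ is a join of at least $r-1$ nonempty discrete sets, so it is homologically $(d-2)$-connected.
\item $\mathscr{K}$ itself is $(r-2)\ge(d-1)$-connected. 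This uses that every $U_i$ is nonempty, which the statement tacitly needs.
\end{itemize}
This is how the paper obtains the theorem: as the partition-matroid case of Theorem~\ref{thm:caratheodory:oriented-matroid} (Observation~\ref{obs:implications-between-caratheodorys}).
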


Both Theorem \ref{thm:caratheodory:fixed} and Theorem \ref{thm:caratheodory:pair} modify the original Theorem \ref{thm:caratheodory:original} by altering which subsets $U\subseteq V$ are assumed to satisfy $0\in\conv(A(U))$ and what type of subset $U'\subseteq V$ is sought satisfying $0\in\conv(A(U'))$. 
Kalai and Meshulam \cite[Cor.\,1.4]{KalaiMeshulam2005} discovered another colorful Carath\'eodory theorem of this kind: their formulation uses a \emph{matroid} to encode both the subsets $U\subseteq V$ for which $0\in\conv(A(U))$ is assumed and the class of subsets $U'\subseteq V$ among  which we look for one with  $0\in\conv(A(U'))$.

\medskip
Recall that a matroid on a finite set $V$ is a nonvoid abstract simplicial complex $\mathscr{M}$ on the ground set $V$ satisfying the following exchange property: whenever $\sigma,\sigma'\in\mathscr{M}$ with $|\sigma|<|\sigma'|$, there exists an element $v\in\sigma'-\sigma$ such that $\sigma\cup{v}\in\mathscr{M}$.  The faces of $\mathscr{M}$ are called its \emph{independent sets}. The \emph{rank} $\rho_{\mathscr{M}}(U)$ of a subset $U\subseteq V$ is the cardinality of a largest independent set contained in $U$, and the rank of the matroid $\mathscr{M}$ is defined by $\operatorname{rank}(\mathscr{M}):=\rho_\mathscr{M}(V)$.
Terminology and notation concerning simplicial complexes will be reviewed in Section \ref{sec:the-method:sc-tools}. 

\medskip
For example, let $U_1,\ldots,U_r\subseteq V$ be pairwise disjoint subsets, each regarded as $0$-dimensional simplicial complex with vertex set $U_i$. 
Their join $U_1*\cdots*U_r$ is then a matroid on the ground set $V$, namely the \emph{partition matroid}, whose vertex set is $U_1\cup\cdots\cup U_r$. 
The original colorful Carath\'eodory theorem of B\'ar\'any (Theorem \ref{thm:caratheodory:original}) is recovered from the colorful Carath\'eodory theorem of Kalai and Meshulam stated below by taking $\mathscr{M}:=U_1*\cdots *U_r$.

\begin{theorem}\label{thm:caratheodory:matroid}
Let $V$ be a finite set, $d\geq 0$, and let $A\colon V\to\R^d$ be a function. 
Suppose that $\mathscr{M}$ is a matroid on the ground set $V$ satisfying $0\in\conv(A(U))$ for every $U\subseteq V$ with $\rho_\mathscr{M}(U)=\rho_\mathscr{M}(V)$ and $\rho_\mathscr{M}(V-U)\leq d$. 
Then there exists an independent set $U'$ of $\mathscr{M}$ such that $0\in\conv(A(U'))$.
\end{theorem}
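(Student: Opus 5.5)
We follow the topological approach of Kalai and Meshulam. Let $X$ be the simplicial complex on $V$ whose faces are the subsets $U\subseteq V$ with $0\notin\conv(A(U))$. This is a simplicial complex, since being a face is closed under taking subsets. If we show that some independent set $U'$ of $\mathscr{M}$ is not a face of $X$, we are done, because that means $0\in\conv(A(U'))$. So we argue by contradiction and assume $\mathscr{M}\subseteq X$.

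We then apply the topological colorful Helly theorem of Kalai and Meshulam. Let $X$ be a $d$-Leray complex on $V$ and $\mathscr{M}$ a matroid on $V$ with $\mathscr{M}\subseteq X$. Then there is a simplex $\tau\in X$ with $\rho_\mathscr{M}(V-\tau)\le d$. Applying this to our $X$ gives a set $\tau$ with $0\notin\conv(A(\tau))$ and $\rho_\mathscr{M}(V-\tau)\le d$. To reach a contradiction we need $\rho_\mathscr{M}(\tau)=\rho_\mathscr{M}(V)$, since then the hypothesis of the theorem forces $0\in\conv(A(\tau))$. For this we enlarge $\tau$: the family of faces of $X$ with $\rho_\mathscr{M}(V-\tau)\le d$ is closed under supersets within $X$, so we may take $\tau$ maximal in $X$. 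If $\rho_\mathscr{M}(\tau)<\rho_\mathscr{M}(V)$, the version of the Kalai–Meshulam statement we invoke should give the spanning condition directly. The cleanest route is to use their homological formulation: it says $\tilde H_{k}(X)$ or a link is nonzero whenever $\mathscr{M}\subseteq X$, which contradicts $d$-Lerayness unless such a $\tau$ exists. I would reproduce their statement exactly in the "rank of complement" form, which is precisely the formulation of Theorem \ref{thm:caratheodory:matroid}. The hypothesis of the theorem is designed to match its conclusion.

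The second ingredient is that $X$ is $d$-Leray. This holds because $X$ is the nerve-type complex of a family of convex sets. For each $v$, let $H_v$ be the set of linear functionals $y\in\R^d$ with $\langle y,A(v)\rangle>0$, an open halfspace, or empty if $A(v)=0$. By Gordan's alternative, $0\notin\conv(A(U))$ if and only if $\bigcap_{v\in U}H_v\neq\emptyset$. So $X$ is the nerve of the open convex sets $H_v$ in $\R^d$. By the nerve theorem, every induced subcomplex is homotopy equivalent to a union of convex subsets of $\R^d$ and has vanishing homology in dimensions $\ge d$. The links are again nerves, now of the restricted families inside the convex set $\bigcap_{v\in\sigma}H_v$. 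Hence $X$ is $d$-representable and therefore $d$-Leray, by Wegner's theorem.

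The main obstacle is the Kalai–Meshulam topological colorful Helly theorem itself. Its proof goes by induction on $|V|$, using deletion and contraction of the matroid and the Mayer–Vietoris or link long exact sequence for $X$. It relies on the fact that if $\mathscr{M}\subseteq X$ and no face has complement of rank $\le d$, then a nontrivial homology class is forced in some link in degree $\ge d$. If I must prove it, I would use the Kalai–Meshulam inequality $\tilde H_k$ vanishing for $k\ge d$ on all links, combined with the fact that the matroid $\mathscr{M}$ restricted to $V-\tau$ is $(\rho-2)$-connected. In the present paper I would simply cite it, since the theorem is attributed to \cite[Cor.\,1.4]{KalaiMeshulam2005}.
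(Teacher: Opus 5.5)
Your route is the natural one, and both ingredients are sound. The zero-avoiding complex $X$ is the nerve of the open half-spaces $\{y:\langle y,A(v)\rangle>0\}$, so it is $d$-representable and hence $d$-Leray; the paper even shows it is near-$(d-1)$-Leray. Kalai--Meshulam's Theorem 1.6 then gives, under $\mathscr{M}\subseteq X$, a face $\tau\in X$ with $\rho_{\mathscr{M}}(V-\tau)\le d$. The paper does not reprove this statement; it quotes it from Kalai--Meshulam. Its own machinery (Theorem~\ref{thm:holmsen-repackaged} with $\mathscr{K}=\mathscr{M}$) produces exactly the same kind of bad face.

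The gap is the step you flag yourself. The hypothesis only covers sets $U$ that are \emph{spanning}, i.e.\ $\rho_{\mathscr{M}}(U)=\rho_{\mathscr{M}}(V)$. So you must produce a $\tau\in X$ with $\rho_{\mathscr{M}}(V-\tau)\le d$ \emph{and} $\rho_{\mathscr{M}}(\tau)=\rho_{\mathscr{M}}(V)$. Nothing in your argument does this.

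Passing to a maximal face does not help. A maximal face of $X$ is just $\{v:\langle y,A(v)\rangle>0\}$ for some $y$, and maximality says nothing about its $\mathscr{M}$-rank. Such a set can in general miss an entire cocircuit of $\mathscr{M}$ while its complement still has rank at most $d$.

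Your remark that ``the version of the Kalai--Meshulam statement we invoke should give the spanning condition directly'' is not backed by any actual statement. Theorem 1.6, as you quote it, only controls $\rho_{\mathscr{M}}(V-\tau)$. The homological argument behind it (Alexander duality plus the Rainbow Simplex Lemma, Section~\ref{sec:the-method}) needs $\mathscr{M}[V-\sigma]$ to be highly connected for \emph{every} face $\sigma$ of $X$, spanning or not.

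As written, your proof therefore establishes only the weaker variant in which $0\in\conv(A(U))$ is assumed for all $U$ with $\rho_{\mathscr{M}}(V-U)\le d$. To get the statement as formulated, you need a further argument that uses $\mathscr{M}\subseteq X$ and the hypothesis in an essential way. For example, you could show that when $\mathscr{M}\subseteq X$ a bad face can always be chosen spanning. That argument is missing.
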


Another way to generalize B\'ar\'any’s colorful Carath\'eodory theorem is to replace the function $A\colon V\to\R^d$ with its associated oriented matroid, a combinatorial abstraction of $A$, as was done by Holmsen \cite[Thm.\,1.2]{Holmsen2016}. 
The necessary theory of oriented matroids is reviewed in detail in Section \ref{sec:prelim-oms}, but for the purposes of the result below it suffices to note that, for any subset $U\subseteq V$, one has $0\in\conv(A(U))$ if and only if $U$ contains a positive circuit of the oriented matroid associated with $A$. 
Oriented matroids capture enough of the affine geometry of $A$ for the theorem below to remain valid, while at the same time providing a discrete and more general framework.

\begin{theorem}\label{thm:caratheodory:oriented-matroid}
Let $V$ be a finite set, $d\geq 0$, and let $\mathscr{O}$ be an oriented matroid on set of elements $V$ of rank at most $d$. 
Suppose that $\mathscr{M}$ is a matroid on ground set $V$ of rank at least $d+1$ such that $U$ contains a positive circuit of $\mathscr{O}$ for every subset $U\subseteq V$ with $\rho_\mathscr{M}(V-U)<d$.
Then there exists an independent subset $U'$ of $\mathscr{M}$ containing a positive circuit of $\mathscr{O}$.
\end{theorem}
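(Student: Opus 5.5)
We prove Theorem \ref{thm:caratheodory:oriented-matroid} by the topological method of Kalai and Meshulam, as adapted by Holmsen. Consider the \emph{support complex}
\[
X:=\{\sigma\subseteq V:\ \sigma \text{ contains no positive circuit of } \mathscr{O}\},
\]
a simplicial complex on $V$: it is closed under subsets, since a subset of a set without positive circuits has none either. The conclusion fails precisely when $\mathscr{M}\subseteq X$. We will show that this inclusion is impossible.

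The tool is the Kalai--Meshulam matroidal Helly-type theorem: if $X$ is $d$-Leray and $\mathscr{M}\subseteq X$ is a matroid on $V$, then there is some $\tau\in X$ with $\rho_{\mathscr{M}}(V-\tau)\le d$. Taking $U:=\tau$ in the hypothesis of Theorem \ref{thm:caratheodory:oriented-matroid} would then give the contradiction, since $\tau\in X$ contains no positive circuit.

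There is a gap of one here: the hypothesis only covers sets $U$ with $\rho_{\mathscr{M}}(V-U)<d$, i.e. $\le d-1$. So we need the sharper statement that $X$ is $(d-1)$-Leray, or else a "near-Leray" refinement of the Kalai--Meshulam theorem. This is where the rank assumption on $\mathscr{O}$ enters.

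\textbf{Step 1 (Leray number).} Show that $\tilde H_i(X[W])=0$ for all $W\subseteq V$ and all $i\ge d$; if possible, also $i=d-1$ except in a controlled case. For an induced subcomplex $X[W]$, its faces are the sets of $W$ that avoid positive circuits. By the oriented matroid Farkas lemma, these are the sets $\sigma$ admitting a covector that is nonnegative on $W$ and positive on all of $\sigma$.

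Thus $X[W]$ is covered by the "full-positive sets" $\{e\in W: Y_e=+\}$ of covectors $Y$ with $Y|_W\ge 0$. By the nerve theorem, $X[W]$ is homotopy equivalent to the set of such covectors, i.e. to a face of the covector lattice restricted to $W$. Via the topological representation theorem this set is a subcomplex of the $(r-1)$-sphere, where $r=\operatorname{rank}(\mathscr{O})\le d$. It is either contractible (a cell, when $W$ is acyclic) or, when $W$ is totally cyclic, it collapses onto the boundary sphere of the contraction. Hence the only possible nonzero reduced homology is $\tilde H_{r-2}$ or lower, and in particular $X$ is $(d-1)$-Leray.

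\textbf{Step 2 (the bound).} Apply the Kalai--Meshulam theorem with Leray number $d-1$. This gives $\tau\in X$ with $\rho_{\mathscr{M}}(V-\tau)\le d-1<d$. The hypothesis of Theorem \ref{thm:caratheodory:oriented-matroid} then forces $\tau$ to contain a positive circuit, contradicting $\tau\in X$. Hence $\mathscr{M}\not\subseteq X$, and any independent set of $\mathscr{M}$ outside $X$ is the desired $U'$.

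The assumption $\operatorname{rank}\mathscr{M}\ge d+1$ guards the degenerate case: without it, the Kalai--Meshulam conclusion could be satisfied trivially, and the hypothesis could be vacuous.

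\textbf{Main obstacle.} The main obstacle is Step 1, namely pinning down the homotopy type of the induced subcomplexes of the support complex using only oriented matroid axioms. This requires the Farkas lemma, the nerve theorem over the covector poset, and the shellability/sphere structure of the restricted covector lattice.

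I would first try to avoid topological representation by working directly with the order complex of the face poset $\{Y\in\mathcal L: Y|_W\ge 0\}$. This poset is an interval or lower set in the big face lattice, which is known to be a shellable PL sphere or ball of dimension $r-1$. If that yields only $d$-Leray, I would fall back on proving a near-$d$-Leray variant of Kalai--Meshulam. In that variant, the top homology appears only when $W$ is totally cyclic, so in the extremal case $\tau$ must contain a positive circuit of $\mathscr{O}|_W$.
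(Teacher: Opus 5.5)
\textbf{There is a genuine gap: your Step~1 conclusion is false.} The support complex $X$ is not $(d-1)$-Leray in general. Take $d=1$, $V=\{a,b\}$, $A(a)=1$, $A(b)=-1$. Then $X$ is two isolated vertices, so $\tilde H_0(X)\neq 0$. More generally, whenever $\mathscr{O}|_W$ is totally cyclic, $X[W]$ is the support complex of $\mathscr{O}|_W$ and is homotopy equivalent to $S^{\operatorname{rank}(W)-1}$, not $S^{\operatorname{rank}(W)-2}$. For example, $d+1$ vectors positively spanning $\R^d$ give $X=\partial\Delta^d$, so $\tilde H_{d-1}$ can be nonzero.

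The error enters in your Farkas description. A face $\sigma$ of $X[W]$ needs only a covector positive on $\sigma$, not one that is also nonnegative on all of $W$. In the totally cyclic case the only covectors nonnegative on $W$ vanish on $W$, so your description would produce $\{\emptyset\}$ and miss even the vertices. Done correctly, the induced-subcomplex analysis shows only that $X$ is $d$-Leray. Kalai--Meshulam then gives $\tau\in X$ with $\rho_{\mathscr{M}}(V-\tau)\le d$, which is exactly one short of the hypothesis $\rho_{\mathscr{M}}(V-U)<d$. So Step~2 does not go through. (The $S^{r-2}$ picture you describe is the behaviour of the element-avoiding complex, which is genuinely $(d-1)$-Leray. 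That is why the cone version, Theorem~\ref{thm:caratheodory:om-cone-matroid}, needs no extra trick.)

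\textbf{What is actually true, and how the paper closes the gap.} The paper shows that $X$ is near-$(d-1)$-Leray: links of \emph{nonempty} faces have vanishing homology in degrees $\ge d-1$, while $X$ itself may have $\tilde H_{d-1}(X)\neq 0$. Each such link is homotopy equivalent to $B^+\langle U\rangle\cap B^+(V-U)$, a proper open subset of $S^{k-1}$. Since this is a statement about links, which are not induced subcomplexes, your Step~1 would have to be redone along those lines.

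The paper then applies Holmsen's refinement of Kalai--Meshulam (Theorem~\ref{thm:holmsen-repackaged}). Here the hypothesis $\operatorname{rank}\mathscr{M}\ge d+1$ is essential, not merely a guard against degenerate cases. It makes $\mathscr{M}$ homologically $(d-1)$-connected. This compensates, in the $\sigma=\emptyset$ case of the connectivity count for $\mathscr{C}^{*}*\mathscr{K}$, for the possible nonvanishing of $\tilde H_{d-1}(X)$. The remaining requirement, that $\mathscr{M}[V-\sigma]$ be $(d-2)$-connected for $\sigma\in X$, follows from $\rho_{\mathscr{M}}(V-\sigma)\ge d$ (your hypothesis) together with the fact that $\mathscr{M}[U]$ is $(\rho_{\mathscr{M}}(U)-2)$-connected.

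Your fallback sentence gestures at this route, but it misnames it as near-$d$-Leray. It also does not say how near-Lerayness would be verified or where the rank assumption on $\mathscr{M}$ enters, and those are exactly the missing ingredients.
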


In the original phrasing of this result \cite[Thm.\,1.2]{Holmsen2016}, it is implicitly requires that $1 \leq \operatorname{rank}(\mathscr{O})$.
Indeed, all oriented matroids in that paper are assumed to be loop-free, and $\operatorname{rank}(\mathscr{M}) \geq d+1 \geq 1$ implies that $V$ is nonempty. 
Nevertheless, it is straightforward to verify --- as we do in Section \ref{sec:cc-proofs} --- that the theorem remains valid when $\operatorname{rank}(\mathscr{O}) = 0$.

\medskip
The final variant of the colorful Carath\'eodory theorem we wish to mention is the so-called constrained colorful Carath\'odory theorem \cite[Thm.\,1.6]{Blagojevic2025}. However, it is instructive to first provide some context, so we postpone its statement to Section \ref{sec:intro-and-main-results:new-generalizations}.

\medskip
\subsection{New colorful Carath\'eodory theorems}\label{sec:intro-and-main-results:new-generalizations}
\medskip

In all colorful Carath\'eodory theorems considered so far, we fixed a finite set $V$ and a logical formula $\mathcal{F}(-)$ with a single free set-valued variable. 
Assuming that $\mathcal{F}(U_i)=\mathrm{true}$ for a given collection $\mathfrak{U}:=(U_i)_{i\in I}$ of subsets of $V$, we concluded that there exists some $j\in J$ such that $\mathcal{F}(U'_j)=\mathrm{true}$, where $\mathfrak{U}':=(U'_j)_{j\in J}$ is another prescribed collection of subsets of $V$. 
In Theorems \ref{thm:caratheodory:original}, \ref{thm:caratheodory:fixed}, \ref{thm:caratheodory:pair}, and \ref{thm:caratheodory:matroid} the formula $\mathcal{F}(U)$ denotes the statement ``$0\in\conv(A(U))$'', in Theorem \ref{thm:caratheodory:cone} stands for ``$A(v_0)\in\cone(A(U))$'', while in Theorem \ref{thm:caratheodory:oriented-matroid}, $\mathcal{F}(U)$ expands to ``$U$ contains a positive circuit of $\mathscr{O}$''.
The corresponding choices of $\mathfrak{U}=(U_i)_{i\in I}$ and $\mathfrak{U}'=(U'_j)_{j\in J}$ can be read off directly from the respective statements; see Theorem \ref{thm:caratheodory:all}.
Now, observe that in each case, whenever $W\subseteq U$ and $\mathcal{F}(U)=\mathrm{false}$, one also has $\mathcal{F}(W)=\mathrm{false}$.
Consequently,
\[\mathscr{C}:=\{U\subseteq V:\mathcal{F}(U)=\mathrm{false}\}\]
forms an abstract simplicial complex, and the conclusion of each colorful Carath\'eodory theorem can be expressed simply as $\mathfrak{U}'\nsubseteq\mathscr{C}$.
If $\mathscr{K}$ denotes the smallest simplicial complex containing $\mathfrak{U}'$ (with respect to inclusion), then 
\[\mathfrak{U}'\nsubseteq\mathscr{C}\iff\mathscr{K}\nsubseteq\mathscr{C}.\]

\medskip
In all theorems presented so far, the complex $\mathscr{K}$ turns out to be a matroid (see the discussion preceding Theorem \ref{thm:caratheodory:matroid}). 
In \cite[Theorem 1.6]{KalaiMeshulam2005}, Kalai and Meshulam established a homological criterion guaranteeing that a matroid cannot be a subcomplex of a so-called $(d-1)$-Leray complex. 
Although, in most cases, the complex $\mathscr{C}$ is \emph{not} $(d-1)$-Leray, and hence their theorem does not apply directly, Holmsen proved in \cite[Prop.\,1.3]{Holmsen2016} that $\mathscr{C}$ is what he calls near-$(d-1)$-Leray whenever $\mathcal{F}(U)$ is given by $0\in\conv(A(U))$'' or $U$ contains a positive circuit of $\mathscr{O}$''. 
In the same paper, the assumptions of \cite[Thm.\,1.6]{KalaiMeshulam2005} were adapted to accommodate near-$(d-1)$-Leray complexes (see \cite[Thm.\,1.4]{Holmsen2016}), thereby yielding colorful Carath\'eodory Theorems \ref{thm:caratheodory:original} and \ref{thm:caratheodory:oriented-matroid}.

\medskip
The combined work of Kalai, Meshulam, and Holmsen was further developed in \cite{Blagojevic2025} in two directions, both of which we pursue here. 
First, observe that the previous approach relied on only a single essential property of matroids. 
This permits a generalization of the method to more general simplicial complexes, leading to colorful Carath\'eodory theorems in which $\mathscr{K}$ is no longer required to be a matroid. 
Second, \cite{Blagojevic2025} carried out extensive homological computations for the complex $\mathscr{C}$ arising from $\mathcal{F}(U)=$``$0\in\conv(A(U))$'', going substantially beyond the mere verification that $\mathscr{C}$ is near-$(d-1)$-Leray. 
We restructure and extend these computations, and perform a comparable analysis for several additional variants of $\mathscr{C}$ beyond those introduced at the beginning of this section. 
Besides enabling --- and indeed yielding --- new colorful Carath\'eodory theorems, these calculations are intended to serve as groundwork for future developments. The central tool in the method of Kalai and Meshulam is the Homological Nerve Theorem, which may, for instance, be derived from the more powerful Leray spectral sequence. 
Combined with the computations presented here, this perspective may permit extensions of colorful Carath\'eodory theorems even in settings where the original formulation of the method, as developed by the previous authors, no longer applies.

\medskip
The work of Holmsen, as well as that of Kalai and Meshulam, will be discussed in detail in Section \ref{sec:the-method}. 
For the remainder of this section, however, we shift our focus to the new colorful Carath\'eodory theorems arising from our results. 
Like the colorful Carath'eodory theorems discussed above, these generalize the original Theorem \ref{thm:caratheodory:original} in several distinct directions:
\begin{enumerate}[label=\normalfont{(\Alph*)}]
    \item\label{en:generalization:complex} New collections of subsets $\mathfrak{U}=(U_i)_{i\in I}$ and $\mathfrak{U}'=(U'_j)_{j\in J}$ of $V$ are identified such that, whenever $0\in\conv(A(U_i))$ for all $i\in I$, one has $0\in\conv(A(U'_j))$ for some $j\in J$.
    
    \item\label{en:generalization:oms} The function $A\colon V\to\R^d$ is replaced by its associated oriented matroid, and both the assumptions and the conclusions of the colorful Carath\'eodory theorem are formulated in the language of oriented matroids.
    
    \item\label{en:generalization:hulls} Convex hulls are replaced by other operators; for example, instead of considering ``$0\in\conv(A(U))$'', we study conditions such as ``$A(v_0)\in\cone(A(U))$'' or ``$A(v_0)\in\operatorname{aff}(A(U))$''.
\end{enumerate}
Recall that Theorem \ref{thm:caratheodory:cone} follows the description in \ref{en:generalization:hulls}, Theorems \ref{thm:caratheodory:fixed}, \ref{thm:caratheodory:pair}, and \ref{thm:caratheodory:matroid} are generalizations falling under scheme \ref{en:generalization:complex}, while Theorem \ref{thm:caratheodory:oriented-matroid} constitutes a generalization in the sense of both \ref{en:generalization:complex} and \ref{en:generalization:oms}. 

\medskip
The first theorem we present, the \emph{constrained colorful Carath\'eodory theorem}, is a generalization of type \ref{en:generalization:complex}, originally introduced in \cite[Thm.\,1.6]{Blagojevic2025}. 
It demonstrates that meaningful versions of the colorful Carath'eodory theorem exist in which the smallest simplicial complex $\mathscr{K}$ containing the collection $\mathfrak{U}'=(U'_j)_{j\in J}$ is not necessarily the independence complex of a matroid $\mathscr{M}$. 
Observation \ref{obs:constrained-caratheodory-new} will later show that the constrained colorful Carath\'eodory theorem presented below is genuinely not a special case of either Theorem \ref{thm:caratheodory:matroid} or Theorem \ref{thm:caratheodory:oriented-matroid}. 
Moreover, it is, of course, an extension of the original colorful Carath\'eodory theorem, Theorem \ref{thm:caratheodory:original}, as will be explained in Observation \ref{obs:implications-between-caratheodorys}.

\begin{theorem}\label{thm:caratheodory:constrained}
    Let $V$ be a finite set, $d\geq 0$, and let $A\colon V\to\R^d$ be a function. 
    Suppose that $r\geq\max\{d+1,2\}$, $U_1,\ldots,U_r\subseteq V$ are pairwise disjoint subsets of $V$ with the property that $0\in \conv(A(U_1))\cap\cdots\cap\conv(A(U_r))$, and  that $\mathscr{K}$ is a simplicial complex on the ground set $V$ of the form $\mathscr{K}_{12}*U_3*\cdots*U_r$, where $\mathscr{K}_{12}$ is a connected simplicial subcomplex of $U_1*U_2$ with vertex set $U_1\cup U_2$ where the sets $U_i$ are viewed as $0$-dimensional simplicial complexes with vertex set $U_i$.
    Then there exists a face $\sigma\in\mathscr{K}$ such that $0\in\conv(A(\sigma))$.
\end{theorem}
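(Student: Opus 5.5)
The plan is to follow the Kalai--Meshulam--Holmsen topological method sketched in the introduction. We argue by contradiction. Set
\[
\mathscr{C}:=\{U\subseteq V:0\notin\conv(A(U))\}
\]
and assume $\mathscr{K}\subseteq\mathscr{C}$. By Holmsen's result \cite[Prop.\,1.3]{Holmsen2016}, $\mathscr{C}$ is near-$(d-1)$-Leray. Roughly, this means that induced subcomplexes $\mathscr{C}[W]$ have vanishing reduced homology in degrees $\geq d-1$, except in degree $d-1$ when $0$ lies in the interior of $\conv(A(W))$. The goal is to produce, from $\mathscr{K}$ and the hypothesis $0\in\conv(A(U_i))$ for all $i$, a nonzero class that this vanishing forbids. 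This is exactly the shape of the criterion \cite[Thm.\,1.4]{Holmsen2016}. The only feature of matroids used there is a connectivity property of $\mathscr{K}$ and of its induced subcomplexes on complements of small sets. So the proof reduces to verifying a suitable replacement of that property for $\mathscr{K}=\mathscr{K}_{12}*U_3*\cdots*U_r$.

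The concrete steps I would carry out are as follows.

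First, restate Holmsen's criterion in a form that uses only homological connectivity of $\mathscr{K}$. The hypotheses would read: $\mathscr{C}$ is near-$(d-1)$-Leray, and for every $W\subseteq V$ in the relevant family (complements of sets $U$ that do not contain a full colour class of the assumption), $\widetilde H_{j}(\mathscr{K}[V-W])=0$ for all $j\le d-2$. The conclusion would be $\mathscr{K}\nsubseteq\mathscr{C}$. The proof of this restatement should be the same nerve/Leray spectral sequence argument, run with the cover of $\mathscr{K}$ by its maximal faces. No matroid exchange property is needed beyond this connectivity.

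Second, verify the connectivity. A connected complex is $0$-connected. A nonempty $0$-dimensional complex is $(-1)$-connected. Since $\operatorname{conn}(X*Y)\ge\operatorname{conn}X+\operatorname{conn}Y+2$, we get
\[
\operatorname{conn}(\mathscr{K})\ge 0+(r-2)\cdot 1=r-2\ge d-1 .
\]
For induced subcomplexes $\mathscr{K}[V-W]$ the same join formula applies colourwise, giving $\mathscr{K}_{12}[(U_1\cup U_2)-W]*(U_3-W)*\cdots*(U_r-W)$. One loses one degree of connectivity for each colour $i\ge 3$ with $U_i\subseteq W$. One loses at most one or two degrees on the $\mathscr{K}_{12}$ factor: deleting vertices may disconnect $\mathscr{K}_{12}$, but it remains nonempty as long as $U_1\cup U_2\nsubseteq W$. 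One then checks that the total loss is compensated exactly by the number of colour classes $U_i$ with $0\in\conv(A(U_i))$ that are fully contained in $W$. This is the analogue of the rank condition $\rho_{\mathscr{M}}(V-U)\le d$ in Theorem~\ref{thm:caratheodory:matroid}. The requirement $r\geq 2$ is needed simply so that $\mathscr{K}_{12}$ makes sense.

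The main obstacle I expect is precisely this bookkeeping for the $\mathscr{K}_{12}$ factor. When $W$ removes vertices from $U_1\cup U_2$ without exhausting either class, $\mathscr{K}_{12}[(U_1\cup U_2)-W]$ may become disconnected, even though no full colour class has been removed to pay for the drop in connectivity. To handle this, I would first try to use Holmsen's ``near'' slack: the defect of $\mathscr{C}$ sits only in degree exactly $d-1$, so one extra degree of freedom is available. I would apply this with the cover of $\mathscr{K}$ indexed by edges of $\mathscr{K}_{12}$, where a spanning tree of $\mathscr{K}_{12}$ makes the nerve contractible. If that fails, a fallback is an elementary Bárány-type argument. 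Choose a face $\sigma$ of $\mathscr{K}$ that minimises the distance from $0$ to $\conv(A(\sigma))$, and separate $0$ from the nearest point by a hyperplane. Carathéodory in the hyperplane shows that some vertex of $\sigma$ is redundant. If it lies in a colour class $U_i$ with $i\ge3$, or if both vertices of the colour-$1,2$ edge are redundant, the usual exchange decreases the distance. Only when a single endpoint of the $\mathscr{K}_{12}$-edge is redundant and $r=d+1$ would one need to walk along a path in the connected graph $\mathscr{K}_{12}$ to reach a vertex strictly on the side of $0$.
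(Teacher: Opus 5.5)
Your framework is the paper's. The zero-avoiding complex $\mathscr{C}$ is near-$(d-1)$-Leray, and the criterion in your first step is Theorem~\ref{thm:holmsen-repackaged}: $\mathscr{K}$ must be homologically $(d-1)$-connected, and $\mathscr{K}[V-\sigma]$ must be homologically $(d-2)$-connected for every $\sigma\in\mathscr{C}$. (Restrict first to $V=U_1\sqcup\cdots\sqcup U_r$ so that $\mathscr{K}$ has vertex set $V$. That theorem is proved via Alexander duality and the Rainbow Simplex Lemma applied to $\mathscr{C}^**\mathscr{K}$, not via a cover of $\mathscr{K}$ by maximal faces.)

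The gap is in your second step: you never complete the required check, and you misdiagnose the ``main obstacle''. The sets to test are the faces $\sigma\in\mathscr{C}$ themselves, not complements of sets avoiding a full class as your parenthetical says. Since $0\in\conv(A(U_i))$ for every $i$, no such $\sigma$ contains any $U_i$. So there are no lost colour classes to be ``compensated'', and $V-\sigma$ meets every $U_i$, including $U_1$ and $U_2$. Hence
\[
\mathscr{K}[V-\sigma]=\mathscr{K}_{12}[(U_1\cup U_2)-\sigma]*(U_3-\sigma)*\cdots*(U_r-\sigma)
\]
is a join of $r-1$ nonempty complexes, so it is homologically $(r-3)$-connected, and $r-3\geq d-2$ is exactly what is required. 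It does not matter whether $\mathscr{K}_{12}[(U_1\cup U_2)-\sigma]$ is disconnected. The one degree lost there is precisely the gap between the $(d-1)$-connectivity demanded of $\mathscr{K}$ and the $(d-2)$-connectivity demanded of $\mathscr{K}[V-\sigma]$; that asymmetry is built into Theorem~\ref{thm:holmsen-repackaged} for near-Leray complexes. Connectedness of $\mathscr{K}_{12}$ is used only at $\sigma=\emptyset$, to make $\mathscr{K}$ homologically $(r-2)\geq(d-1)$-connected, and you computed that part correctly.

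Because this one-line check is missing, your proposal ends with two unproven remedies for a difficulty that does not exist. The cover indexed by edges of $\mathscr{K}_{12}$ with a spanning tree is not an argument as stated. The B\'ar\'any-type fallback leaves open exactly its hard case: when only one endpoint $u_1$ of the $\mathscr{K}_{12}$-edge is redundant and $r=d+1$, there may be no $u_1'\in U_1$ on the near side of the separating hyperplane that is adjacent to $u_2$. ``Walking along a path'' is never shown to decrease the distance. A purely local exchange cannot suffice here. The same case analysis, applied to whichever pair contains the redundant vertex, would also cover $\mathscr{K}_{12}*\mathscr{K}_{34}*U_5*\cdots*U_r$, where the statement is false by Observation~\ref{obs:no-further-grouping-in-constrained}. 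If you insert the observation above into your second step, your argument becomes the paper's proof.
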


This result admits further generalizations, albeit in a not entirely obvious way; see Theorem \ref{thm:caratheodory:3-joined} and Observation \ref{obs:no-further-grouping-in-constrained}. 
Moreover, as observed in \cite[Cor.\,1.7]{Blagojevic2025}, the theorem above yields a new strengthening of Tverberg's theorem, especially in the context of \emph{ordered} Tverberg partitions. 
For further details, see Section \ref{sec:applications-and-limits}, in particular Corollary \ref{crl:new-tverberg}.
   
\medskip
While the constrained colorful Carath\'eodory theorem stated above is a generalization in the sense of \ref{en:generalization:complex}, we will in fact prove a slightly stronger version, incorporating the additional generalization described in \ref{en:generalization:oms}. 
By contrast, the second colorful Carath\'eodory theorem presented below provides a generalization in all three senses, combining the ideas underlying Theorems \ref{thm:caratheodory:cone} and \ref{thm:caratheodory:oriented-matroid}. 
To formulate it, we first recall some notation.

\medbreak
In the setting of the oriented matroid $\mathscr{O}$ associated to the function $A\colon V\sqcup{v_0}\to\R^d$, there is an operator $\conv\colon 2^{V\sqcup{v_0}}\to 2^{V\sqcup{v_0}}$ with the property that
 \[v_0\in\conv(U)\iff A(v_0)\in\cone(A(U))\] 
 for every $U\subseteq V$.
This operator $\conv$ admits a definition for arbitrary oriented matroids $\mathscr{O}$, and it gives rise to the following generalization of Theorem \ref{thm:caratheodory:cone}, analogous to the way in which Theorem \ref{thm:caratheodory:oriented-matroid} generalizes Theorem \ref{thm:caratheodory:original}:

\begin{theorem}\label{thm:caratheodory:om-cone-matroid}
    Let $V$ be a finite set, $v_0\notin V$, $d\geq 0$, and let $\mathscr{O}$ be an oriented matroid on set of elements $V\sqcup\{v_0\}$ of rank at most $d$. 
    Suppose that $\mathscr{M}$ is a matroid on $V$ such that $v_0\in\conv(U)$ for every $U\subseteq V$ with $\rho_{\mathscr{M}}(V-U)<d$.
    Then there exists an independent set $U'$ of $\mathscr{M}$ such that  $v_0\in\conv(U')$.
\end{theorem}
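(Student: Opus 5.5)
We follow the Kalai--Meshulam--Holmsen scheme. Let
\[\mathscr{C}:=\{U\subseteq V: v_0\notin\conv(U)\}.\]
Since $\conv$ is monotone, $\mathscr{C}$ is a simplicial complex on $V$, and the conclusion of the theorem says exactly that $\mathscr{M}\nsubseteq\mathscr{C}$. We argue by contradiction and assume $\mathscr{M}\subseteq\mathscr{C}$.

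\emph{Step 1: collapsibility.} We show that $\mathscr{C}$ is $(d-1)$-Leray; in the cone setting we hope for this rather than only near-$(d-1)$-Leray. The planned route is to reduce to the affine situation by contracting $v_0$. In the realizable case, $v_0\in\conv(U)$ means $A(v_0)\in\cone(A(U))$. This in turn means that $\cone(A(U))$ contains the ray through $A(v_0)$, or equivalently that after projecting along $A(v_0)$ the images satisfy a ``$0$ in the relative interior/cone'' condition. Oriented matroid contraction $\mathscr{O}/v_0$ has rank at most $d-1$, and we expect $v_0\in\conv(U)$ to be equivalent to $U$ containing the support of a signed vector $X$ of $\mathscr{O}$ with $X(v_0)=-$ and $X|_U\ge 0$.

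The key local computation is then the following. For every $W\subseteq V$, the induced subcomplex $\mathscr{C}[W]$ has vanishing reduced homology in degrees $\ge d-1$. We prove this by a Holmsen-style induction: delete an element $w\in W$ and use the Mayer--Vietoris / link sequence
\[\mathscr{C}[W]=\mathscr{C}[W-w]\cup \big(w*\link_{\mathscr{C}[W]}(w)\big).\]
The link of $w$ is identified with a complex of the same type for the contraction $\mathscr{O}/w$, which has smaller rank. The case in which $v_0$ becomes parallel or antiparallel to $w$ must be handled separately; there the link is a cone or the whole simplex, which are trivial cases.

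\emph{Step 2: the topological criterion.} Apply the Kalai--Meshulam theorem \cite[Thm.\,1.6]{KalaiMeshulam2005}, or rather its near-Leray version \cite[Thm.\,1.4]{Holmsen2016} if Step 1 only gives near-Leray. In the relevant form it states the following: if $\mathscr{C}$ is $(d-1)$-Leray on $V$ and $\mathscr{M}\subseteq\mathscr{C}$, then there is $U\in\mathscr{C}$ with $\rho_{\mathscr{M}}(V-U)\le d-1$. Such a $U$ satisfies $\rho_{\mathscr{M}}(V-U)<d$, so by hypothesis $v_0\in\conv(U)$, that is, $U\notin\mathscr{C}$, a contradiction.

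Two degenerate cases are checked directly. If $\operatorname{rank}\mathscr{O}=0$, then $v_0$ is a loop and every $U$ has $v_0\in\conv(U)$ (including $U=\emptyset$), so $U'=\emptyset$ works. If $d=0$, the hypothesis applied to $U=\emptyset$ is immediate as well.

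\emph{Main obstacle.} The hard step is Step 1: establishing the exact Leray number, $d-1$ rather than $d$, for the cone complex in the general oriented matroid setting. This requires a clean combinatorial description of $v_0\in\conv(U)$ that behaves well under deletion and contraction of elements of $V$. The first approach to try is to reduce to Holmsen's Proposition \cite[Prop.\,1.3]{Holmsen2016} applied to a suitable modification of $\mathscr{O}$ in which $v_0$ is reoriented ($-v_0$). Then $v_0\in\conv(U)$ becomes ``$U\cup\{\bar v_0\}$ contains a positive circuit through $\bar v_0$''. Under this reformulation $\mathscr{C}$ should be the link of $\bar v_0$ in Holmsen's near-$(d-1)$-Leray complex. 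Links of a near-Leray complex at the special vertex should be honestly $(d-1)$-Leray, which is exactly what Step 2 needs.
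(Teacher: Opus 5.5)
\textbf{Gap: Step~1, the $(d-1)$-Leray property of $\mathscr{C}$, is not proved.} Your Step~2 is correct and matches the paper's proof, which is \cite[Thm.\,1.6]{KalaiMeshulam2005} (in the form of Theorem~\ref{thm:holmsen-repackaged}) applied to the element-avoiding complex $\mathscr{C}=\mathscr{C}_{v_0,\mathscr{O}}$. But the whole content of the theorem lies in Step~1, and you only list routes you hope will work. Full $(d-1)$-Lerayness is really needed. The statement puts no lower bound on $\operatorname{rank}\mathscr{M}$, and the near-Leray criterion \cite[Thm.\,1.4]{Holmsen2016} does not apply when $\operatorname{rank}\mathscr{M}=d$, e.g.\ for the partition matroid with $r=d$ behind Theorem~\ref{thm:caratheodory:om-cone}. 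So your near-Leray fallback would not prove the statement.

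Each reduction you sketch fails in general, essentially because contraction forgets the sign of a coefficient.
\begin{itemize}
\item \emph{Contracting $v_0$.} ``$v_0\in\conv(U)$'' agrees with ``$U$ contains a positive circuit of $\mathscr{O}/v_0$'' only when $\mathscr{O}$ is acyclic, $\mathscr{B}^+(V\cup\{v_0\})\neq\emptyset$ (point~\ref{prop:htpy-type-elem-avoiding:pointed} of Proposition~\ref{prop:htpy-type-elem-avoiding}).
\item \emph{Reorienting $v_0$.} The link of $\bar v_0$ in the support complex is $\mathscr{C}\cap\mathscr{C}_{\mathscr{O}|_V}$, not $\mathscr{C}$. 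Your remark that links of nonempty faces of a near-$(d-1)$-Leray complex are $(d-1)$-Leray is right, but here it concerns the wrong complex.
\item \emph{Deletion induction.} $\link_{\mathscr{C}[W]}(w)=\{U: v_0\notin\conv(U\cup\{w\})\}$ forces a nonnegative coefficient on $w$, while the element-avoiding complex of $\mathscr{O}/w$ allows either sign. So the class of complexes does not close up under your induction.
\end{itemize}
The first two fail for $A(u_1)=e_1$, $A(u_2)=-e_1$, $A(v_0)=e_2$: the edge $\{u_1,u_2\}$ lies in $\mathscr{C}$ because $e_2\notin\R e_1$, yet it contains a positive circuit. The third fails for $A(w)=e_1$, $A(u)=e_1+e_2$, $A(v_0)=e_2$: $\{u\}$ is in the link, but $v_0\in\conv(\{u\})$ in $\mathscr{O}/w$.

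\textbf{What is missing, and how the paper supplies it.} Using the induced-subcomplex form of the Leray property, the contraction route does handle every $W$ with $\mathscr{O}|_{W\cup\{v_0\}}$ acyclic. There $\mathscr{C}[W]$ is the support complex of an oriented matroid of rank at most $d-1$. The missing idea is the non-acyclic case, where $\mathscr{C}[W]$ must be shown to be contractible. The paper obtains this, and treats all links uniformly, via the topological representation. Set $k=\operatorname{rank}\mathscr{O}\le d$. Then $\link_{\mathscr{C}}(V-U)$ is either void or, by a nerve theorem for closed cellular covers (Theorem~\ref{thm:nerve-closed-cellular} applied to $(D^+_u\cap D^+(V-U)\cap B^-_{v_0})_{u\in U}$), homotopy equivalent to a subset of the open hemisphere $B^-_{v_0}$. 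By Lemma~\ref{lem:operator-T} that subset is homotopy equivalent to a proper open subset of $S^{k-1}$, which has no homology in degrees $\geq k-1$. In the realizable case this is Wegner's theorem for the convex sets $\{\varphi:\varphi(A(u))\geq 0,\ \varphi(A(v_0))=-1\}$, which lie in a $(d-1)$-dimensional affine hyperplane.
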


Since the assumption is that $v_0\in\conv(U)$ for all $U\subseteq V$ satisfying $\rho_{\mathscr{M}}(V-U)<d$, rather than for all $U\subseteq V$ with $\rho_{\mathscr{M}}(V-U)<d-1$, this is not a direct generalization of Theorem \ref{thm:caratheodory:oriented-matroid}, even in the affine setting. In fact, Theorem \ref{thm:caratheodory:om-cone-matroid} fails if the condition $\rho_{\mathscr{M}}(V-U)<d$ is replaced by $\rho_{\mathscr{M}}(V-U)<d-1$. Thus, it constitutes a genuinely new colorful Carath'eodory theorem, independent of Theorem \ref{thm:caratheodory:oriented-matroid}. See Observations \ref{obs:conv-vs-cone-in-oms} and \ref{obs:no-cone-caratheodory} for further discussion.
Nevertheless, Theorem \ref{thm:caratheodory:om-cone-matroid} still generalizes Theorem \ref{thm:caratheodory:cone}, as applying it to the partition matroid $U_1*\cdots*U_r$ yields the following result.

\begin{theorem}\label{thm:caratheodory:om-cone}
    Let $V$ be a finite set, $v_0\notin V$, $d\geq 0$, and let $\mathscr{O}$ be an oriented matroid on set of elements $V\sqcup\{v_0\}$ of rank at most $d$. 
    Suppose that $r\geq d$ and that $U_1,\ldots,U_r\subseteq V$ are pairwise disjoint subsets of $V$  with the property that $v_0\in\conv(U_1)\cap\cdots\cap\conv(U_r)$.
    Then there exist $u_1\in U_1,\ldots,u_r\in U_r$ such that $v_0\in\conv(\{u_1,\ldots,u_r\})$.
\end{theorem}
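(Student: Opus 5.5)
We will deduce Theorem \ref{thm:caratheodory:om-cone} from Theorem \ref{thm:caratheodory:om-cone-matroid}, which we may assume. We apply it to the partition matroid, taken with ground set $V$ rather than only $U_1\cup\cdots\cup U_r$. Concretely, put
\[
\mathscr{M}:=U_1*\cdots*U_r ,
\]
regarded as a matroid on $V$ in which every element of $V-\bigcup_i U_i$ is a loop. This is still a matroid, since adding loops preserves the exchange property. Its independent sets are exactly the subsets of $V$ meeting each $U_i$ in at most one element. For $W\subseteq V$ the rank is
\[
\rho_{\mathscr{M}}(W)=\bigl|\{i : W\cap U_i\neq\emptyset\}\bigr| .
\]

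Next we check the hypothesis of Theorem \ref{thm:caratheodory:om-cone-matroid}. Let $U\subseteq V$ satisfy $\rho_{\mathscr{M}}(V-U)<d$. Since $r\geq d$, fewer than $d\le r$ of the colors meet $V-U$. Hence there is an index $i$ with $U_i\cap(V-U)=\emptyset$, that is, $U_i\subseteq U$. By assumption $v_0\in\conv(U_i)$. The oriented matroid convex hull operator is monotone, so $v_0\in\conv(U)$.

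Monotonicity holds because $v_0\in\conv(W)$ means there is a signed circuit (or, depending on the convention of Section \ref{sec:prelim-oms}, a vector) $X$ with $X(v_0)=-$, $X^+\subseteq W$ and $X^-=\{v_0\}$. Any such witness for $W$ is also a witness for every superset of $W$. We will also need the degenerate case $d=0$. There the condition $\rho_{\mathscr{M}}(V-U)<0$ is never satisfied, so the hypothesis is vacuous. In this case we rely on Theorem \ref{thm:caratheodory:om-cone-matroid} exactly as stated, with no extra argument. Its conclusion then produces an independent set, possibly the empty set, with $v_0\in\conv(\emptyset)$, which is consistent when $v_0$ is a loop, as forced by rank $0$.

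Theorem \ref{thm:caratheodory:om-cone-matroid} now yields an independent set $U'$ of $\mathscr{M}$ with $v_0\in\conv(U')$. The set $U'$ contains at most one element from each $U_i$ and no loops, so $U'\subseteq\{u_1,\ldots,u_r\}$ for suitable choices $u_i\in U_i$. For colors $i$ not represented in $U'$ we pick any $u_i\in U_i$. Each $U_i$ is nonempty, since $v_0\in\conv(U_i)$ together with $v_0\notin U_i$ requires a witness with nonempty positive part inside $U_i$; if $v_0$ is a loop, the choice is made in whatever way the statement permits. Monotonicity of $\conv$ then gives $v_0\in\conv(\{u_1,\ldots,u_r\})$.

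The main obstacle is bookkeeping rather than mathematics. Two things must be confirmed against the definitions of Section \ref{sec:prelim-oms}. First, $\conv$ must be monotone, including in degenerate situations where $v_0$ is a loop or $U_i$ is empty. Second, treating elements outside $\bigcup U_i$ as loops must be compatible with the notion of matroid on ground set $V$ used in Theorem \ref{thm:caratheodory:om-cone-matroid}. If loops are disallowed, we instead restrict $\mathscr{O}$ to $U_1\cup\cdots\cup U_r\cup\{v_0\}$. Restriction preserves both the rank bound and the operator $\conv$ on subsets, and after restricting the argument above goes through verbatim.
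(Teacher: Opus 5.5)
Your proposal is correct and is essentially the paper's own argument (Observation~\ref{obs:implications-between-caratheodorys}). You apply Theorem~\ref{thm:caratheodory:om-cone-matroid} to the partition matroid $U_1*\cdots*U_r$, observe that $\rho(V-U)<d\le r$ forces $U_i\subseteq U$ for some $i$, and finish by monotonicity of $\conv$ and by extending the independent set to a full transversal. Your vague remark about the loop case reflects a defect of the statement, not of your proof: if $v_0$ is a loop, then $U_i=\emptyset$ satisfies the hypothesis while the conclusion fails. This is why the paper's reformulation in Theorem~\ref{thm:caratheodory:all} explicitly requires $\emptyset\notin\mathfrak{U}$, and its own passage to the partition matroid tacitly assumes the same.
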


However, it turns out that Theorems \ref{thm:caratheodory:fixed}, \ref{thm:caratheodory:pair}, and \ref{thm:caratheodory:constrained} do not extend well to the setting in which $0\in\conv(A(-))$'' is replaced by $A(v_0)\in\cone(A(-))$'' or $v_0\in\conv(-)$''. In fact, only Theorem \ref{thm:caratheodory:pair} admits a meaningful extension, as discussed in Observation \ref{obs:no-cone-caratheodory}; however, even this extension does not imply the original version either (see Observation \ref{obs:conv-vs-cone-in-oms} for details). In this sense, this extension is also a new and independent result. For brevity, we state only the variant of Theorem \ref{thm:caratheodory:pair} in which $0\in\conv(A(U))$'' is replaced by $v_0\in\conv(U)$'', since this formulation implies the version in which we use $A(v_0)\in\cone(A(U))$''.

\begin{theorem}\label{thm:caratheodory:om-cone-pair}
    Let $V$ be a finite set, $d\geq 0$, and let $\mathscr{O}$ be an oriented matroid on set of elements $V\sqcup\{v_0\}$ of rank at most $d$. 
    Suppose that $r\geq d+1$ and that $U_1,\ldots,U_r\subseteq V$ are pairwise disjoint subsets of $V$ satisfying that $v_0\in\bigcap_{1\leq i<j\leq r}\conv(U_i\cup U_j)$.
    Then there exist $u_1\in U_1,\ldots,u_r\in U_r$ such that $v_0\in\conv(\{u_1,\ldots,u_r\})$.
\end{theorem}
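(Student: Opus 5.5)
Following the paper's method (Kalai--Meshulam, Holmsen), the plan is to recast the statement as non-containment of complexes. Let
\[\mathscr{C}:=\{U\subseteq V: v_0\notin\conv(U)\},\]
a simplicial complex on $V$. We must show that the partition matroid $\mathscr{K}=U_1*\cdots*U_r$ is not a subcomplex of $\mathscr{C}$. Suppose for contradiction that $\mathscr{K}\subseteq\mathscr{C}$. The hypothesis says that $U_i\cup U_j\notin\mathscr{C}$ for all $i<j$; equivalently, every face of $\mathscr{C}$ meets at most one of the $U_i$ in a ``large'' way.

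Step 1 is the topological input. I would use the homological structure of $\mathscr{C}$ developed later in the paper for the cone operator in oriented matroids: $\mathscr{C}$ is near-$d$-Leray. Concretely, for every $W\subseteq V$ we have $\tilde H_k(\mathscr{C}[W])=0$ for $k\ge d$, and moreover $\tilde H_{d-1}(\mathscr{C}[W])$ vanishes unless $W$ satisfies a spanning condition relative to $v_0$, namely $v_0\in\conv(W)$ with $W$ ``minimally'' spanning. Here $\mathscr{C}[W]$ is essentially the boundary of a cross-polytope or simplex around $v_0$ in the contraction $\mathscr{O}/v_0$, which has rank at most $d-1$. I take these computations as available from the support-complex analysis.

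Step 2 is the Kalai--Meshulam pair argument, as in the proof of Theorem \ref{thm:caratheodory:pair} by Holmsen--Pach--Tverberg and Arocha et al. I would consider the map from $\mathscr{K}$ into $\mathscr{C}$ and apply the Kalai--Meshulam/Holmsen criterion (\cite[Thm.\,1.4]{Holmsen2016} adapted to non-matroidal constraints as in \cite{Blagojevic2025}). The matroid $\mathscr{K}$ has rank $r\ge d+1$. A $d$-Leray-type complex cannot contain it unless the required ``complement has rank $<d$'' sets lie in $\mathscr{C}$. The relevant sets are those $U$ with $\rho(V-U)\le d-1$, and for the partition matroid these are exactly the sets $U$ containing at least $r-d+1\ge 2$ entire color classes. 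Any such $U$ contains some $U_i\cup U_j$, so $v_0\in\conv(U)$ and $U\notin\mathscr{C}$. This is precisely why the pairwise hypothesis, rather than the single-class one, suffices. It also explains why $r\ge d+1$ (and not $r\ge d$ as in Theorem \ref{thm:caratheodory:om-cone}) is needed: we need two full classes in the complement-rank-$<d$ sets.

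The main obstacle is Step 1. We need that $\mathscr{C}$ for the cone operator is near-$(d-1)$-Leray in the sense required by Holmsen's Theorem 1.4, or else an honest $d$-Leray statement combined with the rank-$d$ threshold $\rho(V-U)<d$ appearing in Theorem \ref{thm:caratheodory:om-cone-matroid}. I expect to reduce this via the contraction $\mathscr{O}/v_0$. We have $v_0\in\conv(U)$ iff $U$ contains a positive circuit of $\mathscr{O}/v_0$, or $v_0$ is a loop-like degenerate case, handled separately as in the rank-$0$ remark. Then $\mathscr{C}$ becomes Holmsen's positive-circuit-free complex for $\mathscr{O}/v_0$, of rank at most $d-1$, which is near-$(d-2)$-Leray by \cite[Prop.\,1.3]{Holmsen2016}. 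The remaining issue is to reconcile the threshold: near-$(d-2)$-Leray with Theorem \ref{thm:caratheodory:oriented-matroid} applied to $\mathscr{O}/v_0$ with parameter $d-1$ gives the conclusion whenever all $U$ with $\rho(V-U)<d-1$ contain positive circuits. For the partition matroid of rank $r\ge d+1$, these are sets containing at least $r-d+2\ge 3$ classes, hence in particular containing two classes. So the pairwise hypothesis already implies the hypothesis of Theorem \ref{thm:caratheodory:oriented-matroid} for $\mathscr{O}/v_0$, and its conclusion yields a transversal $\{u_1,\dots,u_r\}$ with $v_0\in\conv$. The care needed is in verifying that the contraction faithfully translates $\conv$ with respect to $v_0$, including parallel and antiparallel elements to $v_0$ and loops.
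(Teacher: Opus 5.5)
Your reformulation and your counting match the paper's. Since $U_i\cup U_j\notin\mathscr{C}$, every face $\sigma\in\mathscr{C}$ contains at most one whole class. Hence $\mathscr{K}[V-\sigma]$ is a join of at least $r-1$ nonempty discrete sets, so it is $(d-2)$-connected; equivalently, every $U$ with $\rho(V-U)\le d-1$ contains two whole classes.

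This count closes the argument only if $\mathscr{C}=\mathscr{C}_{v_0,\mathscr{O}}$ is \emph{$(d-1)$-Leray}. That is what lets the Leray clause of Theorem \ref{thm:holmsen-repackaged} (Kalai--Meshulam) apply with threshold $\rho(V-U)<d$ and no connectivity requirement on $\mathscr{K}$. It is the real content of the proof, and you do not establish it. Step 1 only asserts near-$d$-Leray, together with an unproved description of $\tilde H_{d-1}$. That is too weak: it leads to the threshold $\rho(V-U)\le d$, which needs hypotheses on single classes and rank $r\ge d+2$.

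The reduction to $\mathscr{O}/v_0$ does not repair this, and the problem is not confined to parallel, antiparallel or loop elements. A positive vector of $\mathscr{O}/v_0$ inside $U$ lifts to a vector $\Psi$ of $\mathscr{O}$ with $\Psi(v_0)\in\{-,0,+\}$. Only $\Psi(v_0)=-$ gives $v_0\in\conv(U)$. For example, take $A(v_0)=e_1$, $A(u)=e_2$, $A(u')=-e_2$. Then $\{u,u'\}$ contains a positive circuit of $\mathscr{O}/v_0$, yet $A(v_0)\notin\cone(A(\{u,u'\}))$.

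The identity $\mathscr{C}_{v_0,\mathscr{O}}=\mathscr{C}_{\mathscr{O}/v_0}$ holds only when $\mathscr{B}^+(V\cup\{v_0\})\neq\emptyset$, i.e.\ when $V\sqcup\{v_0\}$ is acyclic (Proposition \ref{prop:htpy-type-elem-avoiding}). In that case your argument is valid and even gives $r\ge d$. In the non-acyclic case, Theorem \ref{thm:caratheodory:oriented-matroid} applied to $\mathscr{O}/v_0$ yields only a transversal containing a positive circuit of $\mathscr{O}/v_0$, which does not imply $v_0\in\conv$. Moreover, as set up, your reduction uses only $r\ge d$ (matroid rank $\ge d$, and sets with $\rho(V-U)<d-1$ contain two classes). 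The $r=d$ statement is false by Observation \ref{obs:no-cone-caratheodory}, so $r\ge d+1$ must enter through a different topological input in the non-acyclic case.

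What is missing is the paper's direct analysis of the element-avoiding complex. Via the cellular Nerve Theorem \ref{thm:nerve-closed-cellular}, every nonvoid link $\link_{\mathscr{C}_{v_0,\mathscr{O}}}(V-U)$ with $U\subseteq V$, including $U=V$, is homotopy equivalent to $D^+\langle U\rangle\cap D^+(V-U)\cap B^-_{v_0}$. This is a cellular subset of the open hemisphere $B^-_{v_0}$, hence homotopy equivalent to a proper open subset of $S^{k-1}$, with $k=\operatorname{rank}(\mathscr{O})\le d$. So it has no homology in degrees $\ge k-1$, and in particular none in degrees $\ge d-1$; this is Corollary \ref{crl:element-avoiding-leray}. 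Once you have it, you could also simply invoke Theorem \ref{thm:caratheodory:om-cone-matroid} for the partition matroid, since your count verifies its hypothesis.
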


Finally, we note that Theorems \ref{thm:caratheodory:fixed}, \ref{thm:caratheodory:pair}, and \ref{thm:caratheodory:constrained} all admit generalizations of type \ref{en:generalization:oms}, that is, if in these statements the map $A$ is replaced by an oriented matroid $\mathscr{O}$ of rank at most $d$, and ``$0\in\conv(A(U))$'' is replaced by ``$U$ contains a positive circuit of $\mathscr{O}$'', then the resulting statements remain valid.

\medskip
The proofs of the colorful Carath\'eodory theorems will be presented in Section \ref{sec:cc-proofs} and will follow the method described in Section \ref{sec:the-method}. In between, we review the relevant aspects of the theory of oriented matroids in Section \ref{sec:prelim-oms}, and study the different versions of $\mathscr{C}$ introduced above in Section \ref{sec:avoiding-complexes}, establishing the key properties on which the various colorful Carath\'eodory theorems rely. Before that, however, in Section \ref{sec:applications-and-limits} we explore counterexamples and applications related to our colorful Carath\'eodory theorems.

Since we will frequently refer to and compare Theorems \ref{thm:caratheodory:original} through \ref{thm:caratheodory:om-cone-pair} in what follows, we conclude this section by presenting a unifying theorem (Theorem \ref{thm:caratheodory:all}) together with a summary table (Table \ref{tbl:caratheodory-thms}). 
To keep these concise, we slightly reformulate the various colorful Carath\'eodory theorems above, without changing their essential content.

\begin{theorem}\label{thm:caratheodory:all}
Given a finite set $V$ with $v_0\notin V$, integers $d\geq 0$ and $r\geq d+1$, and either a function $A\colon V\sqcup{v_0}\to\R^d$ or an oriented matroid $\mathscr{O}$ of rank at most $d$ on the ground set $V\sqcup{v_0}$, we have that the implication 
    \begin{quotation}\centering
        ``\,\emph{If $\mathcal{F}(U)$ for all $U\in\mathfrak{U}$ then $\mathcal{F}(U')$ for some $\U'\in\mathfrak{U}'$}\,''
    \end{quotation}
    holds if any of the points below are true (labelled by the numbers of Theorems \ref{thm:caratheodory:original} through \ref{thm:caratheodory:om-cone-pair}):
    \begin{itemize}
        \setlength{\itemsep}{5pt}
        \item[\ref{thm:caratheodory:original}.] 
        $\mathcal{F}(W)=$ ``$0\in\conv(A(W))$'', $|\mathfrak{U}|=r$, $\bigsqcup\mathfrak{U}\subseteq V$, \\ 
        $\mathfrak{U}'=\{U'\subseteq V: |U'\cap U|=1\text{~for all~}U\in\mathfrak{U}\}$.
        
        \item[\ref{thm:caratheodory:cone}.] 
        $\mathcal{F}(W)=$ ``$A(v_0)\in\cone(A(W))$'', $|\mathfrak{U}|=r-1$, $\bigsqcup\mathfrak{U}\subseteq V$, $\emptyset\notin\mathfrak{U}$, \\
        $\mathfrak{U}'=\{U'\subseteq V: |U'\cap U|=1\text{~for all~}U\in\mathfrak{U}\}$. 
        
        \item[\ref{thm:caratheodory:fixed}.]
        $\mathcal{F}(W)=$ ``$0\in\conv(A(W))$'', $|\mathfrak{U}|=r-1$, $\bigsqcup\mathfrak{U}\subseteq V$, \\
        $\mathfrak{U}'=\{U'\cup\{v_0\}:U'\subseteq V,|U'\cap U|=1\text{~for all~}U\in\mathfrak{U}\}$.
        
        \item[\ref{thm:caratheodory:pair}.]
        $\mathcal{F}(W)=$ ``$0\in\conv(A(W))$'', $\mathfrak{U}=\{U_1\cup U_2:U_1\neq U_2\in\mathfrak{U}_0\}$, $|\mathfrak{U}_0|=r$, $\bigsqcup\mathfrak{U}_0\subseteq V$, $\emptyset\notin\mathfrak{U}_0$, \\
        $\mathfrak{U}'=\{U'\subseteq V, |U'\cap U|=1\text{~for all~}U\in\mathfrak{U}_0\}$.
        
        \item[\ref{thm:caratheodory:matroid}.] 
        $\mathcal{F}(W)=$ ``$0\in\conv(A(W))$'', $\mathfrak{U}=\{U\subseteq V:\rho_{\mathfrak{U}'}(U)=\rho_{\mathfrak{U}'}(V),\rho_{\mathfrak{U}'}(V-U)\leq d\}$, \\
        $\mathfrak{U}'$ is a matroid on $V$.
        
        \item[\ref{thm:caratheodory:oriented-matroid}.]
        $\mathcal{F}(W)=$ ``$U$ contains a positive circuit of $\mathscr{O}$'', $\mathfrak{U}=\{U\subseteq V:\rho_{\mathfrak{U}'}(V-U)<d\}$, \\
        $\mathfrak{U}'$ is a matroid on $V$ of rank $r$.

        \item[\ref{thm:caratheodory:constrained}.]
        $\mathcal{F}(W)=$``$0\in\conv(A(W))$'', $|\mathfrak{U}|=r\geq 2$, $\bigsqcup\mathfrak{U}\subseteq V$, \\
        $\mathfrak{U}'=\mathscr{K}_{12}*U_3*\cdots*U_r$, $\mathfrak{U}=\{U_1,\ldots,U_r\}$, $\mathscr{K}_{12}\subseteq U_1*U_2$ connected on vertex set $U_1\cup U_2$.

        \item[\ref{thm:caratheodory:om-cone-matroid}.]
        $\mathcal{F}(W)=$``$v_0\in\conv(U)$'', $\mathfrak{U}=\{U\subseteq V:\rho_{\mathfrak{U}'}(V-U)< d\}$, \\
        $\mathfrak{U}'$ is a matroid on $V$.
        
        \item[\ref{thm:caratheodory:om-cone}.]
        $\mathcal{F}(W)=$``$v_0\in\conv(U)$'', $|\mathfrak{U}|=r-1$, $\bigsqcup\mathfrak{U}\subseteq V$,$\emptyset\notin\mathfrak{U}$, \\
        $\mathfrak{U}'=\{U'\subseteq V:|U'\cap U|=1\text{~for all~}U\in\mathfrak{U}\}$.

        \item[\ref{thm:caratheodory:om-cone-pair}.]
        $\mathcal{F}(W)=$``$v_0\in\conv(U)$'', $\mathfrak{U}=\{U_1\cup U_2:U_1\neq U_2\in\mathfrak{U}_0\}$, $|\mathfrak{U}_0|=r$, $\bigsqcup\mathfrak{U}_0\subseteq V$, $\emptyset\notin\mathfrak{U}_0$, \\
        $\mathfrak{U}'=\{U'\subseteq V:|U'\cap U|=1\text{~for all~}U\in\mathfrak{U}\}$.
    \end{itemize}
\end{theorem}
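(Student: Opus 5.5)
The plan is to verify Theorem \ref{thm:caratheodory:all} bullet by bullet. Each item should be a direct rephrasing, or a slight specialisation, of the theorem carrying the same number. Nothing new needs to be proved beyond unpacking the notation: the collections $\mathfrak{U}$ and $\mathfrak{U}'$ have to match the hypotheses and conclusions of the earlier statements, and the numerical ranges have to be compatible with the uniform assumption $r\geq d+1$. The one point requiring care is that Theorem \ref{thm:caratheodory:all} allows either a function $A\colon V\sqcup\{v_0\}\to\R^d$ or an oriented matroid $\mathscr{O}$ of rank at most $d$. For every bullet whose formula refers to $A$, I will simply restrict $A$ to $V$ whenever $v_0$ plays no role.

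\textbf{Convex-hull bullets.} For items \ref{thm:caratheodory:original}, \ref{thm:caratheodory:pair} and \ref{thm:caratheodory:constrained}, I check that $\mathfrak{U}'$ is exactly the set of colorful transversals (respectively the faces of $\mathscr{K}_{12}*U_3*\cdots*U_r$). The conclusion ``$\mathcal{F}(U')$ for some $U'\in\mathfrak{U}'$'' is then literally the conclusion of the cited theorem. In item \ref{thm:caratheodory:constrained} the faces of $\mathscr{K}$ are subsets of such transversals, so ``some face'' and ``some member'' are interchangeable by monotonicity of $\mathcal{F}$. The requirement $r\geq 2$ is stated explicitly.

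For item \ref{thm:caratheodory:fixed}, the point $v_0$ plays the role of the fixed element $u\in V-\bigcup U_i$. So I apply Theorem \ref{thm:caratheodory:fixed} to $A$ on $V\sqcup\{v_0\}$, with the $r-1$ sets of $\mathfrak{U}$.

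For item \ref{thm:caratheodory:matroid}, the statement is verbatim Theorem \ref{thm:caratheodory:matroid}, with $\mathscr{M}=\mathfrak{U}'$ and its independent sets as $\mathfrak{U}'$.

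\textbf{Cone and oriented-matroid bullets.} For item \ref{thm:caratheodory:cone}, the $r-1\geq d$ nonempty sets of $\mathfrak{U}$ satisfy the hypothesis $r'\geq d$ of Theorem \ref{thm:caratheodory:cone}, with $r'=r-1$. Nonemptiness is needed only so that a transversal can exist. Items \ref{thm:caratheodory:om-cone} and \ref{thm:caratheodory:om-cone-pair} translate the same way into Theorems \ref{thm:caratheodory:om-cone} and \ref{thm:caratheodory:om-cone-pair}, using $r-1\geq d$ and $r\geq d+1$ respectively.

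Items \ref{thm:caratheodory:oriented-matroid} and \ref{thm:caratheodory:om-cone-matroid} are Theorems \ref{thm:caratheodory:oriented-matroid} and \ref{thm:caratheodory:om-cone-matroid} with $\mathscr{M}=\mathfrak{U}'$. In item \ref{thm:caratheodory:oriented-matroid}, the rank condition $\operatorname{rank}\mathscr{M}=r\geq d+1$ is exactly what Theorem \ref{thm:caratheodory:oriented-matroid} demands. Item \ref{thm:caratheodory:om-cone-matroid} needs no rank assumption at all.

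\textbf{Oriented-matroid versions of the convex-hull bullets.} When the data is an oriented matroid $\mathscr{O}$ rather than a function $A$, the bullets phrased via $0\in\conv(A(-))$ should be read through the type \ref{en:generalization:oms} generalizations mentioned at the end of Section \ref{sec:intro-and-main-results:new-generalizations}. There, ``$0\in\conv(A(U))$'' becomes ``$U$ contains a positive circuit of $\mathscr{O}$''. These are asserted to remain valid and are proved later in Section \ref{sec:cc-proofs}. I will invoke them as given and mention that the function versions follow by passing to the associated oriented matroid.

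\textbf{Main obstacle.} No step is conceptually hard. The step I expect to be most error-prone is bookkeeping: matching the shifted indices ($r$ versus $r-1$ sets, $\rho(V-U)<d$ versus $\leq d$) and the nonemptiness conditions of each bullet against the exact hypotheses of the corresponding earlier theorem. I would do this explicitly, one bullet at a time, as a table of substitutions.
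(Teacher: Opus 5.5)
Your bullet-by-bullet matching is how the paper intends Theorem \ref{thm:caratheodory:all}. The paper presents it as a concise reformulation of Theorems \ref{thm:caratheodory:original}--\ref{thm:caratheodory:om-cone-pair} with no separate argument, so the substitutions you list are the whole proof: $r-1$ sets in bullets \ref{thm:caratheodory:cone} and \ref{thm:caratheodory:om-cone}, $u=v_0$ in bullet \ref{thm:caratheodory:fixed}, and $\mathscr{M}=\mathfrak{U}'$ in the matroid bullets.

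Three bookkeeping points need attention:
\begin{itemize}
\item Bullet \ref{thm:caratheodory:om-cone-pair} as printed defines $\mathfrak{U}'$ using $U\in\mathfrak{U}$ instead of $U\in\mathfrak{U}_0$. Read literally, $\mathfrak{U}'=\emptyset$ for $r\geq 3$, so you must read it as in bullet \ref{thm:caratheodory:pair}.
\item Bullet \ref{thm:caratheodory:oriented-matroid} requires passing to $\mathscr{O}|_V$ before invoking Theorem \ref{thm:caratheodory:oriented-matroid}, since there $\mathscr{O}$ lives on $V$, not on $V\sqcup\{v_0\}$.
\item Your paragraph on oriented-matroid readings of the $A$-bullets is unnecessary. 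For bullet \ref{thm:caratheodory:matroid} it is also not supported by anything in the paper, which asserts type (B) versions only for Theorems \ref{thm:caratheodory:fixed}, \ref{thm:caratheodory:pair} and \ref{thm:caratheodory:constrained}.
\end{itemize}
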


\begin{table}[ht]
    \centering
    \renewcommand{\arraystretch}{1.5}
    \begin{tabular}{c|r|ll|l|l|}
        \cline{2-6}
        $\mathcal{F}$ &
        Thm & 
        \multicolumn{2}{l|}{If $\mathfrak{U}:=\{U_1,\ldots\}\subseteq 2^V-\emptyset$} & 
        $\mathcal{F}(U)$ for $U$ & 
        Then $\exists U'\in\mathscr{K}$: $\mathcal{F}(U')$ for \\\cline{2-6}\cline{2-6}
        $0$ &
        \ref{thm:caratheodory:original}. & 
        $\bigsqcup\mathfrak{U}\subseteq V$, & 
        $|\mathfrak{U}|=r$, & 
        in $\mathfrak{U}$. & 
        $\mathscr{K}=U_1*\cdots*U_r$. \\\cline{2-6}
        $A(v_0)$ &
        \ref{thm:caratheodory:cone}. &
        $\bigsqcup\mathfrak{U}\subseteq V$, &
        $|\mathfrak{U}|=r-1$, &
        in $\mathfrak{U}$. &
        $\mathscr{K}=U_1*\cdots*U_{r-1}$.\\\cline{2-6}
        $0$ &
        \ref{thm:caratheodory:fixed}. & 
        $\bigsqcup\mathfrak{U}\subseteq V$, & 
        $|\mathfrak{U}|=r-1$, & 
        in $\mathfrak{U}$. & 
        $\mathscr{K}=U_1*\cdots*U_{r-1}*\{v_0\}$. \\\cline{2-6}
        $0$ &
        \ref{thm:caratheodory:pair}. & 
        $\bigsqcup\mathfrak{U}\subseteq V$, &
        $|\mathfrak{U}|=r$, &
        \makecell[l]{$=U_1\cup U_2$,\\$U_1\neq U_2\in\mathfrak{U}$.} &
        $\mathscr{K}=U_1*\cdots*U_r$. \\\cline{2-6}
        $0$ &
        \ref{thm:caratheodory:matroid}. &
        is a matroid, &
        &
        \makecell[l]{s.t. $\rho_\mathfrak{U}(U)=\rho_\mathfrak{U}(V)$,\\ $\rho_\mathfrak{U}(V-U)\leq d$.} &
        $\mathscr{K}=\mathfrak{U}$. \\\cline{2-6}
        $\mathscr{C}^+$ &
        \ref{thm:caratheodory:oriented-matroid}. &
        is a matroid &
        of rank $r$, & 
        s.t. $\rho_\mathfrak{U}(V-U)<d$. &
        $\mathscr{K}=\mathfrak{U}$. \\\cline{2-6}
        $0$ &
        \ref{thm:caratheodory:constrained}. & 
        $\bigsqcup\mathfrak{U}\subseteq V$, &
        $|\mathfrak{U}|=r$, & 
        in $\mathfrak{U}$. &
        \makecell[l]{$\mathscr{K}=\mathscr{K}_{12}*U_3*\cdots*U_r$,\\ $\mathscr{K}_{12}\subseteq U_1*U_2$ connected,\\$\vertex(\mathscr{K}_{12})=U_1\cup U_2$.} \\\cline{2-6}
        $v_0$ &
        \ref{thm:caratheodory:om-cone-matroid}. &
        is a matroid, &
        &
        s.t. $\rho_\mathfrak{U}(V-U)<d$. &
        $\mathscr{K}=\mathfrak{U}$.\\\cline{2-6}
        $v_0$ &
        \ref{thm:caratheodory:om-cone}. &
        $\bigsqcup\mathfrak{U}\subseteq V$, &
        $|\mathfrak{U}|=r-1$, &
        in $\mathfrak{U}$. &
        $\mathscr{K}=U_1*\cdots*U_{r-1}$. \\\cline{2-6}
        $v_0$ &
        \ref{thm:caratheodory:om-cone-pair} &
        $\bigsqcup\mathfrak{U}\subseteq V$, &
        $|\mathfrak{U}|=r$, &
        \makecell[l]{$=U_1\cup U_2$,\\$U_1\neq U_2\in\mathfrak{U}$.} &
        $\mathscr{K}=U_1*\cdots*U_r$. \\\cline{2-6}        
    \end{tabular}
    \caption{For each $n$, reading only rows $1$ and $n$ (up to down, left to right) gives a colorful Carath\'eodory theorem. It is assumed that $r>d\geq1$. In rows marked with ``$0$'', $\mathscr{F}(U)$ stands for ``$0\in\conv(A(U))$'', while in rows marked with ``$A(v_0)$'' it expands to ``$A(v_0)\in\cone(A(U))$'', where $A\colon {V\sqcup\{v_0\}}\to\R^d$ is a function. In rows marked with ``$\mathscr{C}^+$'' $\mathscr{F}(U)$ means ``$U$ contains a positive circuit of $\mathscr{O}$'', while in those marked with ``$v_0$'' it is shorthand for ``$v_0\in\conv(U)$'', where $\mathscr{O}$ is an oriented matroid on $V\sqcup\{v_0\}$ of rank at most $d$.}
    \label{tbl:caratheodory-thms}
\end{table}

\bigskip
\section{(Non-)implications between, counterexamples for, and applications of colorful Carath\'eodory theorems}\label{sec:applications-and-limits}
\bigskip

Here we review in a more detailed way why some colorful Carath\'eodory theorems imply or do not imply each other, why generalizing them in certain directions is not feasible, and how the new variation of Tverberg's theorem (Corollary \ref{crl:new-tverberg}) follows from the constrained colorful Carath\'eodory theorem.

First, we wish to highlight how many of the colorful Carath\'eodory theorems follow from other ones using simple substitutions.
\begin{observation}\label{obs:implications-between-caratheodorys}
    \begin{figure}
        \centering
        \[\xymatrix{
                \ref{thm:caratheodory:oriented-matroid}
                    \ar@{=>}[r]
                    \ar@{=>}[d]
            &   \ref{thm:caratheodory:fixed}
                    \ar@{=>}[d]
            &   \ref{thm:caratheodory:constrained}
                    \ar@{=>}[dl]
            &   \ref{thm:caratheodory:om-cone-matroid}
                    \ar@{=>}[d]
            \\  \ref{thm:caratheodory:pair}
                    \ar@{=>}[r]
            &   \ref{thm:caratheodory:original}
            &   \ref{thm:caratheodory:cone}
                    \ar@{=>}[l]
            &   \ref{thm:caratheodory:om-cone}
                    \ar@{=>}[l]
        }\]
        \caption{Implications between different colorful Carath\'eodory theorems.}
    \end{figure}
    We already noted below the statement of Theorem \ref{thm:caratheodory:cone} that it generalizes Theorem \ref{thm:caratheodory:original}, because to the function $A_{\operatorname{aff}}\colon V\to\R^{d}$ of the latter theorem another function $A\colon {V\sqcup\{v_0\}}\to\R^{d+1}$ can be associated with $0\in\conv(A_{\operatorname{aff}}(U))\iff A(v_0)\in\cone(A(U))$. Before Theorem \ref{thm:caratheodory:om-cone-matroid} we also recalled that $v_0\in\conv(U)$ in the oriented matroid $\mathscr{O}$ associated to $A$ if and only if $A(v_0)\in\cone(A(U))$, and the rank of $\mathscr{O}$ is bounded by the dimension of the codomain of $A$, so Theorem \ref{thm:caratheodory:om-cone} is a generalization of Theorem \ref{thm:caratheodory:cone}. 
    
    At the beginning of Section \ref{sec:intro-and-main-results:new-generalizations} it was highlighted that it suffices to look for a face $\sigma$ of the smallest simplicial complex $\mathscr{K}$ containing $\mathfrak{U}'$ with $\mathcal{F}(\sigma)$ to prove that there is an $U'\in\mathfrak{U}'$ such that $\mathcal{F}(U')$, but note that for a collection of pairwise disjoint $U_1,U_2,\ldots$ the smallest simplicial complex $\mathscr{K}$ containing $\mathfrak{U}':=\{\{u_1,u_2,\ldots\}:u_1\in U_1,u_2\in U_2,\ldots\}$ is the partition matroid $U_1*U_2*\cdots$. In particular, applying Theorem \ref{thm:caratheodory:om-cone-matroid} to the partition matroid $\mathscr{K}:=U_1*\cdots*U_r$ we get that to prove Theorem \ref{thm:caratheodory:om-cone} it suffices to check that $v_0\in\conv(U)$ for all $U\subseteq V$ with $\rho_\mathscr{K}(V-U)<d$, but such $U$ must contain a $U_i$ for some $i$ -- otherwise $\exists u_i\in(V-U)\cap U_i$ for all $i$ and then $\rho_\mathscr{K}(V-U)\geq|\{u_1,\ldots,u_r\}|=r\geq d$ as $\{u_1,\ldots,u_r\}\in U_1*\cdots*U_r$. In particular, $v_0\in\conv(U_i)\subseteq\conv(U)$, showing that Theorem \ref{thm:caratheodory:om-cone} is indeed a consequence of Theorem \ref{thm:caratheodory:om-cone-matroid}.

    Theorems \ref{thm:caratheodory:fixed} and \ref{thm:caratheodory:pair} follow entirely similarly from Theorem \ref{thm:caratheodory:oriented-matroid}. We recalled above Theorem \ref{thm:caratheodory:oriented-matroid} that $U$ contains a positive circuit of the oriented matroid $\mathscr{O}$ (of rank at most $d$) associated to the function $A\colon V\to\R^d$ if and only if $0\in\conv(A(U))$, so to derive Theorems \ref{thm:caratheodory:fixed} and \ref{thm:caratheodory:pair} from Theorem \ref{thm:caratheodory:oriented-matroid} with $\mathscr{M}:=U_1*\cdots*U_r$ it suffices to check that $0\in\conv(A(U))$ for all $U\subseteq V$ with $\rho_\mathscr{M}(V-U)<d$. However, such $U$ must contain the set $U_i$ for at least two distinct $i$, otherwise $\rho_\mathscr{M}(V-U)=|\{1\leq i\leq r:U_i\cap (V-U)\neq\emptyset\}|\geq r-1\geq d$. In particular, in Theorem \ref{thm:caratheodory:fixed} we have $0\in\conv(A(U_i))\subseteq\conv(A(U))$ for an $1\leq i\leq r-1$, and in Theorem \ref{thm:caratheodory:pair} similarly $0\in\conv(A(U_i\cup U_j))\subseteq 0\in\conv(A(U))$, proving that Theorems \ref{thm:caratheodory:fixed} and \ref{thm:caratheodory:pair} indeed follow from Theorem \ref{thm:caratheodory:oriented-matroid}.

    Finally, the original colorful Carath\'eodory theorem (Theorem \ref{thm:caratheodory:original}) of course follows from any of Theorems \ref{thm:caratheodory:fixed}, \ref{thm:caratheodory:pair}, and \ref{thm:caratheodory:constrained}. The set $\{u_1,\ldots,u_{r-1},u\}$ with $0\in\conv(A(\{u_1,\ldots,u_{r-1},u\}))$ guaranteed by Theorem \ref{thm:caratheodory:fixed} for some $u\in U_r$ also satisfies the conclusion of the original theorem, while the assumption of Theorem \ref{thm:caratheodory:pair} clearly follows from that of Theorem \ref{thm:caratheodory:original}, as $0\in\conv(A(U_i))\subseteq\conv(A(U_i\cup U_j))$. Finally, $\mathscr{K}\subseteq U_1*\cdots*U_r$ in Theorem \ref{thm:caratheodory:constrained}, so the conclusion of that theorem also guarantees a selection $u_1\in U_1,\ldots,u_r\in U_r$ with $0\in\conv(A(\{u_1,\ldots,u_r\}))$.
\end{observation}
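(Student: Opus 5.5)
The statement to be justified is Observation \ref{obs:implications-between-caratheodorys}: each arrow in the diagram is a genuine implication. The plan is to verify the arrows one at a time. Each verification uses one of two mechanisms:
\begin{itemize}
\item a translation of the predicate $\mathcal{F}$ from one setting to another, which preserves the rank or dimension bound;
\item a specialization of the complex $\mathscr{K}$ to a partition matroid, followed by a rank count showing that the hypothesis of the general theorem reduces to that of the special one.
\end{itemize}

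\textbf{Translations.} For \ref{thm:caratheodory:cone} $\Rightarrow$ \ref{thm:caratheodory:original}, I use the homogenization $x\mapsto(x,1)$ with $A(v_0)=e_{d+1}$, which gives
\[0\in\conv(A_{\operatorname{aff}}(U))\iff A(v_0)\in\cone(A(U)).\]
Theorem \ref{thm:caratheodory:cone} in $\R^{d+1}$ requires $r\ge d+1$ colour classes, which matches Theorem \ref{thm:caratheodory:original}.

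For \ref{thm:caratheodory:om-cone} $\Rightarrow$ \ref{thm:caratheodory:cone}, I pass to the oriented matroid of $A$. Its rank is at most $d$, and the operator $\conv$ satisfies $v_0\in\conv(U)\iff A(v_0)\in\cone(A(U))$.

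For the arrows out of \ref{thm:caratheodory:oriented-matroid}, I use the analogous fact: $U$ contains a positive circuit if and only if $0\in\conv(A(U))$.

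\textbf{Matroid specializations.} Here I take $\mathscr{M}=U_1*\cdots*U_r$, or the analogous partition matroid, and use that
\[\rho_{\mathscr{M}}(W)=|\{i: W\cap U_i\neq\emptyset\}|.\]
For \ref{thm:caratheodory:om-cone-matroid} $\Rightarrow$ \ref{thm:caratheodory:om-cone}, suppose $\rho_{\mathscr{M}}(V-U)<d\le r$. Then some $U_i$ misses $V-U$, so $U_i\subseteq U$, and monotonicity of $\conv$ gives $v_0\in\conv(U)$.

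For \ref{thm:caratheodory:oriented-matroid} $\Rightarrow$ \ref{thm:caratheodory:pair} and $\Rightarrow$ \ref{thm:caratheodory:fixed}, suppose $\rho_{\mathscr{M}}(V-U)<d\le r-1$. Then at least two classes $U_i$ lie inside $U$.
\begin{itemize}
\item For Theorem \ref{thm:caratheodory:pair}, this gives $0\in\conv(A(U_i\cup U_j))\subseteq\conv(A(U))$.
\item For Theorem \ref{thm:caratheodory:fixed}, I take $U_r=\{u\}$. At least one of the two contained classes is among $U_1,\dots,U_{r-1}$, which supplies the hypothesis.
\end{itemize}
In both cases the rank of $\mathscr{M}$ is $r\ge d+1$, as Theorem \ref{thm:caratheodory:oriented-matroid} requires.

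\textbf{Arrows into \ref{thm:caratheodory:original}.} Each of these is immediate.
\begin{itemize}
\item From Theorem \ref{thm:caratheodory:fixed}: apply it with any $u\in U_r$.
\item From Theorem \ref{thm:caratheodory:pair}: the hypothesis follows by monotonicity of $\conv$.
\item From Theorem \ref{thm:caratheodory:constrained}: take $\mathscr{K}_{12}=U_1*U_2$, which is connected on $U_1\cup U_2$ when both sets are nonempty. Alternatively, note directly that $\mathscr{K}\subseteq U_1*\cdots*U_r$.
\end{itemize}
The case $r=1$, i.e.\ $d=0$, is excluded by $r\ge 2$ in Theorem \ref{thm:caratheodory:constrained}. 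It is trivial anyway, since it asks only for some $u_1\in U_1$ with $0\in\conv(A(\{u_1\}))$, and in $\R^0$ every point is $0$.

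\textbf{Main obstacle.} The only delicate point is the off-by-one bookkeeping. In the cone translation, the dimension rises to $d+1$ while only $r\ge d+1$ classes are needed. In the rank-count arguments, the strict inequality $<d$ must be combined correctly with $r\ge d$ or $r\ge d+1$. I would check each inequality explicitly, together with the degenerate cases of empty classes and $d=0$.
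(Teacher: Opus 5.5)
Your proposal is correct and follows the paper's own argument essentially step for step. You use homogenization for \ref{thm:caratheodory:cone}$\Rightarrow$\ref{thm:caratheodory:original}, and the oriented-matroid reading of $\conv$ and of positive circuits for \ref{thm:caratheodory:om-cone}$\Rightarrow$\ref{thm:caratheodory:cone} and for the arrows out of \ref{thm:caratheodory:oriented-matroid}. You use the partition-matroid rank count (one, respectively two, classes $U_i$ must lie in $U$) for \ref{thm:caratheodory:om-cone-matroid}$\Rightarrow$\ref{thm:caratheodory:om-cone} and for \ref{thm:caratheodory:oriented-matroid}$\Rightarrow$\ref{thm:caratheodory:fixed}, \ref{thm:caratheodory:pair}, and monotonicity or $\mathscr{K}\subseteq U_1*\cdots*U_r$ for the arrows into \ref{thm:caratheodory:original}. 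Your explicit choices $U_r=\{u\}$ and $\mathscr{K}_{12}=U_1*U_2$, and your separate treatment of $r=1$, only spell out details the paper leaves implicit.
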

Just as noted in \cite{Blagojevic2025}, the constrained colorful Carath\'eodory theorem is not a consequence of Theorem \ref{thm:caratheodory:matroid}, or of Theorem \ref{thm:caratheodory:oriented-matroid} for that matter:
\begin{observation}\label{obs:constrained-caratheodory-new}
    Theorem \ref{thm:caratheodory:constrained} is not a corollary of Theorems \ref{thm:caratheodory:matroid} or \ref{thm:caratheodory:oriented-matroid}, particularly if we set $\mathscr{K}_{12}:=\mathscr{K}\subseteq U_1*U_2$, where $U_1$ and $U_2$ are discrete simplicial complexes on vertex sets $\{u_1^1,u_1^2\}$ and $\{u_2^1,u_2^2\}$ respectively, and $\mathscr{K}_{12}$ is illustrated in Figure \ref{fig:cone-counterexamples:K}. Then Theorem \ref{thm:caratheodory:constrained} claims that if $d\geq 0$, $r\geq\max\{d+1,2\}$, and $A\colon {U_1\cup U_2}\to\R^d$ is a function such that $0\in\conv(A(U_1))\cap\conv(A(U_2))$ then there is a $\sigma\in\mathscr{K}_{12}$ with $0\in\conv(A(\sigma))$. $\mathscr{K}_{12}$ is not a matroid (the defining axiom fails for e.g. $\sigma:=\{u_1^1\}$, $\sigma':=\{u_1^2,u_2^1\}$), so the only way that Theorems \ref{thm:caratheodory:matroid} or \ref{thm:caratheodory:oriented-matroid} could guarantee the existence of a $\sigma\in\mathscr{K}_{12}$ with $0\in\conv(A(\sigma))$ is if they are applied to some matroid $\mathscr{M}$ which is a (proper) subcomplex of $\mathscr{K}_{12}$. It is however easily verified that the assumptions of Theorems \ref{thm:caratheodory:matroid} or \ref{thm:caratheodory:oriented-matroid} are not implied for any such choice of $\mathscr{M}$ by the fact that $0\in\conv(A(U_1))\cap\conv(A(U_2))$. Alternatively, one can show examples of $A$ for each such $\mathscr{M}$ where $0\notin\conv(A(\sigma))$ for all $\sigma\in\mathscr{M}$: either the $A$ defined by $A(\{u_1^1,u_2^1\})=1\in\R^1$ and $A(\{u_1^2,u_2^2\})=-1\in\R^1$, or the $A$ defined by $A(\{u_1^1,u_2^2\})=1\in\R^1$ and $A(\{u_1^2,u_2^1\})=-1\in\R^1$ will be good.
\end{observation}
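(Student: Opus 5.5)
We will argue by exhibiting one explicit instance of Theorem \ref{thm:caratheodory:constrained} that neither Theorem \ref{thm:caratheodory:matroid} nor Theorem \ref{thm:caratheodory:oriented-matroid} can produce. Throughout we take $d=1$ and $r=2$, which is allowed since $r\geq\max\{d+1,2\}$.

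We do not have Figure \ref{fig:cone-counterexamples:K} at hand, so we reconstruct $\mathscr{K}_{12}$ from the data in the statement. It must be a connected subcomplex of the $4$-cycle $U_1*U_2$ on all four vertices. The stated failure of the exchange axiom (for $\sigma=\{u_1^1\}$, $\sigma'=\{u_1^2,u_2^1\}$) forces $\{u_1^2,u_2^1\}\in\mathscr{K}_{12}$ and $\{u_1^1,u_2^1\}\notin\mathscr{K}_{12}$. The choice of the two test functions then fits the path
\[u_1^1 - u_2^2 - u_1^2 - u_2^1,\]
that is, the edges $\{u_1^1,u_2^2\}$, $\{u_1^2,u_2^2\}$ and $\{u_1^2,u_2^1\}$. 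We fix this choice.

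\emph{Step 1: $\mathscr{K}_{12}$ is not a matroid.} Take $\sigma=\{u_1^1\}$ and $\sigma'=\{u_1^2,u_2^1\}$. Neither $\{u_1^1,u_1^2\}$ (it lies inside $U_1$) nor $\{u_1^1,u_2^1\}$ is a face, so no element of $\sigma'-\sigma$ can be added to $\sigma$. Hence the exchange axiom fails.

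\emph{Step 2: reduction.} To deduce the conclusion ``some face of $\mathscr{K}_{12}$ has $0$ in its convex hull'' from Theorem \ref{thm:caratheodory:matroid} or \ref{thm:caratheodory:oriented-matroid}, one must apply them to some matroid $\mathscr{M}\subseteq\mathscr{K}_{12}$. Since $\mathscr{K}_{12}$ is not a matroid, $\mathscr{M}$ is a proper subcomplex. It therefore suffices to show the following: for every matroid $\mathscr{M}\subseteq\mathscr{K}_{12}$ there is some $A\colon U_1\cup U_2\to\R^1$ with $0\in\conv(A(U_1))\cap\conv(A(U_2))$ but $0\notin\conv(A(\tau))$ for every $\tau\in\mathscr{M}$. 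Then the hypotheses of those theorems cannot follow from the hypothesis of Theorem \ref{thm:caratheodory:constrained}.

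\emph{Step 3: two test functions.} We use only two functions.
\begin{itemize}
\item $A_1$ sends $u_1^1,u_2^1\mapsto 1$ and $u_1^2,u_2^2\mapsto-1$. Among faces of $\mathscr{K}_{12}$, the only ones with $0$ in their hull are the edges $\{u_1^1,u_2^2\}$ and $\{u_1^2,u_2^1\}$.
\item $A_2$ sends $u_1^1,u_2^2\mapsto 1$ and $u_1^2,u_2^1\mapsto -1$. Among faces of $\mathscr{K}_{12}$, the only one with $0$ in its hull is the edge $\{u_1^2,u_2^2\}$.
\end{itemize}
Both functions satisfy $0\in\conv(A(U_1))\cap\conv(A(U_2))$, and no singleton ever contains $0$ in its hull since all values are nonzero.

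\emph{Step 4: the case split over $\mathscr{M}$.}
\begin{itemize}
\item If $\mathscr{M}$ has no edge, every face is empty or a singleton, so either function works.
\item If $\mathscr{M}$ contains the edge $\{u_1^2,u_2^2\}$, we claim it contains no other edge. Then $A_1$ works.
\item If $\mathscr{M}$ does not contain $\{u_1^2,u_2^2\}$, then $A_2$ works.
\end{itemize}

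The main obstacle is the claim in the second case, namely classifying which matroids fit inside the path. Suppose $\mathscr{M}$ contains $\{u_1^2,u_2^2\}$ together with a second edge. Then $\mathscr{M}$ contains the $2$-edge path $u_1^1-u_2^2-u_1^2$ or $u_2^2-u_1^2-u_2^1$.

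For the first path, apply exchange to $\sigma=\{u_1^1\}$ and $\sigma'=\{u_1^2,u_2^2\}$. It would require $\{u_1^1,u_1^2\}$ or $\{u_1^1,u_2^2\}$ to be a face; $\{u_1^1,u_2^2\}$ is a face, so exchange succeeds and there is no contradiction here. So a $2$-edge path alone is not excluded. I expect this step to need the most care, and I would first list all matroid subcomplexes of the path. A rank-$2$ simple matroid's edge graph is complete multipartite on its non-loops, so inside this path the options are a single edge or a $2$-edge star.

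The star $u_1^1-u_2^2-u_1^2$ contains both $\{u_1^2,u_2^2\}$ and $\{u_1^1,u_2^2\}$. These are killed by $A_2$ and $A_1$ respectively, but not simultaneously by either function. If Step 3 does not cover this star, I would replace the test functions. The fallback is $A_3$ with $u_1^1,u_1^2\mapsto\pm1$ and $u_2^2,u_2^1$ chosen so that $u_2^2$ has the sign of neither partner. This is impossible in $\R^1$, so I would pass to $d=2$ (still allowed with $r=3$ after adding a dummy class $U_3$). There I would place $A(u_2^2)$ off the line through $A(u_1^1)$ and $A(u_1^2)$, with $A(u_2^1)$ chosen so that $0\in\conv(A(U_2))$, which keeps both star edges free of $0$. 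I would verify that this adjustment still satisfies $0\in\conv(A(U_i))$ for every $i$.
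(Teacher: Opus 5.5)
\textbf{The star case in Step 4 cannot be closed.} You reconstruct $\mathscr{K}_{12}$ correctly as the path $u_1^1-u_2^2-u_1^2-u_2^1$, and your $A_1,A_2$ are exactly the paper's test functions. You also correctly see that both fail on $\mathscr{M}_0:=\{u_1^1,u_1^2\}*\{u_2^2\}$ (with $u_2^1$ a loop), which is a matroid inside the path. Indeed, $A_1$ puts $0$ into the hull of $\{u_1^1,u_2^2\}$ and $A_2$ puts $0$ into that of $\{u_1^2,u_2^2\}$. The paper's own two-function justification has exactly this hole. However, no cleverer $A$ can fix it:
\begin{itemize}
\item Since $\mathscr{K}=\mathscr{K}_{12}$ forces $r=2$, only $d\le 1$ is at stake, and $d=0$ is trivial.
\item For $d=1$, $\mathscr{M}_0$ has rank $2=d+1$, and $\rho_{\mathscr{M}_0}(V-U)<1$ forces $U\supseteq\{u_1^1,u_1^2,u_2^2\}\supseteq U_1$.
\item So the hypothesis of Theorem \ref{thm:caratheodory:oriented-matroid} follows from $0\in\conv(A(U_1))$, and that theorem yields a face of $\mathscr{M}_0\subseteq\mathscr{K}_{12}$ with $0$ in the hull of its image. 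Directly: if $A(u_2^2)\neq 0$, one of $u_1^1,u_1^2$ is sent to $0$ or to the opposite sign.
\end{itemize}
So for the illustrated $\mathscr{K}_{12}$ the instance \emph{is} a corollary of Theorem \ref{thm:caratheodory:oriented-matroid}. The illustrated example does not witness the claimed non-implication, and your Step 4 cannot be completed.

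\textbf{The fallback fails for the same reason.} Adding a dummy $U_3$ and passing to $d=2$ replaces the complex by $\mathscr{K}_{12}*U_3$. This contains the rank-$3$ partition matroid $\{u_1^1,u_1^2\}*\{u_2^2\}*U_3$. Any $U$ whose complement has rank below $2$ in it contains two of the three parts, hence contains $U_1$ or $U_3$. So Theorem \ref{thm:caratheodory:oriented-matroid} applies again, and no admissible $A_3$ avoids all faces of this matroid. (Keeping only the two star edges free of $0$ also ignores the triangles $\{u_1^i,u_2^2,u_3\}$.)

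The same argument shows more generally: if some vertex of $\mathscr{K}_{12}$ is adjacent to a whole opposite class, then Theorem \ref{thm:caratheodory:constrained} for $\mathscr{K}_{12}*U_3*\cdots*U_r$ follows from Theorem \ref{thm:caratheodory:oriented-matroid}. With $|U_1|=|U_2|=2$, every connected spanning $\mathscr{K}_{12}$ has such a vertex.

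\textbf{A genuine witness needs larger classes.} Take $d=1$, $r=2$, $U_1=\{a_1,a_2,a_3\}$, $U_2=\{b_1,b_2,b_3\}$, and $\mathscr{K}_{12}$ the path $a_1b_1a_2b_2a_3b_3$.
\begin{itemize}
\item A rank-$2$ matroid inside it has edge graph $P*Q$ on its non-loops, with $P,Q$ in opposite classes and $|P\cup Q|\le 3$. So $P\cup Q$ misses an element of each class.
\item Set $A:=1$ on $P\cup Q$ and $A:=-1$ elsewhere. This satisfies $0\in\conv(A(U_1))\cap\conv(A(U_2))$, yet no face of the matroid has $0$ in its hull.
\item Matroids of rank at most $1$ are defeated by any admissible $A$ with no zero values.
\end{itemize}
This rules out both Theorems \ref{thm:caratheodory:matroid} and \ref{thm:caratheodory:oriented-matroid}.
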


Furthermore, the cone versions of the colorful Carath\'eodory theorems often do not imply the original version, as discussed in the next observation.
\begin{observation}\label{obs:conv-vs-cone-in-oms}
    As noted below the statement of Theorem \ref{thm:caratheodory:cone}, to any function $A_{\operatorname{aff}}\colon V\to\R^{d}$ there is an associated function $A\colon {V\sqcup\{v_0\}}\to\R^{d+1}$ such that $0\in\conv(A_{\operatorname{aff}}(U))\iff A(v_0)\in\cone(A(U))$. 
 This construction means that by replacing in a colorful Carath\'eodory theorem all references to ``$0\in\conv(A(U))$'' with ``$A(v_0)\in\cone(A(U))$'', and allowing $A$ to have codomain $\R^{d+1}$, we obtain a stronger statement than the original.
    
 Performing this transformation on the affine version of Theorem \ref{thm:caratheodory:oriented-matroid}, by which we mean first replacing ``$U$ has a positive circuit of the fixed oriented matroid $\mathscr{O}$ of rank at most $d$'' by ``$0\in\conv(A_{\operatorname{aff}}(U))$ for the fixed function $A_{\operatorname{aff}}\colon V\to\R^d$'', and than this by ``$A(v_0)\in\cone(A(U))$ for the fixed function $A\colon {V\sqcup\{v_0\}}\to\R^{d+1}$'', the third statement is more general than the second. However, this third statement, namely that for a function $A\colon {V\sqcup\{v_0\}}\to\R^{d+1}$ and a matroid $\mathscr{M}$ on $V$ of rank at least $d+1$ we have 
    \[
        \big(\forall U\subseteq V\colon\rho_\mathscr{M}(V-U)<d\implies A(v_0)\in\cone(A(U))\big)\implies\big(\exists\sigma\in\mathscr{M}\colon A(v_0)\in\cone(A(\sigma))\big),
    \]
    is false, as will be discussed in Observation \ref{obs:no-cone-caratheodory}. Therefore Theorem \ref{thm:caratheodory:om-cone-matroid} is not stronger than it: indeed, for this $A$ Theorem \ref{thm:caratheodory:om-cone-matroid} demands that 
    \[
        \forall U\subseteq V\colon \rho_\mathscr{M}(V-U)<d+1\implies A(v_0)\in \cone(A(U)),
    \]
    and so Theorem \ref{thm:caratheodory:om-cone-matroid} does not generalize Theorem \ref{thm:caratheodory:oriented-matroid}.

    Similarly performing the aforementioned transformation on Theorem \ref{thm:caratheodory:pair}, we get a version of Theorem \ref{thm:caratheodory:om-cone-pair} where only $r\geq d$ is assumed instead of $r\geq d+1$, but as we will note in Observation \ref{obs:no-cone-caratheodory}, this statement is also false. Therefore the only true statement, Theorem \ref{thm:caratheodory:om-cone-pair}, is not an immediate generalization of Theorem \ref{thm:caratheodory:pair}. 
\end{observation}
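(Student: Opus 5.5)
The plan is to treat the observation in two layers. The first layer is the formal transfer principle. The second is a pair of explicit counterexamples, which the observation defers to Observation~\ref{obs:no-cone-caratheodory}; I will sketch them here already.

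\emph{Transfer principle.} Given $A_{\operatorname{aff}}\colon V\to\R^d$, define $A\colon V\sqcup\{v_0\}\to\R^{d+1}$ by $A(v):=(A_{\operatorname{aff}}(v),1)$ and $A(v_0):=e_{d+1}$. Suppose $A(v_0)=\sum_{u\in U}\lambda_u A(u)$ with $\lambda_u\ge 0$. Comparing last coordinates gives $\sum\lambda_u=1$, and the first $d$ coordinates then give $0=\sum\lambda_u A_{\operatorname{aff}}(u)$. The converse is immediate. So $0\in\conv(A_{\operatorname{aff}}(U))\iff A(v_0)\in\cone(A(U))$, and any cone-form statement in $\R^{d+1}$ with the \emph{same} combinatorial hypotheses implies the affine one in $\R^d$. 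Moreover, Theorem~\ref{thm:caratheodory:om-cone-matroid} applied to $A$ uses the rank bound $d+1$. Its hypothesis therefore reads $\rho_\mathscr{M}(V-U)<d+1$, which quantifies over more sets $U$ than $\rho_\mathscr{M}(V-U)<d$. Consequently it cannot specialize to Theorem~\ref{thm:caratheodory:oriented-matroid}, provided the cone statement with hypothesis $\rho_\mathscr{M}(V-U)<d$ is actually false.

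\emph{Counterexample for the matroid cone statement.} I take $d=1$, so the codomain is $\R^2$. Let $V=U_1\sqcup U_2$ with $U_1=\{a,a'\}$ and $U_2=\{b,b'\}$, and let $\mathscr{M}=U_1*U_2$, which has rank $2=d+1$. Since $d=1$, a set $U$ satisfies $\rho_\mathscr{M}(V-U)<1$ only if $U=V$. The hypothesis therefore reduces to $A(v_0)\in\cone(A(V))$. Put $A(v_0)$ at angle $90^\circ$, $a$ at $10^\circ$, $a'$ at $170^\circ$, $b$ at $200^\circ$ and $b'$ at $340^\circ$. Then $A(v_0)\in\cone(A(\{a,a'\}))\subseteq\cone(A(V))$.

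It remains to check every face of $\mathscr{M}$. No singleton spans $A(v_0)$. Each cross pair spans the sector of angle less than $180^\circ$ between its two vectors:
\begin{itemize}
\item $\{a,b\}$ spans $[200^\circ,370^\circ]$;
\item $\{a,b'\}$ spans $[340^\circ,370^\circ]$;
\item $\{a',b\}$ spans $[170^\circ,200^\circ]$;
\item $\{a',b'\}$ spans $[170^\circ,340^\circ]$.
\end{itemize}
None of these sectors contains $90^\circ$. Hence no independent set works, and the displayed cone statement is false.

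\emph{Counterexample for the pair statement.} For the pair version with only $r\ge d$, I take $d=2$ and $r=2$, with codomain $\R^3$, and embed the same planar configuration into $\R^3$. The hypothesis is just $A(v_0)\in\cone(A(U_1\cup U_2))$, which holds. The same sector check shows that no transversal $\{u_1,u_2\}$ has $A(v_0)$ in its cone.

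The main obstacle is not computational. It is making the logical bookkeeping precise: each non-implication must be derived exactly from the failure of the corresponding transformed statement, with the rank shift $d\mapsto d+1$ tracked correctly. The angle verifications are routine.
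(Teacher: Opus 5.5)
Your transfer argument is fine, and so is your counterexample to the transformed matroid statement. For $d=1$ and $\mathscr{M}=U_1*U_2$, the hypothesis $\rho_{\mathscr{M}}(V-U)<1$ forces $U=V$, and your sector check of the four cross pairs is correct.

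The slip is in the pair counterexample, precisely in the $d\mapsto d+1$ bookkeeping you flagged. The transformation replaces the codomain $\R^d$ by $\R^{d+1}$ but leaves the hypothesis $r\geq d+1$ of Theorem~\ref{thm:caratheodory:pair} untouched. So for a function into $\R^3$, the statement to be refuted requires $r\geq 3$, and your example with $r=2$ does not satisfy its hypotheses. As parametrized, you are refuting the stronger statement ``codomain $\R^{d+1}$, $r\geq d$''. Its failure says nothing about the statement the observation calls false. (If $d=2$ is meant as the rank bound of Theorem~\ref{thm:caratheodory:om-cone-pair}, the example is valid only because the configuration has rank $2$, and the embedding into $\R^3$ is beside the point.)

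The repair is immediate: keep the configuration in $\R^2$ with $r=2$, i.e.\ affine $d=1$. For $\mathscr{M}=U_1*U_2$ and $d=1$, the matroid hypothesis is literally $A(v_0)\in\cone(A(U_1\cup U_2))$, so your first example already refutes the pair statement. To get an example in $\R^3$, add a class $U_3=\{c,c'\}$ with $A(c)=A(v_0)+e_3$ and $A(c')=A(v_0)-e_3$. The $e_3$-coordinate forces the coefficient of $c$ or $c'$ to vanish, which reduces to the planar check.

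Apart from this, your route differs from the paper's. The paper defers the counterexamples to Observation~\ref{obs:no-cone-caratheodory}. There, for every $d$, it uses the family $A(v_0)=e_1$, $A(u_1)=-e_1$, $A(u_i^{1,2})=e_1\pm e_i$ to refute the pair statement with $r=d$. It then obtains the failure of the matroid statement by showing that, for the partition matroid, the matroid statement implies the pair statement. You go the other way, checking the matroid statement directly in the smallest case. One instance suffices to falsify the universally quantified statements. The paper's family, however, exhibits the failure in every dimension, i.e.\ it shows that the thresholds in Theorems~\ref{thm:caratheodory:om-cone-matroid} and~\ref{thm:caratheodory:om-cone-pair} are sharp for each $d$.
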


Next, let us note that Theorems \ref{thm:caratheodory:fixed}, \ref{thm:caratheodory:pair}, \ref{thm:caratheodory:oriented-matroid}, and \ref{thm:caratheodory:constrained} do not have satisfying cone versions.
\begin{observation}\label{obs:no-cone-caratheodory}
    Let $V$ be a finite set, $v_0\notin V$, $d\geq 0$, $A\colon{V\sqcup\{v_0\}}\to\R^d$ a function, $\mathscr{O}$ an oriented matroid of rank at most $d$ on set of elements $V\sqcup\{v_0\}$, and let $\mathcal{F}(U)$ stand for either ``$A(v_0)\in\cone(A(U))$'' or ``$v_0\in\conv(U)$''. Then the following analogues of Theorems \ref{thm:caratheodory:fixed}, \ref{thm:caratheodory:pair}, \ref{thm:caratheodory:oriented-matroid}, and \ref{thm:caratheodory:constrained} are all false for $r=d$, \ref{obs:no-cone-caratheodory:fixed} and \ref{obs:no-cone-caratheodory:constrained} are consequences of Theorem \ref{thm:caratheodory:om-cone} for $r\geq d+1$, and \ref{obs:no-cone-caratheodory:oriented-matroid} for $r\geq d+1$ follows from Theorem \ref{thm:caratheodory:om-cone-matroid}:
    \begin{enumerate}[label=\normalfont{(\arabic*)}]
        \item\label{obs:no-cone-caratheodory:fixed} If $U_1,\ldots,U_{r-1}\subseteq V$ are pairwise disjoint and nonempty such that $\mathcal{F}(U)$ for all $1\leq i\leq r-1$ then for any $u\in V-\bigcup_{i=1}^{r-1}(U_i)$ there are $u_1\in U_1,\ldots,u_{r-1}\in U_{r-1}$ with $\mathcal{F}(\{u_1,\ldots,u_{r-1},u\})$.
        \item\label{obs:no-cone-caratheodory:pair} If $U_1,\ldots,U_r\subseteq V$ are pairwise disjoint and nonempty such that $\mathcal{F}(U_i\cup U_j)$ for all $1\leq i<j\leq r$ then there are $u_1\in U_1,\ldots,u_r\in U_r$ with $\mathcal{F}(\{u_1,\ldots,u_r\})$.
        \item\label{obs:no-cone-caratheodory:oriented-matroid} If $\mathscr{M}$ is a matroid of rank at least $d$ on $V$ such that $\mathcal{F}(U)$ for all $U\subseteq V$ with $\rho_\mathscr{M}(V-U)<r-1$, then there is an independent set $U'$ of $\mathscr{M}$ with $\mathcal{F}(U')$. 
        \item\label{obs:no-cone-caratheodory:constrained} If $r\geq 2$, $U_1,\ldots,U_r\subseteq V$ are pairwise disjoint and nonempty such that $\mathcal{F}(U_i)$ for all $1\leq i\leq r$, and $\mathscr{K}$ is a simplicial complex on the ground set $V$ of the form $\mathscr{K}_{12}*U_3*\cdots*U_r$, where $\mathscr{K}_{12}$ is a connected simplicial subcomplex of $U_1*U_2$ with vertex set $U_1\cup U_2$ and the sets $U_i$ are viewed as discrete sets of vertices, then there is a face $\sigma\in\mathscr{K}$ with $\mathcal{F}(\sigma)$. 
    \end{enumerate}
\end{observation}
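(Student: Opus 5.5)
The plan has two halves: counterexamples for $r=d$, and reductions to Theorems \ref{thm:caratheodory:om-cone} and \ref{thm:caratheodory:om-cone-matroid} for $r\geq d+1$.

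\emph{Counterexamples for $r=d$.} Every counterexample will be given by a function $A$, so it also works for ``$v_0\in\conv(U)$'' via the associated oriented matroid, whose rank is at most $d$. For \ref{obs:no-cone-caratheodory:fixed}, \ref{obs:no-cone-caratheodory:pair} and \ref{obs:no-cone-caratheodory:oriented-matroid} I take $d=r=1$, $V=\{u\}$, $A(v_0)=1$ and $A(u)=-1$ in $\R^1$.
\begin{itemize}
\item In \ref{obs:no-cone-caratheodory:fixed} there are no sets $U_i$, so the hypothesis is vacuous, and $\mathcal{F}(\{u\})$ fails.
\item In \ref{obs:no-cone-caratheodory:pair} take $U_1=\{u\}$. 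There are no pairs $i<j$, so the hypothesis is vacuous, and the only transversal $\{u\}$ fails.
\item In \ref{obs:no-cone-caratheodory:oriented-matroid} take $\mathscr{M}=2^{\{u\}}$, of rank $1$. The condition $\rho_\mathscr{M}(V-U)<0$ is never met, so the hypothesis is vacuous, and no independent set contains $A(v_0)$ in its cone.
\end{itemize}

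For \ref{obs:no-cone-caratheodory:constrained} we need $r\geq 2$, so I take $d=r=2$ and $A(v_0)=(0,1)$. Let $U_1=\{a,a',c\}$ with $A(a)=(1,1)$, $A(a')=(-1,1)$, $A(c)=(0,-1)$, and let $U_2=\{b,b'\}$ with $A(b)=(1,1)$, $A(b')=(-1,1)$. Then $(0,1)$ lies in both $\cone(A(U_1))$ and $\cone(A(U_2))$, since $(1,1)+(-1,1)=(0,2)$. Let $\mathscr{K}_{12}\subseteq U_1*U_2$ be the path $a-b-c-b'-a'$. It is connected and has vertex set $U_1\cup U_2$.

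I then check that no face of $\mathscr{K}_{12}$ has $(0,1)$ in its cone.
\begin{itemize}
\item The vertices are clear: no single vector is a positive multiple of $(0,1)$.
\item The edges $ab$ and $a'b'$ span rays lying strictly on one side of the vertical axis, apart from the origin.
\item For the edge $cb$, we have $s(1,1)+t(0,-1)=(s,s-t)$. This has first coordinate $0$ only for $s=0$, and then the second coordinate is $-t\le 0$. The edge $cb'$ is symmetric.
\end{itemize}
This is the step needing the most care. With only two elements per color the argument fails, because of the sign pattern: one of $U_2$'s two vectors lies on each side of the vertical axis, and so do $U_1$'s. The only cross pairs avoiding $(0,1)$ are then the two same-side pairs, which give a disconnected graph. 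The third, downward vector $c$ is what makes a connected $\mathscr{K}_{12}$ possible.

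\emph{Positive statements for $r\geq d+1$.} In all three cases the property $\mathcal{F}$ is monotone: $\mathcal{F}(W)$ and $W\subseteq W'$ imply $\mathcal{F}(W')$. The cone version is reduced to the oriented matroid version via the oriented matroid associated with $A$.
\begin{itemize}
\item For \ref{obs:no-cone-caratheodory:fixed}: we have $r-1\geq d$ nonempty disjoint sets $U_1,\ldots,U_{r-1}$ with $\mathcal{F}(U_i)$. Theorem \ref{thm:caratheodory:om-cone} gives a transversal $\{u_1,\ldots,u_{r-1}\}$ satisfying $\mathcal{F}$. Adding $u$ preserves $\mathcal{F}$ by monotonicity.
\item For \ref{obs:no-cone-caratheodory:constrained}: apply Theorem \ref{thm:caratheodory:om-cone} to the $r-1\geq d$ sets $U_2,\ldots,U_r$. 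This gives $u_2\in U_2,\ldots,u_r\in U_r$ with $\mathcal{F}(\{u_2,\ldots,u_r\})$. Since $u_2$ is a vertex of $\mathscr{K}_{12}$, the set $\{u_2\}$ is a face of $\mathscr{K}_{12}$. Therefore $\{u_2,\ldots,u_r\}\in\mathscr{K}_{12}*U_3*\cdots*U_r=\mathscr{K}$.
\item For \ref{obs:no-cone-caratheodory:oriented-matroid}: since $d\leq r-1$, every $U$ with $\rho_\mathscr{M}(V-U)<d$ also satisfies $\rho_\mathscr{M}(V-U)<r-1$, so $\mathcal{F}(U)$ holds for it. This is exactly the hypothesis of Theorem \ref{thm:caratheodory:om-cone-matroid}, which then yields the desired independent set.
\end{itemize}
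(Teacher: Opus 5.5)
Your reductions for $r\ge d+1$ are the same as the paper's, and each of your counterexamples is correct as far as it goes. The gap is one of scope. You show failure in a single dimension only: $d=1$ for (1)--(3), where the hypotheses are vacuous, and $d=2$ for (4). The observation, however, fixes an arbitrary $d$. Its role in the paper is to show that the threshold $r\ge d+1$ is sharp, so that (1) and (4) have no meaningful cone extension, and that needs failure for every $d$. Your examples do not rule out, say, that (4) holds with $r=d$ whenever $d\ge 3$.

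The paper's examples live in $\R^d$ for general $d$, and the missing ingredient is the padding that achieves this. Add classes $U_i=\{u_i^1,u_i^2\}$ with $A(u_i^1)=A(v_0)+e_i$ and $A(u_i^2)=A(v_0)-e_i$, using fresh coordinate directions $e_i$. Each padding class satisfies $\mathcal{F}(U_i)$. But if $A(v_0)$ is a nonnegative combination of a partial transversal, the $e_i$-coordinate forces the coefficient of the padding vector to be $0$, so failure reduces to your low-dimensional core. This gives:
\begin{itemize}
\item (1) and (2) for every $d\ge 1$, using $A(v_0)=e_1$, $A(u)=-e_1$ and padding classes for $2\le i\le d$. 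For (2) this is exactly the paper's example. For (1) it even covers $d=1$, which the paper's example, built with $e_2$, does not.
\item (3) for the partition matroid $U_1*\cdots*U_d$, since $\rho_{\mathscr{M}}(V-U)<d-1$ forces $U$ to contain two whole classes, at least one of them a padding class.
\item (4) for every $d\ge 2$, by padding your planar example with classes $\{e_2\pm e_i\}$, $3\le i\le d$.
\end{itemize}

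Separately, your remark that two elements per color cannot work in (4) is false, although your argument does not depend on it. An opposite-side cross pair need not capture $A(v_0)$. The paper takes $A(v_0)=e_1$, $U_1=\{e_1-e_2,\,e_2\}$ and $U_2=\{e_1+e_2,\,-e_2\}$. The cross pair $\{e_2,-e_2\}$ has one vector on each side of the axis through $A(v_0)$, yet its cone is a line missing $e_1$. The path $u_1^1-u_2^2-u_1^2-u_2^1$ omits only the edge $\{u_1^1,u_2^1\}$, whose vectors are $e_1\pm e_2$. It is therefore a connected $\mathscr{K}_{12}$ none of whose faces has $e_1$ in its cone.
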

\begin{proof}
    If $r\geq d+1$, then the collection $(U_i)_{1\leq i\leq r-1}$ of \ref{obs:no-cone-caratheodory:fixed} is easily seen to satisfy the assumptions of Theorem \ref{thm:caratheodory:om-cone}, and hence there are $u_1\in U_1,\ldots,u_{r-1}\in U_{r-1}$ with $\mathcal{F}(\{u_1,\ldots,u_{r-1}\})$, but then of course $\mathcal{F}(\{u_1,\ldots,u_{r-1},u\})$ also holds for any $u\in V$. On the other hand, if $r$ is still at least $d+1$ then Theorem \ref{thm:caratheodory:om-cone} is also applicable to the collection $(U_i)_{2\leq i\leq r}$ extracted from \ref{obs:no-cone-caratheodory:constrained}, and therefore there are $u_2\in U_2,\ldots,u_r\in U_r$ with $\mathcal{F}(\{u_2,\ldots,u_r\})$, but $\{u_2,\ldots,u_r\}\in U_2*\cdots*U_r\subseteq\mathscr{K}$. In conclusion, \ref{obs:no-cone-caratheodory:fixed} and \ref{obs:no-cone-caratheodory:constrained} are indeed corollaries of Theorem \ref{thm:caratheodory:om-cone}. Similarly, if $r\geq d+1$ then we do not even need $\mathscr{M}$ to be of rank at least $d$ to see that the assumptions of \ref{obs:no-cone-caratheodory:oriented-matroid} imply those of \ref{thm:caratheodory:om-cone-matroid}, but the two conclusions are the same, and as such \ref{obs:no-cone-caratheodory:oriented-matroid} is a corollary of Theorem \ref{thm:caratheodory:om-cone-matroid} in this case.
    
    \begin{figure}
        \centering
        \begin{subfigure}{0.35\textwidth}
            \centering
            \begin{tikzpicture}[scale=1.8]
                \draw[->] (0,0) -- (0,1)    node[anchor=south]  {$A(v_0)$};
                \draw[->] (0,0) -- (-1,1)   node[anchor=south]  {$A(u_1^1)$}; 
                \draw[->] (0,0) -- (1,0)    node[anchor=south]  {$A(u_1^2)$};
                \draw[->] (0,0) -- (1,1)    node[anchor=south]  {$A(u_2^1)$};
                \draw[->] (0,0) -- (-1,0)   node[anchor=south]  {$A(u_2^2)$};
            \end{tikzpicture}
            \caption{}
            \label{fig:cone-counterexamples:fixed-and-constrained}
        \end{subfigure}
        \begin{subfigure}{0.3\textwidth}
            \centering
            \begin{tikzpicture}
                \draw[->] (0,0) -- (1,0)    node[anchor=west]  {$A(v_0)$};
                \draw[->] (0,0) -- (-1,0)   node[anchor=south]   {$A(u_1)$};
                \draw[->] (0,0) -- (1,1)    node[anchor=west]  {$A(u_2^1)$};
                \draw[->] (0,0) -- (1,-1)   node[anchor=west]  {$A(u_2^2)$};
            \end{tikzpicture}
            \caption{}
            \label{fig:cone-counterexamples:pair}
        \end{subfigure}
        \begin{subfigure}{0.3\textwidth}
            \centering
            \begin{tikzpicture}
                \draw (0,1) -- (1,0) -- (0,0) -- (1,1);
                \filldraw[black] (0,1) circle (2pt) node[anchor=east] {$u_1^1$};
                \filldraw[black] (0,0) circle (2pt) node[anchor=east] {$u_1^2$};
                \filldraw[black] (1,1) circle (2pt) node[anchor=west] {$u_2^1$};
                \filldraw[black] (1,0) circle (2pt) node[anchor=west] {$u_2^2$};
            \end{tikzpicture}
            \caption{}
            \label{fig:cone-counterexamples:K}
        \end{subfigure}
        \caption{The ingredients used in disproving some cone colorful Carath\'eodory theorems when $r=d$. $U_3,\ldots,U_r$ and their images under $A$ are not illustrated.}
        \label{fig:cone-counterexamples}
    \end{figure}
    The counterexamples for \ref{obs:no-cone-caratheodory:fixed}, \ref{obs:no-cone-caratheodory:pair}, \ref{obs:no-cone-caratheodory:oriented-matroid}, and \ref{obs:no-cone-caratheodory:constrained} when $r=d$ are illustrated in Figure \ref{fig:cone-counterexamples} and explained here. Let $e_1,e_2,\ldots,e_d$ be the standard basis of $\R^d$, and first consider the function $A$ illustrated in Figure \ref{fig:cone-counterexamples:fixed-and-constrained}, namely let $U_i:=\{u_i^1,u_i^2\}$ for $1\leq i\leq r$, $A(v_0):=e_1$, $A(u_1^1):=e_1-e_2$, $A(u_1^2):=e_2$, $A(u_2^1):=e_1+e_2$, $A(u_2^2):=-e_2$, and for $3\leq i\leq d$ put $A(u_i^1):=e_1+e_i$ and $A(u_i^2):=e_1-e_i$. It is easily seen that $A(v_0)\in\conv(A(U_i))$ for all $1\leq i\leq d$, yet if $u:=u_1^2$ then there are no $u_2\in U_2,\ldots,u_d\in U_d$ such that $A(v_0)\in\cone(\{u,u_2,\ldots,u_r\})$, so \ref{obs:no-cone-caratheodory:fixed} does not hold if $r=d$. A related computation shows that if $\sigma\in\mathscr{K}_{12}*U_3*\cdots*U_d$ for the $\mathscr{K}_{12}$ illustrated in Figure \ref{fig:cone-counterexamples:K} then $A(v_0)\notin\cone(A(\sigma))$, disproving \ref{obs:no-cone-caratheodory:constrained} when $r=d$. 

    Moving on to point \ref{obs:no-cone-caratheodory:pair}, take the function $A$ illustrated in Figure \ref{fig:cone-counterexamples:pair}, or in other words let $U_1:=\{u_1\}$, $U_i:=\{u_i^1,u_i^2\}$ for $2\leq i\leq d$, $A(v_0):=e_1$, $A(u_1):=-e_1$, and for $2\leq i\leq d$ put $A(u_i^1):=e_1+e_i$ and $A(u_i^2):=e_1-e_i$. In this case $A(v_0)\in\cone(A(U_i))$ for $2\leq i\leq d$, so $A(v_0)\in\cone(A(U_i\cup U_j))$ for $1\leq i<j\leq d$, yet there are no $u_1\in U_1,\ldots,u_d\in U_d$ with $A(v_0)\in\cone(A(\{u_1,\ldots,u_r\}))$, which means \ref{obs:no-cone-caratheodory:pair} is no longer true if $r=d$.

    Finally, it is left to see that \ref{obs:no-cone-caratheodory:oriented-matroid} is false for $r=d$. This is the case, because point \ref{obs:no-cone-caratheodory:pair} with $r=d$ is a corollary of this claim if we set $\mathscr{M}:=U_1*\cdots*U_d$, where each $U_i$ is viewed as a discrete set of vertices. Indeed, if $d-1>\rho_\mathscr{M}(V-U)=|\{1\leq i\leq d: U_i\cap (V-U)\neq\emptyset\}|$ then $U_i\cap(V-U)=\emptyset$ for at least two distinct values of $i$, but then $U\supseteq U_i\cup U_j$ for some $1\leq i< j\leq d$, which means that $\mathcal{F}(U)$ holds as a consequence of $\mathcal{F}(U_i\cup U_j)$. Therefore the assumptions of \ref{obs:no-cone-caratheodory:oriented-matroid} are satisfied under the conditions of \ref{obs:no-cone-caratheodory:pair} if $r=d$, so if \ref{obs:no-cone-caratheodory:oriented-matroid} was true then we would have an independent set $U'$ of $U_1*\cdots*U_d$ with $\mathcal{F}(U')$, and therefore a maximal independent set satisfying $\mathcal{F}$ as well, but maximal independent sets of $U_1*\cdots*U_d$ are of the form $\{u_1,\ldots,u_r\}$ with $u_1\in U_1,\ldots,u_r\in U_r$. All in all, this means that if $r=d$ then \ref{obs:no-cone-caratheodory:oriented-matroid} indeed implies \ref{obs:no-cone-caratheodory:pair}, which is already seen to be false.
\end{proof}

Nevertheless, as illustrated by Theorems \ref{thm:caratheodory:om-cone-matroid} and \ref{thm:caratheodory:om-cone-pair}, Theorems \ref{thm:caratheodory:oriented-matroid} and \ref{thm:caratheodory:pair} still have non-trivial analogues for cones.

While the constrained colorful Carath\'eodory theorem (Theorem \ref{thm:caratheodory:constrained}) does not admit a meaningful generalization for cones, it is worthwhile to ask if the restriction it gives on the complex $\mathscr{K}=\mathscr{K}_{12}*U_3*\cdots*U_r$ can be strengthened further. Replacing $U_1*U_2$ by a connected subcomplex $\mathscr{K}_{12}$ did not change the homological connectivity of $\mathscr{K}$, which is the key property enabling the method described in Section \ref{sec:the-method} to still be applicable for this $\mathscr{K}$. If $U_3*U_4$ was simultaneously replaced with a connected subcomplex $\mathscr{K}_{34}\subseteq U_3*U_4$, then the homological connectivity of $\mathscr{K}$ would continue to be the same, so it is natural to ask if performing this still yields a true statement. The answer is unfortunately no, which is explained by the observation below.
\begin{observation}\label{obs:no-further-grouping-in-constrained}
    Theorem \ref{thm:caratheodory:constrained} can not be generalized further to complexes of the form $\mathscr{K}=\mathscr{K}_{12}*\mathscr{K}_{34}*U_5*\cdots*U_r$ with $\mathscr{K}_{12}\subseteq U_1*U_2$ and $\mathscr{K}_{34}\subseteq U_3*U_4$ connected containing all vertices of the joins. More precisely, there are $d\geq 0$, $A:V\to\R^d$, $r\geq\max\{d+1,4\}$, $U_1,\ldots,U_r\subseteq V$ pairwise disjoint such that $0\in\conv(A(U_1))\cap\cdots\cap\conv(A(U_r))$, and $\mathscr{K}_{12}\subseteq U_1*U_2$, $\mathscr{K}_{34}\subseteq U_3*U_4$ connected with vertex sets $U_1\cup U_2$ and $U_3\cup U_4$ respectively, such that $\mathscr{K}_{12}*\mathscr{K}_{34}*U_5*\cdots*U_r$ has no face $\sigma$ with $0\in\conv(A(\sigma))$.

    Namely, let $V$ be $\{-1,1\}^3$, $d:=3$, $A:=\id\colon V\hookrightarrow\R^3$, $r:=4$, $U_1,\ldots,U_4\subseteq V$ the four pairs of antipodal vertices, and $\mathscr{K}_{12}\subseteq U_1*U_2$, $\mathscr{K}_{34}\subseteq U_3*U_4$ be obtained by deleting the edge whose endpoints both have first coordinate equal to $-1$. See Figure \ref{fig:cube} for an illustration.
    \begin{figure}[ht]
        \centering
        \begin{tikzpicture}[scale=0.7]
            \draw[lightgray] (0,3) -- (2,4) -- (5,4) -- (5,1) -- (2,1) -- (2,4);
            \draw[black, thick] (2,1) -- (5,1) -- (3,3);
            \draw[white, line width=3pt] (0,3) -- (3,3) -- (3,0);
            \draw[lightgray] (2,1) -- (0,0) -- (0,3) -- (3,3) -- (3,0) -- (0,0);
            \draw[lightgray] (3,3) -- (5,4) -- (5,1) -- (3,0);
            \draw[black, thick] (3,3) -- (0,3);
            \draw[black, thick] (0,0) -- (3,0) -- (5,4) -- (2,4);
            \node[circle, fill, inner sep=2pt] at (0,0) {};
            \node[circle, fill, inner sep=2pt] at (5,4) {};
            \node[rectangle, fill, inner sep=3pt] at (3,0) {};
            \node[rectangle, fill, inner sep=3pt] at (2,4) {};
            \node[star, star points=5, fill, inner sep=2pt] at (0,3) {};
            \node[star, star points=5, fill, inner sep=2pt] at (5,1) {};
            \node[regular polygon, regular polygon sides=3, fill, inner sep=1.7pt] at (3,3) {};
            \node[regular polygon, regular polygon sides=3, fill, inner sep=1.7pt] at (2,1) {};
        \end{tikzpicture}
        \caption{The example of Observation \ref{obs:no-further-grouping-in-constrained}, with elements of $U_1,\ldots,U_4$ drawn using different shapes, and $\mathscr{K}_{12}$, $\mathscr{K}_{34}$ illustrated.}
        \label{fig:cube}
    \end{figure}
\end{observation}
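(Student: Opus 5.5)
The plan is to check the explicit example in two steps. First, determine exactly which subsets of $V=\{-1,1\}^3$ contain $0$ in their convex hull. Second, check that each of these subsets fails to be a face of $\mathscr{K}_{12}*\mathscr{K}_{34}$. The second step only works for a suitable labelling of the antipodal pairs, and I will fix that labelling explicitly.

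The routine checks come first. For each antipodal pair $U_i=\{x,-x\}$ we have $0\in\conv(A(U_i))$. The join $U_1*U_2$ is a $4$-cycle, so deleting one edge leaves a path through all four vertices; hence $\mathscr{K}_{12}$ is connected with vertex set $U_1\cup U_2$, and likewise for $\mathscr{K}_{34}$. Every face of $\mathscr{K}=\mathscr{K}_{12}*\mathscr{K}_{34}$ contains at most one element from each antipodal pair. By monotonicity it suffices to treat the maximal faces, which are transversals $\{u_1,u_2,u_3,u_4\}$ with $u_i\in U_i$.

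The key geometric step is the claim that a transversal $T$ of the four antipodal pairs has $0\in\conv(T)$ if and only if $T$ is one of the two inscribed regular tetrahedra:
\[
T_+=\{x:x_1x_2x_3=1\} \quad\text{or}\quad T_-=\{x:x_1x_2x_3=-1\}.
\]
The "if" direction holds because $\sum_{x\in T_\pm}x=0$. For the converse I would argue as follows.
\begin{itemize}
\item For any other transversal $T$, choose the sign pattern $s\in\{-1,1\}^3$ that occurs with the appropriate majority, so that the linear functional $y\mapsto\langle y,s\rangle$, possibly after perturbing it to a nearby functional, is positive on every element of $T$.
\item Concretely, a transversal other than $T_\pm$ contains a vertex $x$ together with at least two of its three neighbours in the cube graph.
\item One then checks that the functional $\langle\cdot,x\rangle+\varepsilon\langle\cdot,y\rangle$, for a suitable neighbour $y$, separates $T$ strictly from $0$.
\end{itemize}
Since there are only $16$ transversals, and symmetry reduces them to a handful of cases, this can also simply be verified by hand.

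For the labelling, write $u_i^-$ for the element of $U_i$ whose first coordinate is $-1$. Then the deleted edges are $\{u_1^-,u_2^-\}$ and $\{u_3^-,u_4^-\}$. Each of $T_+$ and $T_-$ contains exactly two vertices with first coordinate $-1$.
\begin{itemize}
\item For $T_+$ these are $(-1,1,-1)$ and $(-1,-1,1)$. They lie in the pairs of $(1,-1,1)$ and $(1,1,-1)$.
\item For $T_-$ these are $(-1,-1,-1)$ and $(-1,1,1)$. They lie in the pairs of $(1,1,1)$ and $(1,-1,-1)$.
\end{itemize}
So I set $U_1,U_2$ to be the pairs $\pm(1,1,1)$ and $\pm(1,-1,-1)$, and $U_3,U_4$ to be the pairs $\pm(1,-1,1)$ and $\pm(1,1,-1)$, matching Figure~\ref{fig:cube}. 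With this choice:
\begin{itemize}
\item $T_-\cap(U_1\cup U_2)=\{u_1^-,u_2^-\}$ is the deleted edge of $U_1*U_2$, so $T_-\notin\mathscr{K}$;
\item $T_+\cap(U_3\cup U_4)=\{u_3^-,u_4^-\}$ is the deleted edge of $U_3*U_4$, so $T_+\notin\mathscr{K}$.
\end{itemize}
Combined with the classification, no face of $\mathscr{K}$ contains $0$ in its convex hull. Here $d=3$ and $r=4\ge\max\{d+1,4\}$, which completes the counterexample.

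I expect the main obstacle to be the classification step, namely a clean proof that only $T_\pm$ capture the origin. If a slick separating-functional argument is not available, I would fall back on the explicit case check up to the symmetry group of the cube.
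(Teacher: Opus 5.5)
Your verification is correct. It is the check the paper leaves implicit: the paper states only the example and Figure~\ref{fig:cube} and gives no written argument.

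Your point that the grouping of the pairs matters is also right. Of the three ways to split the four antipodal pairs into $\{U_1,U_2\},\{U_3,U_4\}$, only one excludes both $T_+$ and $T_-$: the split in which the two deleted edges are the two diagonals of the face $x_1=-1$. This is exactly the grouping drawn in the figure, circles/rectangles versus triangles/stars. So your labelling is the intended one, and the reduction to maximal faces (transversals) is fine.

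One sub-step of your classification sketch is wrong as stated. For the six transversals that are faces of the cube, no functional of the form $\langle\cdot,x\rangle+\varepsilon\langle\cdot,y\rangle$ separates them from $0$. To see this, let $y'$ be the other face-neighbour of $x$ and $z$ the vertex of the face opposite $x$.
\begin{itemize}
\item If $y$ lies in the face, the functional equals $1-\varepsilon$ at $y'$ and $-1+\varepsilon$ at $z$, which cannot both be positive.
\item If $y$ is the neighbour off the face, it equals $-1-3\varepsilon$ at $z$.
\end{itemize}
The ``majority sign pattern $s\in\{-1,1\}^3$'' bullet fails for the same transversals, since it gives values $3,1,1,-1$.

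The clean fix is to split the $16$ transversals into three classes:
\begin{itemize}
\item $6$ faces: a face $\{y:y_k=c\}$ is separated by the coordinate functional $c\,e_k$;
\item $8$ claws, i.e.\ a vertex with its three neighbours: a claw centred at $x$ is separated by $\langle\cdot,x\rangle$, with values $3,1,1,1$;
\item the $2$ tetrahedra $T_\pm$, whose elements sum to $0$.
\end{itemize}
This gives your classification directly, and the rest of your argument goes through unchanged.
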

On the other hand, instead of trying to substitute joins of two color classes $U_i*U_{i+1}$ with connected subcomplexes $\mathscr{K}_{i,i+1}\subseteq U_i*U_{i+1}$, one could attempt to generalize Theorem \ref{thm:caratheodory:constrained} by replacing $U_1*U_2*U_3$ with a suitable homologically $1$-connected subcomplex $\mathscr{K}_{123}\subseteq U_1*U_2*U_3$. The next theorem shows that such an extension should be possible, but we do not attempt to derive a more general formulation of this example in this paper.
\begin{theorem}\label{thm:caratheodory:3-joined}
    Let $V:=U_1\sqcup U_2\sqcup U_3$ with $U_i:=\{u_i^1,u_i^2,u_i^3\}$ for $1\leq i\leq 3$, and $\mathscr{K}:=\mathscr{K}_{123}\subseteq U_1*U_2*U_3$ be the subcomplex obtained by deleting the three edges $\{u_2^1,u_3^1\}$, $\{u_3^2,u_1^2\}$, and $\{u_1^3,u_2^3\}$, as well as all faces containing these. If $d\leq 2$ and $A\colon V\to\R^d$ is an arbitrary function such that $0\in\conv(A(U_1))\cap\conv(A(U_2))\cap\conv(A(U_3))$, then there is a face $\sigma\in\mathscr{K}$ such that $0\in\conv(A(\sigma))$.
\end{theorem}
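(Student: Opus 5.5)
We argue by contradiction, following the method of Section \ref{sec:the-method} (Kalai--Meshulam, Holmsen). First we reduce to $d=2$. Compose $A$ with an embedding $\R^d\hookrightarrow\R^2$; this does not change which subsets $U\subseteq V$ satisfy $0\in\conv(A(U))$. Let
\[\mathscr{C}:=\{U\subseteq V:0\notin\conv(A(U))\}\]
be the avoiding complex, and suppose $\mathscr{K}_{123}\subseteq\mathscr{C}$. By Holmsen's result \cite[Prop.\,1.3]{Holmsen2016} (equivalently, the corresponding statement proved in Section \ref{sec:avoiding-complexes}), $\mathscr{C}$ is near-$1$-Leray. So it suffices to verify the homological hypotheses of the near-Leray version of the Kalai--Meshulam criterion \cite[Thm.\,1.4]{Holmsen2016}, in the form formulated in Section \ref{sec:the-method} for arbitrary complexes $\mathscr{K}$ rather than matroids. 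Roughly, we need:
\begin{itemize}
\item $\widetilde H_i(\mathscr{K}_{123})=0$ for $i\le 1$;
\item for every relevant deleted set $S$, the induced complex $\mathscr{K}_{123}[V-S]$ has vanishing reduced homology in the degrees demanded by the criterion (for $d=2$ this amounts to connectivity, i.e. $\widetilde H_0=0$);
\item the deleted sets $S$ that are not covered by this connectivity are exactly those containing some $U_i$; for these, $0\in\conv(A(S))$, so $S\notin\mathscr{C}$ and the hypothesis $0\in\conv(A(U_1))\cap\conv(A(U_2))\cap\conv(A(U_3))$ makes them harmless.
\end{itemize}
This is the same bookkeeping by which Theorem \ref{thm:caratheodory:constrained} is derived.

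The core computation is therefore purely combinatorial and concerns $\mathscr{K}_{123}$. We start from $U_1*U_2*U_3$, which has $9$ vertices, $27$ edges and $27$ triangles. It is $1$-connected, with $\widetilde H_2$ of rank $8$. The complex $\mathscr{K}_{123}$ arises by removing the open stars of three edges, that is, $3$ edges and $9$ triangles. Hence
\[\chi(\mathscr{K}_{123})=9-24+18=3.\]
So once we show $\widetilde H_0=\widetilde H_1=0$, we get $\widetilde H_2(\mathscr{K}_{123})\cong\Z^2$, which serves as a consistency check.

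Connectivity is immediate: only three edges are missing, and every vertex still has degree at least $5$. For $\widetilde H_1=0$ we use the Mayer--Vietoris/long exact sequence for removing the open star of a single edge $e=\{a,b\}$, say $a=u_2^1$, $b=u_3^1$. Every $1$-cycle in the complex after removal is homologous, in the full join, to a combination of cycles $a\,c\,b\,c'$ with $c,c'\in U_1$, together with cycles already supported away from $e$. We therefore exhibit explicit fillings of the squares $a\,c\,b\,c'$ by triangles that avoid all three deleted edges. For instance, we fill $a\,c\,b\,c'$ through the vertex $u_3^2$ or $u_2^2$, so that we use only triangles $\{a,c,u_3^j\}$ and $\{u_3^j,c,\ldots\}$ with $j\neq 1$. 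We then check that the chosen auxiliary vertices never recreate one of the other two deleted edges $\{u_3^2,u_1^2\}$, $\{u_1^3,u_2^3\}$. The cyclic symmetry $u_1^j\to u_2^j\to u_3^j$, composed with the permutation of the upper index that matches the three deleted edges, reduces this to a single case.

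We then repeat the connectivity check for the induced subcomplexes $\mathscr{K}_{123}[V-S]$ that the criterion requires, namely for $S$ not containing a whole colour class. Deleting a few vertices could isolate a vertex only if all its remaining neighbours lay in $S$. Every vertex is adjacent to at least $5$ of the $6$ vertices of the other two colour classes, so this can happen only when $S$ contains almost two full colour classes, and these cases are excluded because they contain some $U_i$.

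We expect the main obstacle to be the $H_1$ computation together with matching it exactly to the form of the near-Leray criterion, i.e. making sure the correct degrees are checked for the correct deleted sets. If the direct filling argument becomes messy, we would instead run a short discrete-Morse/collapse argument: collapse $\mathscr{K}_{123}$ onto a $2$-complex in which $\widetilde H_1=0$ is visible, or simply compute $\widetilde H_1$ by a rank count of the $24\times 18$ boundary matrix, reduced by the threefold symmetry.
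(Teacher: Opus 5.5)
Your overall strategy is the paper's: apply the near-Leray version of Theorem \ref{thm:holmsen-repackaged} to the zero-avoiding (or support) complex. Then use that no $\sigma\in\mathscr{C}$ contains a whole $U_i$, and check that $\mathscr{K}$ is homologically $1$-connected and that each $\mathscr{K}[V-\sigma]$ is connected.

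\textbf{Gap: connectivity of $\mathscr{K}[V-\sigma]$.} Your argument for this step does not prove it. You only show that removing $\sigma$ cannot isolate a vertex. Having no isolated vertices does not imply connectedness: a priori $V-\sigma$ could split into two components, each containing an edge. The missing observation, which is exactly what the paper uses, is the following. The three deleted edges form a matching, so each endpoint of a deleted edge is adjacent to every vertex of the third colour class; for instance, $u_2^1$ and $u_3^1$ are both joined to all of $U_1$. Since $V-\sigma$ meets all three classes, two of its vertices from different classes are either adjacent or have a common neighbour in $(V-\sigma)\cap U_k$, where $k$ is the third class. Two vertices of the same class are linked through any vertex of another class. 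Hence $\mathscr{K}[V-\sigma]$ is connected.

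\textbf{Different route for $\tilde H_1(\mathscr{K})=0$.} The paper uses a single Mayer--Vietoris sequence. It covers $\mathscr{K}$ by the complements of $U_3$ and of $\mathscr{K}[U_1\cup U_2]$. The intersection retracts onto the three connected links $\mathscr{K}[U_1\cup U_2-\{u_2^1\}]$, $\mathscr{K}[U_1\cup U_2-\{u_1^2\}]$ and $\mathscr{K}[U_1\cup U_2]$. The last of these is all of $\mathscr{K}[U_1\cup U_2]$, so the map on $H_1$ is onto and the map on $H_0$ is injective. This forces $H_1(\mathscr{K})=0$ without writing down any $2$-chains.

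Your edge-by-edge argument also works, but it should be stated as an induction, not as ``homologous in the full join'':
\begin{enumerate}
\item Remove the three open stars one at a time, each from the previous complex, whose $H_1$ you have just shown to vanish.
\item No triangle contains two deleted edges, so each edge's link is still a full colour class.
\item Mayer--Vietoris then shows that $H_1$ of the new complex is generated by the squares $a\,c\,b\,c'$.
\item Fill these squares inside the final $\mathscr{K}$. The symmetry $u_i^j\mapsto u_{i+1}^{j+1}$ preserves $\mathscr{K}$, so this reduces everything to one edge.
\end{enumerate}

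Be careful with the auxiliary vertex. $u_3^2$ is not adjacent to $u_1^2$, so filling through it fails whenever $c$ or $c'$ equals $u_1^2$. The vertices $u_3^3$ and $u_2^2$ lie on no deleted edge and give a uniform filling: the triangles $\{u_2^1,c,u_3^3\}$ and $\{u_2^1,c',u_3^3\}$, together with the cone from $u_2^2$ over the square $u_3^3\,c\,u_3^1\,c'$. Your route is more explicit; the paper's is shorter.
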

The proof is postponed to Section \ref{sec:cc-proofs}; here we simply note that this theorem does not follow immediately from Theorems \ref{thm:caratheodory:matroid}, \ref{thm:caratheodory:oriented-matroid}, or \ref{thm:caratheodory:constrained}, so it is indeed a novel kind of generalization, whose analogues are worth exploring in the future. During the proof of this latter claim, we will need to construct a particular $A:=A_{u_1,u_2,u_3}\colon V\to\R^2$ where $0\notin\conv(A(V-\{u_1,u_2,u_3\}))$, $0\notin\conv(A(\{u_1,u_2,u_3\}))$, and $0\in\conv(A(U_1))\cap\conv(A(U_2))\cap\conv(A(U_3))$, but this is doable for any $u_1\in U_1, u_2\in U_2,u_3\in U_3$. Namely, color each $U_i$ using all three of exactly $3$ colors so that $\{u_1,u_2,u_3\}$ becomes a color class, and let $A_{u_1,u_2,u_3}$ send each color class to a different element of $\{v_1,v_2,v_3\}$ where $0\in\interior(\conv(\{v_1,v_2,v_3\}))$.
\begin{remark}
    Let $d:=2$. There is no subcomplex of the $\mathscr{K}$ of Theorem \ref{thm:caratheodory:3-joined} to which either of Theorems \ref{thm:caratheodory:matroid}, \ref{thm:caratheodory:oriented-matroid}, or \ref{thm:caratheodory:constrained} could be applied (after possibly relabelling the sets $U_1,U_2,U_3$) in a way that its assumptions are implied by the assumptions of Theorem \ref{thm:caratheodory:3-joined}. In particular, Theorem \ref{thm:caratheodory:3-joined} is not an immediate consequence of any of these three theorems. For the sake of brevity, we jump over some details while justifying this claim.

    Assume $\mathscr{M}\subseteq\mathscr{K}$ is a subcomplex for which the assumptions of Theorem \ref{thm:caratheodory:matroid} are implied by those of Theorem \ref{thm:caratheodory:3-joined}. First, note that the rank of $\mathscr{M}$ is $3$, or otherwise there would be a maximal face of $\mathscr{M}$ contained in a set of the form $\{u_1,u_2,u_3\}$ with $u_1\in U_1,u_2\in U_2,u_3\in U_3$, but then $A_{u_1,u_2,u_3}$ satisfies the assumptions of Theorem \ref{thm:caratheodory:3-joined} but fails to do the same for Theorem \ref{thm:caratheodory:matroid}, particularly for $U:=\{u_1,u_2,u_3\}$. But even if the rank of $\mathscr{M}$ is $3$, $A_{u_1^k,u_2^k,u_3^k}$ testifies for $1\leq k\leq 3$ that $\rho_\mathscr{M}(V-\{u_1^k,u_2^k,u_3^k\})<\rho_\mathscr{M}(V)=3$, because $\rho_\mathscr{M}(\{u_1^k,u_2^k,u_3^k\})\leq 2=d$ due to the removed edges, and $0\notin\conv(A_{u_1^k,u_2^k,u_3^k}(V-\{u_1^k,u_2^k,u_3^k\}))$. In particular, all 2-dimensional faces of $\mathscr{M}$ must be of the form $\{u_1^i,u_2^j,u_3^k\}$ where $i,j,k$ are pairwise distinct. Using the definition of matroids and the fact that $\mathscr{M}$ has rank $3$, this also implies that any edge $\{u_a^b,u_c^d\}$ must satisfy $b\neq d$. Again by the definition of matroids, we get that for any such edge, $u_a^d$ and $u_c^b$ are not vertices of $\mathscr{M}$. The fact that $\mathscr{M}$ has a two-dimensional face, which is of the form $\{u_1^i,u_2^j,u_3^k\}$ with $i,j,k$ are pairwise distinct, implies that outside this face $\mathscr{M}$ contains no other vertices. But then $A_{u_1^i,u_2^j,u_3^k}$ satisfies the assumptions of Theorem \ref{thm:caratheodory:3-joined} but not those of Theorem \ref{thm:caratheodory:matroid}, particularly for $U:=\{u_1^i,u_2^j,u_3^k\}$.

    Let us move on to Theorem \ref{thm:caratheodory:oriented-matroid}, and make two observations:
    \begin{enumerate}[label=\normalfont{(\arabic*)}]
        \item\label{en:3-joined-not-OM:rk-2} If $u_1\in U_1,u_2\in U_2,u_3\in U_3$ then $\rho_\mathscr{M}(\{u_1,u_2,u_3\})\geq 2$. Otherwise $A_{u_1,u_2,u_3}$ satisfies the assumptions of Theorem \ref{thm:caratheodory:3-joined}, but not those of Theorem \ref{thm:caratheodory:oriented-matroid}, because $0\notin\conv(A_{u_1,u_2,u_3}(V-\{u_1,u_2,u_3\}))$.
        \item\label{en:3-joined-not-OM:forbidden-edges} If $u_i\in U_i,u_j\in U_j$, $\{u_i,u_j\}\notin\mathscr{M}$, $\{u_i\}\in\mathscr{M}$, and $i\neq j$, then $\{u_i',u_j\}\notin\mathscr{M}$ for all $u_i'\in U_i$. For this, use the fact that $\mathscr{M}\subseteq\mathscr{K}\subseteq U_1*U_2*U_3$ and the definition of matroids.
    \end{enumerate}
    Use point \ref{en:3-joined-not-OM:rk-2} to see that at least one element of $\{u_2^1,u_3^1\}$ is a vertex of $\mathscr{M}$. If $u_2^1$ is a vertex of $\mathscr{M}$, then point \ref{en:3-joined-not-OM:forbidden-edges} applied to the non-edge $\{u_2^1,u_3^1\}$ tells us that $\{u_2^3,u_3^1\}\notin\mathscr{M}$, but then point \ref{en:3-joined-not-OM:rk-2} for the selection $\{u_1^3,u_2^3,u_3^1\}$ guarantees that $\{u_1^3,u_3^1\}\in\mathscr{M}$, in particular both $u_2^1$ and $u_3^1$ are vertices of $\mathscr{M}$. The same holds if $u_3^1$ was a vertex of $\mathscr{M}$ instead, as then point \ref{en:3-joined-not-OM:forbidden-edges} applied to the non-edge $\{u_2^1,u_3^1\}$ tells us that $\{u_2^1,u_3^2\}\notin\mathscr{M}$, but then point \ref{en:3-joined-not-OM:rk-2} for the selection $\{u_1^2,u_2^1,u_3^2\}$ guarantees $\{u_1^2,u_2^1\}\in\mathscr{M}$. But if both $u_2^1$ and $u_3^1$ are vertices of $\mathscr{M}$, then point \ref{en:3-joined-not-OM:forbidden-edges} applied to the non-edge $\{u_2^1,u_3^1\}$ tells us that $\mathscr{M}[U_2\cup U_3]$ contains no edges incident to either of the vertices $u_2^1$ or $u_3^1$, meaning that (as an application of the definition of matroids) $\rho_\mathscr{M}(U_2\cup U_3)\leq1$. But then it is easily seen that the rank of $\mathscr{M}$ can not be at least $3$ as assumed by Theorem \ref{thm:caratheodory:oriented-matroid}.

    Lastly, Theorem \ref{thm:caratheodory:constrained} constructs a certain simplicial complex $\mathscr{K}_{12}*U_3$ where $\mathscr{K}_{12}$ has vertex set $U_1\cup U_2$, in particular there is an $1\leq i\leq 3$ such that $\{u_j,u_i\}$ is an edge for all $j\neq i$ and $u_i\in U_i,u_j\in U_j$. A subcomplex of $\mathscr{K}$ satisfying the assumptions of Theorem \ref{thm:caratheodory:constrained} under the conditions of Theorem \ref{thm:caratheodory:3-joined} must have vertex set $U_1\sqcup U_2\sqcup U_3$. However, no such subcomplex of $\mathscr{K}$ satisfies the property mentioned at the beginning of this paragraph because of the three deleted edges $\{u_2^1,u_3^1\}$, $\{u_3^2,u_1^2\}$, and $\{u_1^3,u_2^3\}$, so Theorem \ref{thm:caratheodory:constrained} indeed can not be applied to any subcomplex of $\mathscr{K}$.
\end{remark}

Finally, the constrained colorful Carath\'eodory theorem (Theorem \ref{thm:caratheodory:constrained}) implies the following variation of Tverberg's theorem, taken from \cite[Corollary 1.7]{Blagojevic2025}. Here, let $[n]$ stand for the set $\{1,\ldots,n\}$, sometimes viewed as a discrete collection of $n$ vertices, and if $\mathscr{X}$ is a simplicial complex on ground set $V$ then write $\{a\}\times \mathscr{X}$ for the corresponding simplicial complex on ground set $\{a\}\times V$.
\begin{corollary}\label{crl:new-tverberg}
    Let $r\geq 2$, $d\geq 0$, and $N\geq (d+1)(r-1)$ be integers, $A\colon [N+1]\to\R^d$ a function, and $\mathscr{K}_{12}$ a path-connected subcomplex of $(\{1\}\times[r])*(\{2\}\times[r])$ with vertex set $\{1,2\}\times[r]$. Then there are pairwise disjoint subsets $\sigma_1,\ldots,\sigma_r\subseteq [N+1]$ with $\conv(A(\sigma_1))\cap\cdots\cap\conv(A(\sigma_r))\neq\emptyset$ and $(\sigma_1\times\{1\})\cup\cdots\cup(\sigma_r\times\{r\})\in\mathscr{K}_{12}*(\{3\}\times[r])*\cdots*(\{N+1\}\times [r])$.
\end{corollary}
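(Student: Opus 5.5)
We will derive Corollary \ref{crl:new-tverberg} from the constrained colorful Carath\'eodory theorem (Theorem \ref{thm:caratheodory:constrained}) via Sarkaria's tensor trick, in the form used in the standard proof of Tverberg's theorem from Theorem \ref{thm:caratheodory:original}.

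\emph{Setup.} Fix vectors $w_1,\ldots,w_r\in\R^{r-1}$ whose only linear dependencies are multiples of $w_1+\cdots+w_r=0$. For instance, take $w_j:=e_j$ for $j<r$ and $w_r:=-(e_1+\cdots+e_{r-1})$. Lift each point to $\hat A(i):=(A(i),1)\in\R^{d+1}$. Set $D:=(d+1)(r-1)$ and define
\[
B\colon [N+1]\times[r]\to\R^{d+1}\otimes\R^{r-1}\cong\R^{D},\qquad B(i,j):=\hat A(i)\otimes w_j .
\]
Use the colour classes $U_i:=\{i\}\times[r]$ for $i\in[N+1]$; these are pairwise disjoint. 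Since $\sum_j w_j=0$, we have $\frac1r\sum_j B(i,j)=\hat A(i)\otimes\frac1r\sum_j w_j=0$, so $0\in\conv(B(U_i))$ for every $i$. The number of colour classes is $N+1\ge D+1$, and $N+1\ge 2$ because $r\ge2$. So the hypotheses of Theorem \ref{thm:caratheodory:constrained} hold, with ambient dimension $D$, with $N+1$ in place of $r$, and with $\mathscr{K}_{12}$ a connected subcomplex of $U_1*U_2$ on vertex set $U_1\cup U_2$. If $N+1>D+1$, we simply use all $N+1$ classes, since the theorem only requires the number of classes to be at least $D+1$. Path-connectedness of $\mathscr{K}_{12}$ is exactly connectedness of the simplicial complex. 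One also has to check that $(\{a\}\times[r])*\cdots$ matches the paper's notation $U_a$, i.e.\ that the roles of the two coordinates are consistent with the conclusion of the corollary.

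\emph{Applying the theorem.} Theorem \ref{thm:caratheodory:constrained} gives a face $\tau\in\mathscr{K}_{12}*U_3*\cdots*U_{N+1}$ with $0\in\conv(B(\tau))$. Because $\tau$ lies in a subcomplex of $U_1*\cdots*U_{N+1}$, it contains at most one element $(i,j_i)$ for each $i$. Set $\sigma_j:=\{i:(i,j)\in\tau\}$. These sets are pairwise disjoint, and $(\sigma_1\times\{1\})\cup\cdots\cup(\sigma_r\times\{r\})$ is $\tau$ up to the coordinate swap in the notation. Hence this union lies in the required complex.

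\emph{Unwinding the tensor.} Write $0=\sum_{(i,j)\in\tau}\lambda_{ij}\,\hat A(i)\otimes w_j$ with $\lambda_{ij}\ge0$ summing to $1$. Group the terms by $j$ to get $0=\sum_j y_j\otimes w_j$, where $y_j:=\sum_{i\in\sigma_j}\lambda_{ij}\hat A(i)$. Apply each linear functional $f$ on $\R^{d+1}$ in the first tensor factor; this gives $\sum_j f(y_j)w_j=0$. By the choice of the $w_j$, all $f(y_j)$ are equal, and hence all $y_j$ are equal. Their last coordinates $\sum_{i\in\sigma_j}\lambda_{ij}$ are therefore all equal, and they sum to $1$, so each equals $1/r>0$. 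In particular every $\sigma_j$ is nonempty. Dividing the first $d$ coordinates of $y_j$ by $1/r$ produces the same point for every $j$, and that point lies in $\conv(A(\sigma_j))$ for each $j$. This proves $\bigcap_j\conv(A(\sigma_j))\neq\emptyset$.

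\emph{Expected obstacles.} The tensor computation is the only real step, and it is the standard one from Sarkaria's proof, so no serious obstacle is expected. The places needing care are the edge case $d=0$, which is handled by the same argument with $D=r-1$, and the bookkeeping that the first two colour classes carry $\mathscr{K}_{12}$ with the indexing as stated.
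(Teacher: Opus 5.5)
Your proposal is correct and follows the paper's proof. Both use the lift $(i,j)\mapsto w_j\otimes(A(i),1)$ into $\R^{(r-1)(d+1)}$, with colour classes $\{i\}\times[r]$ that contain $0$ in their hull because $\sum_j w_j=0$, apply Theorem~\ref{thm:caratheodory:constrained} with $N+1\geq (d+1)(r-1)+1$ classes, and set $\sigma_j:=\{i:(i,j)\in\tau\}$. The only difference is that you unwind the tensor by hand, where the paper cites Sarkaria's Lemma~\ref{lem:sarkaria}, and no coordinate swap is needed since $\bigcup_j\sigma_j\times\{j\}$ is literally $\tau$.
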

In other words, this theorem guarantees the existence of pairwise disjoint subsets $\sigma_1,\ldots,\sigma_r\subseteq [N+1]$ with $\conv(A(\sigma_1))\cap\cdots\cap\conv(A(\sigma_r))\neq\emptyset$ satisfying a certain property. Whether this property is indeed true might depend on the ordering of the $r$ subsets $\sigma_1,\ldots,\sigma_r$. If we do not care about the order of the $\sigma_1,\ldots,\sigma_r$, then the corollary above can still be used to impose an additional condition besides $\conv(A(\sigma_1))\cap\cdots\cap\conv(A(\sigma_r))\neq\emptyset$, as formulated below. When $r$ is prime power, this is a special case of the work of Schöneborn \& Ziegler \cite{SchoenebornZiegler2005}, further generalized by Hell \cite{Hell2008}, and finally the Optimal colored Tverberg theorem of Blagojević, Matschke \& Ziegler \cite{Blagojevic2009,Blagojevic2011-2}. In fact, the Optimal colored Tverberg theorem, by virtue of containing the results of Schöneborn \& Ziegler as well as Hell, has so far been the only strengthening of the original result of Tverberg \cite{Tverberg1966}, in which the additional condition on $\sigma_1,\ldots,\sigma_r\subseteq[N+1]$ besides $\conv(A(\sigma_1))\cap\cdots\cap\conv(A(\sigma_r))\neq\emptyset$ is non-trivial even if $N=(r-1)(d+1)$. While the statements above and below do not extend the Optimal colored Tverberg theorem, the latter only works when $r$ is a prime power, so they are not contained in it either, providing novel generalizations imposing non-trivial conditions on the Tverberg partition $\sigma_1,\ldots,\sigma_r\subseteq[N+1]$.
\begin{corollary}\label{crl:new-unordered-tverberg}
    Let $r\geq 3$, $d\geq 0$, and $N\geq (d+1)(r-1)$ be integers, and $A\colon [N+1]\to\R^d$ a function. Then there are pairwise disjoint subsets $\sigma_1,\ldots,\sigma_r\subseteq[N+1]$ with $\conv(A(\sigma_1))\cap\cdots\cap\conv(A(\sigma_r))\neq\emptyset$, such that $\{1,2\}\nsubseteq\sigma_i$ for any $1\leq i\leq r$.
\end{corollary}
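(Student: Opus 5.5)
We derive Corollary \ref{crl:new-unordered-tverberg} from Corollary \ref{crl:new-tverberg} by choosing $\mathscr{K}_{12}$ so that every face excludes any colour class $i$ from containing both points $1$ and $2$. In the notation of Corollary \ref{crl:new-tverberg}, a face $(\sigma_1\times\{1\})\cup\cdots\cup(\sigma_r\times\{r\})$ of $\mathscr{K}_{12}*(\{3\}\times[r])*\cdots*(\{N+1\}\times[r])$ restricts to a face of $\mathscr{K}_{12}$. This restriction consists of at most one vertex $(1,i)$ and at most one vertex $(2,j)$, meaning that $1\in\sigma_i$ and $2\in\sigma_j$. The condition $\{1,2\}\nsubseteq\sigma_i$ for every $i$ therefore says exactly that this restricted face is never an edge of the form $\{(1,i),(2,i)\}$. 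So the task is to take $\mathscr{K}_{12}$ to be the subcomplex of the complete bipartite graph $(\{1\}\times[r])*(\{2\}\times[r])$ obtained by deleting the perfect matching $\{(1,i),(2,i)\}$, $1\le i\le r$, while keeping all $2r$ vertices.

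The only hypothesis of Corollary \ref{crl:new-tverberg} left to verify is that this $\mathscr{K}_{12}$ is path-connected with vertex set $\{1,2\}\times[r]$. Since $r\geq 2$, each vertex $(1,i)$ is adjacent to some $(2,j)$ with $j\neq i$, so no vertex is isolated and the vertex set is correct. For connectivity, $K_{r,r}$ minus a perfect matching is connected once $r\geq 3$. Any two vertices $(1,i),(1,i')$ have a common neighbour $(2,j)$ with $j\notin\{i,i'\}$, which exists because $r\geq3$. Symmetrically, any two vertices on side $2$ have a common neighbour on side $1$. Finally, each side-$1$ vertex has at least one neighbour on side $2$, which links the two sides. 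This is exactly where $r\geq 3$ is needed: for $r=2$ the graph splits into two disjoint edges, and the statement genuinely fails there.

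Applying Corollary \ref{crl:new-tverberg} with this $\mathscr{K}_{12}$, the integers $r,d,N$ and the function $A$ yields pairwise disjoint $\sigma_1,\dots,\sigma_r$ with $\conv(A(\sigma_1))\cap\cdots\cap\conv(A(\sigma_r))\neq\emptyset$. The face condition then forbids $\{1,2\}\subseteq\sigma_i$, by the translation in the first paragraph. The argument has no real obstacle: the only points needing care are that translation of the face condition, which reads off which $\sigma_i$ contain $1$ and $2$ from the restriction to $\mathscr{K}_{12}$, and the connectivity check that uses $r\ge3$.
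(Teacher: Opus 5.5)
Your proposal is correct and matches the paper's proof: the same $\mathscr{K}_{12}$ (the complete bipartite graph with the matching $\{(1,i),(2,i)\}$, $1\leq i\leq r$, deleted), path-connectedness for $r\geq 3$, an application of Corollary \ref{crl:new-tverberg}, and the observation that $\{1,2\}\subseteq\sigma_i$ would make a deleted edge a face of $\mathscr{K}_{12}$. Your common-neighbour argument for connectivity spells out what the paper only calls ``easily seen''.
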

We first derive Corollary \ref{crl:new-unordered-tverberg}, then spell out the proof of Corollary \ref{crl:new-tverberg}.
\begin{proof}[Proof of Corollary \ref{crl:new-unordered-tverberg}]
    Let $\mathscr{K}_{12}$ be the subcomplex of $(\{1\}\times[r])*(\{2\}\times [r])$ obtained by deleting all edges between the vertices $(1,i)$ and $(2,i)$ for all $1\leq i\leq r$. When $r\geq 3$, $\mathscr{K}_{12}$ is easily seen to be path-connected, and so Corollary \ref{crl:new-tverberg} guarantees that there are pairwise disjoint subsets $\sigma_1,\ldots,\sigma_r\subseteq[N+1]$ with $\conv(A(\sigma_1))\cap\cdots\cap\conv(A(\sigma_r))\neq\emptyset$ and $(\sigma_1\times\{1\})\cup\cdots\cup(\sigma_r\times\{r\})\in\mathscr{K}_{12}*(\{3\}\times[r])*\cdots*(\{N+1\}\times[r])$. If we had $\{1,2\}\subseteq\sigma_i$ for some $1\leq i\leq r$, then this would mean that the edge connecting $(1,i)$ to $(2,i)$ is a face of $(\sigma_1\times\{1\})\cup\cdots\cup(\sigma_r\times\{r\})$, and therefore of $\mathscr{K}_{12}*(\{3\}\times[r])*\cdots*(\{N+1\}\times[r])$ and also of $\mathscr{K}_{12}$, which is a contradiction.
\end{proof}
\begin{remark}
    Corollary \ref{crl:new-tverberg} can not be used to immediately imply a stronger statement about $\{\sigma_1,\ldots,\sigma_r\}$ other than the one written in Corollary \ref{crl:new-unordered-tverberg}. Indeed, if $\Sigma$ is a collection of $r$ many pairwise disjoint subsets of ${[N+1]}$ such that the convex hulls of their images under $A$ have a common point, and $\mathscr{K}_{12}$ is arbitrary like in Corollary \ref{crl:new-tverberg}, then we can nearly always label the elements of $\Sigma$ with $\sigma_1,\ldots,\sigma_r$ such that $(\sigma_1\times\{1\})\cup\cdots\cup(\sigma_r\times\{r\})$ becomes a face of $\mathscr{K}_{12}*(\{3\}\times[r])*\cdots*(\{N+1\}\times[r])$. The only thing we have to pay attention to is that if $1\in\sigma_{i_1}$ and $2\in\sigma_{i_2}$ then there must be an edge in $\mathscr{K}_{12}$ between $(1,i_1)$ and $(2,i_2)$. Any admissible $\mathscr{K}_{12}$ has an edge the between some $(1,j_1)$ and $(2,j_2)$ with $j_1\neq j_2$ if $r\geq 2$, so if $\{1,2\}\notin\sigma_i$ for any $1\leq i\leq r$ then we can just pick $i_1:=j_1$, $i_2:=j_2$, and choose arbitrarily for $i_3,\ldots,i_r$. On the other hand, if $\{1,2\}\subseteq\sigma_i$ then we can still set $i:=j$ if $\mathscr{K}_{12}$ contains an edge between $(1,j)$ and $(2,j)$ for some $j$. In conclusion, the only non-trivial restriction $\mathscr{K}_{12}$ can pose on $\Sigma$ is to prohibit $\{1,2\}\subset\sigma_i$, which it can do by Corollary \ref{crl:new-unordered-tverberg}.
 \end{remark}
On the other hand, Corollary \ref{crl:new-tverberg} follows from Theorem \ref{thm:caratheodory:constrained}, together with Sarkaria's Lemma, whose formulation we take from \cite[Lemma 13.1]{Barany2021}. 
\begin{lemma}[Sarkaria's Lemma]\label{lem:sarkaria}
    Let $W$ be a finite set, $A\colon W\to\R^d$ a function, and $W=W_1\sqcup\cdots\sqcup W_r$ a partition. Put $j(w):=j$ if $w\in W_j$, take $v_1,\ldots,v_r\in\R^{r-1}$ such that any $r-1$ of them are independent and $v_1+\cdots+v_r=0$, and define $\hat{A}\colon W\to\R^{(r-1)(d+1)}\cong\R^{r-1}\otimes\R^{d+1}$ by
    \[
        \hat{A}(w):=v_{j(w)}\otimes
        (A(w),1).
    \] 
    Then $\emptyset=\conv(A(W_1))\cap\cdots\cap\conv(A(W_r))$ if and only if $0\notin\conv(\hat{A}({W}))$. 
\end{lemma}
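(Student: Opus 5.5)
The plan is to prove the equivalence in its contrapositive form,
\[
\conv(A(W_1))\cap\cdots\cap\conv(A(W_r))\neq\emptyset \iff 0\in\conv(\hat{A}(W)),
\]
by proving each implication directly, working in the tensor product $\R^{r-1}\otimes\R^{d+1}$. The one structural fact needed about $v_1,\ldots,v_r$ is the following. The linear map $\R^r\to\R^{r-1}$, $(c_1,\ldots,c_r)\mapsto\sum_j c_jv_j$, is surjective, since any $r-1$ of the $v_j$ form a basis. Hence its kernel is one-dimensional, and because $v_1+\cdots+v_r=0$ it is spanned by $(1,\ldots,1)$.

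For ``$\Rightarrow$'', take $x$ in the intersection. For each $j$ write $x=\sum_{w\in W_j}\lambda_wA(w)$ with $\lambda_w\ge 0$ and $\sum_{w\in W_j}\lambda_w=1$. Then $(x,1)=\sum_{w\in W_j}\lambda_w(A(w),1)$ for every $j$. Set $\mu_w:=\lambda_w/r$; these are nonnegative and sum to $1$ over $W$. We get
\[
\sum_{w\in W}\mu_w\hat{A}(w)=\frac1r\sum_{j=1}^r v_j\otimes(x,1)=\frac1r\Big(\sum_j v_j\Big)\otimes(x,1)=0 .
\]
So $0\in\conv(\hat{A}(W))$.

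For ``$\Leftarrow$'', suppose $0=\sum_{w\in W}\mu_w\hat{A}(w)$ with $\mu_w\ge0$ and $\sum_w\mu_w=1$. Define $y_j:=\sum_{w\in W_j}\mu_w(A(w),1)\in\R^{d+1}$, so that $\sum_j v_j\otimes y_j=0$.

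The key step is to conclude from this that $y_1=\cdots=y_r$. Apply $\id\otimes\varphi$ for an arbitrary linear functional $\varphi$ on $\R^{d+1}$. This gives $\sum_j\varphi(y_j)v_j=0$, so by the kernel fact above the numbers $\varphi(y_j)$ do not depend on $j$. Since $\varphi$ was arbitrary, all the $y_j$ are equal to a common vector $y$.

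Read off the last coordinate: $t:=\sum_{w\in W_j}\mu_w$ is the same for every $j$. Summing over $j$ gives $rt=1$, so $t=1/r>0$. In particular every $W_j$ is nonempty. Let $x$ be the first $d$ coordinates of $y$, divided by $t$. Then for each $j$,
\[
x=\sum_{w\in W_j}(\mu_w/t)A(w)
\]
is a convex combination, so $x\in\conv(A(W_j))$ for all $j$, and the intersection is nonempty.

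The only non-routine point is the tensor-cancellation step, i.e. deducing $y_1=\cdots=y_r$ from $\sum_jv_j\otimes y_j=0$. It is handled by the one-dimensional kernel argument. Everything else is bookkeeping with convex coefficients, plus the remark that the last coordinate $1$ forces the per-class weights to agree, which also takes care of possibly empty $W_j$.
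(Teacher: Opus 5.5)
Your proof is correct and complete. The kernel of $(c_1,\ldots,c_r)\mapsto\sum_j c_jv_j$ is spanned by $(1,\ldots,1)$, which forces the weighted class sums $y_j$ to coincide; the appended coordinate $1$ then makes every class weight equal to $1/r$, which also handles empty classes and the degenerate case $r=1$. The paper gives no proof of this lemma (it quotes it from Lemma 13.1 of B\'ar\'any's 2021 work), and your argument is essentially the standard proof of Sarkaria's lemma.
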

\begin{proof}[Proof of Corollary \ref{crl:new-tverberg}]
    Take $v_1,\ldots,v_r$ like in Lemma \ref{lem:sarkaria}, and define the auxiliary function $A'\colon {[N+1]\times[r]}\to\R^{r-1}\otimes\R^{d+1}$ using the formula
    \[
        A'((i,j)):=v_{j}\otimes
        (A(i),1).
    \]
    As $v_1+\cdots+v_r=0$, necessarily 
    $0=\sum_{j=1}^rr^{-1}v_j\otimes(A(i),1)\in\conv(A'(\{i\}\times [r]))$ 
    for every $1\leq i\leq N+1$. This means that as $N+1\geq (r-1)(d+1)+1$, the constrained colorful Carath\'eodory theorem (Theorem \ref{thm:caratheodory:constrained}) can be applied for $A'$ with $U_1:=\{1\}\times[r],\ldots,U_{N+1}:=\{N+1\}\times[r]$ to get a $\sigma\in\mathscr{K}_{12}*(\{3\}\times[r])*\cdots*(\{N+1\}\times[r])$ with $0\in\conv(A'(\sigma))$. Now put $W_j:=\{1\leq i\leq N+1:(i,j)\in\sigma\}$ for $1\leq j\leq r$, and let $W:=W_1\sqcup\cdots\sqcup W_r$, so that $A'((i,j))$ for $(i,j)\in \sigma$ is equal to the $\hat{A}(i)$ defined in Lemma \ref{lem:sarkaria}. In particular, Lemma just mentioned tells us that $\emptyset\neq\conv(A(W_1))\cap\cdots\cap\conv(A(W_r))$ if and only if $0\in\conv(A'(\sigma))$, where the latter claim is true by definition of $\sigma$, so $\sigma_1:=W_1,\ldots,\sigma_r:=W_r$ satisfies the desired properties.
\end{proof}

\bigskip
\section{Outline of the method}\label{sec:the-method}
\bigskip

In this section we reformulate the method -- originally due to Kalai \& Meshulam \cite{KalaiMeshulam2005} and extended by Holmsen \cite{Holmsen2016} -- for more general complexes than just matroids, in order to apply it in Section \ref{sec:cc-proofs} to prove all of our central results. The presentation here intends to highlight some interesting points where more complicated tools could be substituted in in order to extend the computations to less well-behaved settings.

Recall from the previous section that each colorful Carath\'eodory theorem is parametrized by a logical formula $\mathcal{F}(-)$ and two collections $\mathfrak{U}$ and $\mathfrak{U}'$ of subsets of $V$, in terms of which the theorem states that ``if $\mathcal{F}(U)$ for all $U\in \mathfrak{U}$ then $\mathcal{F}(U')$ for some $U'\in\mathfrak{U}'$ too''. The following
\[
    \mathscr{C}_\mathcal{F}:=\{U\subseteq V: \mathcal{F}(U)=\mathrm{false}\}
\]
is an abstract simplicial complex for each of the four $\mathcal{F}(-)$ that we consider:
\begin{enumerate}[label=\normalfont{(\arabic*)}]
    \item\label{en:complex:zero-avoiding} $\mathcal{F}(U)=$``$0\in\conv(A(U))$'' for a fixed function $A\colon V\to\R^d$,
    \item\label{en:complex:vector-avoiding} $\mathcal{F}(U)=$``$A(v_0)\in\cone(A(U))$'' for a fixed function $A\colon {V\sqcup\{v_0\}}\to\R^d$,
    \item\label{en:complex:support} $\mathcal{F}(U)=$``$U$ contains a positive circuit of $\mathscr{O}$'' for a fixed oriented matroid $\mathscr{O}$ of rank at most $d$ on set of elements $V$,
    \item\label{en:complex:elem-avoiding} $\mathcal{F}(U)=$``$v_0\in\conv(U)$'' interpreted in a fixed oriented matroid $\mathscr{O}$ of rank at most $d$ on set of elements $V\sqcup\{v_0\}$.
\end{enumerate}
In \cite{Holmsen2016} the $\mathscr{C}_\mathcal{F}$ of point \ref{en:complex:support} was called the \emph{support complex} of the oriented matroid $\mathscr{O}$. We will refer to the $\mathscr{C}_\mathcal{F}$ of points \ref{en:complex:zero-avoiding}, \ref{en:complex:vector-avoiding}, and \ref{en:complex:elem-avoiding} as the \emph{zero-avoiding}, \emph{vector-avoiding}, and \emph{element-avoiding} complex respectively.

As $\mathscr{C}_\mathcal{F}$ is an abstract simplicial complex, replacing $\mathfrak{U}'$ in a colorful Carath\'eodory theorem with the smallest abstract simplicial complex $\mathscr{K}$ containing it gives the equivalent statement ``if $\mathcal{F}(U)$ for all $U\in\mathfrak{U}$ then $\mathcal{F}(U')$ for some $U'\in\mathscr{K}$'', which can be further rephrased as ``if $\mathfrak{U}\cap\mathscr{C}_\mathcal{F}=\emptyset$ then $\mathscr{K}\nsubseteq\mathscr{C}_\mathcal{F}$''. Statements of this last form have already been established by Kalai \& Meshulam \cite[Theorem 1.6]{KalaiMeshulam2005} and Holmsen \cite[Theorem 1.4]{Holmsen2016}, where $\mathscr{K}$ is demanded to be a matroid and $\mathscr{C}_\mathcal{F}$ satisfies one of the following two properties:
\begin{definition}
    A simplicial complex $\mathscr{C}$ is \emph{near-$(d-1)$-Leray (with coefficients in the field $\mathbb{F}$)} if $\tilde{H}^n(\mathscr{C};\mathbb{F})=0$ for all $n\geq d$ and $\tilde{H}^n(\link_{\mathscr{C}}(\sigma);\mathbb{F})=0$ for each simplex $\emptyset\neq\sigma\in\mathscr{C}$ and $n\geq d-1$. $\mathscr{C}$ is \emph{$(d-1)$-Leray} (with coefficients in the field $\mathbb{F}$) if $\tilde{H}^n(\link_{\mathscr{C}}(\sigma);\mathbb{F})=0$ for all $n\geq d-1$ and $\emptyset\subseteq\sigma\in\mathscr{C}$.
\end{definition}
Here $\link_{\mathscr{C}}(\sigma)$ denotes the \emph{link} of $\sigma$ in the abstract simplicial complex $\mathscr{C}$, which is the subcomplex of $\mathscr{C}$ defined as
\[
    \link_{\mathscr{C}}(\sigma):=\{\tau\in\mathscr{C}:\tau\cap\sigma=\emptyset,\tau\cup\sigma\in\mathscr{C}\}.
\]
With coefficients in a fixed field $\mathbb{F}$, the cohomology $H^n(X)$ of any space $X$ is naturally isomorphic to the dual $H_n(X)^*$ of the homology $H_n(X)$ \cite[Corollary 53.6]{Munkres1984}, so there is an isomorphism $H^n(X)\cong H_n(X)$ whenever the latter is finite dimensional. From this it easily follows that $\tilde{H}^n(X)\cong\tilde{H}_n(X)$ whenever the latter is finite dimensional. In particular, near-$(d-1)$-Lerayness and $(d-1)$-Lerayness could have been defined using reduced homology groups in place of reduced cohomology groups, like it is done in e.g. \cite{Holmsen2016}. We frame the definition differently here, because this is the version which we will always use.

\cite[Proposition 1.3]{Holmsen2016} states that the support complex of an oriented matroid $\mathscr{O}$ is near-$(\operatorname{rank}(\mathscr{O})-1)$-Leray. From this it follows that zero-avoiding complexes are also near-$(d-1)$-Leray, see \cite[p. 4]{Holmsen2016}. We prove that vector- and element-avoiding complexes are $(d-1)$-Leray in Section \ref{sec:avoiding-complexes}. In fact, we will study the (co)homology groups of these complexes more completely than demanded by the $(d-1)$-Leray property, in order to pave the road for future applications of the method described in this section by enabling more detailed computations.

While \cite[Theorem 1.6]{KalaiMeshulam2005} and \cite[Theorem 1.4]{Holmsen2016} ask for $\mathscr{K}$ to be a matroid, and describe $\mathfrak{U}$ in terms of this structure, we are also interested in cases where such an assumption is not fulfilled, hence we present a reformulation of these two theorems below. Its proof can be extracted from either of the two papers mentioned, after observing that the only property of matroids used there is \cite[Lemma 3.2]{Holmsen2016} (a special case of \cite[Proposition 6.2]{Bjorner1995}). Namely, if $\mathscr{K}$ is a matroid and $U\subseteq V$, then $\mathscr{K}[U]$ is homologically $(\rho_\mathscr{K}[U]-2)$-connected, and Theorem \ref{thm:holmsen-repackaged} specializes to \cite[Theorem 1.6]{KalaiMeshulam2005} and \cite[Theorem 1.4]{Holmsen2016} respectively.
\begin{theorem}\label{thm:holmsen-repackaged}
    Fix a field $\mathbb{F}$ to be the coefficient group of all (co)homology groups mentioned. Let $d\geq 0$, $V$ a nonempty finite set, $\mathscr{C}$ a near-$(d-1)$-Leray simplicial complex with ground set $V$, and $\mathscr{K}$ a homologically $(d-1)$-connected simplicial complex with vertex set $V$ such that $\mathscr{K}[V-\sigma]$ is homologically $(d-2)$-connected for all $\sigma\in\mathscr{C}$. Then $\mathscr{K}\nsubseteq\mathscr{C}$.
    The same holds if $\mathscr{K}$ is not assumed to be homologically $(d-1)$-connected but at the same time $\mathscr{C}$ is $(d-1)$-Leray.
\end{theorem}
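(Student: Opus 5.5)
We argue by contradiction: assume $\mathscr{K}\subseteq\mathscr{C}$, and derive a contradiction by induction on $|V|$ using the cohomology long exact sequences attached to deleting a vertex. This follows the Kalai--Meshulam argument, with Holmsen's bookkeeping for the near-Leray case. The only property of $\mathscr{K}$ used is the connectivity hypothesis on the induced subcomplexes $\mathscr{K}[V-\sigma]$, which is exactly what matroidness supplied in \cite{KalaiMeshulam2005,Holmsen2016}.

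\textbf{The $(d-1)$-Leray case.} Here we prove the stronger auxiliary statement $(\ast)$: \emph{if $\mathscr{C}$ is $(d-1)$-Leray on $V$, and $\mathscr{K}\subseteq\mathscr{C}$ has vertex set $V$, then some $\sigma\in\mathscr{C}$ has $\mathscr{K}[V-\sigma]$ not homologically $(d-2)$-connected.} We induct on $|V|+d$; the base cases $d=0$ and $|V|\le 1$ are checked directly, using the convention that the empty complex is not $(-1)$-connected. For the step we fix a vertex $v$ and use two facts:
\begin{enumerate}[label=\normalfont{(\alph*)}]
\item $\mathscr{C}-v$ is again $(d-1)$-Leray, since links in an induced subcomplex are induced subcomplexes of links, and $(d-1)$-Lerayness passes to induced subcomplexes.
\item $\link_{\mathscr{C}}(v)$ is $(d-2)$-Leray as a complex, because $\tilde H^{n}(\link_{\mathscr{C}}(v))=0$ for $n\ge d-1$ together with the Leray condition on its links.
\end{enumerate}
These feed into the Mayer--Vietoris sequence for $\mathscr{C}=(\mathscr{C}-v)\cup\operatorname{star}(v)$,
\[
\cdots\to\tilde H^{n-1}(\mathscr{C}-v)\to\tilde H^{n-1}(\link_{\mathscr{C}}(v))\to\tilde H^{n}(\mathscr{C})\to\tilde H^{n}(\mathscr{C}-v)\to\cdots,
\]
and the analogous sequence for $\mathscr{K}=(\mathscr{K}-v)\cup\operatorname{star}_{\mathscr{K}}(v)$. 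These two sequences are linked by the restriction maps induced by $\mathscr{K}\subseteq\mathscr{C}$.

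Suppose every $\mathscr{K}[V-\sigma]$ with $\sigma\in\mathscr{C}$ is $(d-2)$-connected. We split into two cases.
\begin{enumerate}[label=\normalfont{(\roman*)}]
\item If $\{v\}\in\mathscr{C}$, the hypothesis applies to the faces $\sigma\ni v$. These are exactly the sets $\sigma=\{v\}\cup\tau$ with $\tau\in\link_{\mathscr{C}}(v)$, so it says that $\mathscr{K}[(V-v)-\tau]$ is $(d-2)$-connected. We then apply the induction hypothesis in dimension $d-1$ to $\link_{\mathscr{C}}(v)$ on the ground set $V-v$, paired with $\mathscr{K}-v$. This requires $\mathscr{K}-v\subseteq\link_{\mathscr{C}}(v)$, which fails in general; the fix is to pass to $\mathscr{K}' := \mathscr{K}[V-v]\cap\link_{\mathscr{C}}(v)$. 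One then shows from the Mayer--Vietoris sequences and the connectivity hypotheses that $\mathscr{K}'$ still has the required induced connectivity in one dimension less.
\item If $\{v\}\notin\mathscr{C}$, then $v$ is not a vertex of $\mathscr{K}$, contradicting $\vertex(\mathscr{K})=V$.
\end{enumerate}
I expect this \emph{transfer of the induced-connectivity hypothesis from $\mathscr{K}$ to the relevant subcomplex of the link} to be the main obstacle. It is exactly where \cite[Lemma 3.2]{Holmsen2016} is used in the matroid case, so I would first try to mimic that proof line by line, replacing every appeal to the rank function by the assumed $(d-2)$-connectivity of $\mathscr{K}[V-\sigma]$.

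\textbf{The near-$(d-1)$-Leray case.} Assume in addition that $\mathscr{K}$ is $(d-1)$-connected. Near-Lerayness controls the links of all nonempty faces, so the link and deletion steps above go through verbatim. The only missing input is the vanishing of $\tilde H^{d-1}(\mathscr{C})$ itself, and this is exactly where the extra assumption enters. The inclusion $\mathscr{K}\subseteq\mathscr{C}$, combined with $\tilde H^{d-1}(\mathscr{K})=0$ and $\tilde H^{n}(\mathscr{C})=0$ for $n\ge d$, lets us run the top-degree part of the long exact sequence: the connecting map out of $\tilde H^{d-2}(\link_{\mathscr{C}}(v))$ is controlled through its image in $\tilde H^{d-1}(\mathscr{K})=0$. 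In this way the obstruction class produced in the Leray argument is forced to vanish, which gives the contradiction.
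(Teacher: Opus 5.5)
Your argument has a genuine gap, and it starts at step (b), which is false. Your hypotheses give $\tilde H^n(\link_{\mathscr{C}}(v))=0$ only for $n\ge d-1$. Since $\link_{\link_{\mathscr{C}}(v)}(\tau)=\link_{\mathscr{C}}(\tau\cup\{v\})$, all that follows is that $\link_{\mathscr{C}}(v)$ is again $(d-1)$-Leray; $d$ does not drop. For example, the cone with apex $v$ over $\partial\Delta^2$ is $2$-Leray, while $\link(v)\cong S^1$ is not $1$-Leray. So the induction ``in dimension $d-1$'' in case (i) is unavailable.

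If you instead apply $(\ast)$ to $\link_{\mathscr{C}}(v)$ with the same $d$, the transfer step you flag is not a technicality but the entire content of the theorem, and nothing available to you supplies it. First, $\mathscr{K}'=\mathscr{K}[V-v]\cap\link_{\mathscr{C}}(v)$ need not have vertex set $V-v$: every $u$ with $\{u,v\}\notin\mathscr{C}$ drops out. Second, $\mathscr{K}'[(V-v)-\tau]$ is merely a subcomplex, not an induced subcomplex, of $\mathscr{K}[(V-v)-\tau]$, so the assumed connectivity of the latter says nothing about the former. Holmsen's Lemma~3.2 cannot be mimicked here. It only says that $\mathscr{M}[U]$ is homologically $(\rho_{\mathscr{M}}(U)-2)$-connected, i.e.\ it is what \emph{verifies} the hypothesis on $\mathscr{K}[V-\sigma]$ when $\mathscr{K}$ is a matroid; it never transfers connectivity to links.

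Your near-Leray paragraph likewise has no working mechanism. $\tilde H^{d-1}(\mathscr{C})$ is typically nonzero in the applications: e.g.\ $\mathscr{C}_A\simeq S^{d-1}$ whenever $0\in\interior(\conv(A(V)))$, the typical situation of Theorem~\ref{thm:caratheodory:original}. So it could only be handled through the interaction with $\mathscr{K}$, and the map you invoke goes the wrong way: $\tilde H^{d-1}(\mathscr{K})=0$ says nothing about the source of the restriction $\tilde H^{d-1}(\mathscr{C})\to\tilde H^{d-1}(\mathscr{K})$.

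The missing idea is a device that globalizes the containment $\mathscr{K}\subseteq\mathscr{C}$, which does not localize at a single vertex. The paper, following Kalai--Meshulam (whose argument is this one, not a vertex-deletion induction), takes two disjoint copies $V^{(1)},V^{(2)}$ of $V$. It observes that $\mathscr{C}^**\mathscr{K}$, colored by the pairs $\{v^{(1)},v^{(2)}\}$, has a colorful face if and only if $\mathscr{K}\nsubseteq\mathscr{C}$ (Lemma~\ref{lem:colorful-faces-vs-containment}). By Meshulam's Rainbow Simplex Lemma (Lemma~\ref{lem:meshulam-colorful}, proved via the Homological Nerve Theorem), it then suffices that $\mathscr{C}^*[U^{(1)}]*\mathscr{K}[U^{(2)}]$ is homologically $(|U|-2)$-connected for every nonempty $U\subseteq V$. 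Combinatorial Alexander duality gives $\tilde H_n(\mathscr{C}^*[U])\cong\tilde H^{|U|-3-n}(\link_{\mathscr{C}}(V-U))$, and the K\"unneth formula for joins settles three cases:
\begin{enumerate}
\item If $V-U\notin\mathscr{C}$, the link is void and the join is acyclic.
\item If $\emptyset\neq V-U\in\mathscr{C}$, Lerayness of that link makes $\mathscr{C}^*[U]$ homologically $(|U|-2-d)$-connected, while $\mathscr{K}[U]$ is $(d-2)$-connected by hypothesis.
\item If $U=V$, near-Lerayness only gives $(|U|-3-d)$-connectivity of $\mathscr{C}^*$. This lost degree is exactly what the $(d-1)$-connectivity of $\mathscr{K}$ pays for; under full $(d-1)$-Lerayness it is not lost, which gives the second statement.
\end{enumerate}
That trade-off in the single term $U=V$ is the correct replacement for your ``obstruction class'' argument. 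Each hypothesis is used exactly once, and no induction on $|V|$ or $d$ is needed.
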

Recall that a space $X$ is \emph{homologically $c$-connected (with coefficients in $G$)} if $\tilde{H}_n(X;G)=0$ for all $-1\leq n\leq c$. It will be checked in Section \ref{sec:cc-proofs} that $\mathscr{K}$ satisfies the homological connectivity assumption of Theorem \ref{thm:holmsen-repackaged} with $\mathscr{C}:=\mathscr{C}_{\mathcal{F}}$, independently of which $\mathcal{F}$ we choose from the list above and which colorful Carath\'eodory theorem we take $\mathscr{K}$ from. In particular, we can conclude that there is a $U'\in\mathscr{K}$ such that $U'\notin\mathscr{C}_{\mathcal{F}}$, and this latter property is equivalent to saying that $\mathcal{F}(U')=\mathrm{true}$, proving the colorful Carath\'eodory theorems.

The rest of this section is dedicated to proving Theorem \ref{thm:holmsen-repackaged}. After we review Alexander duality, joins, and some auxiliary constructions in Section \ref{sec:the-method:sc-tools}, we give a proof of Theorem \ref{thm:holmsen-repackaged} in Section \ref{sec:the-method:holmsen}. This is essentially a retelling of \cite{Holmsen2016}, particularly of the proof of Theorem 1.2 therein. Using a lemma of Meshulam regarding the existence of colorful simplices (\cite[Proposition 1.6]{Meshulam2003}, see also \cite[Proposition 3.1]{KalaiMeshulam2005}), the discussion there is rather short. In Section \ref{sec:the-method:covering-scheme} we elaborate on the aforementioned lemma of Meshulam, reducing it to his Homological Nerve Theorem, following the exposition in \cite[Proposition 3.1]{KalaiMeshulam2005}. We choose to do this because the Homological Nerve Theorem can be replaced by a spectral sequence, which should enable partial computations in situations where the assumptions of this theorem fail.

\medskip
\subsection{Alexander duality, joins, and colorful faces}\label{sec:the-method:sc-tools}
\medskip

The core idea in the proof of Theorem \ref{thm:holmsen-repackaged}, coming from \cite{KalaiMeshulam2005} and also used in \cite{Holmsen2016}, is to study the simplicial complex
\[
    \mathscr{C}^**\mathscr{K}
\]
together with a particular colouring of its ground set. In this subsection we review -- following the exposition of \cite[Sections 3.1, 3.2]{Holmsen2016} -- the definition and properties of the operations $\mathscr{X}\mapsto\mathscr{X}^*$ and $(\mathscr{X},\mathscr{Y})\mapsto\mathscr{X}*\mathscr{Y}$, as well as the relevance of the colouring of this complex to Theorem \ref{thm:holmsen-repackaged}.

Recall that an \emph{abstract simplicial complex} on the finite ground set $V$ is a subset $\mathscr{X}\subseteq 2^V$ of the power set $2^V$ of $V$, such that if $\sigma\in \mathscr{X}$ and $\sigma'\subseteq\sigma$ then $\sigma'\in \mathscr{X}$. A \emph{face} of $\mathscr{X}$ is an element $\sigma\in \mathscr{X}$, and we say it is of \emph{dimension} $\dim\sigma:=|\sigma|-1$. Note that $\emptyset$ is the unique $-1$-dimensional face of all simplicial complexes except for $\mathscr{X}=\emptyset$. We will refer to $\mathscr{X}=\emptyset$ as the \emph{void complex}, and to $\mathscr{X}=\{\emptyset\}$ as the \emph{empty complex}. A \emph{vertex} of $\mathscr{X}$ is a $0$-dimensional face, and the \emph{set of vertices} of $\mathscr{X}$ is the union of all vertices -- this might be different from $V$. When we say that $\mathscr{X}$ is \emph{on the vertex set $V$} we mean that $V$ is the set of vertices of $\mathscr{X}$. If $\sigma=\{v\}$ is a vertex of $\mathscr{X}$ then sometimes we will also refer to $v\in V$ as a vertex of $\mathscr{X}$, but this shall cause no confusion. If $U\subseteq V$ then we denote by $\mathscr{X}[U]$ the \emph{induced subcomplex} of $\mathscr{X}$ on the ground set $U$, which is just $\{\sigma\in \mathscr{X}:\sigma\subseteq U\}$.

Any simplicial complex $\mathscr{X}\neq\emptyset$ has a geometric realization where to each $\sigma\in \mathscr{X}$ there is associated a $\dim\sigma$-dimensional geometric simplex $\Delta_\sigma$, and if $\sigma'\subseteq\sigma$ then $\Delta_{\sigma'}$ is glued to the appropriate face of $\Delta_\sigma$. To the void complex $\mathscr{X}=\emptyset$ we do not associate a geometric realization, but by convention say that $\tilde{H}_n(\mathscr{X})$ and $\tilde{H}^n(\mathscr{X})$ vanish for all coefficient groups and $n\in\Z$, even $n=-1$. When talking about (abstract) simplicial complexes we always simultaneously think of the abstract simplicial complex and its geometric realization, and we denote the two using the same letter, although this shall cause no confusion either.

If $\mathscr{X}$ is an abstract simplicial complex on the finite ground set $V$, then its \emph{Alexander dual} is the abstract simplicial complex
\[
    \mathscr{X}^*:=\{U\subseteq V: V-U\notin \mathscr{X}\}
\]
defined on the same ground set. Clearly $(\mathscr{X}^*)^*=\mathscr{X}$, and it is easily verified that $\link_\mathscr{X}(\sigma)=(\mathscr{X}^*[V-\sigma])^*$. The (co)homology groups of $\mathscr{X}$ and $\mathscr{X}^*$ are related through the theorem below, referred to as \emph{combinatorial Alexander duality}, a proof of which can be found in for example \cite{BjornerTancer2009}.
\begin{theorem}\label{thm:alexander-duality}
    Let $\mathscr{X}$ be a simplicial complex on the ground set $V$. Then for every commutative unital ring $R$ and $n\in\Z$ there is an isomorphism
    \[
        \tilde{H}_n(\mathscr{X};R)\cong\tilde{H}^{|V|-3-n}(\mathscr{X}^*;R).
    \]
\end{theorem}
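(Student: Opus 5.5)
The plan is a purely combinatorial chain-level argument, so that it works over any commutative unital ring $R$ and needs no point-set topology. Write $\Delta:=2^V$ for the full simplex on $V$. Throughout, $\tilde C_\bullet$ denotes the \emph{augmented} simplicial chain complex, in which the empty face $\emptyset$ is a generator in degree $-1$. The key fact is that $\tilde C_\bullet(\Delta;R)$ is acyclic for $V\neq\emptyset$: it is the augmented chain complex of a simplex, and a cone on any vertex gives an explicit contracting homotopy.

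\emph{Step 1: long exact sequence.} For a complex $\mathscr{X}\subseteq\Delta$, the short exact sequence
\[
0\to\tilde C_\bullet(\mathscr{X})\to\tilde C_\bullet(\Delta)\to Q_\bullet\to 0
\]
defines the quotient $Q_\bullet$. As an $R$-module, $Q_\bullet$ is free on the non-faces $\sigma\in\Delta-\mathscr{X}$, with $\sigma$ in degree $|\sigma|-1$. Its differential is the simplicial boundary with all terms lying in $\mathscr{X}$ discarded. Acyclicity of $\tilde C_\bullet(\Delta)$ and the long exact sequence give $\tilde H_n(\mathscr{X})\cong H_{n+1}(Q_\bullet)$ for all $n$.

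\emph{Step 2: identify $Q_\bullet$ with the cochains of $\mathscr{X}^*$.} Complementation $\sigma\mapsto V-\sigma$ is a bijection from $\Delta-\mathscr{X}$ onto $\mathscr{X}^*$ by the definition of the Alexander dual. A non-face $\sigma$ of degree $n+1$ has $|\sigma|=n+2$, so $V-\sigma$ has dimension $|V|-n-3$, which is the required degree.

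Fix a linear order on $V$. I would define $\Phi(\sigma):=\varepsilon(\sigma)\,(V-\sigma)^\vee$, where $(V-\sigma)^\vee$ is the elementary cochain dual to the face $V-\sigma$. The sign $\varepsilon(\sigma)$ is that of the shuffle permutation putting $\sigma$ followed by $V-\sigma$ into the fixed order of $V$.

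The claim to verify is that $\Phi$ intertwines the truncated boundary on $Q_\bullet$ with the simplicial coboundary on $\tilde C^\bullet(\mathscr{X}^*)$, up to a global sign depending only on degree. Deleting $v$ from $\sigma$ corresponds to adding $v$ to $V-\sigma$, so the terms match one for one. Terms discarded in $Q_\bullet$ (those $\sigma-v\in\mathscr{X}$) are exactly those with $(V-\sigma)\cup v\notin\mathscr{X}^*$, which are absent from the coboundary. This yields $H_{n+1}(Q_\bullet)\cong\tilde H^{|V|-3-n}(\mathscr{X}^*;R)$, and combined with Step 1 the theorem follows.

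\emph{Main obstacle and edge cases.} The step I expect to be fiddly is the sign bookkeeping in Step 2. One must check that $\varepsilon(\sigma-v)\,[\sigma:\sigma-v]$ and $\varepsilon(\sigma)\,[(V-\sigma)\cup v:V-\sigma]$ differ by a factor that depends only on $|\sigma|$. I would do this by writing both incidence signs as $(-1)^{\#\{w\in\sigma:w<v\}}$ and $(-1)^{\#\{w\in V-\sigma:w<v\}}$, and noting that moving $v$ across the shuffle changes $\varepsilon$ by $(-1)^{\#\{w<v\}+(\text{position data})}$. A degree-dependent sign does not affect (co)homology.

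Finally, I would treat the degenerate cases separately, using the paper's convention that the void complex has vanishing reduced (co)homology. If $V=\emptyset$, the complexes are only $\emptyset$ and $\{\emptyset\}$, which are dual to each other, and the isomorphism can be checked directly in degree $n=-1$ versus $|V|-3-n=-2$. If $\mathscr{X}=\Delta$, then $\mathscr{X}^*$ is void and both sides vanish. If $\mathscr{X}=\emptyset$, then $\mathscr{X}^*=\Delta$ and both sides again vanish. Apart from $V=\emptyset$, these cases are already covered by the general argument provided the augmented conventions are applied consistently.
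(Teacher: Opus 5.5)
The paper gives no proof of this statement and only cites \cite{BjornerTancer2009}. For $V\neq\emptyset$, your Steps 1 and 2 are a correct self-contained proof. The sign bookkeeping you anticipated does close up. Take $\varepsilon(\sigma)=(-1)^{\#\{(a,b):a\in\sigma,\ b\in V-\sigma,\ a>b\}}$. Deleting $v$ from $\sigma$ loses the inversions $(v,b)$ with $b\in V-\sigma$, $b<v$, and gains the inversions $(a,v)$ with $a\in\sigma$, $a>v$. Hence $\varepsilon(\sigma-v)\,\varepsilon(\sigma)=(-1)^{\#\{w\in\sigma:w>v\}+\#\{w\in V-\sigma:w<v\}}$, and therefore
\[
[\sigma:\sigma-v]\,\varepsilon(\sigma-v)=(-1)^{|\sigma|-1}\,\varepsilon(\sigma)\,[(V-\sigma)\cup v:V-\sigma].
\]
So $\Phi\circ\partial_Q=(-1)^{k}\,\delta\circ\Phi$ on $Q_k$, which is enough. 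Combined with your matching of the discarded terms, this identifies $H_{n+1}(Q_\bullet)$ with $\tilde H^{|V|-3-n}(\mathscr{X}^*;R)$. This also covers the void and full-simplex cases when $V\neq\emptyset$.

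Your treatment of $V=\emptyset$ is wrong. The direct check you propose in degrees $-1$ versus $-2$ does not succeed, because under the paper's conventions the statement is false there (unless $R=0$).
\begin{itemize}
\item For $V=\emptyset$ and $\mathscr{X}=\{\emptyset\}$, the dual $\mathscr{X}^*$ is the void complex. So $\tilde H_{-1}(\mathscr{X};R)=R$, while $\tilde H^{-2}(\mathscr{X}^*;R)=0$.
\item Symmetrically, for $\mathscr{X}$ void one gets $\mathscr{X}^*=\{\emptyset\}$. Then $\tilde H^{-1}(\mathscr{X}^*;R)=R$ sits opposite $\tilde H_{-2}(\mathscr{X};R)=0$.
\end{itemize}
This is exactly where your Step 1 breaks, since the augmented chain complex of $2^{\emptyset}=\{\emptyset\}$ is not acyclic, and nothing repairs it. Rather than claiming a verification, you should add the hypothesis $V\neq\emptyset$, which the statement as written omits. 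This costs nothing for the paper. The theorem is only used in Section~\ref{sec:the-method:holmsen}, where it is applied to $\mathscr{C}^*[U^{(1)}]$ on the ground set $U^{(1)}$ with $U\neq\emptyset$.
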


Next we recall that the \emph{join} $\mathscr{X}*\mathscr{Y}$ of two abstract simplicial complexes $\mathscr{X}$ and $\mathscr{Y}$ on ground sets $V$ and $V'$ with $V\cap V'=\emptyset$ is the simplicial complex on the ground set $V\sqcup V'$ defined as
\[
    \mathscr{X}*\mathscr{Y}:=\{\sigma\cup\tau:\sigma\in \mathscr{X},\tau\in \mathscr{Y}\}.
\]
Some convenient sources to learn about joins are \cite{Walker1988} and \cite{Whitehead1956}. In particular, the join operation can be extended to any pair of topological spaces as well \cite[p. 99]{Walker1988}, and the resulting operation satisfies $X*Y\simeq X'*Y'$ whenever $X\simeq X'$ and $Y\simeq Y'$, as well as the property that $X*\operatorname{pt}\simeq\operatorname{pt}$ \cite[p. 56]{Whitehead1956}; here $\simeq$ stands for ``is homotopy equivalent to''. The homology of a join is given by the K\"unneth formula \cite[Equation (2.3)]{Whitehead1956}, which states that
\begin{equation}
    \tilde{H}_n(X*Y;\mathbb{F})\cong\bigoplus_{x+y=n-1}\tilde{H}_x(X;\mathbb{F})\otimes_{\mathbb{F}}\tilde{H}_y(Y;\mathbb{F})
\end{equation}
with coefficients in a field $\mathbb{F}$. In particular, if $X$ is homologically $c_X$-connected and $Y$ is homologically $c_Y$ connected with coefficients in $\mathbb{F}$, then $X*Y$ is homologically $(c_X+c_Y+2)$-connected. Both the K\"unneth formula and the homological connectivity result hold true for all simplicial complexes $\mathscr{X}$ and $\mathscr{Y}$ too, even if one or both of $\mathscr{X}$ and $\mathscr{Y}$ is the void- or empty complex.

Although the definitions of the Alexander dual and joins are now reviewed, we have not yet defined the complex $\mathscr{C}^**\mathscr{K}$, as $\mathscr{C}^*$ and $\mathscr{K}$ have the same ground set, but this can be remedied as follows. If $\mathscr{X}^*$ and $\mathscr{Y}$ are any two abstract simplicial complexes on the common ground set $V$, then take two disjoint copies $V^{(1)},V^{(2)}$ of $V$, and view $\mathscr{X}^*$ as a simplicial complex on the ground set $V^{(1)}$ and $\mathscr{Y}$ as a simplicial complex on the ground set $V^{(2)}$. If $V=\{v_0,\ldots,v_m\}$ and $0\leq j\leq m$ then denote by $v_j^{(1)}$ and $v_j^{(2)}$ the copies of $v_j$ inside $V^{(1)}$ and $V^{(2)}$ respectively; in particular, $V^{(1)}=\{v_0^{(1)},\ldots,v_m^{(1)}\}$ and $V^{(2)}=\{v_0^{(2)},\ldots,v_m^{(2)}\}$. This notation gives rise to a colouring (partition) of the ground set $V^{(1)}\sqcup V^{(2)}$ of $\mathscr{X}^**\mathscr{Y}$ as follows:
\[
    V^{(1)}\sqcup V^{(2)}=\bigsqcup_{0\leq j\leq m}C_j=\bigsqcup_{0\leq j\leq m}\{v_j^{(1)},v_j^{(2)}\}.
\]
Of course, $C_j:=\{v_j^{(1)},v_j^{(2)}\}$. Given a simplicial complex whose ground set is coloured by some colours, say that a face is \emph{colorful} if it has exactly one vertex from each colour. In particular, a face $\sigma$ of $\mathscr{Z}:=\mathscr{X}^**\mathscr{Y}$ is colorful if $|\sigma\cap C_j|= 1$ for each $0\leq j\leq m$. The existence of colorful faces is tightly linked to the conclusion of Theorem \ref{thm:holmsen-repackaged} through the lemma below, which appears as Claim 3.4 in \cite{Holmsen2016}.
\begin{lemma}\label{lem:colorful-faces-vs-containment}
    $\mathscr{Z}$ has a colorful face if and only if $\mathscr{Y}\nsubseteq \mathscr{X}$.
\end{lemma}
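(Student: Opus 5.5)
The plan is to unwind the definitions and match colorful faces of $\mathscr{Z}=\mathscr{X}^**\mathscr{Y}$ with faces of $\mathscr{Y}$ that are not faces of $\mathscr{X}$. First, note that any face of the join has the form $\tau^{(1)}\sqcup\sigma^{(2)}$ with $\tau\in\mathscr{X}^*$ and $\sigma\in\mathscr{Y}$, where $\tau^{(1)}\subseteq V^{(1)}$ and $\sigma^{(2)}\subseteq V^{(2)}$ are the corresponding copies. The color class $C_j$ meets this face in $\{v_j^{(1)}\}$ if $v_j\in\tau$, and in $\{v_j^{(2)}\}$ if $v_j\in\sigma$. Therefore the face is colorful exactly when every $v_j$ lies in exactly one of $\tau,\sigma$. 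In other words, $\tau$ and $\sigma$ must partition $V$, so $\tau=V-\sigma$.

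Next comes the key translation. By definition of the Alexander dual, $V-\sigma\in\mathscr{X}^*$ if and only if $V-(V-\sigma)=\sigma\notin\mathscr{X}$. Hence colorful faces of $\mathscr{Z}$ are in bijection with sets $\sigma\subseteq V$ such that $\sigma\in\mathscr{Y}$ and $\sigma\notin\mathscr{X}$, via $\sigma\mapsto (V-\sigma)^{(1)}\sqcup\sigma^{(2)}$.

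Both directions now follow. If $\mathscr{Z}$ has a colorful face, its $V^{(2)}$-part gives a face $\sigma\in\mathscr{Y}-\mathscr{X}$, so $\mathscr{Y}\nsubseteq\mathscr{X}$. Conversely, if $\sigma\in\mathscr{Y}-\mathscr{X}$, then $V-\sigma\in\mathscr{X}^*$, and $(V-\sigma)^{(1)}\sqcup\sigma^{(2)}$ is a face of the join that meets each $C_j$ exactly once.

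The only delicate step is the bookkeeping for degenerate cases. If $\mathscr{X}$ is the void complex, then $\mathscr{X}^*=2^V$ and any $\sigma\in\mathscr{Y}$, including $\emptyset$, works. If $\mathscr{X}=2^V$, then $\mathscr{X}^*=\emptyset$ is void, so the join is void and has no faces; consistently, $\mathscr{Y}\subseteq\mathscr{X}$. The convention that $\emptyset\in\mathscr{X}^*$ if and only if $V\notin\mathscr{X}$ must be checked to fit the join definition, but I expect it to be consistent throughout. I do not anticipate any real obstacle.
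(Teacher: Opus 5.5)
Your proof is correct: a face $\tau^{(1)}\sqcup\sigma^{(2)}$ of $\mathscr{X}^**\mathscr{Y}$ is colorful exactly when $\tau=V-\sigma$, and by the definition of the Alexander dual $V-\sigma\in\mathscr{X}^*$ is equivalent to $\sigma\notin\mathscr{X}$. The paper does not reprove the lemma but cites it as Claim 3.4 of Holmsen, which is the same unwinding of definitions. Your separate check of degenerate cases is unnecessary, since the bijection $\sigma\mapsto(V-\sigma)^{(1)}\sqcup\sigma^{(2)}$ between $\mathscr{Y}-\mathscr{X}$ and the colorful faces already covers void and empty complexes.
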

In particular, $\mathscr{C}^**\mathscr{K}$ has a colorful face if and only if $\mathscr{K}\nsubseteq\mathscr{C}$, i.e., if and only if the conclusion of Theorem \ref{thm:holmsen-repackaged} holds.

\medskip
\subsection{A short proof of Theorem \ref{thm:holmsen-repackaged} using colorful faces}\label{sec:the-method:holmsen}
\medskip

In this subsection we extract a proof of Theorem \ref{thm:holmsen-repackaged} from \cite{Holmsen2016} (itself building on \cite{KalaiMeshulam2005}), in particular as an application of the following Rainbow Simplex Lemma of Meshulam \cite[Proposition 1.6]{Meshulam2003} (see also \cite[Proposition 3.1]{KalaiMeshulam2005}). The proof of this lemma will be reviewed in Section \ref{sec:the-method:covering-scheme}.
\begin{lemma}[Rainbow Simplex Lemma]\label{lem:meshulam-colorful}
    Let $m\geq0$, $\mathscr{Z}$ be a simplicial complex on the ground set $W=\bigsqcup_{0\leq j\leq m}C_j$ which is coloured by $m+1$ colours such that $C_j$ contains a vertex of $\mathscr{Z}$ for each $0\leq j\leq m$. If for any nonempty subset of colors $J\subseteq\{0,\ldots,m\}$ the complex $\mathscr{Z}[\bigsqcup_{j\in J}C_j]$ is homologically $(|J|-2)$-connected with coefficients in a fixed field $\mathbb{F}$ then $\mathscr{Z}$ contains a colorful face.
\end{lemma}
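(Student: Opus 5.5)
The plan is to prove the Rainbow Simplex Lemma by induction on $m$ via Meshulam's Homological Nerve Theorem, following \cite[Proposition 3.1]{KalaiMeshulam2005}. The base case $m=0$ is immediate. Take $J=\{0\}$: then $\mathscr{Z}[C_0]$ contains a vertex, and any vertex $\{w\}$ with $w\in C_0$ is a colorful face. (The hypothesis that $\mathscr{Z}[C_0]$ is homologically $(-1)$-connected says the same thing, since $\tilde{H}_{-1}$ vanishes exactly when the complex is nonempty.)

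For the inductive step I argue by contradiction and assume $\mathscr{Z}$ has no colorful face. For each vertex $w\in C_m$ set $\mathscr{L}_w:=\link_{\mathscr{Z}}(\{w\})[W-C_m]$, a complex colored by the $m$ colors $C_0,\ldots,C_{m-1}$. A colorful face of some $\mathscr{L}_w$ would yield a colorful face of $\mathscr{Z}$ after adding $w$. So every $\mathscr{L}_w$ is colorful-free, and by induction some subfamily of colors $J\subseteq\{0,\ldots,m-1\}$ violates the connectivity hypothesis for $\mathscr{L}_w$.

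The cleanest way to turn this into a contradiction is Meshulam's covering argument. Write $\mathscr{Y}:=\mathscr{Z}[W-C_m]$ and consider the cover of $\mathscr{Z}$ by the subcomplex $\mathscr{Y}$ together with the stars $\mathrm{st}(w)$ for $w\in C_m$. The homological nerve theorem, or equivalently a Mayer--Vietoris spectral sequence, relates the homology of $\mathscr{Z}$ to that of $\mathscr{Y}$ and of the intersections. Because $C_m$ is independent within the color class, intersections of stars lie in $\mathscr{Y}$. One gets the exact-sequence-type estimate
\[
\tilde H_{k}(\mathscr{Z})\ \text{controlled by}\ \tilde H_k(\mathscr{Y})\ \text{and}\ \tilde H_{k-1}\Bigl(\mathscr{Y}\cap\bigcup_{w\in S}\mathrm{st}(w)\Bigr).
\]

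I would rather run a stronger induction statement, which is what Kalai--Meshulam actually prove. It concerns the complex $\mathscr{Z}$ with a chosen color set $J$ and a subset $I\subseteq J$, and it states a \emph{lower bound on the number of colorful faces} or, more directly, the nonvanishing of a relative homology class. Concretely, suppose $\mathscr{Z}$ has no colorful face. Then the map from $\mathscr{Z}$ to the full simplex on $W$ restricted to the colorful part is the join $C_0*\cdots*C_m$ (a homology $(m-1)$-sphere wedge). Its identity-induced class in $\tilde H_{m-1}$ would have to come from $\mathscr{Z}$, and the nerve theorem for the cover of $\mathscr{Z}$ by full subcomplexes $\mathscr{Z}[W-C_j]$ produces the contradiction. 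Explicitly, the nerve of the cover $\{\mathscr{Z}[W-C_j]\}_{j}$ of $\mathscr{Z}$ (which covers it exactly when no face is colorful) is the boundary of the $m$-simplex, since all proper intersections $\mathscr{Z}[\bigsqcup_{j\in J}C_j]$ are nonempty. Each intersection over $m+1-|J|$ sets is $\mathscr{Z}[\bigsqcup_{j\in J}C_j]$, which is $(|J|-2)$-connected by hypothesis. The homological nerve theorem in the form ``if every nonempty $t$-fold intersection is $(k-t)$-connected then $\tilde H_i(\mathscr{Z})\cong\tilde H_i(\text{nerve})$ for $i\le k$'' applies with $k=m-1$. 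It gives $\tilde H_{m-1}(\mathscr{Z})\cong\tilde H_{m-1}(\partial\Delta^m)=\mathbb{F}\neq0$. This contradicts the hypothesis for $J=\{0,\ldots,m\}$, namely that $\mathscr{Z}$ itself is $(m-1)$-connected. So no induction is needed after all.

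The main obstacle is matching indices precisely in the nerve theorem. An intersection of $t$ members corresponds to $|J|=m+1-t$ colors, giving connectivity $m-1-t=k-t$ with $k=m-1$. I also have to confirm that the hypothesis ``each $C_j$ contains a vertex'' is what guarantees that all proper intersections are nonempty, or rather nonvoid. With the convention for the empty complex, the intersection for $J=\emptyset$ is $\{\emptyset\}$, which corresponds to the nerve's missing top face. I expect this bookkeeping to be the delicate part.
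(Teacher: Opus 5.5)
Your final argument is the paper's argument. Assuming no colorful face, the full subcomplexes $\mathscr{Z}[W-C_j]$ cover $\mathscr{Z}$, and an intersection of $t$ of them is $\mathscr{Z}[\bigsqcup_{j\in J}C_j]$ with $|J|=m+1-t$. The nerve is $\partial\Delta^m$, because every proper intersection contains a vertex while the total one is $\mathscr{Z}[\emptyset]=\{\emptyset\}$, the empty space. The Homological Nerve Theorem then contradicts the $(m-1)$-connectivity of $\mathscr{Z}$.

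However, the version of the nerve theorem you invoke is off by one, and it is false as stated. Apply it to the one-member cover $\{\mathscr{Z}\}$: it would assert that every homologically $(k-1)$-connected complex is homologically $k$-connected, which fails for two points with $k=0$. The correct statement (Theorem \ref{thm:meshulam-nerve}) requires nonempty intersections of $t$ members to be $(k-t+1)$-connected in order to get $\tilde H_i(\mathscr{Z})\cong\tilde H_i(\mathscr{N})$ for $i\le k$. It then adds only the one-sided statement that $\tilde H_{k+1}(\mathscr{N})\neq0$ implies $\tilde H_{k+1}(\mathscr{Z})\neq0$. Your intersections are $(m-1-t)=((m-2)-t+1)$-connected, so the theorem applies with $k=m-2$, not $k=m-1$. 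Consequently the isomorphism $\tilde H_{m-1}(\mathscr{Z})\cong\tilde H_{m-1}(\partial\Delta^m)$ you claim does not follow.

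The repair is immediate, and it is exactly what the paper does (its $k=|I|-3$ together with part (2) of the theorem). With $k=m-2$, the fact that $\tilde H_{m-1}(\partial\Delta^m;\mathbb{F})\cong\mathbb{F}\neq0$ forces $\tilde H_{m-1}(\mathscr{Z};\mathbb{F})\neq0$. This contradicts the hypothesis for $J=\{0,\ldots,m\}$, and together with your direct treatment of $m=0$ it proves the lemma.

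You should also delete the exploratory paragraphs: the links $\mathscr{L}_w$, the cover by $\mathscr{Z}[W-C_m]$ and stars, and the remark on $C_0*\cdots*C_m$. They contribute nothing and contain false claims. A color class need not be independent in $\mathscr{Z}$; in the application, $\{v_j^{(1)},v_j^{(2)}\}$ can be an edge of $\mathscr{C}^**\mathscr{K}$. Also, $C_0*\cdots*C_m$ is a wedge of $m$-spheres, not of $(m-1)$-spheres.
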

\cite[Proposition 1.6]{Meshulam2003} and \cite[Proposition 3.1]{KalaiMeshulam2005} only state the lemma above in case of $\mathbb{F}=\mathbb{Q}$, but as our review of the proof in Section \ref{sec:the-method:covering-scheme} also shows, it holds with arbitrary field coefficients.

To prove Theorem \ref{thm:holmsen-repackaged} using this lemma, first observe that in the setting of Theorem \ref{thm:holmsen-repackaged}, $\mathscr{C}$ can not be a simplex with vertex set $V$, as then $\mathscr{K}[V-V]=\mathscr{K}[\emptyset]$ should be homologically $(d-2)$-connected while $\link_\mathscr{C}(V)$ is supposed to have vanishing homology in degrees $d-1$ and greater, but both are the the empty complex and thus have non-trivial homology in degree $-1$, and either $-1\leq d-2$ or $d-1\leq -1$.

If $U\subseteq V$ then denote by $U^{(1)}:=\{v_j^{(1)}:v_j\in U\}$ and $U^{(2)}:=\{v_j^{(2)}:v_j\in U\}$  the corresponding subsets of $V^{(1)}$ and $V^{(2)}$ respectively. By assumption any $v_j^{(2)}\in V^{(2)}$ is a vertex of $\mathscr{K}$, and $\emptyset\in\mathscr{C}^*$ because $V\notin\mathscr{C}$, so the $C_j$ defined at the end of the last section contains a vertex of $\mathscr{Z}:=\mathscr{C}^**\mathscr{K}$. Then the lemma above combined with Lemma \ref{lem:colorful-faces-vs-containment} says that to prove Theorem \ref{thm:holmsen-repackaged} it suffices to check that $\mathscr{Z}[U^{(1)}\sqcup U^{(2)}]$ is homologically $(|U|-2)$-connected with coefficients in $\mathbb{F}$ for all nonempty $U\subseteq V$. Note that 
\[
    \mathscr{Z}[U^{(1)}\sqcup U^{(2)}]=\mathscr{C}^*[U^{(1)}]*\mathscr{K}[U^{(2)}],
\]
so the homological connectivity of $\mathscr{Z}[U^{(1)}\sqcup U^{(2)}]$ can be calculated from the homological connectivities of $\mathscr{C}^*[U^{(1)}]$ and $\mathscr{K}[U^{(2)}]$. Using combinatorial Alexander duality (Theorem \ref{thm:alexander-duality}) we have that
\[
    \tilde{H}_n(\mathscr{C}^*[U^{(1)}];\mathbb{F})\cong\tilde{H}^{|U|-3-n}((\mathscr{C}^*[U^{(1)}])^*;\mathbb{F})=\tilde{H}^{|U|-3-n}(\link_\mathscr{C}(V-U);\mathbb{F}).
\]
If $\emptyset=V-U$ then $\link_\mathscr{C}(V-U)=\link_\mathscr{C}(\emptyset)=\mathscr{C}$, so by near-$(d-1)$-Lerayness of $\mathscr{C}$ the cohomology group on the right vanishes whenever $|U|-3-n\geq d$, or equivalently if $n\leq |U|-3-d$, and in particular $\mathscr{C}^*$ is homologically $(|U|-3-d)$-connected. On the other hand, if $\emptyset\neq V-U\in\mathscr{C}$ then the right hand side is $0$ whenever $|U|-3-n\geq d-1$, meaning that $\mathscr{C}^*[U^{(1)}]$ is homologically $(|U|-2-d)$-connected. Finally, if $V-U\notin\mathscr{C}$ then $\link_\mathscr{C}(V-U)$ is the void complex, as otherwise $\emptyset\in\link_\mathscr{C}(V-U)$, and this would imply $V-U=\emptyset\cup(V-U)\in\mathscr{C}$, a contradiction. Therefore in this case all the reduced (co)homology groups above vanish in all degrees, and $\mathscr{C}^*[U^{(1)}]$ is homologically ``$\infty$-connected'' (acyclic).

Now let us consider $\mathscr{K}[U^{(2)}]$. By assumption of the theorem this is homologically $(d-1)$-connected if $\emptyset=V-U$, and homologically $(d-2)$-connected if $\emptyset\neq V-U\in\mathscr{C}$. In the former case we calculated $\mathscr{C}^*[U^{(1)}]$ to be homologically $(|U|-3-d)$-connected, so then $\mathscr{Z}[U^{(1)}\sqcup U^{(2)}]=\mathscr{C}^*[U^{(1)}]*\mathscr{K}[U^{(2)}]$ is homologically $(|U|-3-d)+(d-1)+2=(|U|-2)$-connected, while in the latter case $\mathscr{C}^*[U^{(1)}]$ is homologically $(|U|-2-d)$-connected so $\mathscr{Z}[U^{(1)}\sqcup U^{(2)}]$ is homologically $(|U|-2-d)+(d-2)+2=(|U|-2)$-connected as well. The only case left is when $\emptyset\neq V-U\not\in\mathscr{C}$, but then all reduced homology groups of $\mathscr{C}^*[U^{(1)}]$ vanish, so the same holds for $\mathscr{Z}[U^{(1)}\sqcup U^{(2)}]=\mathscr{C}^*[U^{(1)}]*\mathscr{K}[U^{(2)}]$.

Thus we computed that $\mathscr{Z}[U^{(1)}\sqcup U^{(2)}]$ is homologically $(|U|-2)$-connected for any nonempty $U\subseteq V$, so by the Rainbow Simplex Lemma (Lemma \ref{lem:meshulam-colorful}) $\mathscr{Z}$ must have a colorful face, and by Lemma \ref{lem:colorful-faces-vs-containment} we have $\mathscr{K}\nsubseteq\mathscr{C}$, concluding the proof of Theorem \ref{thm:holmsen-repackaged}.

If we do not assume $\mathscr{K}$ to be homologically $(d-1)$-connected but $\mathscr{C}$ is $(d-1)$-Leray, then the proof above only changes in the case $\emptyset=V-U\in\mathscr{C}$. Then $\tilde{H}^{|U|-3-n}(\link_\mathscr{C}(\emptyset);\mathbb{F})$ vanishes for $|U|-3-n\geq d-1$ by $(d-1)$-Lerayness of $\mathscr{C}$, meaning that $\mathscr{C}^*$ is homologically $(|U|-2-d)$-connected. On the other hand, $\mathscr{K}=\mathscr{K}[V-\emptyset]$ is only $(d-2)$-connected now, but this is still enough to compute that $\mathscr{Z}[U^{(1)}\sqcup U^{(2)}]=\mathscr{Z}=\mathscr{C}^**\mathscr{K}$ is homologically $(|U|-2-d)+(d-2)+2=(|U|-2)$-connected if $U=V$. But the fact that $\mathscr{Z}[U^{(1)}\sqcup U^{(2)}]$ is homologically $(|U|-2)$-connected when $\emptyset\neq V-U\in\mathscr{C}$ or $V-U\notin\mathscr{C}$ remains true with the same proof, so by the Rainbow Simplex Lemma (\ref{lem:meshulam-colorful}) the complex $\mathscr{Z}$ has a colorful face, and by Lemma \ref{lem:colorful-faces-vs-containment} we have $\mathscr{K}\nsubseteq\mathscr{C}$ again.

\medskip
\subsection{The covering scheme}\label{sec:the-method:covering-scheme}
\medskip

Here we recall and comment on the proof of the Rainbow Simplex Lemma (Lemma \ref{lem:meshulam-colorful}) given in \cite[Proposition 3.1]{KalaiMeshulam2005}. First, note the following observation about the connection between colorful simplices and covers:
\begin{lemma}\label{lem:colorful-simplices-vs-covers}
    Let $m\geq 0$, $\mathscr{Z}$ be a simplicial complex on the ground set $W=\bigsqcup_{0\leq j\leq m}C_j$ which is coloured by $m+1$ colours. It has no colorful face if and only if the family of induced subcomplexes
    \[
        (\mathscr{Z}[W-C_j])_{0\leq j\leq m}
    \]
    is a cover of $\mathscr{Z}$, i.e., if $\mathscr{Z}=\bigcup_{0\leq j\leq m}\mathscr{Z}[W-C_j]$.
\end{lemma}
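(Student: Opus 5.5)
Both directions follow by unwinding the definitions. A face $\sigma$ of a simplicial complex is a subset of the ground set, and it is colorful exactly when $|\sigma\cap C_j|=1$ for every $j$.

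For the direction ``no colorful face $\Rightarrow$ cover'', we take an arbitrary face $\sigma\in\mathscr{Z}$ and show that it lies in some $\mathscr{Z}[W-C_j]$.

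\emph{Case of a face missing a colour.} Suppose $\sigma\cap C_j=\emptyset$ for some $j$. Then $\sigma\subseteq W-C_j$, so $\sigma\in\mathscr{Z}[W-C_j]$ by the definition of induced subcomplexes.

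\emph{Case of a face meeting every colour.} Otherwise $|\sigma\cap C_j|\geq 1$ for every $j$. We then pick one vertex $w_j\in\sigma\cap C_j$ for each $j$. The set $\tau:=\{w_0,\ldots,w_m\}$ is a subset of $\sigma$, hence a face of $\mathscr{Z}$ because $\mathscr{Z}$ is closed under subsets. Since the $C_j$ are disjoint, $\tau$ meets each $C_j$ in exactly one vertex, so $\tau$ is colorful. This contradicts the hypothesis, so the second case cannot occur. Hence $\mathscr{Z}\subseteq\bigcup_j\mathscr{Z}[W-C_j]$, and the reverse inclusion is trivial.

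For the converse, suppose $\mathscr{Z}=\bigcup_j\mathscr{Z}[W-C_j]$ and that $\sigma$ is a colorful face. Then $\sigma\in\mathscr{Z}[W-C_j]$ for some $j$, which means $\sigma\cap C_j=\emptyset$. This contradicts $|\sigma\cap C_j|=1$.

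One degenerate case needs checking: the void complex $\mathscr{Z}=\emptyset$. Then both sides hold trivially, because there are no faces and the union of void complexes is void.

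There is no real obstacle here. The only point requiring care is the downward closure used to pass from a face meeting every colour to a colorful subface.
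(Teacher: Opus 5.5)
Your proof is correct and follows the same route as the paper: both directions just unwind the definitions of colorful faces and induced subcomplexes. The one step beyond that is that a face meeting every colour class contains a colorful subface. You justify it explicitly via downward closure, while the paper leaves it implicit in the phrase ``no face of $\sigma$ is colorful.''
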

\begin{proof}
    Assume $\mathscr{Z}$ has no colorful face, and let $\sigma\in \mathscr{Z}$ be arbitrary. Then as no face of $\sigma$ is colorful, there is a $0\leq j\leq m$ such that $\sigma\cap C_j=\emptyset$, in particular $\sigma\in \mathscr{Z}[W-C_j]$. This was true for any $\sigma\in \mathscr{Z}$, so $\mathscr{Z}=\bigcup_{0\leq j\leq m}\mathscr{Z}[W-C_j]$.

    For the other direction assume that $\mathscr{Z}=\bigcup_{0\leq j\leq m}\mathscr{Z}[W-C_j]$, and take any $\sigma\in \mathscr{Z}$. Then $\sigma\in \mathscr{Z}[W-C_j]$ for some $0\leq j\leq m$, and in particular $\sigma\cap C_j=\emptyset$, making $\sigma$ not colorful. This was true for any $\sigma\in \mathscr{Z}$, so $\mathscr{Z}$ has no colorful face.
\end{proof}

The Rainbow Simplex Lemma (Lemma \ref{lem:meshulam-colorful}) is proved by contradiction, so we may assume that $\mathscr{Z}$ has no colorful face. We only apply this lemma to $\mathscr{Z}:=\mathscr{X}^**\mathscr{Y}$ with the colouring described at the end of Section \ref{sec:the-method:sc-tools}, so Lemmas \ref{lem:colorful-faces-vs-containment} and \ref{lem:colorful-simplices-vs-covers} can be combined into the following Lemma, which we dub the \emph{covering scheme}.
\begin{lemma}[Covering scheme]
    Assume $m\geq 0$ and $\mathscr{Z}=\mathscr{X}^**\mathscr{Y}$ for some abstract simplicial complexes $\mathscr{X}$ and $\mathscr{Y}$ defined on the same ground set $V=\{v_0,\ldots,v_m\}$. If $\mathscr{Y}\subseteq \mathscr{X}$ then the family of induced subcomplexes
    \[
        (\mathscr{Z}[(V^{(1)}\sqcup V^{(2)})-C_j])_{0\leq j\leq m}
    \]
    is a cover of $\mathscr{Z}$.
\end{lemma}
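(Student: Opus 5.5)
The covering scheme is just Lemma \ref{lem:colorful-faces-vs-containment} and Lemma \ref{lem:colorful-simplices-vs-covers} chained together, so the plan is a two-line argument. The hypothesis $\mathscr{Y}\subseteq\mathscr{X}$ gives, through the contrapositive of Lemma \ref{lem:colorful-faces-vs-containment}, that $\mathscr{Z}=\mathscr{X}^**\mathscr{Y}$ has no colorful face with respect to the colouring $C_j=\{v_j^{(1)},v_j^{(2)}\}$. Lemma \ref{lem:colorful-simplices-vs-covers} applies to $\mathscr{Z}$ with ground set $W=V^{(1)}\sqcup V^{(2)}=\bigsqcup_{0\le j\le m}C_j$ and $m+1$ colours. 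Its forward direction then yields $\mathscr{Z}=\bigcup_j\mathscr{Z}[W-C_j]$, which is exactly the claimed cover.

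The only step needing care is the contrapositive of Lemma \ref{lem:colorful-faces-vs-containment}, since that lemma is quoted without proof. To be self-contained, I would verify it directly in the needed direction. Suppose $\sigma$ is a colorful face of $\mathscr{Z}$. Write $\sigma=\tau^{(1)}\sqcup\rho^{(2)}$ with $\tau\in\mathscr{X}^*$ and $\rho\in\mathscr{Y}$, where colorfulness means $\tau\sqcup\rho=V$. Then $\tau\in\mathscr{X}^*$ means $V-\tau=\rho\notin\mathscr{X}$. Since $\rho\in\mathscr{Y}$, this gives $\mathscr{Y}\nsubseteq\mathscr{X}$, contradicting the hypothesis. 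Hence there is no colorful face, and the cover statement follows. No genuine obstacle is expected; the only bookkeeping is the identification $\sigma\cap C_j$ having exactly one element, which is equivalent to each $v_j$ lying in exactly one of $\tau$ and $\rho$.
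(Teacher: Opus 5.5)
Your proposal is correct and is essentially the paper's proof, which likewise just chains Lemma \ref{lem:colorful-faces-vs-containment} (so $\mathscr{Y}\subseteq\mathscr{X}$ means $\mathscr{Z}$ has no colorful face) with the forward direction of Lemma \ref{lem:colorful-simplices-vs-covers}. Your direct check of the needed direction of the former lemma is a sound addition, since the paper only cites it from Holmsen: a colorful face $\tau^{(1)}\sqcup\rho^{(2)}$ forces $\tau\sqcup\rho=V$, hence $\rho\in\mathscr{Y}$ but $\rho\notin\mathscr{X}$.
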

\begin{proof}
    Lemma \ref{lem:colorful-faces-vs-containment} says that $\mathscr{Y}\subseteq \mathscr{X}$ if and only if $\mathscr{Z}$ has no colorful face, while Lemma \ref{lem:colorful-simplices-vs-covers} guarantees that the given collection of subcomplexes is a cover if $\mathscr{Z}$ has no colorful face.
\end{proof}
As was apparent in Section \ref{sec:the-method:holmsen}, the question of whether $\mathscr{Z}$ has colorful simplices was simply used to connect the containment $\mathscr{K}\stackrel{?}{\subseteq}\mathscr{C}$ to the homological connectivity of the induced subcomplexes $\mathscr{Z}[U^{(1)}\sqcup U^{(2)}]$. Indeed, Lemma \ref{lem:colorful-faces-vs-containment} said that $\mathscr{K}\nsubseteq\mathscr{C}$ if and only if $\mathscr{Z}$ has a colorful face, which by the Rainbow Simplex Lemma (Lemma \ref{lem:meshulam-colorful}) is guaranteed to happen under certain homological conditions on subcomplexes of the form $\mathscr{Z}[U^{(1)}\sqcup U^{(2)}]$.
We present the covering scheme to highlight this fact, and to connect the two ends of the argumentation directly. Another reason for putting the emphasis on the cover $(\mathscr{Z}[(V^{(1)}\sqcup V^{(2)})-C_j])_{0\leq j\leq m}$ instead of the existence of a colorful face $\sigma\in \mathscr{Z}$  is that the main tool in the proof of the Rainbow Simplex Lemma (Lemma \ref{lem:meshulam-colorful}) is the Homological Nerve Theorem of Meshulam (\cite[Theorem 2.1]{Meshulam2001}, see below), which takes a cover as an input. Moreover, this theorem has multiple generalizations, like the Leray spectral sequence it is derived from, that could be substituted in when one attempts to extend the results in this paper, but only once the existence of the cover is established.
\begin{theorem}[The Homological Nerve Theorem]\label{thm:meshulam-nerve}
    Let $\mathscr{Z}$ be a finite simplicial complex and let $\mathfrak{U}:=(\mathscr{Z}_i:i\in I)$ be a finite cover by non-void subcomplexes. Fix a field $\mathbb{F}$ and an integer $k$, and assume that for any $\emptyset\neq J\subseteq I$ the space $\bigcap_{j\in J}\mathscr{Z}_j$ is empty or it is homologically $(k-|J|+1)$-connected with coefficients in $\mathbb{F}$. Then
    \begin{enumerate}[label=\normalfont{(\arabic*)}]
        \item\label{thm:meshulam-nerve:low-degrees} $\tilde{H}_n(\mathscr{Z};\mathbb{F})\cong\tilde{H}_n(\mathscr{N}_\mathfrak{U};\mathbb{F})$ for all $0\leq n\leq k$, and
        \item\label{thm:meshulam-nerve-k+1} if $\tilde{H}_{k+1}(\mathscr{N}_\mathfrak{U};\mathbb{F})\neq 0$ then $\tilde{H}_{k+1}(\mathscr{Z};\mathbb{F})\neq 0$.
    \end{enumerate}
\end{theorem}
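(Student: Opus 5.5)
The plan is to prove Theorem \ref{thm:meshulam-nerve} with the Mayer--Vietoris (Leray) spectral sequence of the cover $\mathfrak{U}$, with all coefficients in $\mathbb{F}$. For $\emptyset\neq J\subseteq I$ write $\mathscr{Z}_J:=\bigcap_{j\in J}\mathscr{Z}_j$. Fix a total order on $I$ and form the double complex
\[
    C_{p,q}:=\bigoplus_{|J|=p+1}C_q(\mathscr{Z}_J).
\]
Its horizontal differential is the alternating Čech differential, induced by the inclusions $\mathscr{Z}_J\hookrightarrow\mathscr{Z}_{J-\{j\}}$. Its vertical differential is the simplicial boundary.

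First we compute the total homology using the row filtration. Fix a simplex $\sigma\in\mathscr{Z}$ and let $I_\sigma:=\{i\in I:\sigma\in\mathscr{Z}_i\}$. This set is nonempty because $\mathfrak{U}$ covers $\mathscr{Z}$. Moreover, $\sigma\in\mathscr{Z}_J$ exactly when $J\subseteq I_\sigma$, since each $\mathscr{Z}_i$ is a subcomplex. Hence, for fixed $q$, the row $(C_{\bullet,q},\partial_{\mathrm{Čech}})$ splits as a direct sum over $q$-simplices $\sigma$ of the augmented chain complex of the full simplex on $I_\sigma$. That complex is acyclic. So the row homology is $C_q(\mathscr{Z})$ concentrated in $p=0$, and the total homology of the double complex is $H_*(\mathscr{Z})$. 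We will work throughout with the augmentation to $\mathbb{F}$ so that reduced homology appears; equivalently, we add the usual $q=-1$ row.

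Next we run the column filtration. This gives $E^1_{p,q}=\bigoplus_{|J|=p+1}H_q(\mathscr{Z}_J)$, converging to $H_{p+q}(\mathscr{Z})$. By hypothesis, each nonempty $\mathscr{Z}_J$ with $|J|=p+1$ has $\tilde H_q(\mathscr{Z}_J)=0$ for $q\le k-p$. Three consequences follow.
\begin{enumerate}[label=\normalfont{(\roman*)}]
    \item $E^1_{p,q}=0$ whenever $q\geq1$ and $p+q\le k$.
    \item For $p\le k$, every nonempty $\mathscr{Z}_J$ is connected, so $E^1_{p,0}\cong\bigoplus_{\mathscr{Z}_J\ne\emptyset}\mathbb{F}=C_p(\mathscr{N}_\mathfrak{U})$. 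Under this identification $d^1$ becomes the simplicial boundary of the nerve.
    \item For $p=k+1$ the components need not be unique, but the augmentation $E^1_{k+1,0}\to C_{k+1}(\mathscr{N}_\mathfrak{U})$ is still surjective and compatible with $d^1$.
\end{enumerate}
Consequently $E^2_{p,0}\cong H_p(\mathscr{N}_\mathfrak{U})$ for $p\le k$; with augmentation this is $\tilde H_p$ in degree $0$.

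In total degree $n\le k$, the only possibly nonzero term is $E^2_{n,0}$. Every higher differential $d^r$ ($r\ge2$) leaving it lands in $E^r_{n-r,r-1}$, which has total degree $n-1\le k$ and positive $q$, so it vanishes. Every differential entering it would come from a negative row. Hence $E^\infty_{n,0}=E^2_{n,0}$, and claim \ref{thm:meshulam-nerve:low-degrees} follows.

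For claim \ref{thm:meshulam-nerve-k+1}, again nothing enters $E^r_{k+1,0}$. The outgoing differentials land in $E^r_{k+1-r,r-1}$ of total degree $k$, which vanish by (i). So $E^\infty_{k+1,0}=E^2_{k+1,0}$, and this is a quotient of $H_{k+1}(\mathscr{Z})$ (the bottom piece of the column filtration of the edge map). It therefore remains to show that $E^2_{k+1,0}\neq0$ whenever $\tilde H_{k+1}(\mathscr{N}_\mathfrak{U})\neq0$.

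This last step is the main obstacle, because $E^1_{k+1,0}$ is no longer identified with the nerve chains. The plan is to take a nonzero cycle $c\in C_{k+1}(\mathscr{N}_\mathfrak{U})$ and lift it to $\tilde c\in E^1_{k+1,0}$ by picking one component of each $\mathscr{Z}_J$. Since $E^1_{k,0}\cong C_k(\mathscr{N}_\mathfrak{U})$ compatibly with $d^1$, the lift satisfies $d^1\tilde c=0$. If $\tilde c$ were a $d^1$-boundary, applying the chain map to $C_\bullet(\mathscr{N}_\mathfrak{U})$ would make $c$ a boundary. Hence the induced map $E^2_{k+1,0}\to H_{k+1}(\mathscr{N}_\mathfrak{U})$ is surjective, and so $H_{k+1}(\mathscr{Z})\neq 0$. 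Care is needed at the edge cases $k\le0$, where we will use the augmented row, and with empty intersections, which contribute nothing and match simplices absent from the nerve.
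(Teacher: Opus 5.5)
Your Mayer--Vietoris spectral sequence argument is correct and is essentially the derivation the paper has in mind. The paper does not reprove this theorem; it cites Meshulam and remarks that the result is derived from the Leray spectral sequence of the cover, which is exactly your column-filtration computation: $E^1_{p,0}\cong C_p(\mathscr{N}_\mathfrak{U})$ for $p\le k$, $E^1_{p,q}=0$ for $q\ge 1$ and $p+q\le k$, and a surjective augmentation in degree $k+1$ that yields claim (2).

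One cosmetic inconsistency: your remark about working ``throughout with the augmentation'' does not match the unreduced $E^1$-page you actually use. This does not affect correctness, since the unreduced computation plus a dimension count in degree $0$ (and the trivial cases $k\le -2$) already gives the reduced statements.
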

Here $\mathscr{N}_\mathfrak{U}$ denotes the \emph{nerve complex} of the cover $\mathfrak{U}$, which is a simplicial complex on the ground set $I$ defined as
\[
    \mathscr{N}_\mathfrak{U}:=\{J\subseteq I: J=\emptyset\text{~or~}\bigcap_{j\in J}\mathscr{Z}_j\text{~is not an empty space}\}.
\]
As promised, the proof of the Rainbow Simplex Lemma (Lemma \ref{lem:meshulam-colorful}) is done by contradiction, so we may assume that $\mathscr{Z}$ contains no colorful face, and so by Lemma \ref{lem:colorful-simplices-vs-covers} that $(\mathscr{Z}[W-C_j])_{0\leq j\leq m}$ is a cover of $\mathscr{Z}$. To apply the Homological Nerve Theorem we need to estimate the homological connectivity of $\bigcap_{j\in J}\mathscr{Z}[W-C_j]=\mathscr{Z}[W-\bigsqcup_{j\in J}C_j]=\mathscr{Z}[\bigsqcup_{i\in I-J}C_i]$, but this space is homologically $(|I|-|J|-2)$-connected by assumption, or empty when $J=I$. (Remember that in Section \ref{sec:the-method:holmsen} proving this assumption amounted to computing the homological connectivity of $\mathscr{Z}[U^{(1)}\sqcup U^{(2)}]=\mathscr{C}^*[U^{(1)}]*\mathscr{K}[U^{(2)}]$). Consequently, the Homological Nerve Theorem can be applied with $k:=|I|-3$. The assumption of the Rainbow Simplex Lemma (Lemma \ref{lem:meshulam-colorful}) for $J:=I$ says that $\mathscr{Z}$ is $(|I|-2)$-connected, so in particular $\tilde{H}_{|I|-2}(\mathscr{Z};\mathbb{F})=0$, and by point \ref{thm:meshulam-nerve-k+1} of Theorem \ref{thm:meshulam-nerve} the homology group $\tilde{H}_{|I|-2}(\mathscr{N}_\mathfrak{U};\mathbb{F})$ also vanishes. The complex $\mathscr{N}_\mathfrak{U}$ is easy to determine for the cover $\mathfrak{U}$ given by Lemma \ref{lem:colorful-simplices-vs-covers}:
\begin{lemma}
    Assume we are in the setup of Lemma \ref{lem:colorful-simplices-vs-covers} and that $C_j$ contains a vertex of $\mathscr{Z}$ for all $0\leq j\leq m$. Then for the cover $\mathfrak{U}$ of the Lemma just mentioned we have that $\mathscr{N}_\mathfrak{U}=\partial\Delta_{\{0,\ldots,m\}}$, i.e., it is the boundary of an $m$-dimensional simplex, and as such homeomorphic to an $(m-1)$-dimensional sphere.
\end{lemma}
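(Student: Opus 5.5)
We need to show that the nerve of the cover $\mathfrak{U}=(\mathscr{Z}[W-C_j])_{0\leq j\leq m}$ is exactly $\partial\Delta_{\{0,\ldots,m\}}$. So for $J\subseteq\{0,\ldots,m\}$ we must decide when $\bigcap_{j\in J}\mathscr{Z}[W-C_j]$ is nonempty as a space. The plan is first to identify these intersections and then to sort the sets $J$ into proper and non-proper ones.

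The intersection is computed directly from the definition of induced subcomplexes:
\[
\bigcap_{j\in J}\mathscr{Z}[W-C_j]=\mathscr{Z}\Big[W-\bigsqcup_{j\in J}C_j\Big]=\mathscr{Z}\Big[\bigsqcup_{i\notin J}C_i\Big].
\]
A simplicial complex is a nonempty space exactly when it has a vertex, i.e. a face of dimension $\geq 0$. The void and empty complexes both have empty geometric realization.

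Now take $J\subsetneq\{0,\ldots,m\}$. Pick any $i\notin J$. By hypothesis $C_i$ contains a vertex $w$ of $\mathscr{Z}$, so $\{w\}\in\mathscr{Z}[\bigsqcup_{i\notin J}C_i]$. Hence the intersection is a nonempty space and $J\in\mathscr{N}_\mathfrak{U}$. The case $J=\emptyset$ lies in the nerve by definition anyway; this is consistent, since $\mathscr{Z}$ itself is nonempty when $m\geq 0$.

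For $J=\{0,\ldots,m\}$ the intersection is $\mathscr{Z}[\emptyset]$. Its only possible face is $\emptyset$, so its realization is empty and the full index set is not in the nerve.

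Therefore $\mathscr{N}_\mathfrak{U}$ consists of all proper subsets of $\{0,\ldots,m\}$, which is precisely $\partial\Delta_{\{0,\ldots,m\}}$. It is a standard fact that this complex is homeomorphic to $S^{m-1}$. For $m=0$ the nerve is $\{\emptyset\}$, the empty complex, matching the convention $S^{-1}=\emptyset$ with $\tilde H_{-1}\neq 0$. There is no real obstacle in this argument; the only point needing care is the convention distinguishing the empty space from the void and empty complexes, which is exactly what makes the top face drop out.
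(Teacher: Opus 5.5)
Your proof is correct and follows essentially the same approach as the paper. Both identify $\bigcap_{j\in J}\mathscr{Z}[W-C_j]$ with the induced subcomplex on the remaining colour classes. This subcomplex contains a vertex, by hypothesis, precisely when $J\neq\{0,\ldots,m\}$, so only the full index set is missing from the nerve.
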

\begin{proof}
    $\bigcap_{j\in J}\mathscr{Z}[W-C_j]=\mathscr{Z}[W-\bigsqcup_{j\in J}C_j]=\mathscr{Z}[\bigsqcup_{i\in I-J}C_j]$ is an empty space if $I-J=\emptyset$ and it contains a vertex and is hence nonempty whenever $I-J\neq\emptyset$. In particular, $\mathscr{N}_\mathfrak{U}$ is the abstract simplicial complex on the ground set $I$ where every $J\subseteq I$ is a face except for $J=I$, and this is by definition $\partial\Delta_{\{0,\ldots,m\}}$.
\end{proof}
In our case $I=\{0,\ldots,m\}$, so $|I|=m+1$, but then $0=\tilde{H}_{|I|-2}(\mathscr{N}_\mathfrak{U};\mathbb{F})\cong\tilde{H}_{m-1}(S^{m-1};\mathbb{F})\cong\mathbb{F}$ gives a contradiction, concluding the proof of the Rainbow Simplex Lemma (Lemma \ref{lem:meshulam-colorful}).

\bigskip
\section{Preliminaries regarding oriented matroids}\label{sec:prelim-oms}
\bigskip

In this section we give an introduction to the theory of oriented matroids, presenting the relevant parts of \cite{OrientedMatroids}.

Given a function $A\colon V\to\R^d$, to each linear functional $\varphi\in(\R^d)^*$ associate the function $\mathfrak{s}_A(\varphi)\colon V\to\{-,0,+\}$, $v\mapsto \sgn(\varphi(A(v)))$; think of it as a ``sign vector'' whose coordinates are indexed by $V$. It encodes for each $v\in V$ whether $\varphi$ is on the (in case of $A(v)=0$ possibly degenerate) hyperplane $H_v:=\{\psi\in(\R^d)^*: \psi(A(v))=0\}$, and if not, which side it lays on. It turns out that given only the set $\mathscr{L}:=\mathfrak{s}_A((\R^d)^*)$, but not the original function $A$ itself, it is possible to decide for any $U,U'\subseteq V$, $v\in V$, and $k\in\Z$ which of the following claims hold true:
\begin{center}
    \begin{tabular}{ccc}
        $0\in\conv(A(U))$ & $0\in\relint(\conv(A(U)))$ & $\cone(A(U))\cap\cone(A(U'))=\{0\}$ \\
        $A(v)\in\cone(A(U))$ & $A(v)\in\relint(\cone(A(U)))$ & $\dim(\operatorname{span}(A(U)))=k$
    \end{tabular}
\end{center}
In particular, both the assumptions and the conclusions of the colorful Carath\'eodory theorems \ref{thm:caratheodory:original}, \ref{thm:caratheodory:cone}, \ref{thm:caratheodory:fixed}, \ref{thm:caratheodory:pair}, \ref{thm:caratheodory:matroid}, \ref{thm:caratheodory:constrained}, and \ref{thm:caratheodory:3-joined} can be stated in terms of the set $\mathscr{L}$. Not only that, but $\mathscr{L}$, viewed as an abstract collection of sign vectors, satisfies a few combinatorial properties which are sufficient to prove the conclusions from the assumptions, without referring to affine geometry at all. A set of sign vectors having these combinatorial properties is called \emph{(the set of covectors of) an oriented matroid}, and so both the statements and the proofs of the colorful Carath\'eodory theorems can be phrased in terms of the oriented matroid $\mathscr{L}$ associated to the function $A$. We choose to present the aforementioned proofs in the language of oriented matroids because this way we also obtain generalizations of the theorems, and as it makes our argumentation both clearer and more rigorous.

While the combinatorics of oriented matroids will be helpful, the geometric picture of the arrangement of (possibly degenerate) hyperplanes $(H_v)_{v\in V}$ in $(\R^d)^*$ will also be of importance. Instead of caring about the whole of $(\R^d)^*$ however, it suffices to look at the unit sphere $S((\R^d)^*)$ (with respect to some fixed inner product on $(\R^d)^*$) because of the fact that $\mathscr{L}=\mathfrak{s}_A(S((\R^d)^*))\cup\{0\}$, where by $0$ we mean the sign vector which is $0$ for all $v\in V$. Consequently, to understand $\mathscr{L}$ it suffices to restrict our attention to how $S((\R^d)^*)$ is partitioned into cells by the subspheres $(H_v\cap S((\R^d)^*))_{v\in V}$. Indeed, there is a one-to-one correspondence given by $\mathfrak{s}_A$ between the cells of this regular cell decomposition of $S((\R^d)^*)$ and the elements of $\mathscr{L}-\{0\}$, at least if $\operatorname{span}(A(V))=\R^d$. A similar, but not identical, statement holds if $\operatorname{span}(A(V))\neq\R^d$. By the Topological Representation Theorem of Folkman and Lawrence, reviewed in Theorem \ref{thm:top-repr}, there is a similar geometric interpretation of each oriented matroid where the cells of a decomposition of a sphere are indexed by the covectors of the oriented matroid.

In Section \ref{sec:prelim-oms:combinatorial} we review the definition of oriented matroids as well as some of their combinatorial properties, and explain what parts of the function $A$ they abstract. After that, in Section \ref{sec:prelim-oms:pseudospheres} we recall the Topological Representation Theorem and establish some notation to deal with signed arrangements of pseudospheres -- the geometric pictures associated to oriented matroids.

\medskip
\subsection{The combinatorial approach}\label{sec:prelim-oms:combinatorial}
\medskip

As the definition of oriented matroids presented later in this section concerns itself with collections of sign vectors, we will benefit from fixing some notation both for both sign vectors and the signs involved. On the set $\{-,0,+\}$ we consider the partial order where $0<-$, $0<+$, and $-$ and $+$ are incomparable. If $\mathfrak{s}\in\{-,0,+\}$ write
\[
    -\mathfrak{s}:=\begin{cases}
        +, & \text{~if~} \mathfrak{s}=-,\\
        0, & \text{~if~} \mathfrak{s}=0,\\
        -, & \text{~if~} \mathfrak{s}=+.
    \end{cases}
\]
Furthermore, if $\mathfrak{s},\mathfrak{s}'\in\{-,0,+\}$ write
\[
    \mathfrak{s}\circ\mathfrak{s}':=\begin{cases}
        \mathfrak{s}, & \text{~if~} \mathfrak{s}\neq 0,\\
        \mathfrak{s}', & \text{~otherwise.}
    \end{cases}
\]

Throughout $V$ will be a fixed finite ground set, and by \emph{sign vector} we will mean any function $\Phi\colon V\to\{-,0,+\}$. Extend the above notions regarding signs to sign vectors coordinatewise, i.e., for sign vectors $\Phi$ and $\Psi$ say that $\Phi\leq\Psi$ if $\Phi(v)\leq\Psi(v)$ for all $v\in V$, and define the sign vectors $-\Phi$ and $\Phi\circ\Psi$ with the formulas $(-\Phi)(v):=-\Phi(v)$ and $(\Phi\circ\Psi)(v):=\Phi(v)\circ\Psi(v)$ for all $v\in V$. $\Phi\circ\Psi$ is called the \emph{composition} of $\Phi$ and $\Psi$, and the composition of a collection of sign vectors is said to be \emph{conformal} if it is independent of the order of the sign vectors. Moreover, write $\Phi^+:=\Phi^{-1}(\{+\})$, $\Phi^-:=\Phi^{-1}(\{-\})$, and $\Phi^0:=\Phi^{-1}(\{0\})$, $S(\Phi,\Psi):=(\Phi^+\cap\Psi^-)\cup(\Phi^-\cap\Psi^+)$, and call $\underline{\Phi}:=\Phi^+\cup\Phi^-$ the \emph{support} of $\Phi$. A sign vector $\Phi$ is \emph{(contained) in} $U\subseteq V$ if $\underline{\Phi}\subseteq U$. Let $0$ be the sign vector with support the empty set, and say that a sign vector $\Phi$ is \emph{positive} if $\Phi\neq 0$ and $\Phi^-=\emptyset$. Finally, say that $\Phi$ and $\Psi$ are \emph{orthogonal}, denoted $\Phi\perp\Psi$, if whenever $\Phi(v)=\Psi(v)\neq 0$ for some $v\in V$ then $\Phi(w)=-\Psi(w)\neq 0$ for some $w\in V$, and conversely.

Now we are ready to define oriented matroids. They have many definitions which give rise to equivalent objects, but for the purpose of this paper we restrict our attention to the following one, see \cite[Propositions 3.7.5, 3.7.9]{OrientedMatroids}. The many different definitions motivate denoting an oriented matroid and its set of covectors in different ways, despite the former being defined by the latter.
\begin{definition}\label{def:om}
    An \emph{oriented matroid} $\mathscr{O}$ on the set of elements $V$ is a set $\mathscr{L}(\mathscr{O})$ of sign vectors satisfying the following axioms:
    \begin{enumerate}[label=\normalfont{(V\arabic*)}]
        \setcounter{enumi}{-1}
        \item $0\in\mathscr{L}(\mathscr{O})$,
        \item $-\Phi\in\mathscr{L}(\mathscr{O})$ if and only if $\Phi\in\mathscr{L}(\mathscr{O})$,
        \item for all $\Phi,\Psi\in\mathscr{L}(\mathscr{O})$ we have $\Phi\circ\Psi\in\mathscr{L}(\mathscr{O})$,
    \end{enumerate}
    and finally:
    \begin{enumerate}[resume,label=\normalfont{(V\arabic*')}]
        \item\label{def:om:elim} For all $\Pi,\Xi\in\mathscr{L}(\mathscr{O})$ and $v\in\Pi^+\cap\Xi^-$ there is a $\Theta\in\mathscr{L}(\mathscr{O})$ such that $\Theta(v)=0$ and if $w\in V-S(\Pi,\Xi)$ then $\Theta(w)=\Pi(w)\circ\Xi(w)$.
    \end{enumerate}
    The elements of $\mathscr{L}(\mathscr{O})$ are referred to as the \emph{covectors} of the oriented matroid $\mathscr{O}$.
    \end{definition}
Then indeed the set $\mathscr{L}:=\mathfrak{s}_A((\R^d)^*)$ associated to some function $A\colon V\to\R^d$ is the set of covectors of some oriented matroid $\mathscr{O}$ \cite[Section 2.1]{OrientedMatroidsToday}.

Given an oriented matroid $\mathscr{O}$ on set of elements $V$, say that a sign vector $\Phi$ is a \emph{vector} of $\mathscr{O}$ if $\Phi\perp\Psi$ for all $\Psi\in\mathscr{L}(\mathscr{O})$, and write $\mathscr{V}(\mathscr{O})$ for the set of vectors of $\mathscr{O}$. The minimal non-zero elements of $\mathscr{V}(\mathscr{O})$ are called \emph{circuits}, their set is denoted by $\mathscr{C}(\mathscr{O})$, and any vector of $\mathscr{O}$ is equal to the conformal composition of all circuits smaller than it \cite[Proposition 3.7.2]{OrientedMatroids}. If $\mathscr{L}(\mathscr{O})=\mathfrak{s}_A((\R^d)^*)$ for some function $A\colon V\to\R^d$, then $\Phi\in\mathscr{V}(\mathscr{O})$ if and only if there are real numbers $(\lambda_v)_{v\in V}$ such that $\sum_{v\in V}\lambda_vA(v)=0$ and $\Phi(v)=\sgn\lambda_v$ for all $v\in V$ \cite[Section 2.1]{OrientedMatroidsToday}. $\mathscr{V}(\mathscr{O})$ is the set of covectors of another oriented matroid denoted $\mathscr{O}^*$ \cite[Proposition 3.7.12]{OrientedMatroids} (in light of \cite[Proposition 3.4.1]{OrientedMatroids}), and $(\mathscr{O}^*)^*=\mathscr{O}$ \cite[Proposition 3.4.1]{OrientedMatroids}.

Using the above characterization of vectors and covectors of the $\mathscr{O}$ defined by $\mathscr{L}(\mathscr{O}):=\mathfrak{s}_A((\R^d)^*)$, certain statements about convex hulls and cones can be translated into the language of oriented matroids, as summarized by the lemma below. Here, $\conv(R)$ stands for the convex hull $\{\sum_{r\in R}\lambda_rr:\lambda_r\geq 0\text{~for all~}r\in R,\sum_{r\in R}\lambda_r=1\}$ and $\cone(R)$ is the convex cone $\{\sum_{r\in R}\lambda_rr:\lambda_r\geq 0\text{~for all~}r\in R\}$ of the finite set $R\subseteq\R^d$, while $\conv(U):=U\sqcup\{v\in V-U:\Phi^+\subseteq U\text{~and~}\Phi^-=\{v\}\text{~for some~}\Phi\in\mathscr{V}(\mathscr{O})\}$ for $U\subseteq V$ (see \cite[Exercise 3.9]{OrientedMatroids}). Note that in particular $\cone(\emptyset)=\{0\}$, and $\mathscr{V}(\mathscr{O})$ can be replaced by $\mathscr{C}(\mathscr{O})$ in the definition of $\conv(U)$ because every vector is the conformal composition of all circuits smaller than it.
\begin{lemma}\label{lem:0-in-conv-interpreted-in-oms}
    If $\mathscr{L}(\mathscr{O})=\mathfrak{s}_A((\R^d)^*)$ for some function $A\colon V\to\R^d$ and $U\subseteq V$ then:
    \begin{enumerate}[label=\normalfont{(\arabic*)}]
        \item\label{lem:0-in-conv-interpreted-in-oms:full} $0\in\conv(A(U))$ if and only if $U$ contains a positive vector of $\mathscr{O}$.
        \item\label{lem:0-in-conv-interpreted-in-oms:relint} $0\in\relint(\conv(A(U)))$ if and only if there is a positive vector of $\mathscr{O}$ with support $U$.
        \item\label{lem:0-in-conv-interpreted-in-oms:cone} $A(v)\in\cone(A(U))$ if and only if $v\in\conv(U)$.
        \item\label{lem:0-in-cov-interpreted-in-oms:relintcone} Assuming $v\in V-U$, $A(v)\in\relint(\cone(A(U)))$ if and only if there exists a $\Psi\in\mathscr{V}(\mathscr{O})$ with $\Psi^+=U$ and $\Psi^-=\{v\}$. 
    \end{enumerate}
\end{lemma}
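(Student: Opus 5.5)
The plan is to reduce all four items to linear algebra, using the description of vectors recalled above: for $\mathscr{L}(\mathscr{O})=\mathfrak{s}_A((\R^d)^*)$, a sign vector $\Phi$ is a vector of $\mathscr{O}$ if and only if $\Phi=\sgn\lambda$ for some linear dependence $\sum_{v\in V}\lambda_vA(v)=0$. Each geometric condition in the lemma will be rewritten as the existence of a dependence with a prescribed sign pattern, and then read off as a statement about a vector of $\mathscr{O}$.

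For \ref{lem:0-in-conv-interpreted-in-oms:full}, suppose $0\in\conv(A(U))$. Then there are $\lambda_u\geq0$ for $u\in U$, summing to $1$, with $\sum_{u\in U}\lambda_uA(u)=0$. Set $\lambda_v=0$ for $v\notin U$. The sign vector $\sgn\lambda$ is nonzero because the coefficients sum to $1$, has no minus signs, and has support in $U$. So it is a positive vector contained in $U$. Conversely, a positive vector in $U$ comes from a dependence with nonnegative coefficients, not all zero, supported in $U$. Dividing by the sum of the coefficients gives a convex combination equal to $0$.

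Item \ref{lem:0-in-conv-interpreted-in-oms:relint} follows the same pattern, using the standard fact that a point of $\conv(R)$ lies in its relative interior if and only if it can be written as a convex combination with all coefficients strictly positive. I expect this fact to be the only step needing real justification. I would prove it as follows.
\begin{enumerate}[label=\normalfont{(\roman*)}]
\item If the point $x$ is in the relative interior, move slightly from $x$ away from the barycenter $b$ within the affine hull, staying inside the hull. Then $x$ is a convex combination of that nearby point and $b$ with positive weight on $b$, and $b$ has all coefficients positive.
\item If all coefficients are positive, then every small affine perturbation of $x$ can be absorbed by perturbing those coefficients, so $x$ is in the relative interior.
\end{enumerate}
With this, $0\in\relint(\conv(A(U)))$ corresponds exactly to a positive vector with support equal to $U$ (the case $U=\emptyset$ is excluded on both sides).

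For \ref{lem:0-in-conv-interpreted-in-oms:cone}, there are two cases.
\begin{itemize}
\item If $v\in U$, both sides hold trivially.
\item If $v\notin U$, then $A(v)\in\cone(A(U))$ means $A(v)=\sum_{u\in U}\mu_uA(u)$ with $\mu_u\geq0$. This gives the dependence $\sum\mu_uA(u)-A(v)=0$, whose sign vector $\Phi$ is a vector with $\Phi^-=\{v\}$ and $\Phi^+\subseteq U$, so $v\in\conv(U)$. Conversely, such a vector comes from a dependence whose only negative coefficient is at $v$ and whose positive coefficients lie in $U$. Solving for $A(v)$ expresses it in $\cone(A(U))$.
\end{itemize}

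Item \ref{lem:0-in-cov-interpreted-in-oms:relintcone} is the cone analogue of \ref{lem:0-in-conv-interpreted-in-oms:relint}. It uses the fact that $y\in\relint(\cone(R))$ if and only if $y=\sum_{r\in R}\mu_rr$ with all $\mu_r>0$, proved by the same perturbation argument as above, with the barycenter replaced by $\sum_{r\in R}r$. This matches $\Psi^+=U$ and $\Psi^-=\{v\}$ exactly.
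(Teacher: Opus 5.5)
Your proof is correct. For (1) and (2) it is the paper's argument, except that you prove the positive-coefficient characterization of $\relint(\conv(R))$ by the barycenter perturbation, whereas the paper cites it.

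The routes diverge in (3) and (4).

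For (3), the paper does the following:
\begin{itemize}
\item it separates the case $A(v)=0$;
\item for $A(v)\neq0$ it rewrites $A(v)\in\cone(A(U))$ as $\cone(A(v))\cap\cone(A(U))\neq\{0\}$, and $v\in\conv(U)$ as the existence of a circuit with $\emptyset\neq\Phi^+\subseteq U$ and $\emptyset\neq\Phi^-\subseteq\{v\}$;
\item it then invokes Lemma~\ref{lem:intersection-of-cones-in-oms}, whose proof needs a minimal-support choice of coefficients.
\end{itemize}
You instead observe that $\conv(U)$ is already defined in terms of vectors. A nonnegative representation of $A(v)$ over $A(U)$ is then literally the same data as a vector with $\Phi^+\subseteq U$ and $\Phi^-=\{v\}$, and the case $A(v)=0$ needs no separate treatment.

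For (4), the paper's forward direction works as follows:
\begin{itemize}
\item it takes $\Psi_0$ from (3);
\item it perturbs $A(v)$ by $-\varepsilon\sum_{u\in U-\Psi_0^+}A(u)$ to obtain a second vector $\Psi_1$ that covers the missing elements of $U$;
\item it forms the composition $\Psi_0\circ\Psi_1$, using that vectors are closed under composition.
\end{itemize}
You carry out the same step on the real coefficients, by proving that $y\in\relint(\cone(R))$ iff $y$ has a representation with all coefficients positive. The converse directions agree.

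Your version is shorter, uniform and entirely elementary. The paper's version stays closer to oriented-matroid operations (circuits and composition of vectors). Its detour through Lemma~\ref{lem:intersection-of-cones-in-oms} is also not wasted globally, since that lemma is needed again in Example~\ref{ex:zero-avoiding-lemmas}.
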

\begin{proof}
    $0\in\conv(A(U))$ if and only if there are non-negative real numbers $(\lambda_u)_{u\in U}$ such that $\lambda_u\neq 0$ for at least one $u\in U$, and $\sum_{u\in U}\lambda_uA(u)=0$. In other words, the condition $\sum_{u\in U}\lambda_u=1$ can be weakened to $\exists u\in U\colon \lambda_u\neq 0$. This equivalent condition can however be rephrased as saying that there is a linear dependency among $(A(u))_{u\in U}$ where all coefficients are non-negative and not all of them are $0$. The sign vectors of linear dependencies are exactly the vectors of $\mathscr{O}$, so such sign vectors correspond exactly to positive vectors contained in $U$. This concludes the proof of point \ref{lem:0-in-conv-interpreted-in-oms:full}.

    $0\in\relint(\conv(A(U)))$ if and only if there is a linear dependency between $(A(u))_{u\in U}$ where all coefficients are positive \cite[p. 11, Theorem 6]{McMullenShephard}, so as sign vectors of linear dependencies correspond exactly to vectors of $\mathscr{O}$, this latter condition holds if and only if there is a positive vector of $\mathscr{O}$ with support $U$, proving point \ref{lem:0-in-conv-interpreted-in-oms:relint}.

    If $A(v)=0$ then $(-1).A(v)=0$ so the sign vector $\Phi$ with $\Phi^+=\emptyset$, $\Phi^-=\{v\}$ is a vector of $\mathscr{O}$, and hence $v\in\conv(U)$ for all $U\subseteq V$, similarly to how $A(v)\in\cone(A(U))$ for any $U\subseteq V$.
    Now assume $A(v)\neq 0$. If $v\in U$ then point \ref{lem:0-in-conv-interpreted-in-oms:cone} of the lemma is trivial, so assume otherwise. Then $A(v)\in\cone(A(U))$ if and only if $\cone(A(v))\cap\cone(A(U))\neq \{0\}$, and $v\in\conv(U)$ if and only if there is a circuit $\Phi$ of $\mathscr{O}$ with $\emptyset\neq\Phi^+\subseteq U$ and $\emptyset\neq\Phi^-\subseteq\{v\}$. Consequently, point \ref{lem:0-in-conv-interpreted-in-oms:cone} follows from Lemma \ref{lem:intersection-of-cones-in-oms} below.

    Let us move on to point \ref{lem:0-in-cov-interpreted-in-oms:relintcone}. If $A(v)\in\relint(\cone(A(U)))\subseteq\cone(A(U))$ then by point \ref{lem:0-in-conv-interpreted-in-oms:cone} there is a vector $\Psi_0$ of $\mathscr{O}$ with $\Psi_0^+\subseteq U$ and $\Psi_0^-=\{v\}$. Because $A(v)\in\relint(\cone(A(U)))$, necessarily $A(v)-\varepsilon\sum_{u\in U-\Psi_0^+}A(u)$ is also in $\cone(A(U))$ for $\varepsilon>0$ sufficiently small, and so it must be of the form $\sum_{u\in U}\lambda_uA(u)$ with $\lambda_u\geq 0$. But then $0=-A(v)+\sum_{u\in U}\lambda_uA(u)+\sum_{u\in U-\Psi_0^+}\varepsilon A(u)$ is a linear dependence to which a vector $\Psi_1$ is associated with $\Psi_1^-=\{v\}$ and $\Psi_1^+\supseteq U-\Psi_0^+$. In particular, then $\Psi:=\Psi_0\circ\Psi_1\in\mathscr{V}(\mathscr{O})$ satisfies the claim of point \ref{lem:0-in-cov-interpreted-in-oms:relintcone}. On the other hand, if there is a $\Psi\in\mathscr{V}(\mathscr{O})$ with $\Psi^+=U$ and $\Psi^-=\{v\}$, then there is a corresponding linear dependence $0=\lambda_vA(v)+\sum_{u\in U}\lambda_uA(u)$ with $\sgn(\lambda_w)=\Psi(w)$ for $w\in U\cup\{v\}$, so $A(v)=\sum_{u\in U}-\lambda_v^{-1}\lambda_uA(u)$. Now let $0<\varepsilon<\min_{u\in U}-\lambda_v^{-1}\lambda_u$, and note that 
    $
        N:=\{A(v)+\sum_{u\in U}\mu_uA(u)=\sum_{u\in U}(\mu_u-\lambda_v^{-1}\lambda_u)A(u):\mu_u\in(-\varepsilon,\varepsilon)\text{~for all~}u\in U\}
    $
    is a relatively open neighborhood of $A(v)$ contained in $\cone(A(U))$, showing that $A(v)\in\relint(\cone(A(U)))$.
\end{proof}
As every vector is a conformal composition of all circuits smaller than it, $U$ contains a positive circuit of $\mathscr{O}$ if and only if it contains a positive vector of $\mathscr{O}$.
\begin{lemma}\label{lem:intersection-of-cones-in-oms}
    If $\mathscr{L}(\mathscr{O})=\mathfrak{s}_A((\R^d)^*)$ for some function $A\colon V\to\R^d$, and $U,U'\subseteq V$ with $U\cap U'=\emptyset$, then $\cone(A(U))\cap\cone(A(U'))\neq\{0\}$ if and only if there is a $\Phi\in\mathscr{C}(\mathscr{O})$ satisfying $\emptyset\neq\Phi^+\subseteq U$ and $\emptyset\neq\Phi^-\subseteq U'$.
\end{lemma}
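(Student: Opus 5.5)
The plan is to translate both sides into statements about real linear dependencies, using the fact recalled above that $\Phi\in\mathscr{V}(\mathscr{O})$ if and only if there are reals $(\lambda_v)_{v\in V}$ with $\sum_v\lambda_vA(v)=0$ and $\Phi(v)=\sgn\lambda_v$ for all $v$. Circuits are the inclusion-minimal nonzero vectors. I would prove the two directions separately, with a support-minimality argument for the harder forward direction.

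\emph{($\Leftarrow$)} Let $\Phi$ be a circuit with $\emptyset\neq\Phi^+\subseteq U$ and $\emptyset\neq\Phi^-\subseteq U'$. Realize it by a dependency $\sum_v\lambda_vA(v)=0$ with $\sgn\lambda_v=\Phi(v)$, and set
\[x:=\sum_{u\in\Phi^+}\lambda_uA(u)=\sum_{w\in\Phi^-}(-\lambda_w)A(w).\]
Then $x\in\cone(A(U))\cap\cone(A(U'))$, so it remains to show $x\neq0$. Suppose $x=0$. Then $\sum_{u\in\Phi^+}\lambda_uA(u)=0$ is itself a dependency. Its sign vector is a nonzero vector whose support is $\Phi^+$, and $\Phi^+$ is strictly smaller than $\underline{\Phi}$ because $\Phi^-\neq\emptyset$. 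This contradicts the minimality of the circuit $\Phi$.

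\emph{($\Rightarrow$)} Fix $0\neq x\in\cone(A(U))\cap\cone(A(U'))$. Consider all representations
\[x=\sum_{u\in U}\lambda_uA(u)=\sum_{w\in U'}\mu_wA(w)\qquad(\lambda,\mu\geq0).\]
Each one yields the dependency $\sum\lambda_uA(u)-\sum\mu_wA(w)=0$, and hence a vector $\Psi$ with $\Psi^+\subseteq U$ and $\Psi^-\subseteq U'$; this uses $U\cap U'=\emptyset$. Both $\Psi^+$ and $\Psi^-$ are nonempty, since $x\neq0$ forces some $\lambda_u>0$ and some $\mu_w>0$. Choose such a representation with $\underline{\Psi}$ of minimal size. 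I then claim that $\Psi$ is a circuit.

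If $\Psi$ is not a circuit, there is a circuit $\Phi$ with $0\neq\Phi<\Psi$ strictly; this uses that every vector is the conformal composition of the circuits below it. If $\Phi$ has both a positive and a negative part, it is itself the desired circuit and we are done. Otherwise, replacing $\Phi$ by $-\Phi$ if necessary, we may assume $\Phi$ is positive with $\Phi^+\subseteq\Psi^+\subseteq U$, or negative with $\Phi^-\subseteq\Psi^-\subseteq U'$; the two cases are symmetric.

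In the first case, take a dependency $\sum_{u\in\Phi^+}\alpha_uA(u)=0$ with all $\alpha_u>0$. Replace $\lambda$ by $\lambda-t\alpha$, where $t>0$ is the largest value keeping all coefficients nonnegative. This still represents the same $x$, since the subtracted combination is $0$. At least one coordinate of $\Psi^+$ now vanishes, so the support strictly shrinks. The new positive and negative parts remain nonempty, because they still represent $x\neq0$. This contradicts minimality, so $\Psi$ is a circuit with the required properties.

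The main obstacle is exactly this step: a general vector decomposes into circuits that might all be one-signed. The fix is to run the minimality over representations of the fixed nonzero point $x$, so that nonemptiness of both parts is preserved automatically.
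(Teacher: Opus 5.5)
Your proof is correct and follows essentially the same route as the paper: fix $x\neq 0$, minimize the support of a representation of $x$, rule out one-signed circuits below $\Psi$ by subtracting a positive dependency, and in the converse get $x\neq 0$ from the minimality of the circuit. One wording fix: you have not shown that $\Psi$ itself is a circuit, and this is false in general (take $A(a)=A(c)=e_1$, $A(b)=A(e)=e_2$, $U=\{a,b\}$, $U'=\{c,e\}$, $x=e_1+e_2$); what your argument actually establishes, exactly as in the paper, is that every circuit $\Phi\le\Psi$ has $\Phi^+\neq\emptyset\neq\Phi^-$, and that suffices.
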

\begin{proof}
    $\cone(A(U))\cap\cone(A(U'))\neq\{0\}$ if and only if there are non-negative real numbers $(\lambda_u)_{u\in U}$ and $(\mu_{u'})_{u'\in U'}$ such that $x:=\sum_{u\in U}\lambda_uA(u)=\sum_{u'\in U'}\mu_{u'}A(u')\neq 0$. Fix $x$, and choose $(\lambda_u)_{u\in U}$ and $(\mu_{u'})_{u'\in U'}$ so that as few of them are non-zero as possible. Then to the equality defining $x$ there is a corresponding linear dependence $\sum_{u\in U}\lambda_uA(u)-\sum_{u'\in U'}\mu_{u'}A(u')=0$, to which there is an associated vector $\Psi$ of $\mathscr{O}$. Take any circuit $\Phi\leq\Psi$, and note that neither $\Phi$ nor $-\Phi$ can be positive, because then a corresponding linear dependence could be used to falsify the minimality assumption on $(\lambda_u)_{u\in U}$ and $(\mu_{u'})_{u'\in U'}$. In particular, $\Phi$ satisfies the demands of the Lemma.

    On the other hand, given any $\Phi$ like in the lemma, there is a corresponding minimal linear dependence $\sum_{u\in U}\lambda_uA(u)-\sum_{u'\in U'}\mu_{u'}A(u')=0$, where by minimality $x:=\sum_{u\in U}\lambda_uA(u)=\sum_{u'\in U'}\mu_{u'}A(u')\neq 0$ is in $\cone(A(U)) \cap\cone(A(U'))$.
\end{proof}
The conditions on the vectors in Lemma \ref{lem:0-in-conv-interpreted-in-oms} can be replaced by conditions on covectors:
\begin{corollary}\label{crl:0-in-conv-interpreted-in-oms}
    If $\mathscr{L}(\mathscr{O})=\mathfrak{s}_A((\R^d)^*)$ for some function $A\colon V\to\R^d$, $U\subseteq V$, and $v\in V$, then:
    \begin{enumerate}[label=\normalfont{(\arabic*)}]
        \item\label{crl:0-in-conv-interpreted-in-oms:full} $0\in\conv(A(U))$ if and only if $\nexists\Phi\in\mathscr{L}(\mathscr{O})\colon U\subseteq\Phi^+$,
        \item\label{crl:0-in-conv-interpreted-in-oms:relint} $0\in\relint(\conv(A(U)))$ if and only if $\nexists\Phi\in\mathscr{L}(\mathscr{O})\colon\Phi|_U$ is positive,
        \item\label{crl:0-in-conv-interpreted-in-oms:cone} $A(v)\in\cone(A(U))$ if and only if $\nexists\Phi\in\mathscr{L}(\mathscr{O})$ with $v\in\Phi^-$ and $U\cap\Phi^-=\emptyset$, and
        \item\label{crl:0-in-conv-interpreted-in-oms:relintcone} assuming $v\in V-U$, $A(v)\in\relint(\cone(A(U)))$ if and only if $\nexists\Phi\in\mathscr{L}(\mathscr{O})$ with $\Phi^-\cap U=\emptyset=\Phi^+\cap\{v\}$ and $\underline{\Phi}\cap (U\cup\{v\})\neq\emptyset$.
    \end{enumerate}
\end{corollary}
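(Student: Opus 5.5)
The plan is to bypass the vector characterizations in Lemma \ref{lem:0-in-conv-interpreted-in-oms} and argue directly with linear functionals. Every covector $\Phi\in\mathscr{L}(\mathscr{O})=\mathfrak{s}_A((\R^d)^*)$ has the form $\Phi(w)=\sgn\varphi(A(w))$ for some $\varphi\in(\R^d)^*$. So each item becomes a separation or Farkas-type statement about the finite point set $A(U\cup\{v\})$. In each item the ``$\nexists\Phi$'' direction says that a certain functional cannot exist, and the converse is a standard separation theorem for polyhedral cones. (Alternatively, one could use that covectors are exactly the sign vectors orthogonal to all vectors, since $\mathscr{O}=(\mathscr{O}^*)^*$, and translate Lemma \ref{lem:0-in-conv-interpreted-in-oms} through orthogonality. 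I would use the geometric route because it is more transparent.)

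\emph{Items (1) and (3).} For (1), if $\varphi(A(u))>0$ for all $u\in U$, then $\varphi>0$ on $\conv(A(U))$, so $0\notin\conv(A(U))$. This is also how $U=\emptyset$ is handled: the condition $U\subseteq\Phi^+$ is met by $\Phi=0$, matching $0\notin\conv(\emptyset)$. Conversely, if $0\notin\conv(A(U))$ with $U\neq\emptyset$, strict separation of the compact convex set $\conv(A(U))$ from the point $0$ gives such a $\varphi$. For (3), Farkas' lemma says that $A(v)\notin\cone(A(U))$ if and only if some $\varphi$ satisfies $\varphi\geq0$ on $A(U)$ and $\varphi(A(v))<0$. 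This is exactly a covector with $v\in\Phi^-$ and $U\cap\Phi^-=\emptyset$.

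\emph{Item (2).} If $\varphi\geq 0$ on $A(U)$ and $\varphi$ is positive on some $A(u)$, then $\conv(A(U))$ lies in a closed half-space and is not contained in its boundary hyperplane. Hence $\conv(A(U))\cap\ker\varphi$ is a proper face of $\conv(A(U))$ containing $0$, so $0$ cannot be a relative interior point. Conversely, if $0\notin\relint\conv(A(U))$, proper separation of $\{0\}$ from $\conv(A(U))$ yields such a $\varphi$. This works both when $0\notin\conv(A(U))$ (use (1)) and when $0$ lies on the relative boundary (use a supporting hyperplane within $\operatorname{span}A(U)$, extended to $\R^d$). The degenerate case $U=\emptyset$ needs a separate check.

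\emph{Item (4), the main obstacle.} The plan is to split into three cases and take care with degenerate configurations.
\begin{itemize}
\item If $\varphi(A(v))<0$ and $\varphi\geq 0$ on $A(U)$, then $A(v)\notin\cone(A(U))$ by (3).
\item If $\varphi(A(v))=0$, $\varphi\geq0$ on $A(U)$, and $\varphi(A(u))>0$ for some $u\in U$, then $A(v)$ lies in the proper face $\cone(A(U))\cap\ker\varphi$, so $A(v)\notin\relint\cone(A(U))$.
\item Conversely, if $A(v)\notin\cone(A(U))$, use Farkas as in (3). If instead $A(v)$ lies on the relative boundary of $\cone(A(U))$, take a supporting functional of the polyhedral cone $\cone(A(U))$ within $\operatorname{span}A(U)$. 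It is nonnegative on the cone, vanishes at $A(v)$, and is not identically zero on $A(U)$. Extend it to $\R^d$.
\end{itemize}
It remains to check $U=\emptyset$. Here $\relint\cone(\emptyset)=\{0\}$, and a covector with $\Phi(v)=-$ exists exactly when $A(v)\neq0$, which is what (4) predicts. The step needing most care is producing a supporting functional that is nonzero somewhere on $A(U)$. This holds because a relative-boundary point lies in a proper face, and proper faces of polyhedral cones are cut out by such functionals.
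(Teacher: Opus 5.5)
Your proof is correct, apart from one edge case discussed below, and it takes a different route from the paper.

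\textbf{Comparison with the paper.} The paper does not use separation theorems at this point. It first turns each convex-geometric condition into the existence of a vector, i.e.\ the sign pattern of a linear dependence, via Lemma \ref{lem:0-in-conv-interpreted-in-oms}. It then converts vectors into covectors using the purely combinatorial oriented-matroid Farkas lemma, Lemma \ref{lem:general-vector-covector-duality}, and its specializations Lemmas \ref{lem:acyclic-oms} and \ref{lem:cone-as-covectors}. For (2) and (4) it applies this duality to the restrictions $\mathscr{O}|_U$, respectively $\mathscr{O}|_{U\cup\{v\}}$, and their duals.

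You instead read every covector as $\sgn\varphi(A(\cdot))$ and invoke the classical real statements directly: strict separation, Farkas, and proper separation or supporting hyperplanes of polyhedral cones. Your three-case split in (4) is exactly the negation of the covector condition, and each case is sound.

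What each route buys:
\begin{itemize}
\item Your route is shorter and self-contained, and avoids Lemma \ref{lem:general-vector-covector-duality} altogether.
\item The paper's route yields intermediate statements valid for arbitrary, possibly non-realizable, oriented matroids. Those statements, not this corollary, later justify $\mathscr{C}_\mathscr{O}=\{U:\mathscr{B}^+(U)\neq\emptyset\}$ and $\mathscr{C}_{v_0,\mathscr{O}}=\{U:\mathscr{D}^+(U)\cap\mathscr{B}^-_{v_0}\neq\emptyset\}$. So the paper needs that machinery anyway.
\end{itemize}
Your parenthetical alternative via orthogonality is close to the paper's route. Note, though, that orthogonality alone only gives the easy direction; the existence of the required covector is precisely what Lemma \ref{lem:general-vector-covector-duality} supplies.

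\textbf{The caveat: $U=\emptyset$ in (2).} Your deferred check of $U=\emptyset$ in (2) cannot succeed, because (2) is false there. With the paper's conventions $\conv(\emptyset)=\emptyset$, so $0\notin\relint(\conv(A(\emptyset)))$. On the other side, $\Phi|_\emptyset$ is the zero sign vector, which is never positive, so the right-hand side ``no covector restricts positively to $U$'' holds. Item (2) therefore has to be read for $U\neq\emptyset$.

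The paper's own proof has the same blind spot. Its step equating ``$(\mathscr{O}|_U)^*$ has a positive covector with support $U$'' with ``$(\mathscr{O}|_U)^*$ has no positive vector'' via Lemma \ref{lem:acyclic-oms} breaks exactly at $U=\emptyset$. Items (1), (3) and (4) are fine at $U=\emptyset$, as you verified.
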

The proof of this corollary relies on an analogue of Farkas' lemma \cite[Proposition 3.4.8]{OrientedMatroids} (in light of \cite[Lemma 4.1.8]{OrientedMatroids}):
\begin{lemma}\label{lem:acyclic-oms}
    If $\mathscr{O}$ is an oriented matroid on the vertex set $V$ and $U\subseteq V$, then $U$ contains no positive vector if and only if there is a covector $\Phi$ of $\mathscr{O}$ with $U\subseteq\Phi^+$.
\end{lemma}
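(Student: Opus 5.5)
The direction ``covector $\Rightarrow$ no positive vector'' should follow directly from orthogonality. Suppose $\Phi\in\mathscr{L}(\mathscr{O})$ satisfies $U\subseteq\Phi^+$, and suppose $X$ is a positive vector with $\underline{X}\subseteq U$. Since $X\neq 0$, there is some $v\in\underline{X}$, and then $X(v)=\Phi(v)=+$. Orthogonality $X\perp\Phi$ forces some $w$ with $X(w)=-\Phi(w)\neq 0$. But $X^-=\emptyset$, so $X(w)=+$, hence $w\in U\subseteq\Phi^+$ and $\Phi(w)=+$, a contradiction. So this half only uses the definition of $\mathscr{V}(\mathscr{O})$ as the sign vectors orthogonal to every covector.

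For the converse I would reduce to one element at a time and then compose. The target claim is: for each $e\in U$ there is a covector $Y_e$ with $Y_e(e)=+$ and $Y_e(u)\in\{0,+\}$ for all $u\in U$. Given these, set $\Phi:=Y_{e_1}\circ\cdots\circ Y_{e_k}$ for $U=\{e_1,\ldots,e_k\}$. By (V2), $\Phi$ is a covector. On $U$ all the $Y_{e_i}$ are nonnegative, so for $u\in U$ the value $\Phi(u)$ is the first nonzero $Y_{e_i}(u)$ if one exists, and it is $+$. Since $Y_u(u)=+$, such a nonzero value always exists, giving $U\subseteq\Phi^+$. (If $U=\emptyset$, take $\Phi=0$ by (V0).)

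To produce $Y_e$ I would use Minty's painting lemma (the $3$-painting form), which I would quote from \cite{OrientedMatroids} rather than rederive. Paint the elements of $U$ black and those of $V-U$ green, and fix $e\in U$. The lemma then gives exactly one of two alternatives. Either there is a vector $X$ with $X(e)=+$, $X\geq 0$ on the black elements and $X=0$ on the green elements, or there is a covector $Y$ with $Y(e)=+$ and $Y\geq 0$ on the black elements (green elements unrestricted). In the first alternative $X$ is a positive vector with $\underline{X}\subseteq U$, contradicting the hypothesis. Hence the second alternative holds, and it supplies $Y_e$. One should double-check that the painting convention places the zero constraint on the vector side and leaves the covector side free on green. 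If the conventions are reversed, apply the lemma to the dual pair $(\mathscr{O}^*,\mathscr{O})$, using $(\mathscr{O}^*)^*=\mathscr{O}$ and the fact that $\mathscr{V}(\mathscr{O})=\mathscr{L}(\mathscr{O}^*)$.

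I expect the main obstacle to be the painting lemma itself, which encodes the genuine Farkas content. If it must be proved from Definition \ref{def:om}, I would argue by induction on $|V|$ via deletion and contraction of a green element. The elimination axiom (V3') would be used to glue a covector of the minor back to a covector of $\mathscr{O}$. Everything after that lemma is formal.
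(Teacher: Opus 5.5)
Your proof is correct, but the hard direction follows a different route from the paper's. Your easy direction is the same orthogonality argument the paper gives at the start of its proof of Lemma \ref{lem:general-vector-covector-duality}.

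For the converse, the paper does not work element by element. It obtains Lemma \ref{lem:acyclic-oms} as the special case $\Xi^+=U$, $\Xi^-=\emptyset$ of Lemma \ref{lem:general-vector-covector-duality}. That proof proceeds in one shot:
\begin{enumerate}
\item It takes the conformal composition $\Psi_{\overline{U}}$ of all vectors $\Psi\leq\Xi$ whose support misses $U$.
\item It defines a candidate sign vector on $\underline{\Xi}$ that equals $\Xi$ off $\underline{\Psi_{\overline{U}}}$ and is $0$ on it.
\item It tests whether this candidate is a covector of $\mathscr{O}|_{\underline{\Xi}}$ by checking orthogonality to all vectors of that restriction.
\item If some vector is not orthogonal to it, composing that vector with $\Psi_{\overline{U}}$ gives a vector $\leq\Xi$ whose support strictly contains $\underline{\Psi_{\overline{U}}}$. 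By maximality, this vector must meet $U$.
\end{enumerate}

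So in the paper the Farkas-type content comes from the biorthogonality $\mathscr{L}(\mathscr{O}|_W)=\mathscr{V}(\mathscr{O}|_W)^{\perp}$, that is $(\mathscr{O}^*)^*=\mathscr{O}$, together with closure of vectors under conformal composition. The desired covector is produced directly as the complement of a maximal vector. In your argument the same content sits inside the painting lemma, which you apply once per $e\in U$, and (V2) then glues the resulting covectors $Y_e$ together.

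Your route is the classical one (pointwise Farkas, then composition) and is more modular, though it leans on quoting a deep result. The paper's route is self-contained apart from duality and restriction. It is also stated directly for arbitrary sign patterns $\Xi$ and $U\subseteq\underline{\Xi}$, which the paper needs later for Lemma \ref{lem:cone-as-covectors}. Your method extends to that generality just as easily: reorient on $\Xi^-$, paint $\underline{\Xi}$ black and the rest green, and compose the $Y_e$ over $e\in U$.
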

In fact, we need a slight generalization of this lemma, presented below, for the proof of which we use that there is an oriented matroid $\mathscr{O}|_U$ with set of elements $U$, for which $\mathscr{V}(\mathscr{O}|_U)=\{\Psi|_U:\Psi\in\mathscr{V}(\mathscr{O}),\underline{\Psi}\subseteq U\}$ \cite[Proposition 3.7.11]{OrientedMatroids} and $\mathscr{L}(\mathscr{O}|_U)=\{\Phi|_U:\Phi\in\mathscr{L}(\mathscr{O})\}$ \cite[Lemma 4.1.8]{OrientedMatroids}.
\begin{lemma}\label{lem:general-vector-covector-duality}
    Let $\mathscr{O}$ be an oriented matroid on set of elements $V$, $\Xi$ any sign vector, and $U\subseteq\underline{\Xi}$. Then exactly one of the following two statements hold:
    \begin{enumerate}[label=\normalfont{(\roman*)}]
        \item\label{lem:general-vector-covector-duality:vector} $\exists\Psi\in\mathscr{V}(\mathscr{O}):\Psi\leq\Xi$ and $\underline{\Psi}\cap U\neq\emptyset$.
        \item\label{lem:general-vector-covector-duality:covector} $\exists\Phi\in\mathscr{L}(\mathscr{O}):\Phi|_{\underline{\Xi}}\leq\Xi$ and $U\subseteq\underline{\Phi}$.
    \end{enumerate}
\end{lemma}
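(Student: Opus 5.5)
The plan is to reduce to a positive sign pattern by reorientation, restrict to $\underline{\Xi}$, and then combine orthogonality (for exclusivity) with a one-element Farkas/Minty statement plus conformal composition (for existence). Throughout write $S:=\underline{\Xi}$.

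\emph{Reorientation.} Reorient $\mathscr{O}$ on the set $\Xi^-$, i.e.\ negate the coordinates in $\Xi^-$ of every covector and every vector. This again gives an oriented matroid, whose vectors are the correspondingly reoriented vectors, because orthogonality is preserved by simultaneous reorientation. Both statements \ref{lem:general-vector-covector-duality:vector} and \ref{lem:general-vector-covector-duality:covector} are invariant under this operation. Hence we may assume $\Xi^-=\emptyset$, i.e.\ $\Xi$ is $+$ on $S$ and $0$ outside $S$.

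\emph{Restriction to $S$.} Condition \ref{lem:general-vector-covector-duality:vector} now says that there is a vector $\Psi$ with $\underline{\Psi}\subseteq S$, $\Psi^-=\emptyset$, and $\underline{\Psi}\cap U\neq\emptyset$. By the description $\mathscr{V}(\mathscr{O}|_S)=\{\Psi|_S:\Psi\in\mathscr{V}(\mathscr{O}),\underline{\Psi}\subseteq S\}$, this is equivalent to $\mathscr{O}|_S$ having a nonnegative nonzero vector meeting $U$. Condition \ref{lem:general-vector-covector-duality:covector} says that some covector is nonnegative on $S$ and positive on $U$. By $\mathscr{L}(\mathscr{O}|_S)=\{\Phi|_S\}$, this is a statement purely about covectors of $\mathscr{O}|_S$. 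So it suffices to prove the lemma for $\mathscr{O}|_S$ with $\Xi\equiv +$.

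\emph{Exclusivity.} Suppose both $\Psi$ and $\Phi$ exist. Pick $u\in\underline{\Psi}\cap U$. Then $\Psi(u)=+=\Phi(u)$, so by $\Psi\perp\Phi$ there is $w$ with $\Psi(w)=-\Phi(w)\neq 0$. But both sign vectors are nonnegative on $S$ and $\underline{\Psi}\subseteq S$, so no such $w$ exists. This is a contradiction.

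\emph{Existence.} Assume \ref{lem:general-vector-covector-duality:vector} fails. Fix $u\in U$. Then $\mathscr{O}|_S$ has no nonnegative vector with $u$ in its support. I would invoke the Farkas/Minty-type lemma \cite[Proposition 3.4.8 / Theorem 3.4.4]{OrientedMatroids} (the general form of Lemma \ref{lem:acyclic-oms}) to obtain a covector $\Phi_u$ of $\mathscr{O}|_S$ with $\Phi_u\geq 0$ and $\Phi_u(u)=+$. Here all elements of $S$ are required to be nonnegative and $u$ is the distinguished element. Lift each $\Phi_u$ to a covector of $\mathscr{O}$ and form the composition $\Phi:=\Phi_{u_1}\circ\cdots\circ\Phi_{u_k}$ over $U=\{u_1,\dots,u_k\}$. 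By (V2) this is a covector. On $S$ it is still nonnegative, since each factor is nonnegative there, and it is positive on all of $U$. This gives \ref{lem:general-vector-covector-duality:covector}. If $U=\emptyset$, then \ref{lem:general-vector-covector-duality:vector} is false and $\Phi=0$ witnesses \ref{lem:general-vector-covector-duality:covector}.

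\emph{Main obstacle.} The crux is the single-element Farkas step: a nonnegative vector through $u$ versus a nonnegative covector positive at $u$. If I prefer to derive it from Lemma \ref{lem:acyclic-oms} alone rather than cite the three-painting lemma, I would try induction on $|S|$ using contraction of elements, and this is where most care would be needed.
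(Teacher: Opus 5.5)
Your proof is correct, but existence goes by a different route from the paper's. You reorient so that $\Xi\geq 0$ and restrict to $S=\underline{\Xi}$. You then put all the depth into the single-element Farkas (painting) theorem of \cite{OrientedMatroids}: every element lies in a nonnegative circuit or in a nonnegative cocircuit, but not both. You apply it separately to each $u\in U$, and composition via (V2) glues the resulting cocircuits into one covector that is still nonnegative on $S$ and positive on all of $U$. Note that Lemma \ref{lem:acyclic-oms}, for which the paper cites \cite[Proposition 3.4.8]{OrientedMatroids}, is only the set version and does not by itself give this per-element statement.

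The paper uses neither reorientation nor any Farkas-type theorem. Its only input is that the covectors of $\mathscr{O}|_{\underline{\Xi}}$ are exactly the sign vectors orthogonal to all of its vectors. It lets $U'$ be the support of the conformal composition $\Psi_{\overline{U}}$ of all vectors $\Psi\leq\Xi$ avoiding $U$, and tests the single candidate $\Phi_{\underline{\Xi}}$, equal to $\Xi$ off $U'$ and to $0$ on $U'$. If this is not a covector, it takes a suitably signed vector $\Psi_U$ not orthogonal to it, and $\Psi_{\overline{U}}\circ\Psi_U$ witnesses \ref{lem:general-vector-covector-duality:vector}.

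So the paper's proof is self-contained and in fact reproves Farkas, since Lemma \ref{lem:acyclic-oms} is deduced from the present lemma. Within the paper you therefore cannot obtain your step from that lemma without citing it externally. Your proof is shorter but rests entirely on the cited theorem.

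The obstacle you anticipate also needs no induction. Suppose \ref{lem:general-vector-covector-duality:vector} fails. Let $\Psi_P$ be the conformal composition of all nonnegative vectors of $\mathscr{O}|_S$, and put $P:=\underline{\Psi_P}$; then $U\cap P=\emptyset$. The contraction $\mathscr{O}|_S/P$ has no positive vector: if $\Psi\in\mathscr{V}(\mathscr{O}|_S)$ had $\Psi|_{S-P}$ positive, then $\Psi_P\circ\Psi$ would be a nonnegative vector with support strictly larger than $P$. The set version of Farkas then yields a covector of $\mathscr{O}|_S$ vanishing on $P$ and positive on $S-P\supseteq U$, which handles all of $U$ at once. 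In that case $P$ is precisely the paper's $U'$ (after your reorientation), so the two arguments are closer than they first appear.
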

\begin{proof}[Proof of Lemma \ref{lem:acyclic-oms}]
    Apply Lemma \ref{lem:general-vector-covector-duality} with $\Xi^+:=U$ and $\Xi^-:=\emptyset$.
\end{proof}
\begin{proof}[Proof of Lemma \ref{lem:general-vector-covector-duality}]
    First, observe that if both $\Psi$ and $\Phi$ exist then $\Psi\not\perp\Phi$: $\exists v\in\underline{\Psi}\cap U\neq\emptyset$, and this $v$ satisfies $0\neq\Psi(v)\leq\Xi(v)\geq\Phi(v)\neq 0$, i.e., $\Psi(v)=\Phi(v)\neq0$, yet for any $w\in\underline{\Psi}\subseteq\underline{\Xi}$ we have $0\neq\Psi(w)\leq\Xi(w)\geq\Phi(w)$, and consequently $0\neq\Psi(w)\geq\Phi(w)\neq-\Psi(w)$ for any $w\in\underline{\Psi}$, so there is no $w\in V$ for which $0\neq\Psi(w)=-\Phi(w)$. $\Psi\not\perp\Phi$ however contradicts the fact that any vector is orthogonal to any covector.

    Let $U'$ be the support of the conformal composition $\Psi_{\overline{U}}$ of all vectors in $\{\Psi\in\mathscr{V}(\mathscr{O}):\Psi\leq\Xi,\Psi\cap U=\emptyset\}$, and define the sign vector $\Phi_{\underline{\Xi}}$ by $\Phi_{\underline{\Xi}}|_{\underline{\Xi}-U'}:=\Xi|_{\underline{\Xi}-U'}$ and $\Phi_{\underline{\Xi}}|_{U'}=0$. Then necessarily $U\subseteq\underline{\Xi}-U'$. If $\Phi_{\underline{\Xi}}\in\mathscr{L}(\mathscr{O}|_{\underline{\Xi}})$ then $\exists\Phi\in\mathscr{L}(\mathscr{O})$ with $\Phi|_{\underline{\Xi}}=\Phi_{\underline{\Xi}}$, and this $\Phi$ satisfies point \ref{lem:general-vector-covector-duality:covector}. Otherwise there is a $\Psi_U^{\underline{\Xi}}\in\mathscr{V}(\mathscr{O}|_{\underline{\Xi}})$ with $\Phi_{\underline{\Xi}}\not\perp\Psi_U^{\underline{\Xi}}$ and therefore a corresponding $\Psi_U\in\mathscr{V}(\mathscr{O})$ with $\Psi_U|_{\underline{\Xi}}=\Psi_U^{\underline{\Xi}}$ and $\underline{\Psi_U}\subseteq\underline{\Xi}$. Either $\underline{\Psi_U}\subseteq U'$, or after possibly replacing $\Psi_U$ with $-\Psi_U$ we can assume that $\exists v\in\underline{\Xi}-U'$ for which $0\neq\Psi_U^{\underline{\Xi}}(v)=\Phi_{\underline{\Xi}}(v)$, but then $\Phi_{\underline{\Xi}}\not\perp\Psi_U^{\underline{\Xi}}$ implies that $\Psi_U|_{\underline{\Xi}-U'}=\Psi_U^{\underline{\Xi}}|_{\underline{\Xi}-U'}\leq\Phi_{\underline{\Xi}}|_{\underline{\Xi}-U'}=\Xi|_{\underline{\Xi}-U'}$. We can not have $\underline{\Psi_U}\subseteq U'$ as then from $\underline{\Psi_U}\cap\underline{\Phi_{\underline{\Xi}}}=\emptyset$ it can be derived that $\Phi_{\underline{\Xi}}\perp\Psi_U^{\underline{\Xi}}$. Now define $\Psi:=\Psi_{\overline{U}}\circ\Psi_U\in\mathscr{V}(\mathscr{O})$, and observe that $\Psi\leq\Xi$. It can not hold that $\underline{\Psi}\cap U=\emptyset$, as then the vector $\Psi\leq\Xi$ would have participated in the conformal composition defining $\Psi_{\overline{U}}$, but $\Psi>\Psi_{\overline{U}}$. In other words, $\Psi$ satisfies point \ref{lem:general-vector-covector-duality:vector}.  
\end{proof}
Another relevant specialization of Lemma \ref{lem:general-vector-covector-duality}, needed to prove Corollary \ref{crl:0-in-conv-interpreted-in-oms}, is the following:
\begin{lemma}\label{lem:cone-as-covectors}
    For an oriened matroid $\mathscr{O}$ on set of elements $V$, $U\subseteq V$, and $v\in V$, we have $v\in\conv(U)$ if and only if $\nexists\Phi\in\mathscr{L}(\mathscr{O})$ with $v\in\Phi^-$ and $U\cap\Phi^-=\emptyset$.
\end{lemma}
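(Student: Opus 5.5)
Lemma \ref{lem:cone-as-covectors} is a specialization of Lemma \ref{lem:general-vector-covector-duality} to a suitable sign vector $\Xi$. First I would dispose of the trivial case $v\in U$. Then $v\in\conv(U)$ holds by definition. On the other side, any covector $\Phi$ with $v\in\Phi^-$ has $v\in U\cap\Phi^-$, so no covector as in the statement exists. Both sides of the equivalence therefore hold.

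Now assume $v\notin U$. Define the sign vector $\Xi$ by $\Xi^+:=U$, $\Xi^-:=\{v\}$, $\Xi^0:=V-(U\cup\{v\})$, and apply Lemma \ref{lem:general-vector-covector-duality} with the subset $\{v\}\subseteq\underline{\Xi}$ in the role of $U$. Alternative \ref{lem:general-vector-covector-duality:vector} then reads: there is a vector $\Psi\in\mathscr{V}(\mathscr{O})$ with $\Psi^+\subseteq U$, $\Psi^-\subseteq\{v\}$ and $v\in\underline{\Psi}$. This is exactly a vector with $\Psi^+\subseteq U$ and $\Psi^-=\{v\}$, i.e.\ precisely the defining condition of $v\in\conv(U)$ when $v\notin U$. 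Alternative \ref{lem:general-vector-covector-duality:covector} reads: there is a covector $\Phi$ with $\Phi|_{U\cup\{v\}}\leq\Xi$ and $v\in\underline{\Phi}$. The condition $\Phi|_{U\cup\{v\}}\le\Xi$ means $\Phi(u)\in\{0,+\}$ for $u\in U$ and $\Phi(v)\in\{0,-\}$. Together with $v\in\underline\Phi$ this says exactly that $v\in\Phi^-$ and $U\cap\Phi^-=\emptyset$. The coordinates of $\Phi$ outside $U\cup\{v\}$ are unrestricted, matching the statement.

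Since Lemma \ref{lem:general-vector-covector-duality} asserts that exactly one alternative holds, $v\in\conv(U)$ is equivalent to the nonexistence of such a $\Phi$, which is the claim. The only point needing care is the translation between the partial-order conditions $\Psi\le\Xi$, $\Phi|_{\underline\Xi}\le\Xi$ and the set-theoretic conditions. In particular, $\Psi\le\Xi$ forces $\underline\Psi\subseteq\underline\Xi=U\cup\{v\}$, so the support of $\Psi$ is automatically contained in $U\cup\{v\}$. I expect no real obstacle beyond this bookkeeping.
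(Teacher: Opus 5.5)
Your proposal is correct and is the paper's argument: the trivial case $v\in U$ is dispatched directly, and for $v\notin U$ you apply Lemma~\ref{lem:general-vector-covector-duality} with $\Xi^+:=U$, $\Xi^-:=\{v\}$ and the subset $\{v\}$. You also carry out the translation of both alternatives into the definition of $\conv(U)$ and into the covector condition, which the paper leaves implicit.
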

\begin{proof}
    For $v\in U$ the claim is trivial. For $v\in V-U$ apply Lemma \ref{lem:general-vector-covector-duality} with $\Xi^+:=U$, $\Xi^-:=\{v\}$, and $U$ of that lemma equal to $\{v\}$.
\end{proof}
Now we are ready to present the proof of Corollary \ref{crl:0-in-conv-interpreted-in-oms}.
\begin{proof}[Proof of Corollary \ref{crl:0-in-conv-interpreted-in-oms}]
    Point \ref{crl:0-in-conv-interpreted-in-oms:full} follows from point \ref{lem:0-in-conv-interpreted-in-oms:full} of Lemma \ref{lem:0-in-conv-interpreted-in-oms} combined with Lemma \ref{lem:acyclic-oms}. For point \ref{crl:0-in-conv-interpreted-in-oms:relint}, observe that $0\in\relint(\conv(A(U)))$ if and only if there is a positive vector of $\mathscr{O}$ with support $U$ by point \ref{lem:0-in-conv-interpreted-in-oms:relint} of Lemma \ref{lem:0-in-conv-interpreted-in-oms}, which holds if and only if $\mathscr{O}|_U$ has a positive vector with support $U$, or in other words $(\mathscr{O}|_U)^*$ has a positive covector with support $U$, which is equivalent to $(\mathscr{O}|_U)^*$ having no positive vector by Lemma \ref{lem:acyclic-oms}, but this is the same as stating that $\mathscr{O}|_U$ has no positive covector, which is definitionally the same as the claim $\nexists\Phi\in\mathscr{L}(\mathscr{O})\colon\Phi|_U$ is positive. Moving on, point \ref{crl:0-in-conv-interpreted-in-oms:cone} is just a combination of Lemma \ref{lem:cone-as-covectors} and point \ref{lem:0-in-conv-interpreted-in-oms:cone} of Lemma \ref{lem:0-in-conv-interpreted-in-oms}. Finally, point \ref{crl:0-in-conv-interpreted-in-oms:relintcone} is proved similarly to point \ref{crl:0-in-conv-interpreted-in-oms:relint}: by point \ref{lem:0-in-cov-interpreted-in-oms:relintcone} of Lemma \ref{lem:0-in-conv-interpreted-in-oms} we know that for $v\in V-U$ we have $A(v)\in\relint(\cone(A(U)))$ if and only if $\exists\Psi\in\mathscr{V}(\mathscr{O})$ with $\Psi^+=U$ and $\Psi^-=\{v\}$, which holds if and only if such a sign vector in $\mathscr{V}(\mathscr{O}|_{U\cup\{v\}})=\mathscr{L}((\mathscr{O}|_{U\cup\{v\}})^*)$ exists, but by Lemma \ref{lem:general-vector-covector-duality} this is the case precisely if there is no $\Phi_U\in\mathscr{V}((\mathscr{O}|_{U\cup\{v\}})^*)=\mathscr{L}(\mathscr{O}|_{U\cup\{v\}})$ with $\Phi_U(u)\leq +$ for $u\in U$, $\Phi_U(v)\leq -$, and $\Phi_U\neq 0$, and finally this is equivalent to saying that there is no covector $\Phi$ of $\mathscr{O}$ with $\Phi^-\cap U=\emptyset=\Phi^+\cap\{v\}$ and $\underline{\Phi}\cap (U\cup\{v\})\neq\emptyset$.
\end{proof}

\medskip
\subsection{Pseudosphere arrangements}\label{sec:prelim-oms:pseudospheres}
\medskip

In this section we review the Topological Representation Theorem of Folkman and Lawrence, and establish notation to talk about signed arrangements of pseudospheres. The general reference is \cite[Chapter 5]{OrientedMatroids}.

Fix throughout a sphere $S^{d-1}$ with $d\geq 1$. Later sometimes $d=0$ will be allowed, in which case we mean by $S^{-1}$ the empty set. A \emph{pseudosphere} in $S^{d-1}$ is the image of the equator $S^{d-2}$ under a self-homeomorphism $h\colon S^{d-1}\to S^{d-1}$. The homeomorphism $h$ is not part of the datum. Each pseudosphere cuts the sphere $S^{d-1}$ into two connected components, called its \emph{sides}, which are the images of the Northern and Southern hemispheres -- when $d=1$, the two sides are just the two points of $S^0$. Let a \emph{signed pseudosphere} be a pseudosphere $S$ together with a choice of a positive side and a negative side; in other words it can be encoded by a function $\mathfrak{s}\colon S^{d-1}\to\{-,0,+\}$ where $S=\mathfrak{s}^{-1}(0)$ and there exists a homeomorphism $h:S^{d-1}\to S^{d-1}$ such that $\mathfrak{s}(\varphi)=+$ if and only if $\varphi\in S^{d-1}$ is in the image of the Northern hemisphere. Call $\mathfrak{s}$ the \emph{sign function} of $S$. Write $B^+:=\mathfrak{s}^{-1}(+)$, $B^-:=\mathfrak{s}^{-1}(-)$, $D^+:=\mathfrak{s}^{-1}(\{0,+\})$ and $D^-:=\mathfrak{s}^{-1}(\{-,0\})$. A \emph{signed arrangement $\mathfrak{A}$ of pseudospheres} in $S^{d-1}$ indexed by $V$ is a collection of signed pseudospheres $(S_v)_{v\in V}$ described by sign functions $(\mathfrak{s}_v)_{v\in V}$ satisfying the following axioms:
\begin{enumerate}[label=\normalfont{(A\arabic*)}]
    \item $S(U):=\bigcap_{u\in U}S_u$ is a sphere for all $U\subseteq V$.
    \item If $S(U)\nsubseteq S_v$ for some $U\subseteq V$, $v\in V$, then $S(U)\cap S_v$ is a pseudosphere in $S(U)$ with sign function $\mathfrak{s}_v|_{S(U)}$ and consequently sides $S(U)\cap B^+_v$ and $S(U)\cap B^-_v$. 
    \item The intersection of an arbitrary collection of sets of the form $D^+_v$ and $D^-_v$ is either a sphere or a disk.
\end{enumerate}
For the purposes of this paper we slightly deviate from the definition above by allowing a signed arrangement of pseudospheres to include ``degenerate pseudospheres'' $S_v=S^{d-1}$ for $v\in V'\subseteq V$ ($\mathfrak{s}_v$ is the constant function at $0$ for such $v$) and then only demand that the axioms hold for $(S_v)_{v\in V-V'}$ and $(\mathfrak{s}_v)_{v\in V-V'}$. Then we can also talk about signed arrangements of pseudospheres in $S^{-1}$, where necessarily all pseudospheres are degenerate.

Given a signed arrangement $\mathfrak{A}$ of pseudospheres in $S^{d-1}$ indexed by $V$ and a point $\varphi\in S^{d-1}$, there is an associated sign vector $\mathfrak{s}_\mathfrak{A}(\varphi)\colon V\to\{-,0,+\}$ defined as $\mathfrak{s}_\mathfrak{A}(\varphi)(v):=\mathfrak{s}_v(\varphi)$. It turns out that $\mathfrak{s}_\mathfrak{A}(S^{d-1})\cup\{0\}$ is the set of covectors $\mathscr{L}(\mathscr{O})$ of some oriented matroid $\mathscr{O}$ \cite[Theorem 5.1.4]{OrientedMatroids}. The arrangement $\mathfrak{A}$ can be used to further study the oriented matroid $\mathscr{O}$, for instance the \emph{rank} $\operatorname{rank}(U)$ of a subset $U\subseteq V$ in $\mathscr{O}$ can be defined as $d-1-\dim S(U)$, and the rank of $\mathscr{O}$ is just $\operatorname{rank}(V)$; these values turn out to be functions of $\mathscr{O}$, and so independent of the particular arrangement $\mathfrak{A}$. Even better, the Topological Representation Theorem says that any oriented matroid arises from a signed arrangement of pseudospheres, so the rank is a well-defined concept in each oriented matroid.

The fact that oriented matroids associated to functions $A\colon V\to\R^d$ come from signed arrangements of pseudospheres is clear: the collection of $S_v:=\{\varphi\in S((\R^d)^*):\varphi(A(v))=0\}$ and $\mathfrak{s}_v(\varphi):=\sgn(\varphi(A(v)))$ for $v\in V$ is easily seen to form a signed arrangement $\mathfrak{A}$ of pseudospheres for which $\mathfrak{s}_\mathfrak{A}=\mathfrak{s}_A$. More generally, given a linear subspace $L\supseteq\operatorname{span}(A(V))$, the collection of $S_v:=\{\varphi\in S(L^*):\varphi(A(v))=0\}$ with $\mathfrak{s}_v(\varphi):=\sgn(\varphi(A(v)))$ is also a signed arrangement of pseudospheres in $S(L^*)\cong S^{\dim(L)-1}$ indexed by $V$ for which $\mathfrak{s}_\mathfrak{A}(\varphi)=\mathfrak{s}_A(\varphi')$ for any linear extension $\varphi'$ of $\varphi\in L^*$ to $\R^d$, and the associated oriented matroid is the same independently of the choice of $L$. In either case it can be verified that $\operatorname{rank}(U)=\dim(\operatorname{span}(A(U)))$. For other oriented matroids we do not need the strongest version of the Topological Representation Theorem, but instead the following simplified version of \cite[Theorem 5.2.1]{OrientedMatroids} will be sufficient. Here a \emph{loop-free} oriented matroid is an oriened matroid which has no \emph{loops}, i.e., no element $v\in V$ for which $\Phi(v)=0$ for all covectors $\Phi$.
\begin{theorem}\label{thm:top-repr}
    Let $\mathscr{L}$ be a set of sign vectors of the form $V\to\{-,0,+\}$, and let $k\leq d$. Then the following conditions are equivalent:
    \begin{enumerate}[label=\normalfont{(\roman*)}]
        \item $\mathscr{L}$ is the set of covectors of a loop-free oriented matroid of rank $k$ with set of elements $V$.
        \item $\mathscr{L}=\mathfrak{s}_\mathfrak{A}(S^{d-1})\cup\{0\}$ for some signed arrangement $\mathfrak{A}$ of pseudospheres in $S^{d-1}$ indexed by $V$ such that $\dim(S(V))=d-1-k$.
    \end{enumerate}
\end{theorem}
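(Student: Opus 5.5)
The plan is to deduce both implications from the Folkman--Lawrence theorem in the form of \cite[Theorem 5.2.1]{OrientedMatroids}, together with \cite[Theorem 5.1.4]{OrientedMatroids}. The only extra ingredient is a join construction that moves an arrangement from a sphere of dimension $k-1$ to one of dimension $d-1$. Throughout I read (ii) as an arrangement with no degenerate members. The reason is that a degenerate $S_v=S^{d-1}$ produces exactly a loop $v$, since then $\Phi(v)=0$ for all covectors $\Phi$. So loop-freeness in (i) matches non-degeneracy in (ii), and this will be checked first.

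For (i)$\Rightarrow$(ii), let $\mathscr{L}$ be the covector set of a loop-free oriented matroid of rank $k$. Theorem 5.2.1 of \cite{OrientedMatroids} gives an essential signed arrangement $(S'_v,\mathfrak{s}'_v)_{v\in V}$ of pseudospheres in $S^{k-1}$ with $\bigcap_v S'_v=\emptyset$ and $\mathfrak{s}_{\mathfrak{A}'}(S^{k-1})\cup\{0\}=\mathscr{L}$. I then identify $S^{d-1}$ with the join $S^{k-1}*S^{d-k-1}$. Each point is either a point of $S^{d-k-1}$ or a pair $(x,y,t)$ with $x\in S^{k-1}$ and $t<1$. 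I set
\[
  S_v:=S'_v*S^{d-k-1},\qquad \mathfrak{s}_v(x,y,t):=\mathfrak{s}'_v(x),\qquad \mathfrak{s}_v|_{S^{d-k-1}}\equiv 0.
\]
Three things then need checking.
\begin{enumerate}[label=\normalfont{(\arabic*)}]
  \item The join of a sphere (respectively a disk) with $S^{d-k-1}$ is a sphere (respectively a disk, or a sphere when the first factor is empty). Hence (A1) and (A3) transfer from $\mathfrak{A}'$.
  \item A pseudosphere $S'\subseteq S''$ gives a pseudosphere $S'*S^{d-k-1}\subseteq S''*S^{d-k-1}$, because the flattening homeomorphism can be joined with the identity. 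Its sides are the joins of the sides, which gives (A2).
  \item The set of sign vectors is the old set together with the zero vector coming from $S^{d-k-1}$. Moreover $S(V)=\emptyset*S^{d-k-1}=S^{d-k-1}$, which has dimension $d-1-k$.
\end{enumerate}

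For (ii)$\Rightarrow$(i), \cite[Theorem 5.1.4]{OrientedMatroids} shows that $\mathfrak{s}_\mathfrak{A}(S^{d-1})\cup\{0\}$ is the covector set of an oriented matroid, and loop-freeness follows from non-degeneracy as noted above. For the rank, I would use that $S(V)$ is a sphere of dimension $d-1-k$, together with the definition $\operatorname{rank}=d-1-\dim S(V)$ recalled before the theorem; this gives rank $k$. To confirm that this number is intrinsic, I would contract along $S(V)$. Concretely, a link or normal-sphere argument replaces $S^{d-1}$ by a sphere $S^{k-1}$ that is transverse to $S(V)$ and meets every cell, which yields an essential arrangement of the same oriented matroid. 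Theorem 5.2.1 then identifies its rank as $k$.

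The main obstacle is this last step, namely showing that an arrangement with a positive-dimensional common intersection $S(V)$ reduces to an essential arrangement in $S^{k-1}$ with the same sign vectors. The difficulty is purely topological, since pseudospheres are only tame up to homeomorphism. I would first try to use axiom (A3) to show that $S^{d-1}$ decomposes as $S(V)*\Sigma$ for a sphere $\Sigma$ compatible with all the $S_v$. If that proves too delicate, the fallback is to cite directly the part of \cite[Section 5.1]{OrientedMatroids} stating that the rank equals the codimension of $S(V)$.
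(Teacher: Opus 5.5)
Your direction (i)$\Rightarrow$(ii) is correct. The join $S^{d-1}=S^{k-1}*S^{d-k-1}$ works as you set it up. Using $h*\mathrm{id}$ shows that each $S'_v*S^{d-k-1}$ is a signed pseudosphere with the right sides. Since $S(U)$ and every intersection of closed sides is a join with $S^{d-k-1}$, you correctly get (A1)--(A3), the equality of the sign-vector sets, and $\dim S(V)=d-1-k$. Reading (ii) as excluding degenerate pseudospheres is also right: if they are allowed, loop-freeness in (ii)$\Rightarrow$(i) fails. The paper itself gives no argument here; it imports the statement in this generality as a simplified form of \cite[Theorem 5.2.1]{OrientedMatroids}. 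So your join step is only needed if you start from the essential case alone.

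The gap is the rank in (ii)$\Rightarrow$(i).
\begin{itemize}
\item Invoking ``$\operatorname{rank}=d-1-\dim S(V)$'' is circular. In the paper this is a definition, and its independence of $\mathfrak{A}$ is exactly what must be proved.
\item Your proposed repair, a sphere $\Sigma$ with $S^{d-1}=S(V)*\Sigma$ compatible with every $S_v$, is not established. It is also not easy to extract from (A1)--(A3), because pseudospheres carry no transversality structure.
\item Falling back on a citation just turns your argument into the paper's.
\end{itemize}

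No topology is needed; (A2) suffices combinatorially. Write $\rho$ for the rank function of the underlying matroid of $\mathscr{O}$, and $\operatorname{cl}$ for its closure.
\begin{itemize}
\item Its flats are exactly the zero sets $\Phi^0$ of covectors. Hence $v\notin\operatorname{cl}(U)$ if and only if some covector vanishes on $U$ but not at $v$, that is, if and only if $S(U)\not\subseteq S_v$.
\item In that case (A1) and (A2) say that $S(U\cup\{v\})=S(U)\cap S_v$ is a pseudosphere in the nonempty sphere $S(U)$, so $\dim S(U\cup\{v\})=\dim S(U)-1$. Otherwise $S(U\cup\{v\})=S(U)$.
\item Add the elements of $U$ one at a time, starting from $\dim S(\emptyset)=d-1$. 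The dimension drops exactly at the elements not in the closure of the previously added ones, and those elements form a basis of $U$.
\end{itemize}
Hence $\dim S(U)=d-1-\rho(U)$ for every $U\subseteq V$. In particular $\operatorname{rank}(\mathscr{O})=d-1-\dim S(V)=k$, and the rank is intrinsic to $\mathscr{O}$.
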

It is clear that the theorem still holds if we drop the loop-freeness condition and allow ``degenerate pseudospheres'' $S_v=S^{d-1}$ with $\mathfrak{s}_v\equiv 0$. Additionally, a signed arrangement $\mathfrak{A}$ of pseudospheres indexed by $V$ is called \emph{essential} if $S(V)=\emptyset$, and setting $k,d:=\operatorname{rank}(\mathscr{O})$ in the Topological Representation Theorem shows that every oriented matroid $\mathscr{O}$ is associated to some essential $\mathfrak{A}$, so assuming this property usually does not cause any problems. 

We conclude by introducing additional notation for dealing with signed arrangements of pseudospheres and proving a few lemmas about them. Whenever $U\subseteq V$ and $\mathfrak{s}\in\{-,+\}$, write $B^\mathfrak{s}(U):=\bigcap_{u\in U}B^\mathfrak{s}_u$, $B^\mathfrak{s}\langle U\rangle:=\bigcup_{u\in U}B^\mathfrak{s}_u$, $D^\mathfrak{s}(U):=\bigcap_{u\in U}D^\mathfrak{s}_u$, $D^\mathfrak{s}\langle U\rangle:=\bigcup_{u\in U}D^\mathfrak{s}_u$, $\partial D^\mathfrak{s}(U):=D^\mathfrak{s}(U)-B^\mathfrak{s}(U)$. Consider an intersection indexed by $U=\emptyset$ to be $S^{d-1}$ and a union indexed by $U=\emptyset$ to be the empty set. This notation is meant to mirror that found in \cite{Blagojevic2025}, but within the context of signed arrangements of pseudospheres. The following lemma, a paraphrasing of \cite[Lemma 5.1.8]{OrientedMatroids}, can be used to understand the homotopy type of $B^+(U)$, $D^+(U)$, and $\partial D^+(U)$:
\begin{lemma}\label{lem:disks-in-oms}
    Let $\mathfrak{A}$ be a signed arrangement of pseudospheres, $\mathscr{O}$ the associated oriented matroid, and $\Phi\in\mathscr{L}(\mathscr{O})-{0}$. Then  $D^-(\Phi^-)\cap S(\Phi^0)\cap D^+(\Phi^+)=\mathfrak{s}_\mathfrak{A}^{-1}(\{\Psi\in\mathscr{L}(\mathscr{O}):\Psi\leq\Phi\})$ is a $\dim S(\Phi^0)$-dimensional disk. Furthermore, its relative boundary as a subset of the sphere $S(\Phi^0)$ is 
    \[
        D^-(\Phi^-)\cap S(\Phi^0)\cap D^+(\Phi^+)-B^-(\Phi^-)\cap S(\Phi^0)\cap B^+(\Phi^+)=\mathfrak{s}_\mathfrak{A}^{-1}(\{\Psi\in\mathscr{L}(\mathscr{O})-\{0\}:\Psi<\Phi\}).
    \] 
\end{lemma}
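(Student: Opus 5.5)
The plan is to argue directly from the axioms (A1)--(A3) of a signed arrangement, treating the three assertions in turn: the set equality, the disk property together with its dimension, and the description of the relative boundary.

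\emph{Set equality.} This is a pointwise check. A point $\varphi\in S^{d-1}$ lies in $D^-(\Phi^-)\cap S(\Phi^0)\cap D^+(\Phi^+)$ exactly when three conditions hold: $\mathfrak{s}_v(\varphi)\in\{-,0\}$ for $v\in\Phi^-$, $\mathfrak{s}_v(\varphi)=0$ for $v\in\Phi^0$, and $\mathfrak{s}_v(\varphi)\in\{0,+\}$ for $v\in\Phi^+$. With the order $0<-$, $0<+$, this says precisely that $\mathfrak{s}_\mathfrak{A}(\varphi)\leq\Phi$. The same unwinding shows that the open part $B^-(\Phi^-)\cap S(\Phi^0)\cap B^+(\Phi^+)$ is $\mathfrak{s}_\mathfrak{A}^{-1}(\Phi)$. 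Consequently the set difference in the boundary formula is $\mathfrak{s}_\mathfrak{A}^{-1}(\{\Psi:\Psi<\Phi\})$.

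\emph{Disk and dimension.} We write $S(\Phi^0)=D^+(\Phi^0)\cap D^-(\Phi^0)$, so the whole set is an intersection of sets of the form $D^\pm_v$. By (A3) it is therefore a sphere or a disk, and it remains to exclude the sphere case and compute the dimension.
\begin{enumerate}[label=\normalfont{(\arabic*)}]
\item Since $\Phi\neq0$ is a covector, there is a point $\varphi_0$ with $\mathfrak{s}_\mathfrak{A}(\varphi_0)=\Phi$.
\item For each $v\in\underline{\Phi}$ we have $\varphi_0\in S(\Phi^0)-S_v$, so $S(\Phi^0)\nsubseteq S_v$. By (A2), $S(\Phi^0)\cap S_v$ is then a pseudosphere in $S(\Phi^0)$, and its sides $S(\Phi^0)\cap B^\pm_v$ are open in $S(\Phi^0)$.
\item Hence $\mathfrak{s}_\mathfrak{A}^{-1}(\Phi)$ is a finite intersection of open subsets of the sphere $S(\Phi^0)$, and it is nonempty because it contains $\varphi_0$. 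So the set has nonempty interior in $S(\Phi^0)$, and its dimension is $\dim S(\Phi^0)$.
\item The set is not all of $S(\Phi^0)$. Indeed, $-\Phi$ is a covector by (V1), realized by some point $\varphi_1$, which lies in $S(\Phi^0)$ but has the wrong sign on $\underline{\Phi}\neq\emptyset$. A proper full-dimensional closed subset of a sphere cannot be that sphere, so the set is a disk of dimension $\dim S(\Phi^0)$.
\end{enumerate}

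\emph{Relative boundary.} We show the two inclusions separately.
\begin{itemize}
\item Points of $\mathfrak{s}_\mathfrak{A}^{-1}(\Phi)$ are interior points in $S(\Phi^0)$, by step (3) above.
\item Now let $\varphi$ be in the disk with $\mathfrak{s}_\mathfrak{A}(\varphi)<\Phi$. Then $\mathfrak{s}_v(\varphi)=0$ for some $v\in\underline{\Phi}$, say $v\in\Phi^+$ (the case $v\in\Phi^-$ is symmetric). So $\varphi$ lies on the pseudosphere $S(\Phi^0)\cap S_v$ of $S(\Phi^0)$.
\item A pseudosphere is the image of the equator under a homeomorphism of $S(\Phi^0)$, so every neighbourhood of $\varphi$ in $S(\Phi^0)$ meets the opposite side $B^-_v$. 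Those points are outside the disk, so $\varphi$ is a boundary point.
\end{itemize}
The step I expect to need the most care is the identification of ``topological interior in $S(\Phi^0)$'' with ``relative interior of the disk'' used in the boundary description. This requires invariance of domain: a full-dimensional disk embedded in a sphere of the same dimension has manifold interior equal to its topological interior in that sphere. I would invoke invariance of domain explicitly at this point.

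\emph{Caveat on $\Psi\neq0$.} If $\mathfrak{A}$ is not essential, points of $S(V)$ satisfy $\mathfrak{s}_\mathfrak{A}(\varphi)=0<\Phi$ and are boundary points by the argument above. This is compatible with the stated formula with ``$-\{0\}$'' only if such points are excluded. So I would either assume $\mathfrak{A}$ essential, which is harmless by the remark after Theorem \ref{thm:top-repr}, or read $\{0\}$ there as shorthand for points of $S(V)$. For an essential arrangement $S(V)=\emptyset$, so $0\notin\mathfrak{s}_\mathfrak{A}(S^{d-1})$ and the formula holds as stated.
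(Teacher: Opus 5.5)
Your proof is correct. It differs from the paper in that the paper gives no argument at all: the lemma is presented as a paraphrase of \cite[Lemma 5.1.8]{OrientedMatroids}. You instead derive everything from the axioms:
\begin{itemize}
\item (A3) applied to $S(\Phi^0)=D^+(\Phi^0)\cap D^-(\Phi^0)$ gives ``sphere or disk'';
\item a point realizing $\Phi$ shows that $\mathfrak{s}_\mathfrak{A}^{-1}(\Phi)$ is a nonempty open subset of $S(\Phi^0)$ inside the set;
\item a point realizing $-\Phi$ shows properness;
\item (A2) shows that every point with $\mathfrak{s}_\mathfrak{A}(\varphi)<\Phi$ is a limit of points of an opposite open side.
\end{itemize}
The gain over the citation is that the topological input becomes explicit, and it is exactly what the paper later needs. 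Corollaries \ref{crl:disks-in-oms}, \ref{crl:disk-with-partial-boundary-in-oms} and Lemma \ref{lem:tubular-neighborhoods-in-oms} treat $\mathfrak{s}_\mathfrak{A}^{-1}(\Phi)$ as the manifold interior of the disk (dense in it), and the frontier in $S(\Phi^0)$ as the bounding sphere.

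One tightening: invariance of domain is needed already in your steps (3) and (4), not only in the boundary identification. It is what forces a ball in $S^m$ containing a nonempty open subset of $S^m$ to have dimension exactly $m$. Likewise, the sentence ``a proper full-dimensional closed subset of a sphere cannot be that sphere'' should really read: an embedded $m$-sphere in $S^m$ is open (invariance of domain) and closed (compactness). Hence it is all of $S^m$ for $m\geq 1$, and for $m=0$ this follows by counting points.

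Your caveat is also right, and it exposes a slip in the statement. If $\mathfrak{A}$ is not essential, then $\emptyset\neq S(V)=\mathfrak{s}_\mathfrak{A}^{-1}(0)\subseteq S(\Phi^0)\cap S_v$ for each $v\in\underline{\Phi}$. These points lie in the frontier, so the second equality in the display fails.

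However, assuming $\mathfrak{A}$ essential is not a harmless fix. The lemma is about a fixed arrangement, and Corollary \ref{crl:disks-in-oms} applies it to restrictions $\mathfrak{A}|_U$, which need not be essential. Your proof of the disk property and of the first equality never uses essentiality, so the clean correction is simply to drop ``$-\{0\}$''. The frontier is then $\mathfrak{s}_\mathfrak{A}^{-1}(\{\Psi\in\mathscr{L}(\mathscr{O}):\Psi<\Phi\})$ in general, which reduces to the stated formula exactly when $S(V)=\emptyset$. This agrees with Observation \ref{obs:pseudospheres-regular-cw}, which mentions extra cells decomposing $S(V)$. Your alternative of reading $\{0\}$ as shorthand for $S(V)$ is confusing and best avoided. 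The later arguments in the paper only use the first equality, so nothing downstream is affected.
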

\begin{observation}\label{obs:pseudospheres-regular-cw}
    If $\mathfrak{A}$ is a signed arrengement of pseudospheres in $S^{d-1}$ indexed by the finite set $V$ and with associated oriented matroid $\mathscr{O}$, then using Lemma \ref{lem:disks-in-oms} the arrangement $\mathfrak{A}$ gives a regular CW decomposition of $S^{d-1}$ whose open cells are of the form $B^-(\Phi^-)\cap S(\Phi^0)\cap B^+(\Phi^+)$. There is one cell for each $\Phi\in\mathscr{L}(\mathscr{O})-\{0\}$, and if $\mathfrak{A}$ is not essential then there are additional cells used to decompose $S(V)$.
\end{observation}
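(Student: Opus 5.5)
The plan is to read off the cell structure directly from the sign map $\mathfrak{s}_\mathfrak{A}\colon S^{d-1}\to\{-,0,+\}^V$ and to use Lemma~\ref{lem:disks-in-oms} for every cell. First, the fibres of $\mathfrak{s}_\mathfrak{A}$ partition $S^{d-1}$. A point $\varphi$ has $\mathfrak{s}_\mathfrak{A}(\varphi)=0$ exactly when $\varphi\in S(V)$. For $\Phi\in\mathscr{L}(\mathscr{O})-\{0\}$ the fibre over $\Phi$ is, by definition of the sign functions, $e_\Phi:=B^-(\Phi^-)\cap S(\Phi^0)\cap B^+(\Phi^+)$. Since $\mathscr{L}(\mathscr{O})=\mathfrak{s}_\mathfrak{A}(S^{d-1})\cup\{0\}$, each such fibre is nonempty. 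So $S^{d-1}=S(V)\sqcup\bigsqcup_{\Phi\neq 0}e_\Phi$ is a finite disjoint union, because there are only finitely many sign vectors.

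Next I would show that each $e_\Phi$ is an open cell with a closed-ball closure. By Lemma~\ref{lem:disks-in-oms}, $D_\Phi:=D^-(\Phi^-)\cap S(\Phi^0)\cap D^+(\Phi^+)$ is a closed disk of dimension $\dim S(\Phi^0)$. Its relative boundary inside the sphere $S(\Phi^0)$ is the union of the fibres over the $\Psi<\Phi$; in the non-essential case these fibres also include points of $S(V)$, i.e. $\Psi=0$. Hence $e_\Phi=D_\Phi-\partial D_\Phi$ is the interior of a ball, so it is an open cell, and $\overline{e_\Phi}=D_\Phi$, which gives regularity.

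For the dimension count I would argue as follows. If $0\neq\Psi<\Phi$, then $\Psi^0\supsetneq\Phi^0$, so $S(\Psi^0)\subseteq S(\Phi^0)$. The inclusion is proper, since $e_\Phi\neq\emptyset$ avoids $S(\Psi^0)$. By (A1) both sets are spheres, and a proper subsphere of a sphere has strictly smaller dimension by invariance of domain. Therefore $\partial D_\Phi$ is a union of cells of strictly lower dimension. Together with $S(V)$ when that set is nonempty, $\partial D_\Phi$ is a union of cells, which is exactly the CW condition.

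The main obstacle is the non-essential case. There $S(V)$ is a nonempty sphere lying in the boundary of every $D_\Phi$ (it sits inside every $S(\Phi^0)$ and inside every $D^\pm_v$), and $S(V)$ is not itself covered by covector cells. I would handle it by fixing an arbitrary regular CW structure on the sphere $S(V)$, for instance transporting the boundary of a simplex via a homeomorphism; these are the additional cells. I then need to check that each $\partial D_\Phi$ is still a subcomplex. This holds because $\partial D_\Phi\cap S(V)=S(V)$ whenever $S(V)\neq\emptyset$, so all of $S(V)$ is included. Degenerate pseudospheres $S_v=S^{d-1}$ cause no trouble, since they only enlarge every $\Phi^0$ by the same set $V'$.
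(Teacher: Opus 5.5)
Your proposal is correct and is the argument the paper intends (it gives no proof beyond citing Lemma \ref{lem:disks-in-oms}): you take the cells to be the fibres of $\mathfrak{s}_\mathfrak{A}$, use the lemma to see that each closure $D^-(\Phi^-)\cap S(\Phi^0)\cap D^+(\Phi^+)$ is a ball whose boundary is a union of cells of lower dimension, and put an arbitrary regular structure on $S(V)$ in the non-essential case. You are also right that in the non-essential case this boundary contains $S(V)=\mathfrak{s}_\mathfrak{A}^{-1}(\{0\})$, which the displayed right-hand side in Lemma \ref{lem:disks-in-oms} omits, and your properness argument ($e_\Phi\neq\emptyset$ forces $S(V)\subsetneq S(\Phi^0)$) gives the fact you leave implicit, namely that the extra cells on $S(V)$ have dimension below $\dim e_\Phi$.
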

\begin{observation}\label{obs:restriction-of-pseudosphere-arrangements}
    Let $\mathfrak{A}$ be a signed arrangement of pseudospheres in $S^{d-1}$ indexed by $V$, $\mathscr{O}$ the associated oriented matroid, and $\emptyset\neq U\subseteq V$. $\mathfrak{A}|_U$ consisting of $(S_u)_{u\in U}$ and $(\mathfrak{s}_u)_{u\in U}$ is a signed arrangement of pseudospheres with associated oriented matroid $\mathscr{O}|_U$, and for which the definition of the sets $S_u$, $S(U')$, $B^\mathfrak{s}_u$, $B^\mathfrak{s}(U')$, $B^\mathfrak{s}\langle U'\rangle$, $D^\mathfrak{s}_u$, $D^\mathfrak{s}(U')$, $D^\mathfrak{s}\langle U'\rangle$, and $\partial D^\mathfrak{s}(U')$ for $u\in U$, $U'\subseteq U$, and $\mathfrak{s}\in\{-,+\}$ is independent of which signed arrangement of pseudospheres they are interpreted in: $\mathfrak{A}$ or $\mathfrak{A}|_U$.
\end{observation}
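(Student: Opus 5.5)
The plan is to verify the claims of Observation \ref{obs:restriction-of-pseudosphere-arrangements} directly from the definitions. There are three points: that $\mathfrak{A}|_U$ satisfies the arrangement axioms, that its covector set is $\mathscr{L}(\mathscr{O}|_U)$, and that the derived sets do not depend on the ambient arrangement. No topology beyond what is built into the axioms should be needed.

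\emph{Step 1: the axioms.} Let $V'\subseteq V$ be the set of degenerate indices of $\mathfrak{A}$. Then the degenerate indices of $\mathfrak{A}|_U$ are exactly $V'\cap U$, and the non-degenerate ones are $U-V'$. Axiom (A1) for $\mathfrak{A}|_U$ asks that $S(U'')$ be a sphere for $U''\subseteq U-V'$. This is a special case of (A1) for $\mathfrak{A}$, since $U-V'\subseteq V-V'$ and $S(U'')$ is computed from the same pseudospheres. Likewise, (A2) for $U''\subseteq U-V'$ and $v\in U-V'$ is an instance of (A2) for $\mathfrak{A}$. Finally, (A3) concerns intersections of collections of sets $D^\pm_v$ with $v\in U-V'$, and these are among the collections already covered by (A3) for $\mathfrak{A}$. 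The ambient sphere $S^{d-1}$ is unchanged, so no dimension shift occurs.

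\emph{Step 2: the covectors.} For every $\varphi\in S^{d-1}$ we have $\mathfrak{s}_{\mathfrak{A}|_U}(\varphi)=\mathfrak{s}_\mathfrak{A}(\varphi)|_U$, because both are given coordinatewise by $\mathfrak{s}_u(\varphi)$ for $u\in U$. Hence
\[
\mathfrak{s}_{\mathfrak{A}|_U}(S^{d-1})\cup\{0\}=\{\Phi|_U:\Phi\in\mathscr{L}(\mathscr{O})-\{0\}\}\cup\{0\}=\{\Phi|_U:\Phi\in\mathscr{L}(\mathscr{O})\},
\]
where the last equality uses $0|_U=0$. By the cited description $\mathscr{L}(\mathscr{O}|_U)=\{\Phi|_U:\Phi\in\mathscr{L}(\mathscr{O})\}$ from \cite[Lemma 4.1.8]{OrientedMatroids}, this set is $\mathscr{L}(\mathscr{O}|_U)$. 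So the oriented matroid associated to $\mathfrak{A}|_U$ is $\mathscr{O}|_U$.

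\emph{Step 3: independence of the derived sets.} Each of $S_u$, $B^\mathfrak{s}_u$, $D^\mathfrak{s}_u$ is defined from the single sign function $\mathfrak{s}_u$, which is the same function in both arrangements. The sets $S(U')$, $B^\mathfrak{s}(U')$, $D^\mathfrak{s}(U')$, $B^\mathfrak{s}\langle U'\rangle$, $D^\mathfrak{s}\langle U'\rangle$ and $\partial D^\mathfrak{s}(U')$ are intersections, unions or differences of these over $u\in U'\subseteq U$. The only point needing care is $U'=\emptyset$. There the conventions give $S^{d-1}$ for an empty intersection and $\emptyset$ for an empty union, and these agree because both arrangements live in the same $S^{d-1}$.

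The main obstacle, such as it is, lies in Step 1 with the degenerate-pseudosphere convention and the hypothesis $U\neq\emptyset$. I would check that restricting to a set consisting only of degenerate indices still yields a legitimate (vacuous) arrangement. I would also check that the rank reading $\operatorname{rank}(U'')=d-1-\dim S(U'')$ for $U''\subseteq U$ is consistent in both arrangements, which holds because $S(U'')$ is literally the same set.
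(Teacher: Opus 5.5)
Your proposal is correct and takes the same route the paper implicitly relies on; the paper states this as an Observation with no written proof. The three ingredients are: the axioms (A1)--(A3) are inherited by the subfamily indexed by $U$ (with degenerate indices $V'\cap U$); the identity $\mathfrak{s}_{\mathfrak{A}|_U}(\varphi)=\mathfrak{s}_\mathfrak{A}(\varphi)|_U$ together with the cited $\mathscr{L}(\mathscr{O}|_U)=\{\Phi|_U:\Phi\in\mathscr{L}(\mathscr{O})\}$ identifies the associated oriented matroid (the possibility that $0\in\mathfrak{s}_\mathfrak{A}(S^{d-1})$ is harmlessly absorbed by adjoining $\{0\}$); and all the derived sets depend only on the $\mathfrak{s}_u$ for $u\in U'$ and the common sphere $S^{d-1}$, including the empty-index conventions.
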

\begin{corollary}\label{crl:disks-in-oms}
    Let $\mathfrak{A}$ be a signed arrangement of pseudospheres in $S^{d-1}$ indexed by $V$, and $\emptyset\neq U\subseteq V$. Then $B^+(U)$ is empty or a $(d-1)$-dimensional open ball, and in the latter case $D^+(U)$ is a $(d-1)$-dimensional disk and its boundary $\partial D^+(U)$ is homeomorphic to a $(d-2)$-dimensional sphere.
\end{corollary}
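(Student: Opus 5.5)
The plan is to reduce Corollary~\ref{crl:disks-in-oms} to Lemma~\ref{lem:disks-in-oms}, applied to a suitable covector $\Phi$, after first passing to the restricted arrangement $\mathfrak{A}|_U$. By Observation~\ref{obs:restriction-of-pseudosphere-arrangements}, $B^+(U)$, $D^+(U)$ and $\partial D^+(U)$ are the same whether computed in $\mathfrak{A}$ or in $\mathfrak{A}|_U$. So we may assume $V=U$, with associated oriented matroid $\mathscr{O}|_U$.

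Suppose $B^+(U)\neq\emptyset$, and pick $\varphi\in B^+(U)$. Then $\Phi:=\mathfrak{s}_\mathfrak{A}(\varphi)$ is the all-plus sign vector on $U$, and it is a covector of the restricted oriented matroid. It is nonzero since $U\neq\emptyset$. Then $\Phi^+=U$ and $\Phi^-=\Phi^0=\emptyset$, so $S(\Phi^0)=S(\emptyset)=S^{d-1}$ by the convention on empty intersections.

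Lemma~\ref{lem:disks-in-oms} then gives that
\[
D^-(\emptyset)\cap S(\emptyset)\cap D^+(U)=D^+(U)
\]
is a $(d-1)$-dimensional disk. Its relative boundary inside $S^{d-1}$ is $D^+(U)-B^+(U)=\partial D^+(U)$, which is therefore a $(d-2)$-sphere, being the boundary of a $(d-1)$-disk. The same lemma identifies the interior of this disk relative to $S^{d-1}$ with $B^+(U)$. Since the disk has full dimension $d-1$ in $S^{d-1}$, its interior $B^+(U)$ is an open $(d-1)$-ball.

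The main point needing care is the degenerate pseudospheres allowed in this paper. If some $u\in U$ has $S_u=S^{d-1}$, then $\mathfrak{s}_u\equiv 0$ and $B^+_u=\emptyset$, so $B^+(U)=\emptyset$ and the claim holds trivially. Hence in the nonempty case every $S_u$ with $u\in U$ is a genuine pseudosphere, and Lemma~\ref{lem:disks-in-oms} applies as stated.

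A second point is whether ``relative boundary equals topological boundary'' needs justification when the cell decomposition involves the additional cells of $S(V)$ in the non-essential case (Observation~\ref{obs:pseudospheres-regular-cw}). I would handle this by quoting the lemma's boundary formula directly, which already describes the boundary as the set $D^+(U)-B^+(U)$ used in the definition of $\partial D^+(U)$. So no extra argument should be needed there.
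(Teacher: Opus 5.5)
Your proposal is correct and follows essentially the same route as the paper. You reduce to $U=V$ via Observation~\ref{obs:restriction-of-pseudosphere-arrangements}, take the covector $\Phi$ of a point of $B^+(U)$ (so $\Phi^+=U$ and $\Phi^-=\Phi^0=\emptyset$), and read off from Lemma~\ref{lem:disks-in-oms} the disk $D^+(U)$, its interior $B^+(U)$, and its boundary $D^+(U)-B^+(U)=\partial D^+(U)$. Your remark on degenerate pseudospheres is a harmless extra check that the paper leaves implicit.
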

\begin{proof}
    Applying Observation \ref{obs:restriction-of-pseudosphere-arrangements} we may assume without loss of generality that $U=V$. Suppose $B^+(U)\neq \emptyset$, and pick $\Phi\in\mathfrak{s}_\mathfrak{A}(B^+(U))$. Then $\Phi^+=U=V$, and from Lemma \ref{lem:disks-in-oms} we can derive that $D^-(\Phi^-)\cap S(\Phi^0)\cap D^+(\Phi^+)=D^+(\Phi^+)=D^+(U)$ is a $(d-1)$-dimensional disk, $B^+(U)$, as it is the interior of $D^+(U)$, is an open $(d-1)$-dimensional ball, and the boundary, a $(d-2)$-dimensional sphere, is precisely equal to $D^+(\Phi^+)-B^+(\Phi^+)=\partial D^+(U)$. 
\end{proof}
\begin{corollary}\label{crl:disk-with-partial-boundary-in-oms}
    Let $\mathfrak{A}$ be a signed arrangement of pseudospheres in $S^{d-1}$ indexed by $V$, $\emptyset\neq U_B\subseteq V$, $U_D\subseteq V$. Then $B^+(U_B)\cap D^-(U_D)$ is empty or contractible.
\end{corollary}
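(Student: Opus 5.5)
We need to prove that $B^+(U_B)\cap D^-(U_D)$ is empty or contractible. The plan is to identify this set, when nonempty, with a regular-cell neighbourhood in the cell decomposition of Observation \ref{obs:pseudospheres-regular-cw}, and then show it deformation retracts onto a closed disk. By Observation \ref{obs:restriction-of-pseudosphere-arrangements} we may restrict to $U_B\cup U_D$. Elements $u\in U_B\cap U_D$ force $B^+_u\cap D^-_u=\emptyset$, so if $U_B\cap U_D\neq\emptyset$ the set is empty. Hence we assume $U_B\cap U_D=\emptyset$.

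\textbf{Reduction to a Corollary \ref{crl:disks-in-oms}-type statement.} Reorient the elements of $U_D$, replacing $\mathfrak{s}_u$ by $-\mathfrak{s}_u$ for $u\in U_D$; reorientation preserves the axioms of a signed arrangement. The set then becomes $X:=B^+(U_B)\cap D^+(U_D)$. If $X\neq\emptyset$, then $B^+(U_B)\neq\emptyset$ is an open $(d-1)$-ball with closure $D^+(U_B)$ by Corollary \ref{crl:disks-in-oms}. Moreover, $D^+(U_B)\cap D^+(U_D)=D^+(U_B\cup U_D)$ is a sphere or disk by axiom (A3). It is not the whole sphere, since it lies in $D^+(U_B)$, which is a disk. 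So $D^+(U_B\cup U_D)$ is a closed disk $K$, and
\[
X=K\cap B^+(U_B)=K-\bigcup_{u\in U_B}(K\cap S_u).
\]

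\textbf{Main step: removing boundary pieces.} We claim that $\bigcup_{u\in U_B}(K\cap S_u)$ is contained in the boundary $\partial K$ and is a proper closed subcomplex of it, and that it is either empty or a union of faces forming a $(\dim K-1)$-ball. Granting this, $K$ minus a closed ball in its boundary sphere is contractible: it is homeomorphic to a closed half-ball minus part of its boundary, which collars inward onto an interior point.

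I expect this claim to be the main obstacle. To prove it, I would work at the level of the face poset. Cells of $K$ correspond to covectors $\Psi\geq 0$ on $U_B\cup U_D$, and cells of $X$ are those with $\Psi|_{U_B}\equiv +$. The set $X$ is then the union of open cells forming an upper set (a filter) in the face poset of $K$, namely the cells not meeting $\bigcup_{u\in U_B} S_u$.

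A cleaner route avoids identifying the topology of the removed part. Pick $\Phi_0\in\mathfrak{s}_\mathfrak{A}(X)$ maximal, i.e.\ a tope, which exists since $X$ is nonempty and open in $K$. For each cell $\Psi$ of $X$, the composition $\Psi\circ\Phi_0$ is again a covector, and it lies in $X$ because $\Psi$ is already $+$ on $U_B$. Conformality yields a straight-line-type homotopy in the regular CW structure: each closed cell $\overline{\Psi}\cap X$ deformation retracts into the cell $\Psi\circ\Phi_0$. Equivalently, the order complex of the face poset of $X$ has the cone point $\Phi_0$ after applying the order-preserving maps $\Psi\mapsto\Psi\circ\Phi_0\geq\Psi$ and $\Psi\mapsto\Phi_0$ (Quillen's fiber lemma / homotopy of poset maps).

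Finally, since $X$ is a union of open cells that is upward closed, $X$ deformation retracts onto (the geometric realization of) its face poset's order complex, i.e.\ the subcomplex of the barycentric subdivision spanned by barycenters of cells in $X$. This standard fact for regular CW complexes is the step I would check most carefully. It gives contractibility of $X$, and undoing the reorientation proves the corollary.
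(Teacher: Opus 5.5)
Your reductions (restricting to $W:=U_B\cup U_D$, discarding the case $U_B\cap U_D\neq\emptyset$, reorienting $U_D$) are fine. The gap is in the step that produces the cone point: it is not justified, and in the form you state it, it is false.

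For a tope $\Phi_0$, the map $g(\Psi):=\Psi\circ\Phi_0$ is not order-preserving. If $\Psi\le\Psi'$ and $\Phi_0(v)=-\Psi'(v)$ for some $v\in\underline{\Psi'}-\underline{\Psi}$, then $g(\Psi)(v)=-g(\Psi')(v)\neq 0$. Even where $g$ is monotone, passing from $g$ to the constant map $\Psi\mapsto\Phi_0$ requires $g(\Psi)$ to be comparable with $\Phi_0$, and two distinct topes are never comparable.

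The argument as phrased proves too much. The only properties of $X$ you use are that its cells form an upper set and that $\mathfrak{s}_\mathfrak{A}(X)$ is closed under $\Psi\mapsto\Psi\circ\Phi_0$. Both hold for the full face poset $\mathscr{L}(\mathscr{O})-\{0\}$ with any tope $\Phi_0$, so the argument would make $S^{\operatorname{rank}(\mathscr{O})-1}$ contractible. Your sentence about $\overline{\Psi}\cap X$ retracting into the cell of $\Psi\circ\Phi_0$ has the same defect, since that cell typically does not even meet $\overline{\Psi}$.

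The missing idea is exactly the observation the paper's proof rests on. In your reoriented picture, every covector in $\mathfrak{s}_\mathfrak{A}(X)$, restricted to $W$, is $+$ on $U_B$ and lies in $\{0,+\}$ on $U_D$. So any two of them are conformal, and their composition $\Phi$ lies in $\mathfrak{s}_\mathfrak{A}(X)$ and is its greatest element. Consequences:
\begin{itemize}
\item Your maximal $\Phi_0$ is this $\Phi$. It need not be a tope: it vanishes on every $u\in U_D$ with $X\subseteq S_u$.
\item $\Psi\circ\Phi_0=\Phi_0$ for every cell $\Psi$ of $X$, so the face poset of $X$ is a cone.
\item Combined with the standard retraction of $K$ minus the subcomplex $K\cap\bigcup_{u\in U_B}S_u$ onto the order complex of the remaining cells in the barycentric subdivision, your route then closes.
\end{itemize}

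The paper uses the same $\Phi$ more directly. By Lemma \ref{lem:disks-in-oms}, $\mathfrak{s}_\mathfrak{A}^{-1}(\{\Psi\in\mathscr{L}(\mathscr{O}):\Psi\le\Phi\})$ is a disk whose relative interior is the cell $\mathfrak{s}_\mathfrak{A}^{-1}(\{\Phi\})$. The set $X$ is sandwiched between the two, hence star-shaped about an interior point. This needs neither order complexes nor your abandoned analysis of the removed boundary part.

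Separately, your claim that $K$ is a disk because it is not the whole sphere is incomplete. Axiom (A3) also allows lower-dimensional spheres, such as $D^+_u\cap D^-_u=S_u$. The claim does hold here, by Lemma \ref{lem:disks-in-oms} applied to $\Phi$, but your final route does not need it.
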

\begin{proof}    
    Assume $B^+(U_B)\cap D^-(U_D)\neq\emptyset$ and without loss of generality that $D^-(U_D)\neq S^{d-1}$ (that case is covered by Corollary \ref{crl:disks-in-oms}), and use Observation \ref{obs:restriction-of-pseudosphere-arrangements} to replace $V$ with $U_B\cup U_D$. Let $\Phi$ be the composition of all covectors in $\mathfrak{s}_\mathfrak{A}(B^+(U_B)\cap D^-(U_D))$; by design this is a conformal composition and so $\Phi$ is well-defined and greater than or equal to all covectors in $\mathfrak{s}_\mathfrak{A}(B^+(U_B)\cap D^-(U_D))$. Moreover, $\Phi\neq 0$ as $U_B\neq\emptyset$. In particular, according to Lemma \ref{lem:disks-in-oms} the space $D^-(\Phi^-)\cap S(\Phi^0)\cap D^+(\Phi^+)$ is a $\dim S(\Phi^0)$-dimensional disk with interior $B^-(\Phi^-)\cap S(\Phi^0)\cap B^+(\Phi^+)$, and we also have 
    \begin{multline*}
        B^-(\Phi^-)\cap S(\Phi^0)\cap B^+(\Phi^+)=\mathfrak{s}_\mathfrak{A}^{-1}(\{\Phi\})
        \subseteq\\\subseteq 
        B^+(U_B)\cap D^-(U_D)
        \subseteq\\\subseteq
        \mathfrak{s}_{\mathfrak{A}}^{-1}(\{\Psi\in\mathscr{L}(\mathscr{O}):\Psi\leq\Phi\})=D^-(\Phi^-)\cap S(\Phi^0)\cap D^+(\Phi^+).
    \end{multline*}
    Any subset of a disk which contains the interior is contractible using a straight-line homotopy to the center of the disk, so $B^+(U_B)\cap D^-(U_D)$ is contractible too.
\end{proof}
\begin{remark}\label{rmk:reorientation}
    Given a signed arrangement $\mathfrak{A}$ of pseudospheres in $S^{d-1}$ indexed by $V$ with associated oriented matroid $\mathscr{O}$, and a subset $U\subseteq V$, there is another signed arrangement $\mathfrak{A}_{-U}$ of pseudospheres in $S^{d-1}$ indexed by $V$, with associated oriented matroid denoted by $\mathscr{O}_{-U}$, for which $\mathfrak{s}_{\mathfrak{A}_{-U}}(\varphi)(v)$ is equal to $\mathfrak{s}_\mathfrak{A}(\varphi)(v)$ if $v\in V-U$ and $-\mathfrak{s}_\mathfrak{A}(\varphi)(v)$ if $v\in U$. Then for $v\in V$ and $\mathfrak{s}\in\{-,+\}$ the sets $B^\mathfrak{s}_v$ and $D^\mathfrak{s}_v$ defined in $\mathfrak{A}_{-U}$ are respectively equal to $B^\mathfrak{s}_v$ and $D^\mathfrak{s}_v$ defined in $\mathfrak{A}$ if $v\in V-U$, or to $B^{-\mathfrak{s}}_v$ and $D^{-\mathfrak{s}}_v$ defined in $\mathfrak{A}$ if $v\in U$. This can be combined with the last corollary to show that any intersection of sets of the form $B^{\mathfrak{s}}_v$ and $D^\mathfrak{s}_v$ is empty contractible, given at least one $B^\mathfrak{s}_v$ is used.
\end{remark}
If $\mathscr{O}$ is the oriented matroid associated to $\mathfrak{A}$, define analogously $\mathscr{S}_v:=\{\Phi\in\mathscr{L}(\mathscr{O}):\Phi(v)=0\}$, $\mathscr{B}^\mathfrak{s}_v:=\{\Phi\in\mathscr{L}(\mathscr{O}):\Phi(v)=\mathfrak{s}\}$, and $\mathscr{D}^\mathfrak{s}_v:=\{\Phi\in\mathscr{L}(\mathscr{O}):\Phi(v)\in\{0,\mathfrak{s}\}\}$ for $v\in V$ and $\mathfrak{s}\in\{-,+\}$, and $\mathscr{S}(U):=\bigcap_{u\in U}\mathscr{S}_u$, $\mathscr{B}^\mathfrak{s}(U):=\bigcap_{u\in U}\mathscr{B}^\mathfrak{s}_u$, $\mathscr{B}^\mathfrak{s}\langle U\rangle:=\bigcup_{u\in U}\mathscr{B}^\mathfrak{s}_u$, $\mathscr{D}^\mathfrak{s}(U):=\bigcap_{u\in U}\mathscr{D}^\mathfrak{s}_u$, $\mathscr{D}^\mathfrak{s}\langle U\rangle=\bigcup_{u\in U}\mathscr{D}^\mathfrak{s}_u$, and $\partial\mathscr{D}^\mathfrak{s}(U):=\mathscr{D}^\mathfrak{s}(U)-\mathscr{B}^\mathfrak{s}(U)$ for $U\subseteq V$, $\mathfrak{s}\in\{-,+\}$. Again, consider an intersection indexed by $U=\emptyset$ to be $\mathscr{L}(\mathscr{O})$ and a union indexed by $U=\emptyset$ to be the empty set. The sense in which the subsets of $\mathscr{L}(\mathscr{O})$ defined above are analogues of the subsets of $S^{d-1}$ defined previously is made precise by the observation below.
\begin{observation}\label{obs:curly-vs-print}
    Let $\mathfrak{A}$ be a signed arrangement of pseudospheres in $S^{d-1}$ indexed by $V$, and $\mathscr{O}$ the associated oriented matroid. Let $\mathfrak{F}_\mathfrak{A}$ be the set of all formulas which can be constructed using the symbols $(-)\cap(-)$, $(-)\cup(-)$, as well as $S_v$, $B^\mathfrak{s}_v$, $D^\mathfrak{s}_v$ for $\mathfrak{s}\in\{-,+\}$ and $v\in V$. Similarly, define $\mathfrak{F}_\mathscr{O}$ as the set of formulas which can be constructed using $(-)\cap(-)$, $(-)\cup(-)$, and $\mathscr{S}_v$, $\mathscr{B}^\mathfrak{s}_v$, $\mathscr{D}^\mathfrak{s}_v$ with $\mathfrak{s}\in\{+,-\}$ and $v\in V$. Then there is a bijection between $\mathfrak{F}_\mathfrak{A}$ and $\mathfrak{F}_\mathscr{O}$. Let $\mathcal{F}$ be a formula in $\mathfrak{F}_\mathfrak{A}$, $F$ the subset of $S^{d-1}$ defined by $\mathcal{F}$, and let $\mathscr{F}$ be the subset of $\mathscr{L}(\mathscr{O})$ defined by the corresponding formula in $\mathfrak{F}_\mathscr{O}$. Then $\mathfrak{s}_\mathfrak{A}^{-1}(\mathscr{F})=F$, $\mathscr{F}\supseteq \mathfrak{s}_\mathfrak{A}(F)$, and $\mathscr{F}-\mathfrak{s}_\mathfrak{A}(F)$ is either $\emptyset$ or $\{0\}$. In particular, $F=\mathfrak{s}_\mathfrak{A}^{-1}(\mathscr{F})$ and $\mathfrak{s}_\mathfrak{A}(F)=\mathscr{F}$ hold under the condition that $\mathcal{F}$ is obtained using $(-)\cap(-)$, $(-)\cup(-)$, and formulas of the form $(B^\mathfrak{s}_v)\cap(\mathcal{F}')$, where $\mathfrak{s}\in\{-,+\}$, $v\in V$, and $\mathcal{F}'\in\mathfrak{F}_\mathfrak{A}$.

    To prove all of this, observe that by replacing $(-)\cap(-)$ with ``$(-)$ and $(-)$'', $(-)\cup(-)$ with ``$(-)$ or $(-)$'', $S_v$ with $\Xi(v)=0$, $B^\mathfrak{s}_v$ with $\Xi(v)=\mathfrak{s}$, and $D^\mathfrak{s}_v$ with $\Xi(v)\neq -\mathfrak{s}$, we get a formula $\mathcal{F}'(\Xi)$ with a free variable $\Xi$ such that $F=\{\varphi\in S^{d-1}:\mathcal{F}'(\mathfrak{s}_\mathfrak{A}(\varphi))\}$ and $\mathscr{F}=\{\Phi\in\mathscr{L}(\mathscr{O}):\mathcal{F}'(\Phi)\}$.
\end{observation}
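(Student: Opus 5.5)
The plan is a structural induction on formulas, following the translation $\mathcal{F}\mapsto\mathcal{F}'(\Xi)$ indicated at the end of the statement. The bijection $\mathfrak{F}_\mathfrak{A}\leftrightarrow\mathfrak{F}_\mathscr{O}$ is defined recursively: each symbol $S_v$, $B^\mathfrak{s}_v$, $D^\mathfrak{s}_v$ is replaced by $\mathscr{S}_v$, $\mathscr{B}^\mathfrak{s}_v$, $\mathscr{D}^\mathfrak{s}_v$, and the connectives $\cap,\cup$ are kept. This map is clearly invertible, since both sets are free term algebras on the same connectives over generators in canonical bijection.

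\emph{Preimage identity.} The first step is to prove $\mathfrak{s}_\mathfrak{A}^{-1}(\mathscr{F})=F$ by induction on the formula.
\begin{itemize}
\item Atomic formulas: this is immediate from $\mathfrak{s}_\mathfrak{A}(\varphi)(v)=\mathfrak{s}_v(\varphi)$. For example, $\varphi\in B^\mathfrak{s}_v$ iff $\mathfrak{s}_v(\varphi)=\mathfrak{s}$ iff $\mathfrak{s}_\mathfrak{A}(\varphi)\in\mathscr{B}^\mathfrak{s}_v$. The same check works for $S_v$ and $D^\mathfrak{s}_v$, using that $D^\mathfrak{s}_v=\mathfrak{s}_v^{-1}(\{0,\mathfrak{s}\})$.
\item Inductive step: preimages commute with $\cap$ and $\cup$.
\end{itemize}
Equivalently, both sets are cut out by the same predicate $\mathcal{F}'$, evaluated at $\mathfrak{s}_\mathfrak{A}(\varphi)$ and at $\Phi$ respectively.

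\emph{Image.} Since $\mathscr{F}\subseteq\mathscr{L}(\mathscr{O})$, we have
\[
\mathfrak{s}_\mathfrak{A}(F)=\mathfrak{s}_\mathfrak{A}\bigl(\mathfrak{s}_\mathfrak{A}^{-1}(\mathscr{F})\bigr)=\mathscr{F}\cap\mathfrak{s}_\mathfrak{A}(S^{d-1}).
\]
By the definition of the oriented matroid associated to $\mathfrak{A}$ (and Theorem \ref{thm:top-repr}), $\mathscr{L}(\mathscr{O})=\mathfrak{s}_\mathfrak{A}(S^{d-1})\cup\{0\}$. Hence $\mathscr{F}\supseteq\mathfrak{s}_\mathfrak{A}(F)$, and the difference $\mathscr{F}-\mathfrak{s}_\mathfrak{A}(F)$ is contained in $\{0\}$. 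Here $0$ may or may not lie in the image, depending on whether $S(V)\neq\emptyset$. The degenerate case $d=0$, where $S^{-1}=\emptyset$ and $\mathscr{L}(\mathscr{O})=\{0\}$, is consistent with this.

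\emph{Guarded formulas.} For the final claim, it suffices to show that $0\notin\mathscr{F}$ whenever $\mathcal{F}$ is built with $\cap,\cup$ from formulas of the form $B^\mathfrak{s}_v\cap\mathcal{F}'$. Once this is shown, the previous step forces $\mathscr{F}=\mathfrak{s}_\mathfrak{A}(F)$. I would prove $0\notin\mathscr{F}$ by induction on this restricted grammar.
\begin{itemize}
\item Base case: $\mathscr{B}^\mathfrak{s}_v\cap\mathscr{F}'$ excludes $0$, because $0(v)=0\neq\mathfrak{s}$.
\item Inductive step: both an intersection and a union of sets not containing $0$ omit $0$.
\end{itemize}

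\emph{Main obstacle.} The only genuinely delicate point is the bookkeeping around the zero covector and degenerate pseudospheres. For a loop $v$ one has $S_v=S^{d-1}$, and the identity $D^\mathfrak{s}_v=\mathfrak{s}_v^{-1}(\{0,\mathfrak{s}\})$ must still hold under the paper's convention $\mathfrak{s}_v\equiv0$. Otherwise everything is formal.
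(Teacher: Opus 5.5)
Your proposal is correct and matches the paper's argument. The paper's proof is exactly the translation of $\mathcal{F}$ into the predicate $\mathcal{F}'(\Xi)$, evaluated at $\mathfrak{s}_\mathfrak{A}(\varphi)$ versus at $\Phi\in\mathscr{L}(\mathscr{O})$, which is your structural induction. The remaining claims, which the paper leaves implicit, follow just as you derive them: from $\mathscr{L}(\mathscr{O})=\mathfrak{s}_\mathfrak{A}(S^{d-1})\cup\{0\}$, and from the fact that a guard $\mathscr{B}^\mathfrak{s}_v$ excludes the zero covector.
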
  
In particular, the following corollary automatically follows:
\begin{corollary}\label{crl:intersections-in-pseudsphere-arrangements-vs-covectors}
    If $\mathscr{O}$ is the oriented matroid associated to the signed arrangement $\mathfrak{A}$ of pseudospheres indexed by $V$, $U\subseteq V$, and $v\in V$, then:
    \begin{enumerate}[label=\normalfont{(\arabic*)}]
        \item\label{crl:intersections-in-pseudsphere-arrangements-vs-covectors:full} $B^+(U)=\emptyset\impliedby\mathscr{B}^+(U)=\emptyset\iff\nexists\Phi\in\mathscr{L}(\mathscr{O})\colon U\subseteq\Phi^+$,
        \item\label{crl:intersections-in-pseudsphere-arrangements-vs-covectors:relint} $\xymatrix@R=10pt{
                B^+\langle U\rangle\cap D^+(U)=\emptyset
                \ar@{<=>}[r]
                \ar@{<=>}[d]
            &   \mathscr{B}^+\langle U\rangle\cap\mathscr{D}^+(U)=\emptyset
                \ar@{<=>}[r]
                \ar@{<=>}[d]
            &   \nexists\Phi\in\mathscr{L}(\mathscr{O}):\Phi|_U\text{~is positive,}
            \\  D^+(U)=S(U)
            &   \mathscr{D}^+(U)=\mathscr{S}(U)
        }$
        \item\label{crl:intersections-in-pseudsphere-arrangements-vs-covectors:cone} $D^+(U)\cap B^-_v=\emptyset\iff\mathscr{D}^+(U)\cap\mathscr{B}^-_v=\emptyset\iff\nexists\Phi\in\mathscr{L}(\mathscr{O})$ with $v\in\Phi^-$ and $U\cap\Phi^-=\emptyset$.
        \item\label{crl:intersections-in-pseudosphere-arrangements-vs-covectors:relintcone} Assume  $v\in V-U$. Then $D^+(U)\cap D^-_v-S(U\cup\{v\})=\emptyset\iff\mathscr{D}^+(U)\cap\mathscr{D}^-_v-\mathscr{S}(U\cup\{v\})=\emptyset\iff\nexists\Phi\in\mathscr{L}(\mathscr{O})$ with $\Phi^-\cap U=\emptyset=\Phi^+\cap\{v\}$ and $\underline{\Phi}\cap (U\cup \{v\})\neq\emptyset$.
    \end{enumerate}
\end{corollary}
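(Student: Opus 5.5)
The statement to prove is Corollary \ref{crl:intersections-in-pseudsphere-arrangements-vs-covectors}. The plan is to read off each equivalence from Observation \ref{obs:curly-vs-print}. Each middle condition is the emptiness of a subset of $S^{d-1}$ given by a formula in $\mathfrak{F}_\mathfrak{A}$. The corresponding condition is the emptiness of the subset of $\mathscr{L}(\mathscr{O})$ given by the matching formula in $\mathfrak{F}_\mathscr{O}$. The right-hand conditions are then just unwindings of the definitions of $\mathscr{B}^\mathfrak{s}$, $\mathscr{D}^\mathfrak{s}$ and $\mathscr{S}$ in terms of sign vectors.

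For point \ref{crl:intersections-in-pseudsphere-arrangements-vs-covectors:full}, Observation \ref{obs:curly-vs-print} gives $B^+(U)=\mathfrak{s}_\mathfrak{A}^{-1}(\mathscr{B}^+(U))$. Hence $\mathscr{B}^+(U)=\emptyset$ forces $B^+(U)=\emptyset$. The converse can fail only when $U=\emptyset$, where $\mathscr{B}^+(\emptyset)=\mathscr{L}(\mathscr{O})\ni 0$ while $B^+(\emptyset)=S^{d-1}$ may be empty (for instance when $d=0$). This is why only one direction is claimed. The equality $\mathscr{B}^+(U)=\{\Phi: U\subseteq\Phi^+\}$ is immediate from the definition.

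For points \ref{crl:intersections-in-pseudsphere-arrangements-vs-covectors:relint}, \ref{crl:intersections-in-pseudsphere-arrangements-vs-covectors:cone} and \ref{crl:intersections-in-pseudosphere-arrangements-vs-covectors:relintcone}, the key step is to check that each formula involved is built from pieces of the form $B^\mathfrak{s}_v\cap(\cdot)$ via unions and intersections. By the last sentence of Observation \ref{obs:curly-vs-print}, we then get $F=\mathfrak{s}_\mathfrak{A}^{-1}(\mathscr{F})$ and $\mathfrak{s}_\mathfrak{A}(F)=\mathscr{F}$, so $F=\emptyset\iff\mathscr{F}=\emptyset$. The three cases go as follows.
\begin{itemize}
\item $B^+\langle U\rangle\cap D^+(U)=\bigcup_{u\in U}(B^+_u\cap D^+(U))$.
\item $D^+(U)\cap B^-_v=B^-_v\cap D^+(U)$.
\item $D^+(U)\cap D^-_v-S(U\cup\{v\})$ is not literally a formula in $\mathfrak{F}_\mathfrak{A}$, because of the set difference. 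I would rewrite it as $\bigcup_{u\in U}(B^+_u\cap D^+(U)\cap D^-_v)\cup(B^-_v\cap D^+(U))$, using that a point outside $S(U\cup\{v\})$ has a nonzero sign at some $u\in U$ or at $v$. The same rewriting works for $\mathscr{D}^+(U)\cap\mathscr{D}^-_v-\mathscr{S}(U\cup\{v\})$.
\end{itemize}

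The vertical equivalences in point \ref{crl:intersections-in-pseudsphere-arrangements-vs-covectors:relint} follow from $D^+(U)=S(U)\sqcup(B^+\langle U\rangle\cap D^+(U))$ and its covector analogue, since $S(U)\subseteq D^+(U)$.

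The final translations into statements about $\Phi$ are direct. A covector in $\mathscr{B}^+\langle U\rangle\cap\mathscr{D}^+(U)$ is exactly one with $\Phi|_U$ positive. A covector in $\mathscr{D}^+(U)\cap\mathscr{B}^-_v$ is one with $v\in\Phi^-$ and $U\cap\Phi^-=\emptyset$. A covector in the last set is one with $\Phi^-\cap U=\emptyset=\Phi^+\cap\{v\}$ and $\underline{\Phi}\cap(U\cup\{v\})\neq\emptyset$.

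The only delicate point is the bookkeeping of the zero covector and the empty-index conventions. Concretely, one must confirm that $\mathscr{F}-\mathfrak{s}_\mathfrak{A}(F)\subseteq\{0\}$ does not matter whenever a $B^\mathfrak{s}_v$ factor is present, since such a factor excludes $0$.
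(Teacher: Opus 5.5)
Your proposal is correct and is essentially the paper's argument. The paper just says the corollary follows automatically from Observation~\ref{obs:curly-vs-print}, and you supply the omitted details; in particular, rewriting the set difference in point (4) as $\bigcup_{u\in U}(B^+_u\cap D^+(U)\cap D^-_v)\cup(B^-_v\cap D^+(U))$ is exactly what brings that case within the scope of the observation's final clause.
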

When the pseudosphere arrangement comes from a function $A\colon V\to\R^d$, Corollaries \ref{crl:0-in-conv-interpreted-in-oms} and \ref{crl:intersections-in-pseudsphere-arrangements-vs-covectors} above can be used to characterize when statements such as $0\in\conv(A(U))$, $0\in\relint(\conv(A(U)))$, $A(v)\in\cone(A(U))$, $A(v)\in\relint(\cone(A(U)))$, and  $0\in\interior(\conv(A(U)))$ hold in terms of emptiness of $\mathscr{B}^+(U)$, $\mathscr{B}^+\langle U\rangle\cap \mathscr{D}^+(U)$, $\mathscr{B}^-_v\cap \mathscr{D}^+(U)$, $\mathscr{D}^+(U)\cap\mathscr{D}^-_v-\mathscr{S}(U\cup\{v\})$, and $D^+(U)$ respectively:
\begin{corollary}\label{crl:0-in-conv-final-description}
    If the oriented matroid $\mathscr{O}$ is associated to the function $A\colon V\to\R^d$, $\mathfrak{A}$ is the signed arrangement of pseudospheres in $S((\R^d)^*)$ associated to $A$, $U\subseteq V$, and $v\in V$, then:
    \begin{enumerate}[label=\normalfont{(\arabic*)}]
        \item\label{crl:0-in-conv-final-description:conv} $0\in\conv(A(U))\iff \mathscr{B}^+(U)=\emptyset\implies B^+(U)=\emptyset$,
        \item\label{crl:0-in-conv-final-description:relint} $0\in\relint(\conv(A(U)))\iff \mathscr{D}^+(U)=\mathscr{S}(U)\iff D^+(U)=S(U)$,
        \item\label{crl:0-in-conv-final-description:cone} $A(v)\in\cone(A(U))\iff\mathscr{D}^+(U)\cap\mathscr{B}^-_v=\emptyset\iff D^+(U)\cap B^-_v=\emptyset$, and
        \item\label{crl:0-in-conv-final-description:relintcone} If $v\in V-U$ then $A(v)\in\relint(\cone(A(U)))\iff\mathscr{D}^+(U)\cap\mathscr{D}^-_v-\mathscr{S}(U\cup\{v\})=\emptyset\iff D^+(U)\cap D^-_v-S(U\cup\{v\})=\emptyset$.
        \item\label{crl:0-in-conv-final-description:int} $0\in\interior(\conv(A(U)))\iff D^+(U)=S(U)=\emptyset\implies\mathscr{D}^+(U)=\mathscr{S}(U)=\{0\}$.
    \end{enumerate}
\end{corollary}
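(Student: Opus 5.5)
The plan is to assemble the statement point by point from Corollary \ref{crl:0-in-conv-interpreted-in-oms}, which characterizes the geometric conditions through covectors, and Corollary \ref{crl:intersections-in-pseudsphere-arrangements-vs-covectors}, which translates covector conditions into emptiness of the sets $\mathscr{B}^+(U)$, $\mathscr{D}^+(U)$, etc., and then into the sets $B^+(U)$, $D^+(U)$, etc. of the arrangement $\mathfrak{A}$ in $S((\R^d)^*)$. Since $\mathfrak{s}_\mathfrak{A}=\mathfrak{s}_A$, the oriented matroid associated to $\mathfrak{A}$ is exactly $\mathscr{O}$, so both corollaries apply to the same data.

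\emph{Points (1)--(4).} For \ref{crl:0-in-conv-final-description:conv}, point \ref{crl:0-in-conv-interpreted-in-oms:full} of Corollary \ref{crl:0-in-conv-interpreted-in-oms} says that $0\in\conv(A(U))$ holds iff no covector $\Phi$ satisfies $U\subseteq\Phi^+$. By point \ref{crl:intersections-in-pseudsphere-arrangements-vs-covectors:full} of Corollary \ref{crl:intersections-in-pseudsphere-arrangements-vs-covectors}, this is equivalent to $\mathscr{B}^+(U)=\emptyset$, and it implies $B^+(U)=\emptyset$. Points \ref{crl:0-in-conv-final-description:relint}, \ref{crl:0-in-conv-final-description:cone} and \ref{crl:0-in-conv-final-description:relintcone} are obtained in the same way. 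In each case one chains the matching item of Corollary \ref{crl:0-in-conv-interpreted-in-oms}, namely \ref{crl:0-in-conv-interpreted-in-oms:relint}, \ref{crl:0-in-conv-interpreted-in-oms:cone} or \ref{crl:0-in-conv-interpreted-in-oms:relintcone}, with the matching item of Corollary \ref{crl:intersections-in-pseudsphere-arrangements-vs-covectors}. Item \ref{crl:intersections-in-pseudsphere-arrangements-vs-covectors:relint} of the latter already contains both equivalences $\mathscr{D}^+(U)=\mathscr{S}(U)\iff D^+(U)=S(U)$, so point \ref{crl:0-in-conv-final-description:relint} needs nothing further.

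\emph{Point (5).} The only genuinely new input is the identity
\[
  S(U)=\{\varphi\in S((\R^d)^*):\varphi|_{\operatorname{span}A(U)}=0\}.
\]
It shows that $S(U)=\emptyset$ iff $\operatorname{span}(A(U))=\R^d$. The argument then runs as follows.
\begin{itemize}
\item Since $\conv(A(U))$ is full-dimensional exactly when its affine hull is $\R^d$, we have $0\in\interior(\conv(A(U)))$ iff $0\in\relint(\conv(A(U)))$ and $\operatorname{span}(A(U))=\R^d$.
\item This uses that when $0$ lies in the relative interior, the affine hull equals the linear span.
\item Combining this with point \ref{crl:0-in-conv-final-description:relint} and the identity above gives the equivalence with $D^+(U)=S(U)=\emptyset$.
\item For the final implication, use Observation \ref{obs:curly-vs-print}. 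It gives $\mathfrak{s}_\mathfrak{A}^{-1}(\mathscr{S}(U))=S(U)=\emptyset$, so $\mathscr{S}(U)\subseteq\{0\}$. Since $0\in\mathscr{S}(U)$ always, $\mathscr{S}(U)=\{0\}$. The equality $\mathscr{D}^+(U)=\mathscr{S}(U)$ then follows from point \ref{crl:0-in-conv-final-description:relint}.
\end{itemize}

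\emph{Expected obstacle.} I expect the main difficulty to lie in point (5), in two places.
\begin{itemize}
\item The relation between interior, relative interior and span must be justified, citing \cite{McMullenShephard} if necessary.
\item Degenerate cases need care, chiefly $U=\emptyset$ (and $d=0$). For instance, with $d=0$ and $U=\emptyset$ we have $S(\emptyset)=S^{-1}=\emptyset=D^+(\emptyset)$, yet $0\notin\conv(\emptyset)$. So the statement is presumably intended for $U\neq\emptyset$, or under a convention that I would state explicitly.
\end{itemize}
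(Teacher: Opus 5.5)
This is essentially the paper's proof: (1)--(4) are obtained by chaining Corollaries \ref{crl:0-in-conv-interpreted-in-oms} and \ref{crl:intersections-in-pseudsphere-arrangements-vs-covectors}, and (5) by rewriting $0\in\interior(\conv(A(U)))$ as $0\in\relint(\conv(A(U)))$ together with $\operatorname{span}(A(U))=\R^d$ (equivalently $S(U)=\emptyset$) and then invoking (2). Your Observation \ref{obs:curly-vs-print} argument for the final implication $\mathscr{S}(U)=\mathscr{D}^+(U)=\{0\}$ fills in a step the paper leaves implicit.

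Your degenerate counterexample ($d=0$, $U=\emptyset$) is correct, but its real source is the first equivalence in (2), inherited from item (2) of Corollary \ref{crl:0-in-conv-interpreted-in-oms}. That equivalence fails for $U=\emptyset$ and every $d$: one has $\mathscr{D}^+(\emptyset)=\mathscr{S}(\emptyset)=\mathscr{L}(\mathscr{O})$ while $\relint(\conv(\emptyset))=\emptyset$. So the restriction $U\neq\emptyset$ is needed in (2) as well, and your remark that (2) ``needs nothing further'' holds only for nonempty $U$.
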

\begin{proof}
    Points \ref{crl:0-in-conv-final-description:conv}, \ref{crl:0-in-conv-final-description:relint}, \ref{crl:0-in-conv-final-description:cone}, and \ref{crl:0-in-conv-final-description:relintcone} follow from Corollaries \ref{crl:0-in-conv-interpreted-in-oms} and \ref{crl:intersections-in-pseudsphere-arrangements-vs-covectors}.
    For point \ref{crl:0-in-conv-final-description:int}, observe that $0\in\interior(\conv(A(U)))$ if and only if $0\in\relint(\conv(A(U)))$ and $\operatorname{span}(A(U))=\R^d$. $A(U)$ fails to span $\R^d$ if and only if it is contained in a proper linear subspace, which is equivalent to saying that there is a $\varphi\in(\R^d)^*$ such that $\varphi(A(u))=0$ for all $u\in U$, so $0\in\interior(\conv(A(U)))$ if and only if $0\in\relint(\conv(A(U)))$ and $S(U)=\emptyset$, which happens if and only if $D^+(U)=S(U)=\emptyset$ by point \ref{crl:0-in-conv-final-description:relint}.
\end{proof}

There is one more construction regarding pseudosphere arrangements which will be of use later on. For that, write $T_\Phi:=B^+(\Phi^+)\cap B^-(\Phi^-)$ where $\Phi\neq 0$ is a covector, and introduce the notation $T(\mathscr{F}):=\bigcap_{\Phi\in\mathscr{F}}T_\Phi$ and $T\langle\mathscr{F}\rangle:=\bigcup_{\Phi\in\mathscr{F}}T_\Phi$ where $\mathscr{F}$ is a set of covectors.
\begin{lemma}\label{lem:tubular-neighborhoods-in-oms}
    Let $\mathfrak{A}$ be a signed arrangement of pseudospheres, $\mathscr{O}$ its associated oriented matroid, and let $\mathscr{F}\subseteq\mathscr{L}(\mathscr{O})-\{0\}$ be a set of covectors. Then:
    \begin{enumerate}[label=\normalfont{(\arabic*)}]
        \item\label{lem:tubular-neighborhoods-in-oms:intersection} $T(\mathscr{F})$ is nonempty if and only if the composition $\Phi_\mathscr{F}$ of all covectors in $\mathscr{F}$ is conformal, in which case $T(\mathscr{F})=T_{\Phi_\mathscr{F}}$.
        \item\label{lem:tubular-neighborhoods-in-oms:union} If $\mathfrak{s}_\mathfrak{A}^{-1}(\mathscr{F})$ is open then $T\langle \mathscr{F}\rangle=\mathfrak{s}_\mathfrak{A}^{-1}(\mathscr{F})$.
    \end{enumerate} 
\end{lemma}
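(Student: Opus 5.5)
The plan is to rewrite $T_\Phi$ as a preimage under $\mathfrak{s}_\mathfrak{A}$ and then reduce both points to order-theoretic statements about sign vectors. The key reformulation is
\[
    T_\Phi=\mathfrak{s}_\mathfrak{A}^{-1}(\{\Psi\in\mathscr{L}(\mathscr{O}):\Psi\geq\Phi\}).
\]
Indeed, $\varphi\in B^+(\Phi^+)\cap B^-(\Phi^-)$ means exactly that $\mathfrak{s}_\mathfrak{A}(\varphi)(v)=\Phi(v)$ for every $v\in\underline{\Phi}$, i.e.\ $\Phi\leq\mathfrak{s}_\mathfrak{A}(\varphi)$. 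I will also use two further facts. First, every nonzero covector $\Phi$ is attained, so the open cell $\mathfrak{s}_\mathfrak{A}^{-1}(\{\Phi\})$ is nonempty; this holds because $\mathscr{L}(\mathscr{O})=\mathfrak{s}_\mathfrak{A}(S^{d-1})\cup\{0\}$. Second, by Lemma \ref{lem:disks-in-oms}, $\mathfrak{s}_\mathfrak{A}^{-1}(\{\Psi'\leq\Psi\})$ is the closure of the open cell $\mathfrak{s}_\mathfrak{A}^{-1}(\{\Psi\})$ for every nonzero covector $\Psi$. I assume $\mathscr{F}\neq\emptyset$ in point \ref{lem:tubular-neighborhoods-in-oms:intersection}, since otherwise $\Phi_\mathscr{F}=0$ and $T_0$ is not defined.

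For point \ref{lem:tubular-neighborhoods-in-oms:intersection}, the reformulation gives
\[
    T(\mathscr{F})=\mathfrak{s}_\mathfrak{A}^{-1}(\{\Psi:\Psi\geq\Phi\text{ for all }\Phi\in\mathscr{F}\}).
\]
A family of sign vectors has a common upper bound if and only if no two members disagree in sign at a common element of their supports. This happens if and only if their composition is conformal, and in that case the common upper bounds are exactly the $\Psi\geq\Phi_\mathscr{F}$. The argument then runs in two directions.
\begin{enumerate}[label=\normalfont{(\alph*)}]
    \item If $T(\mathscr{F})\neq\emptyset$, the sign vector of any point of $T(\mathscr{F})$ is a common upper bound, so the composition is conformal.
    \item Conversely, suppose the composition is conformal. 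Then $\Phi_\mathscr{F}\in\mathscr{L}(\mathscr{O})$ by axiom (V2), and it is nonzero because $\mathscr{F}$ is nonempty and consists of nonzero covectors. Hence $T(\mathscr{F})=\mathfrak{s}_\mathfrak{A}^{-1}(\{\Psi\geq\Phi_\mathscr{F}\})=T_{\Phi_\mathscr{F}}$, and this set is nonempty because it contains the nonempty open cell of $\Phi_\mathscr{F}$.
\end{enumerate}

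For point \ref{lem:tubular-neighborhoods-in-oms:union}, the inclusion $\mathfrak{s}_\mathfrak{A}^{-1}(\mathscr{F})\subseteq T\langle\mathscr{F}\rangle$ is immediate. If $\mathfrak{s}_\mathfrak{A}(\varphi)=\Phi\in\mathscr{F}$, then $\Phi\neq0$ and $\varphi\in T_\Phi$. For the reverse inclusion, take $\varphi\in T_\Phi$ with $\Phi\in\mathscr{F}$, and put $\Psi:=\mathfrak{s}_\mathfrak{A}(\varphi)$, so that $\Psi\geq\Phi\neq0$. The goal is to show $\Psi\in\mathscr{F}$, and this is where openness is used. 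Since $\Phi\leq\Psi$, the nonempty open cell of $\Phi$ lies in the closure of the open cell of $\Psi$, by Lemma \ref{lem:disks-in-oms}. The cell of $\Phi$ is contained in the open set $\mathfrak{s}_\mathfrak{A}^{-1}(\mathscr{F})$, so this open set meets the closure of the cell of $\Psi$, and therefore meets the cell of $\Psi$ itself. Thus some point has sign vector $\Psi$ and lies in $\mathfrak{s}_\mathfrak{A}^{-1}(\mathscr{F})$, which gives $\Psi\in\mathscr{F}$ and hence $\varphi\in\mathfrak{s}_\mathfrak{A}^{-1}(\mathscr{F})$.

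The main obstacle is this closure step: it needs the precise statement that the closure of the open cell of $\Psi$ is the set of points whose sign vector is at most $\Psi$. I read this off Lemma \ref{lem:disks-in-oms}, which describes $\mathfrak{s}_\mathfrak{A}^{-1}(\{\Psi'\leq\Psi\})$ as a disk whose interior is the open cell. A non-essential arrangement causes no trouble here, since both $\Phi$ and $\Psi$ are nonzero.
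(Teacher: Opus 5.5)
Your proposal is correct and follows the paper's proof essentially step by step. It uses the same reformulation $T_\Phi=\mathfrak{s}_\mathfrak{A}^{-1}(\{\Psi\in\mathscr{L}(\mathscr{O}):\Psi\geq\Phi\})$, the same conformality argument for point (1), and the same closure argument via Lemma \ref{lem:disks-in-oms} for point (2). You also justify, via axiom (V2) and the nonemptiness of $\mathscr{F}$, that $\Phi_\mathscr{F}$ is a nonzero covector whose cell is nonempty, which the paper leaves implicit.
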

\begin{proof}
    First, observe that $\mathfrak{s}_\mathfrak{A}^{-1}(\Phi)\subseteq T_\Phi$ for any covector $\Phi$, and more generally $T_\Phi=\mathfrak{s}_\mathfrak{A}^{-1}(\{\Psi\in\mathscr{L}(\mathscr{O}):\Psi\geq\Phi\})$. The composition of all covectors in $\mathscr{F}$ is not conformal if and only if $\exists v\in V, \Phi_-,\Phi_+\in\mathscr{F}\colon\Phi_-(v)=-,\Phi_+(v)=+$. If this latter condition holds then $T(\mathscr{F})\subseteq T_{\Phi_-}\cap T_{\Phi_+}\subseteq B^-_v\cap B^+_v=\emptyset$, while otherwise $\Phi_\mathscr{F}^+=\bigcup_{\Phi\in\mathscr{F}}\Phi^+$ and $\Phi_\mathscr{F}^-=\bigcup_{\Phi\in\mathscr{F}}\Phi^-$, meaning that 
    \[
        T(\mathscr{F})=\bigcap_{\Phi\in\mathscr{F}}B^-(\Phi^-)\cap B^+(\Phi^+)=B^-(\bigcup_{\Phi\in\mathscr{F}}\Phi^-)\cap B^+(\bigcup_{\Phi\in\mathscr{F}}\Phi^+)=T_{\Phi_{\mathscr{F}}}\supseteq\mathfrak{s}_\mathfrak{A}^{-1}(\Phi_{\mathscr{F}})\neq\emptyset.
    \]
    This concludes the proof of point \ref{lem:tubular-neighborhoods-in-oms:intersection}. $\mathfrak{s}_\mathfrak{A}^{-1}(\mathscr{F})=\bigcup_{\Phi\in\mathscr{F}}\mathfrak{s}_\mathfrak{A}^{-1}(\Phi)\subseteq T\langle\mathscr{F}\rangle$ holds for arbitrary $\mathscr{F}\subseteq\mathscr{L}(\mathscr{O})-\{0\}$, so for point \ref{lem:tubular-neighborhoods-in-oms:union} it suffices to prove that if $\mathfrak{s}_\mathfrak{A}^{-1}(\mathscr{F})$ is open then $T_\Phi\subseteq\mathfrak{s}_\mathfrak{A}^{-1}(\mathscr{F})$ for all $\Phi\in\mathscr{F}$. In light of $T_\Phi=\mathfrak{s}_\mathfrak{A}^{-1}(\{\Psi\in\mathscr{L}(\mathscr{O}):\Psi\geq\Phi\})$ this boils down to verifying that if $\Phi\leq\Psi\in\mathscr{L}(\mathscr{O})$ then the closure of $\mathfrak{s}_\mathfrak{A}^{-1}(\Psi)$ intersects $\mathfrak{s}_\mathfrak{A}^{-1}(\Phi)$. This, however, is a consequence of Lemma \ref{lem:disks-in-oms}.
\end{proof}

\bigskip
\section{Avoiding complexes, and links therein}\label{sec:avoiding-complexes}
\bigskip

Here we study the zero-avoiding, support, vector-avoiding, and element-avoiding complexes introduced in Section \ref{sec:the-method}, building on ideas in \cite{Blagojevic2025}. We are interested in the homotopy type and (reduced) (co)homology of these complexes and of the links of simplices within them. Section \ref{sec:avoiding-complexes:zero} focuses on the zero-avoiding and the support complexes, while Section \ref{sec:avoiding-complexes:vector} is dedicated to the vector-avoiding and element-avoiding complexes. In either section we first recall the definition of these complexes, describe the relation between them, then summarize our claims regarding the homotopy type of the complexes and of the links, and finally translate these results into statements about their (reduced) (co)homology groups. Both sections end with concluding that all of these complexes are near-$(d-1)$-Leray for a relevant choice of $d$.

\medskip
\subsection{Zero-avoiding and support complexes}\label{sec:avoiding-complexes:zero}
\medskip

Recall from Section \ref{sec:the-method} that given a nonempty finite set $V$ and a function $A\colon V\to\R^d$, the \emph{zero-avoiding complex} is the simplicial complex
\[
    \mathscr{C}_A:=\{U\subseteq V:0\notin\conv(A(U))\},
\]
while for an oriented matroid $\mathscr{O}$ on set of elements $V$ the \emph{support complex} is defined to be
\[
    \mathscr{C}_\mathscr{O}:=\{U\subseteq V:U\text{~contains a positive circuit of~}\mathscr{O}\}.
\]
Any vector of an oriented matroid is a conformal composition of all circuits smaller than it, so $U\in\mathscr{C}_\mathscr{O}$ if and only if $U$ contains a positive vector of $\mathscr{O}$. Applying Lemma \ref{lem:acyclic-oms} and Corollary \ref{crl:intersections-in-pseudsphere-arrangements-vs-covectors}, we obtain an equivalent definition of the support complex:
\[
    \mathscr{C}_\mathscr{O}=\{U\subseteq V:\mathscr{B}^+(U)\neq\emptyset\}.
\]
As a consequence of Lemma \ref{lem:0-in-conv-interpreted-in-oms}, if $\mathscr{O}$ is the oriented matroid associated to the function $A$, then $\mathscr{C}_A=\mathscr{C}_\mathscr{O}$. Therefore, in the following we only study the support complex $\mathscr{C}_\mathscr{O}$, but the results about it can be translated into the language of convex geometry and linear functionals using Corollary \ref{crl:0-in-conv-final-description}, see for instance Example \ref{ex:zero-avoiding-lemmas} -- indeed, the results of the example are just lemmas in \cite{Blagojevic2025} about $\mathscr{C}_A$. 

First and foremost, the homotopy types of links $\link_{\mathscr{C}_\mathscr{O}}(V-U)$ in this complex can be characterized as follows:
\begin{proposition}\label{prop:htpy-type-zero-avoiding}
    Let $\mathfrak{A}$ be a signed arrangement of pseudospheres in $S^{d-1}$ indexed by the finite set $V$, and let $\mathscr{O}$ be its associated oriented matroid. The link $\link_{\mathscr{C}_\mathscr{O}}(V-U)=(\mathscr{C}^*_{\mathscr{O}}[U])^*$ for $U\subseteq V$ can be described as follows:
    \begin{enumerate}[label=\normalfont{(\arabic*)}]
        \item\label{prop:htpy-type-zero-avoiding:void} If $\mathscr{B}^+(V-U)=\emptyset$ then $\link_{\mathscr{C}_{\mathscr{O}}}(V-U)$ is the void complex.
        \item\label{prop:htpy-type-zero-avoiding:nerve} If $\mathscr{B}^+(V-U)\neq\emptyset$ then $\link_{\mathscr{C}_{\mathscr{O}}}(V-U)$ is homotopy equivalent to $B^+\langle U\rangle\cap B^+(V-U)$.
    \end{enumerate}
    In particular, if we are in case \ref{prop:htpy-type-zero-avoiding:nerve} then $\link_{\mathscr{C}_\mathscr{O}}(V-U)$ is homotopy equivalent to an open subset of $S^{d-1}$. The assumption $\mathscr{B}^+(V-U)\neq\emptyset$ is automatically satisfied if $U=V$, in which case we have the implications:
    \begin{enumerate}[label=\normalfont{(2.\arabic*)}]
        \item\label{prop:htpy-type-zero-avoiding:sphere} If $\mathscr{D}^+(V)=\{0\}$ then $\mathscr{C}_\mathscr{O}=\link_{\mathscr{C}_\mathscr{O}}(\emptyset)\simeq S^{\operatorname{rank}(\mathscr{O})-1}$.
        \item\label{prop:htpy-type-zero-avoiding:entire-contr} Otherwise $\mathscr{C}_\mathscr{O}=\link_{\mathscr{C}_\mathscr{O}}(\emptyset)$ is contractible.
    \end{enumerate}
    If we are in case \ref{prop:htpy-type-zero-avoiding:nerve} but under the assumption that $U\subsetneq V$ then:
    \begin{enumerate}[resume,label=\normalfont{(2.\arabic*)}]
        \item\label{prop:htpy-type-zero-avoiding:non-Farkas} If $\mathscr{B}^+(V-U)\cap \mathscr{D}^-(U)=\emptyset$ then $\link_{\mathscr{C}_{\mathscr{O}}}(V-U)$ is contractible.
        \item\label{prop:htpy-type-zero-avoiding:boundary} Otherwise $\link_{\mathscr{C}_\mathscr{O}}(V-U)$ is also homotopy equivalent to
        \[
            S^{\operatorname{rank}(U')-1}*\left(S(U')\cap B^+\langle U-U'\rangle\cap B^+(V-U)\right)
        \]
        for any $U'\subseteq U$ with $\mathscr{D}^+(U')=\mathscr{S}(U')$, where by convention $S^{-1}=\emptyset$.
    \end{enumerate}
\end{proposition}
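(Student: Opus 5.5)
The approach is to realise the link as the nerve of an open cover of $B^+\langle U\rangle\cap B^+(V-U)\subseteq S^{d-1}$. Throughout I use the characterisation $\mathscr{C}_\mathscr{O}=\{W\subseteq V:\mathscr{B}^+(W)\neq\emptyset\}$ established just before the proposition; this is what the support complex must mean, since $\mathscr{C}_\mathscr{O}=\mathscr{C}_A$ consists of the sets $W$ with $0\notin\conv(A(W))$. By definition of the link, $W\subseteq U$ is a face of $\link_{\mathscr{C}_\mathscr{O}}(V-U)$ if and only if $\mathscr{B}^+(W\cup(V-U))\neq\emptyset$.

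\textbf{Case \ref{prop:htpy-type-zero-avoiding:void}.} If $\mathscr{B}^+(V-U)=\emptyset$, then not even $W=\emptyset$ qualifies, so the link is void.

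\textbf{Case \ref{prop:htpy-type-zero-avoiding:nerve}.} Here I would consider the open sets $O_u:=B^+_u\cap B^+(V-U)$ for $u\in U$. They cover $B^+\langle U\rangle\cap B^+(V-U)$, and for $\emptyset\neq W\subseteq U$ we have $\bigcap_{u\in W}O_u=B^+(W\cup(V-U))$.
\begin{enumerate}
\item By Observation \ref{obs:curly-vs-print}, since the formula is built from $B^+$'s, this set is nonempty if and only if $\mathscr{B}^+(W\cup (V-U))\neq\emptyset$. Hence the nerve of $(O_u)_{u\in U}$ is exactly the link.
\item By Corollary \ref{crl:disks-in-oms}, every nonempty intersection is an open ball, hence contractible.
\item The classical nerve theorem for finite open covers of a paracompact space then gives $\link_{\mathscr{C}_\mathscr{O}}(V-U)\simeq B^+\langle U\rangle\cap B^+(V-U)$.
\end{enumerate}
This is an open subset of $S^{d-1}$. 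If $U=\emptyset$, both sides are empty, consistent with the link being $\{\emptyset\}$.

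\textbf{Refinements via complements.} For the sub-cases I would rewrite $B^+\langle U\rangle=S^{d-1}-D^-(U)$. When $U=V$ this gives $\mathscr{C}_\mathscr{O}\simeq S^{d-1}-D^-(V)$.
\begin{itemize}
\item If $\mathscr{D}^+(V)=\{0\}$, then by the symmetry $\Phi\mapsto-\Phi$ also $\mathscr{D}^-(V)=\{0\}$, so $D^-(V)=S(V)$. This is a sphere of dimension $d-1-\operatorname{rank}(\mathscr{O})$, whose complement in $S^{d-1}$ is homotopy equivalent to $S^{\operatorname{rank}(\mathscr{O})-1}$ (pseudosphere-arrangement spheres are tamely embedded subcomplexes of the regular cell structure of Observation \ref{obs:pseudospheres-regular-cw}).
\item Otherwise $D^-(V)$ contains a nonzero covector cell and is the closed disk $D^-(\Phi^-)\cap S(\Phi^0)\cap D^+(\Phi^+)$ of Lemma \ref{lem:disks-in-oms}, where $\Phi$ is the conformal composition of all covectors in $\mathscr{D}^-(V)$. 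Its complement should be contractible.
\end{itemize}

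For $U\subsetneq V$ the space is $B^+(V-U)-D^-(U)$, an open ball minus a relatively closed set.
\begin{itemize}
\item If $\mathscr{B}^+(V-U)\cap\mathscr{D}^-(U)=\emptyset$, nothing is removed and the space is the contractible ball $B^+(V-U)$, giving \ref{prop:htpy-type-zero-avoiding:non-Farkas}.
\item For \ref{prop:htpy-type-zero-avoiding:boundary}, I would pick $U'$ with $\mathscr{D}^+(U')=\mathscr{S}(U')$, so that symmetrically $D^-(U')=S(U')$. This splits the space as
\[
\bigl(B^+(V-U)-S(U')\bigr)\cup\bigl(S(U')\cap B^+\langle U-U'\rangle\cap B^+(V-U)\bigr).
\]
Here $S(U')$ is a codimension-$\operatorname{rank}(U')$ pseudosphere slice through the ball. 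Using a tubular-neighbourhood description (Lemma \ref{lem:tubular-neighborhoods-in-oms} for the sets $T_\Phi$), the space should be homotopy equivalent to the join of the normal sphere $S^{\operatorname{rank}(U')-1}$ with the part kept inside the slice. This mirrors the standard fact that $\R^n-(\R^k\cap C)\simeq S^{n-k-1}*(\R^k-C)$ for closed $C$.
\end{itemize}

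\textbf{Main obstacle.} The hard part is making the complement arguments rigorous in the pseudosphere setting: contractibility of $S^{d-1}$ minus a covector disk, and the join decomposition in \ref{prop:htpy-type-zero-avoiding:boundary}. I would first try to avoid point-set topology altogether. The idea is to cover each open set by the sets $T_\Phi$ for the relevant covectors $\Phi$. This is legitimate by Lemma \ref{lem:tubular-neighborhoods-in-oms}\ref{lem:tubular-neighborhoods-in-oms:union}, since the sets in question are open. Their intersections are either empty or single sets $T_{\Phi_\mathscr{F}}$, which are contractible by Remark \ref{rmk:reorientation}. The nerve theorem then reduces everything to a poset or order-complex computation in $\mathscr{L}(\mathscr{O})$. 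There, contractibility should follow from a cone point given by the composed covector $\Phi$, and the join should follow by splitting covectors according to their restriction to $U'$, which ranges over $\mathscr{L}(\mathscr{O}|_{U'})$, an oriented matroid with no positive covector and whose proper part is the sphere $S^{\operatorname{rank}(U')-1}$.
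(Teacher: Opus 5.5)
Your treatment of (1), (2) and (2.3) is the paper's. You use the same cover $(B^+_u\cap B^+(V-U))_{u\in U}$, get goodness from Corollary \ref{crl:disks-in-oms}, and identify the nerve with the link via Observation \ref{obs:curly-vs-print}. For (2.1) you can drop the tameness claim, since being a subcomplex of a regular cell structure does not by itself give local flatness. Instead work in an essential arrangement, where $\mathscr{D}^-(V)=\{0\}$ gives $D^-(V)=\emptyset$.

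The genuine gaps are in (2.2) and (2.4). There the decisive steps are only asserted (``should be contractible'', ``should be homotopy equivalent to the join''), and the remedy you outline does not work as stated.

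For (2.2), the nerve of the $T_\Psi$-cover of $B^+\langle V\rangle$ consists of the conformal families (Lemma \ref{lem:tubular-neighborhoods-in-oms}), and the composed covector is not a cone point of it. The composition of $\mathscr{D}^-(V)$ has no positive entry, so it is not even a vertex. Its negative $\Phi\geq 0$ is not conformal with covectors that are negative somewhere on $\underline{\Phi}$: for $A(a)=e_1$, $A(b)=e_2$ in $\R^2$, take $\Phi=(+,+)$ and $\Psi=(+,-)$. The cone lives in $\mathscr{C}_\mathscr{O}$ itself. If $\Phi\neq 0$ is the composition of all of $\mathscr{D}^+(V)$ and $\Psi\in\mathscr{B}^+(W)$, then $\Phi\circ\Psi\in\mathscr{B}^+(W\cup\underline{\Phi})$. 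Hence $\mathscr{C}_\mathscr{O}=\mathscr{C}_\mathscr{O}[\underline{\Phi}]*\mathscr{C}_\mathscr{O}[V-\underline{\Phi}]$ with a full simplex as first factor. This is how the paper proves (2.2), with no topology of complements at all.

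For (2.4), the model $\R^n-(\R^k\cap C)\simeq S^{n-k-1}*(\R^k-C)$ is the right picture, but two things are missing.
\begin{itemize}
\item A pseudosphere slice $S(U')\cap B^+(V-U)$ comes with no tubular neighbourhood. Splitting covectors by their restriction to $U'$ also does not make the covector poset a join: with $R:=S(U')\cap B^+\langle U-U'\rangle\cap B^+(V-U)$ and $\mathscr{R}:=\mathfrak{s}_\mathfrak{A}(R)$, covectors in $\mathscr{R}$ lie below those with nonzero restriction, so a fiber-lemma argument would still be owed.
\item Your sketch never uses the hypothesis $\mathscr{B}^+(V-U)\cap\mathscr{D}^-(U)\neq\emptyset$, yet it is indispensable. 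If the slice missed $B^+(V-U)$, the space would be a ball while the join would be $S^{\operatorname{rank}(U')-1}$.
\end{itemize}

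The missing idea is to realise the homotopy pushout directly as a nerve. Consider the mixed family $(B^+_{u'}\cap B^+(V-U))_{u'\in U'}\cup(T_\Phi)_{\Phi\in\mathscr{R}}$.
\begin{itemize}
\item It is an open cover of $B^+\langle U\rangle\cap B^+(V-U)$, because $D^-(U')=S(U')$.
\item It is good by Remark \ref{rmk:reorientation}.
\item Its nerve is the join of $\mathscr{C}_{\mathscr{O}|_{U'}}\simeq S^{\operatorname{rank}(U')-1}$ (by (2.1)) with the nerve of the good cover $(S(U')\cap T_\Phi)_{\Phi\in\mathscr{R}}$ of $R$.
\end{itemize}
The join property says $B^+(U'')\cap B^+(V-U)\cap\bigcap_{\Phi\in\mathscr{R}'}T_\Phi\neq\emptyset$ whenever $B^+(U'')$ and $S(U')\cap\bigcap_{\Phi\in\mathscr{R}'}T_\Phi$ are nonempty. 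Verifying it is exactly where a covector $\Psi\in\mathscr{B}^+(V-U)\cap\mathscr{D}^-(U)$ is needed. One takes $\Phi_{\mathscr{R}'}\circ\Psi\circ\Phi_{U''}$, with $\Phi_{\mathscr{R}'}$ and $\Phi_{U''}$ covectors realising those two sets.
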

Before the proof we show some examples and make some remarks.
\begin{example}\label{ex:zero-avoiding-lemmas}
    Using Proposition \ref{prop:htpy-type-zero-avoiding} and Corollary \ref{crl:0-in-conv-final-description} the homotopy type of $\link_{\mathscr{C}_A}(V-U)$ can be computed under various assumptions phrased in the language of convex geometry. For example:
    \begin{enumerate}[label=\normalfont{(\arabic*)}]
        \item\label{ex:zero-avoiding-lemmas:full} If $0\in\relint(\conv(A(V)))$ then $\mathscr{C}_A=\link_{\mathscr{C}_A}(\emptyset)$ is homotopy equivalent to a sphere of dimension $\dim(\operatorname{span}(A(V)))-1$, and otherwise it is contractible. Note that $\mathscr{S}(V)=\{0\}$ in any oriented matroid, like the $\mathscr{O}$ associated to $A$, so $\mathscr{D}^+(V)=\{0\}\iff\mathscr{D}^+(V)=\mathscr{S}(V)$, which by Corollary \ref{crl:0-in-conv-final-description} is equivalent to the claim $0\in\relint(\conv(A(U)))$. On the other hand $\operatorname{rank}(\mathscr{O})=\dim(\operatorname{span}(A(V)))$, so points \ref{prop:htpy-type-zero-avoiding:sphere} and \ref{prop:htpy-type-zero-avoiding:entire-contr} of Proposition \ref{prop:htpy-type-zero-avoiding} imply the claim.
    \end{enumerate} 
    With a little effort it can be shown that point \ref{prop:htpy-type-zero-avoiding:non-Farkas} of the Proposition translates to point \ref{ex:zero-avoiding-lemmas:cones} below. We include the example \ref{ex:zero-avoiding-lemmas:int} because its assumptions are more easily digestible, but it is a consequence of \ref{ex:zero-avoiding-lemmas:cones}: under the assumptions of point \ref{ex:zero-avoiding-lemmas:int} for any $v\in V-U$ it is implied that $A(v)\neq 0$ by $0\notin\conv(A(V-U))$, while $0\in\interior(\conv(A(U)))$ means that $\cone(A(U))=\R^d$, so $\cone(A(V-U))\cap\cone(A(U))\supseteq\cone(A(v))\neq\{0\}$, so point \ref{ex:zero-avoiding-lemmas:cones} implies the desired conclusion.
    \begin{enumerate}[resume,label=\normalfont{(\arabic*)}]
        \item\label{ex:zero-avoiding-lemmas:int} If $U\subsetneq V$, $0\notin\conv(A(V-U))$, and $0\in\interior(\conv(A(U)))$ then $\link_{\mathscr{C}_A}(V-U)$ is contractible. The conditions imply $\mathscr{B}^+(V-U)\neq\emptyset$ and $\mathscr{D}^+(U)=\mathscr{S}(U)=\{0\}$ respectively by Corollary \ref{crl:0-in-conv-final-description}, so in particular $\mathscr{D}^-(U)=\mathscr{S}(U)=\{0\}$ and $\mathscr{B}^+(V-U)\cap\mathscr{D}^-(U)=\mathscr{B}^+(V-U)\cap\{0\}=\emptyset$ by $U\neq V$. Thus this situation falls under case \ref{prop:htpy-type-zero-avoiding:non-Farkas} of the Proposition, and the link is contractible.
        \item\label{ex:zero-avoiding-lemmas:cones} If $U\subsetneq V$, $0\notin\conv(A(V-U))$, and $\cone(A(V-U))\cap\cone(A(U))\neq\{0\}$ then $\link_{\mathscr{C}_A}(V-U)$ is contractible. This again follows from case \ref{prop:htpy-type-zero-avoiding:non-Farkas} by Corollary \ref{crl:0-in-conv-final-description} together with Lemma \ref{lem:intersection-of-cones-in-oms}: $\cone(A(V-U))\cap\cone(A(U))\neq\{0\}$ implies according to this latter lemma that there is a vector $\Psi$ of the oriented matroid $\mathscr{O}$ associated to $A$ with $\emptyset\neq\Psi^+\subseteq V-U$ and $\Psi^-\subseteq U$, so there can not be a $\Phi\in \mathscr{B}^+(V-U)\cap \mathscr{D}^-(U)$, as then it would not hold that $\Psi\perp\Phi$, yet $\Psi$ is a vector and $\Phi$ is a covector of $\mathscr{O}$.
    \end{enumerate}
    As \ref{ex:zero-avoiding-lemmas:cones} is a translation of point \ref{prop:htpy-type-zero-avoiding:non-Farkas}, necessarily point \ref{prop:htpy-type-zero-avoiding:boundary} of the Proposition applies if and only if $U\subsetneq V$, $0\notin\conv(A(V-U))$, and $\cone(A(V-U))\cap\cone(A(U))=\{0\}$.
    \begin{enumerate}[resume,label=\normalfont{(\arabic*)}]
        \item If point \ref{prop:htpy-type-zero-avoiding:boundary} of Proposition \ref{prop:htpy-type-zero-avoiding} applies and there is a subset $U'\subseteq U$ with $0$ lying in $\relint(\conv(A(U')))$ and $\dim(\operatorname{span}(A(U')))=d-1$ then $\link_{\mathscr{C}_A}(V-U)$ is homotopy equivalent to a $d-2=(\dim(\operatorname{span}(A(U')))-1)$-sphere. For this we just have to see that the second half of the join in point \ref{prop:htpy-type-zero-avoiding:boundary} is empty, as $\operatorname{rank}(U')=\dim(\operatorname{span}(A(U')))$ and $0\in\relint(\conv(A(U')))\implies\mathscr{D}^+(U')=\mathscr{S}(U')$ by Corollary \ref{crl:0-in-conv-final-description}. 
        $S(U')\cong S^{d-1-\operatorname{rank}(U')}=S^0$, so $\mathscr{S}(U')\subseteq \mathfrak{s}_{\mathfrak{A}}(S(U'))\cup\{0\}$ has at most two non-zero elements. One of these must be a covector $0\neq\Phi\in\mathscr{B}^+(V-U)\cap\mathscr{D}^-(U)\subseteq\mathscr{D}^-(U)\subseteq\mathscr{D}^-(U')=\mathscr{S}(U')$ guaranteed by the assumption of point \ref{prop:htpy-type-zero-avoiding:boundary}, but then $-\Phi$ is the other one; denote the points of $S(U')$ corresponding to these under $\mathfrak{s}_\mathfrak{A}$ by $\varphi$ and $-\varphi$ respectively. $\Phi\in\mathscr{D}^-(U)\subseteq\mathscr{D}^-(U-U')$, so $\varphi\notin B^+\langle U-U'\rangle$, while $-\Phi\in\mathscr{B}^-(V-U)$ so $-\varphi\notin B^+(V-U)$ (using $U\subsetneq V$ and Observation \ref{obs:curly-vs-print}). In conclusion, $S(U')\cap B^+\langle U-U'\rangle\cap B^+(V-U)=\{\varphi,-\varphi\}\cap B^+\langle U-U'\rangle\cap B^+(V-U)=\emptyset$, which is just what we wanted.
        \item If point \ref{prop:htpy-type-zero-avoiding:boundary} of Proposition \ref{prop:htpy-type-zero-avoiding} applies and $0\in\relint(\conv(A(U)))$ then $\link_{\mathscr{C}_A}(V-U)$ is homotopy equivalent to a $(\operatorname{rank}(U)-1)$-dimensional sphere. In light of Corollary \ref{crl:0-in-conv-final-description}, for this we just have to show that for $U':=U$ the second half of the join in point \ref{prop:htpy-type-zero-avoiding:boundary} is empty. However, $U'=U$ implies that $B^+\langle U-U'\rangle$ is already empty.
    \end{enumerate}
\end{example}
The proof of Proposition \ref{prop:htpy-type-zero-avoiding} depends on two things. On one hand, we need the Nerve Theorem below (see e.g. \cite[Corollary 4G.3]{Hatcher2002}), which we will repeatedly apply to so-called good open coverings of subsets of the sphere $S^{d-1}$ in order to replace them with simplicial complexes, which we in turn replace by different coverings of different subsets, until we get the desired statements of points \ref{prop:htpy-type-zero-avoiding:nerve} and \ref{prop:htpy-type-zero-avoiding:boundary}. We note that every subset of the sphere $S^{d-1}$ is metrizable, and that metrizable spaces are paracompact \cite{StoneMetricIsParacompact} (see also \cite{RudinParacompact}), so we will not need to worry about paracompactness when we apply the following theorem.
\begin{theorem}[Nerve Theorem]\label{thm:nerve}
    Let $X$ be a paracompact topological space, and $\mathfrak{U}=(U_i)_{i\in I}$ an open cover of it with nerve complex $\mathscr{N}_\mathfrak{U}$. If $\bigcap_{j\in J}U_j$ is empty or contractible for every finite $\emptyset\neq J\subseteq I$ then $X\simeq \mathscr{N}_\mathfrak{U}$.
\end{theorem}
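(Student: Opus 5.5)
The plan is to go through the standard ``double mapping'' construction behind \cite[Corollary 4G.3]{Hatcher2002}. We introduce an auxiliary space $\Delta X$ that maps to both $X$ and the geometric realization of $\mathscr{N}_\mathfrak{U}$, and show that both projections are homotopy equivalences.

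Concretely, for a face $J$ of $\mathscr{N}_\mathfrak{U}$ write $U_J:=\bigcap_{j\in J}U_j$ and let $\Delta_J$ denote the geometric simplex on $J$. Put
\[
    \Delta X:=\bigcup_{\emptyset\neq J\in\mathscr{N}_\mathfrak{U}} U_J\times\Delta_J\subseteq X\times|\mathscr{N}_\mathfrak{U}|,
\]
i.e.\ the pairs $(x,p)$ with $p$ in the simplex spanned by $\{i:x\in U_i\}$. Equivalently, $\Delta X$ is the quotient of $\bigsqcup_J U_J\times\Delta_J$ obtained by gluing along the face inclusions $U_J\times\Delta_{J'}\hookrightarrow U_{J'}\times\Delta_{J'}$ for $J'\subseteq J$. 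Only faces with $U_J\ne\emptyset$ occur, and these are exactly the faces of the nerve. We then have two projections, $p\colon\Delta X\to X$ and $q\colon\Delta X\to|\mathscr{N}_\mathfrak{U}|$.

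\emph{Step 1: $p$ is a homotopy equivalence.} Since $X$ is paracompact, choose a partition of unity $(\varphi_i)_{i\in I}$ subordinate to $\mathfrak{U}$ and define $s(x):=(x,\sum_i\varphi_i(x)e_i)$. This is a continuous section of $p$. Moreover, $s\circ p\simeq\id$ via the fiberwise straight-line homotopy $(x,t\sum_i\varphi_i(x)e_i+(1-t)y)$, which stays in $\Delta X$ because each fiber $p^{-1}(x)$ is the full simplex on $\{i:x\in U_i\}$ and is therefore convex. This step is routine, apart from checking continuity for the quotient topology when $I$ is infinite; one can reduce to a locally finite refinement, or simply note that in all our applications the cover is finite.

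\emph{Step 2: $q$ is a homotopy equivalence.} Here the contractibility hypothesis is used. I would compare $\Delta X$ with the analogous construction $\Delta\mathscr{N}$ in which every $U_J$ is replaced by a point; this $\Delta\mathscr{N}$ is just $|\mathscr{N}_\mathfrak{U}|$. The maps $U_J\to\mathrm{pt}$ are homotopy equivalences and are compatible with the face gluings, so the claim follows from the gluing lemma for such ``realizations of diagrams'' (\cite[Proposition 4G.1]{Hatcher2002}). One proves it by induction over the skeleta of the nerve: passing from the $(k-1)$-part to the $k$-part attaches $U_J\times\Delta_J$ along $U_J\times\partial\Delta_J$ for each $k$-face $J$. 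Each such attachment is a homotopy pushout, because $(\Delta_J,\partial\Delta_J)$ is an NDR pair and hence so is $(U_J\times\Delta_J,U_J\times\partial\Delta_J)$. Homotopy pushouts preserve homotopy equivalences of diagrams, and the infinite case follows by passing to the colimit over skeleta.

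I expect Step 2 to be the main obstacle. The key point is that the gluing maps are cofibrations, so that the map induced on the glued spaces is a homotopy equivalence; the inductive comparison of skeleta also needs care when the nerve is infinite-dimensional. Combining Steps 1 and 2 gives $X\simeq\Delta X\simeq|\mathscr{N}_\mathfrak{U}|$, as claimed in Theorem \ref{thm:nerve}.
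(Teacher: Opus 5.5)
Your proposal is correct and is the standard argument behind \cite[Corollary 4G.3]{Hatcher2002}: realize the cover as $\Delta X$, show $p$ is a homotopy equivalence via a partition of unity, and show $q$ is one via the skeleton-by-skeleton gluing lemma. The paper does not prove this theorem itself; it only cites that result, so your route is the same. One small correction: the subspace topology from $X\times|\mathscr{N}_\mathfrak{U}|$ and the quotient topology on $\Delta X$ differ in general, even for finite covers, so your ``Equivalently'' is not literally true. Work in the quotient topology throughout, since that is what makes the pushout description in Step~2 valid. Check continuity of $s$ and of the fiberwise homotopy there using local finiteness of the partition of unity, not by passing to a refinement, which would change the nerve.
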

The other key observation is that in case \ref{prop:htpy-type-zero-avoiding:boundary} we have that \[B^+\langle U\rangle\cap B^+(V-U)=B^+(V-U)-B^+(V-U)\cap D^-(U),\] but $B^+(V-U)\cap D^-(U)\subseteq B^+(V-U)\cap S(U')$, which is of codimension $\operatorname{rank}(U')$ in $B^+(V-U)$, so the description of $\link_{\mathscr{C}_\mathscr{O}}(V-U)$ given in point \ref{prop:htpy-type-zero-avoiding:nerve} is homeomorphic to $\R^{d-1}$ with some subset of a codimension $\operatorname{rank}(U')$ linear subspace deleted, which can be recognized as having a description as a join like stated in the proposition. This argumentation is made precise below.
\begin{proof}[Proof of Proposition \ref{prop:htpy-type-zero-avoiding}]
    Consider $\mathscr{C}_\mathscr{O}$ as the simplicial complex given by $\{U\subseteq V: \mathscr{B}^+(U)\neq\emptyset\}$. In particular, if $\mathscr{B}^+(V-U)=\emptyset$ then $V-U\notin\mathscr{C}_\mathscr{O}$, so $\link_{\mathscr{C}_\mathscr{O}}(V-U)$ is the void complex:
    \[
        \link_{\mathscr{C}_\mathscr{O}}(V-U)=\{U'\in\mathscr{C}_\mathscr{O}: (V-U)\cap U'=\emptyset,(V-U)\cup U'\in\mathscr{C}_\mathscr{O}\},
    \]
    but $(V-U)\cup U'\in\mathscr{C}_\mathscr{O}\implies V-U\in\mathscr{C}_\mathscr{O}$, so the latter set is empty, proving point \ref{prop:htpy-type-zero-avoiding:void}.

    For point \ref{prop:htpy-type-zero-avoiding:nerve}, consider the collection
    \[
        \mathfrak{U}:=(B_u^+\cap B^+(V-U))_{u\in U}
    \]
    of subsets of $B^+(V-U)$. They are by construction open, and they cover the space $\bigcup_{u\in U}(B^+_u\cap B^+(V-U))=B^+\langle U\rangle\cap B^+(V-U)$. If $\emptyset\neq U'\subseteq U$ then $\bigcap_{u'\in U'}(B^+_{u'}\cap B^+(V-U))=B^+(V-(U-U'))$ is empty or contractible (Corollary \ref{crl:disks-in-oms}), meaning that $\mathfrak{U}$ is a good open cover. Note that this intersection is nonempty if and only if $\mathscr{B}^+(V-(U-U'))\neq\emptyset$ (Observation \ref{obs:curly-vs-print}), or equivalently if $V-(U-U')\in\mathscr{C}_\mathscr{O}$ by the description of $\mathscr{C}_\mathscr{O}$, but this holds if and only if $U-U'\notin\mathscr{C}_\mathscr{O}^*$, which is the same as saying $U'\in(\mathscr{C}_\mathscr{O}^*[U])^*$. In particular, to see that the nerve $\mathscr{N}_\mathfrak{U}$ of the cover $\mathfrak{U}$ is just $(\mathscr{C}_\mathscr{O}^*[U])^*=\link_{\mathscr{C}_\mathscr{O}}(V-U)$ it suffices to check that $\emptyset\in\mathscr{N}_{\mathfrak{U}}\iff\emptyset\in\link_{\mathscr{C}_\mathscr{O}}(V-U)$, but nerves are by definition never void and $\mathscr{B}^+(V-U)\neq\emptyset\implies V-U\in\mathscr{C}_\mathscr{O}\implies\emptyset\in\link_{\mathscr{C}_\mathscr{O}}(V-U)$. Knowing all this, the Nerve Theorem (Theorem \ref{thm:nerve}) gives point \ref{prop:htpy-type-zero-avoiding:nerve} of the Proposition.

    Let us continue with case \ref{prop:htpy-type-zero-avoiding:sphere}, where it is assumed that $U=V$ and $\mathscr{D}^+(V)=\{0\}$. $\mathscr{B}^+(V-U)=\mathscr{B}^+(\emptyset)=\mathscr{L}(\mathscr{O})\neq\emptyset$, so we fall under case \ref{prop:htpy-type-zero-avoiding:nerve}: applying it to an essential signed arrangement of pseudospheres $\mathfrak{A}'$ to which $\mathscr{O}$ is associated, which exists by the Topological Representation Theorem (Theorem \ref{thm:top-repr}), we get that $\link_{\mathscr{C}_\mathscr{O}}(V-U)\simeq B^+\langle U\rangle\cap B^+(V-U)$, which using $U=V$ is further equal to $B^+\langle V\rangle=S^{\operatorname{rank}(\mathscr{O})-1}-D^-(V)$. As $\mathfrak{A}'$ is essential, we can use Observation \ref{obs:curly-vs-print} to see that $\mathscr{D}^+(V)=\{0\}\iff\mathscr{D}^-(V)=\{0\}\implies D^-(V)=\emptyset$, meaning that $\mathscr{C}_\mathscr{O}=\link_{\mathscr{C}_\mathscr{O}}(\emptyset)\simeq S^{\operatorname{rank}(\mathscr{O})-1}$.

    In case \ref{prop:htpy-type-zero-avoiding:entire-contr}, where $U=V$ and $\mathscr{D}^+(V)\neq\{0\}$ are assumed, the description $\link_{\mathscr{C}_\mathscr{O}}(\emptyset)=\mathscr{C}_\mathscr{O}=\{U\subseteq V:\mathscr{B}^+(U)\neq\emptyset\}$ is still useful. By construction the composition of all covectors in $\mathscr{D}^+(V)$ is conformal, so denote this well-defined composition by $\Phi\neq0$. We wish to show that $\mathscr{C}_\mathscr{O}=\mathscr{C}_\mathscr{O}[\underline{\Phi}]*\mathscr{C}_\mathscr{O}[V-\underline{\Phi}]$, of which $\mathscr{C}_\mathscr{O}[\underline{\Phi}]$ is a simplex, whose contractibility implies that the entire join, that is $\mathscr{C}_\mathscr{O}$, is contractible as well. The desired statement can be rephrased as 
    \[
    \mathscr{B}^+(U)\neq\emptyset\iff\mathscr{B}^+(U\cap\underline{\Phi})\neq\emptyset\neq\mathscr{B}^+(U-\underline{\Phi}),
    \] 
    of which only the ``$\Leftarrow$'' direction is non-automatic. Nevertheless, if $\Psi\in\mathscr{B}^+(U-\underline{\Phi})$, then $\Phi\circ\Psi\in\mathscr{B}^+(U)$, concluding the proof of point \ref{prop:htpy-type-zero-avoiding:entire-contr}.
    
    Now assume we are in the situation of point \ref{prop:htpy-type-zero-avoiding:non-Farkas}, i.e., $U\subsetneq V$ such that $\mathscr{B}^+(V-U)\neq\emptyset$ and $\mathscr{B}^+(V-U)\cap\mathscr{D}^-(U)=\emptyset$. Then point \ref{prop:htpy-type-zero-avoiding:nerve} says that 
    \[
    \link_{\mathscr{C}_\mathscr{O}}(V-U)\simeq B^+\langle U\rangle\cap B^+(V-U)=B^+(V-U)-B^+(V-U)\cap D^-(U)=B^+(V-U), 
    \]
    where Observation \ref{obs:curly-vs-print} justifies the last equality and guarantees $B^+(V-U)$ to be nonempty, and therefore by Corollary \ref{crl:disks-in-oms} to be a $(d-1)$-dimensional open ball, and as such contractible, proving point \ref{prop:htpy-type-zero-avoiding:non-Farkas} of the Proposition.
    
    It is left to prove point \ref{prop:htpy-type-zero-avoiding:boundary}, namely that if $\mathscr{B}^+(V-U)\cap D^-(U)\neq\emptyset$ for $U\subsetneq V$ then 
    \[
    \link_{\mathscr{C}_\mathscr{O}}(V-U)\simeq S^{\operatorname{rank}(U')-1}*(S(U')\cap B^+\langle U-U'\rangle\cap B^+(V-U))
    \] 
    for any $U'\subseteq U$ with $\mathscr{D}^+(U')=\mathscr{S}(U')$.
    Using point \ref{prop:htpy-type-zero-avoiding:nerve} we know that $\link_{\mathscr{C}_\mathscr{O}}(V-U)\simeq B^+\langle U\rangle\cap B^+(V-U)$, while our assumption guarantees the existence of a covector $\Psi\in \mathscr{B}^+(V-U)\cap \mathscr{D}^-(U)$.

    Point \ref{prop:htpy-type-zero-avoiding:boundary} is clear for $U'=\emptyset$, so assume otherwise. Set $R:=S(U')\cap B^+\langle U-U'\rangle\cap B^+(V-U)$, $\mathscr{R}:=\mathfrak{s}_\mathfrak{A}(R)$, and note that $\mathfrak{U}:=(B^+_{u'}\cap B^+(V-U))_{u'\in U'}\cup(T_\Phi)_{\Phi\in\mathscr{R}}$ is an open cover of $B^+\langle U\rangle\cap B^+(V-U)\simeq\link_{\mathscr{C}_\mathscr{O}}(V-U)$; let us check that it is good. Let $U''\subseteq U'$ and $\mathscr{R}'\subseteq \mathscr{R}$ such that $U''\cup\mathscr{R}'\neq\emptyset$, and consider
    \[
        \bigcap_{u''\in U''}(B^+_{u''}\cap B^+(V-U))\cap\bigcap_{\Phi\in\mathscr{R}'}T_\Phi
        =B^+(U'')\cap B^+(V-U)\cap\bigcap_{\Phi\in \mathscr{R}'}T_\Phi.
    \]
    In light of Remark \ref{rmk:reorientation} this space is empty or contractible, making the cover into a good open cover. Now consider the covers
    \begin{align*}
        \mathfrak{U}_{U'}:=(B^+_{u'})_{u'\in U'}&\text{~of~}B^+\langle U'\rangle\text{~and}\\
        \mathfrak{U}_{\mathscr{R}}:=(S(U')\cap T_\Phi)_{\Phi\in\mathscr{R}}&\text{~of~}S(U')\cap T\langle\mathscr{R}\rangle=R.
    \end{align*}
    The nerve of $\mathfrak{U}_{U'}$ is equal to $\mathscr{C}_{\mathscr{O}|_{U'}}$, which, using point \ref{prop:htpy-type-zero-avoiding:sphere}, the description of $\mathscr{L}(\mathscr{O}|_{U'})$, and the fact that $\mathscr{D}^+(U')=\mathscr{S}(U')$, is homotopy equivalent to $S^{\operatorname{rank}(\mathscr{O}|_{U'})-1}=S^{\operatorname{rank}(U')-1}$. On the other hand, $\mathfrak{U}_\mathscr{R}$ is a good open cover by Lemma \ref{lem:tubular-neighborhoods-in-oms} and Remark \ref{rmk:reorientation}, so its nerve is homotopy equivalent to $R$ by the Nerve Theorem. Thus to show that $\link_{\mathscr{C}_\mathscr{O}}(V-U)\simeq S^{\operatorname{rank}(U')-1}*R$, it suffices to see that the nerve of $\mathfrak{U}$ is equal to the join of the nerves of $\mathfrak{U}_{U'}$ and $\mathfrak{U}_\mathscr{R}$. In other words, we want to show that for $U''\subseteq U'$ and $\mathscr{R}'\subseteq \mathscr{R}$ it is true that $B^+(U'')\cap B^+(V-U)\cap\bigcap_{\Phi\in \mathscr{R}'}T_\Phi$ is nonempty if and only if both $B^+(U'')$ and $S(U')\cap\bigcap_{\Phi\in \mathscr{R}'}T_\Phi$ are nonempty, where each of these three spaces is deemed nonempty if $U''\cup\mathscr{R}'=\emptyset$, $U''=\emptyset$, or $\mathscr{R}'=\emptyset$ respectively.
    
    The ``only if'' direction is easy: the nonemptyness of $B^+(U'')\cap B^+(V-U)\cap\bigcap_{\Phi\in\mathscr{R}'}T_\Phi$ implies the non-emptiness of $B^+(U'')$ and $\bigcap_{\Phi\in\mathscr{R}'}T_\Phi$ (where the latter is interpreted as nonempty for $\mathscr{R}'=\emptyset$), so either $\mathscr{R}'=\emptyset$ or by Lemma \ref{lem:tubular-neighborhoods-in-oms} the composition $\Phi_{\mathscr{R}'}$ of all elements of $\mathscr{R}'$ is conformal and $\bigcap_{\Phi\in \mathscr{R}'}T_\Phi=T_{\Phi_{\mathscr{R}'}}$, which in particular has a nonempty intersection with $S(U')$ by virtue of $\Phi_{\mathscr{R}'}\in \mathscr{S}(U')$. For the ``if'' direction, take covectors $\Phi_{U''}\in\mathfrak{s}_\mathfrak{A}(B^+(U''))$ and $\Phi_{\mathscr{R}'}\in\mathfrak{s}_{\mathfrak{A}}(S(U')\cap\bigcap_{\Phi\in\mathscr{R}'}T_\Phi)$, and form the composition $\Theta:=\Phi_{\mathscr{R}'}\circ\Psi\circ\Phi_{U''}$, with $\Phi_{\mathscr{R}'}$ and $\Phi_{U''}$ omitted if $\mathscr{R}'=\emptyset$ or $U''=\emptyset$ respectively. 
    $\mathscr{D}^+(U')=\mathscr{S}(U')$ implies $\mathscr{D}^-(U)=\mathscr{S}(U')$, and in particular 
    \[
    \Psi\in\mathscr{B}^+(V-U)\cap\mathscr{D}^-(U)=\mathscr{B}^+(V-U)\cap\mathscr{D}^-(U-U')\cap\mathscr{S}(U'). 
    \]
    It is then easy to verify using Observation \ref{obs:curly-vs-print} that $\mathfrak{s}_\mathfrak{A}^{-1}(\{\Theta\})\subseteq B^+(V-U)$, if $\mathscr{R}'\neq\emptyset$ then $\mathfrak{s}_\mathfrak{A}^{-1}(\{\Theta\})\subseteq\bigcap_{\Phi\in\mathscr{R}'}T_\Phi$, if $U''\neq\emptyset$ then $\mathfrak{s}_\mathfrak{A}^{-1}(\{\Theta\})\subseteq B^+(U'')$, and by virtue of $\Theta\in\mathscr{L}(\mathscr{O})-\{0\}$ (we already had $\Psi\neq 0$) it also holds that $\mathfrak{s}_\mathfrak{A}^{-1}(\{\Theta\})\neq\emptyset$, which together mean that $\emptyset\neq\mathfrak{s}_\mathfrak{A}^{-1}(\{\Theta\})\subseteq B^+(U'')\cap B^+(V-U)\cap\bigcap_{\Phi\in\mathscr{R}'}T_\Phi$ is nonempty as well. 
\end{proof}

If we are interested in the homology groups $\tilde{H}_n(\link_{\mathscr{C}_\mathscr{O}}(V-U))$ rather than the homotopy type of the space $\link_{\mathscr{C}_\mathscr{O}}(V-U)$ itself, then we can say more than Proposition \ref{prop:htpy-type-zero-avoiding}:
\begin{corollary}\label{crl:homology-of-zero-avoiding}
    Let $\mathscr{O}$ be an oriented matroid of rank $k$ on set of elements $V$, $U\subsetneq V$, and consider all reduced homology groups to have coefficients in a fixed field $\mathbb{F}$. Then:
    \begin{enumerate}[label=\normalfont{(\arabic*)}]
        \item\label{crl:homology-of-zero-avoiding:complex-sphere} If $\mathscr{D}^+(V)=\{0\}$ then $\tilde{H}_n(\mathscr{C}_\mathscr{O})=0$ for $n\neq k-1$ and $\tilde{H}_{k-1}(\mathscr{C}_\mathscr{O})\cong\mathbb{F}$.
        \item\label{crl:homology-of-zero-avoiding:complex-pt} If $\mathscr{D}^+(V)\neq\{0\}$ then $\tilde{H}_n(\mathscr{C}_\mathscr{O})=0$ for all $n$. 
        \item\label{crl:homology-of-zero-avoiding:link-contr} If $\mathscr{B}^+(V-U)\cap\mathscr{D}^-(U)=\emptyset$ then $\tilde{H}_n(\link_{\mathscr{C}_\mathscr{O}}(V-U))=0$ for all $n$.
        \item\label{crl:homology-of-zero-avoiding:link-nontrivial} If $\mathscr{B}^+(V-U)\cap\mathscr{D}^-(U)\neq\emptyset$ then $\tilde{H}_n(\link_{\mathscr{C}_\mathscr{O}}(V-U))=0$ in any of the following cases:
        \begin{enumerate}[label=\normalfont{(4\alph*)}]
            \item\label{crl:homology-of-zero-avoiding:link-nontrivial:upper-bound} $n\leq \operatorname{rank}(U')-2$ for any $U'\subseteq U$ with $\mathscr{D}^+(U')=\mathscr{S}(U')$.
            \item\label{crl:homology-of-zero-avoiding:link-nontrivial:lower-bound} $n\geq k-1$,
            \item\label{crl:homology-of-zero-avoiding:link-nontrivial:pass-to-boundary} $n=k-2$ and $\mathscr{D}^-(U)\nsubseteq\mathscr{B}^+(V-U)\cup\{0\}$
            \item\label{crl:homology-of-zero-avoiding:link-nontrivial:sphere} $n\neq k-2$ and $\mathscr{D}^-(U)\subseteq\mathscr{B}^+(V-U)\cup\{0\}$. In this last case $\tilde{H}_{k-2}(\link_{\mathscr{C}_\mathscr{O}}(V-U))\cong \mathbb{F}$.
        \end{enumerate}
    \end{enumerate}
\end{corollary}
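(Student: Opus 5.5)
We reduce to Proposition \ref{prop:htpy-type-zero-avoiding}, working in an essential signed arrangement $\mathfrak{A}$ of pseudospheres in $S^{k-1}$ for $\mathscr{O}$; such an arrangement exists by Theorem \ref{thm:top-repr}. Points \ref{crl:homology-of-zero-avoiding:complex-sphere} and \ref{crl:homology-of-zero-avoiding:complex-pt} then follow directly from cases \ref{prop:htpy-type-zero-avoiding:sphere} and \ref{prop:htpy-type-zero-avoiding:entire-contr}. For point \ref{crl:homology-of-zero-avoiding:link-contr} we split by whether $\mathscr{B}^+(V-U)$ is empty. If it is empty, the link is void, and by convention it has vanishing reduced homology. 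Otherwise case \ref{prop:htpy-type-zero-avoiding:non-Farkas} applies, since $U\subsetneq V$, and the link is contractible.

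For point \ref{crl:homology-of-zero-avoiding:link-nontrivial} we have $\mathscr{B}^+(V-U)\neq\emptyset$, so case \ref{prop:htpy-type-zero-avoiding:boundary} gives
\[
\link_{\mathscr{C}_\mathscr{O}}(V-U)\simeq S^{\operatorname{rank}(U')-1}*R, \qquad R:=S(U')\cap B^+\langle U-U'\rangle\cap B^+(V-U).
\]
Every join with $S^{m-1}$ is $(m-2)$-connected, whether $R$ is empty or not. The Künneth formula for joins then gives (4a).

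For the remaining cases we use the description from case \ref{prop:htpy-type-zero-avoiding:nerve}:
\[
X:=B^+\langle U\rangle\cap B^+(V-U)=B^+(V-U)-D^-(U).
\]
Because $U\subsetneq V$ and $\mathscr{B}^+(V-U)\neq\emptyset$, Corollary \ref{crl:disks-in-oms} shows that $B^+(V-U)$ is an open $(k-1)$-ball. Moreover, $K:=B^+(V-U)\cap D^-(U)$ is closed in that ball and nonempty by Observation \ref{obs:curly-vs-print}. Thus $X$ is an open subset of $\R^{k-1}$, so $\tilde H_n(X)=0$ for $n\geq k-1$, which is (4b).

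For $n=k-2$ we use Alexander duality in the ball, or equivalently in $S^{k-1}$ after compactifying. Its content is that $\tilde H_{k-2}(\R^{k-1}-K)$ is dual to $\check H^0$ of the compact set, and that $\check H^0$ counts the compact components of $K$. It is cleanest to pass to the closed disk $D^+(V-U)$ and its boundary sphere $\partial D^+(V-U)$. Then $\tilde H_{k-2}(X)$ is controlled by whether $K$ stays away from $\partial D^+(V-U)$.

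\begin{itemize}
\item In case (4d), the hypothesis $\mathscr{D}^-(U)\subseteq\mathscr{B}^+(V-U)\cup\{0\}$ says that $D^-(U)\subseteq B^+(V-U)$. So $K=D^-(U)$ is compact and lies in the interior of the ball. By Remark \ref{rmk:reorientation}, in the form of Corollary \ref{crl:disks-in-oms} for $-U$, together with Lemma \ref{lem:disks-in-oms}, the set $D^-(U)$ is a disk or a sphere $S(U)$. Since it is nonempty and meets the open set $B^+(V-U)$, we expect it to be contractible. Then $X\simeq S^{k-2}$, giving both the vanishing of $\tilde H_n(X)$ for $n\neq k-2$ and $\tilde H_{k-2}(X)\cong\mathbb{F}$.
\item In case (4c), $K$ reaches the boundary. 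Here we expect $X$ to deformation retract onto $B^+(V-U)$ minus a closed set that touches the boundary. Equivalently, $D^+(V-U)-D^-(U)$ is a disk minus a contractible set meeting the boundary, and its top homology vanishes. A cleaner route is to show that the boundary sphere $\partial D^+(V-U)$ with $D^-(U)$ removed, glued in, kills the class by a Mayer--Vietoris argument.
\end{itemize}

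The main obstacle is making (4c) and (4d) rigorous, namely justifying the Alexander duality or Mayer--Vietoris computation with the pseudosphere cell structure of Observation \ref{obs:pseudospheres-regular-cw}, and controlling $D^-(U)\cap D^+(V-U)$. We would first try to show that $D^-(U)\cap D^+(V-U)$ is a closed contractible subcomplex (a disk) when it is nonempty. Then:
\begin{itemize}
\item if it misses $\partial D^+(V-U)$, the complement in the disk is $S^{k-2}$ up to homotopy;
\item otherwise, the closed disk minus a boundary-touching contractible subcomplex is contractible by a collapsing argument.
\end{itemize}

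Degenerate cases, such as $k\leq 1$ or $D^-(U)$ being a sphere $S(U)$, would be checked separately.
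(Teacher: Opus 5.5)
The arguments for (1)--(3), (4a) and (4b) match the paper's. The gap is in (4c)/(4d), and it has two parts.

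\textbf{First part: the contractibility route for (4c) is false.} You propose to finish (4c) by showing that a disk minus a boundary-touching contractible subcomplex is contractible. That claim is false, and it is stronger than (4c), which only asserts $\tilde H_{k-2}=0$. Take $A(w)=e_3$, $A(w')=e_3+\delta e_2$ with $\delta>0$, $A(u_1)=e_1+e_3$, $A(u_2)=-e_1-2e_3$, and $U=\{u_1,u_2\}$. Then $D^+(V-U)\cap D^-(U)$ meets $\partial D^+(V-U)$; it is a band cutting the disk in two. Yet $\link_{\mathscr{C}_\mathscr{O}}(\{w,w'\})=\{\emptyset,\{u_1\},\{u_2\}\}$ is two points.

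What works is the duality you sketch, made rigorous as in the paper. In $D^+(V-U)/\partial D^+(V-U)\cong S^{k-1}$, let $N$ be the image of $(D^+(V-U)\cap D^-(U))\cup\partial D^+(V-U)$. This is a nonempty closed CW subcomplex, so its \v{C}ech and singular cohomology agree. Alexander duality gives $\tilde H_n(X)\cong\tilde H^{k-2-n}(N)$, so in degree $k-2$ everything reduces to whether $N$ is connected.

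The input you only ``expect'' is Corollary~\ref{crl:disk-with-partial-boundary-in-oms}: $K=B^+(V-U)\cap D^-(U)$ is contractible, so its closure $D^+(V-U)\cap D^-(U)$ is path-connected. Hence $N$ is connected if this closure meets $\partial D^+(V-U)$, and $N\simeq S^0$ otherwise. The second alternative gives (4d) at once. There only homology is needed; the claim $X\simeq S^{k-2}$ is neither needed nor justified.

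\textbf{Second part: ``$K$ reaches the boundary'' does not follow from the (4c) hypothesis.} The hypothesis $\mathscr{D}^-(U)\nsubseteq\mathscr{B}^+(V-U)\cup\{0\}$ only gives $D^-(U)\nsubseteq B^+(V-U)$. That forces $D^-(U)$ to meet $\partial D^+(V-U)$ only when $D^-(U)$ is connected. By axiom (A3), $D^-(U)$ is a disk or a sphere, and it can be $S^0$.

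Concretely, take $k=2$, $A(w)=e_2$, $A(u_1)=e_1$, $A(u_2)=-e_1$, and $U=\{u_1,u_2\}$. The covector $\mathfrak{s}_A(\varphi)$ with $\varphi(x)=-x_2$ lies in $\mathscr{D}^-(U)$ but not in $\mathscr{B}^+(V-U)\cup\{0\}$. Yet $\link_{\mathscr{C}_\mathscr{O}}(\{w\})=\{\emptyset,\{u_1\},\{u_2\}\}$ has $\tilde H_{k-2}=\tilde H_0\cong\mathbb{F}$. This agrees with the last item of Example~\ref{ex:zero-avoiding-lemmas}.

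So (4c) is false as stated in this edge case, and no argument can close that step. The paper's own proof makes the same jump: ``so the nonempty and path-connected spaces $\partial D^+(V-U)$ and $D^+(V-U)\cap D^-(U)$ have a nonempty intersection''.

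The correct case distinction is the one in your own bullets: whether $D^+(V-U)\cap D^-(U)$ meets $\partial D^+(V-U)$. Equivalently, whether $\mathscr{D}^+(V-U)\cap\mathscr{D}^-(U)\nsubseteq\mathscr{B}^+(V-U)\cup\{0\}$. With (4c) and (4d) phrased this way, the duality argument above proves both. The two versions of the hypothesis differ exactly when $D^-(U)$ is disconnected, i.e.\ when $D^-(U)\cong S^0$.
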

\begin{proof}
    We argue separately about each point, always using Proposition \ref{prop:htpy-type-zero-avoiding}.
    \begin{itemize}
        \item[\ref{crl:homology-of-zero-avoiding:complex-sphere}] Point \ref{prop:htpy-type-zero-avoiding:sphere} of Proposition \ref{prop:htpy-type-zero-avoiding} applies, meaning that $\mathscr{C}_\mathscr{O}\simeq S^{k-1}$, so $\tilde{H}_n(\mathscr{C}_\mathscr{O})\cong\tilde{H}_n(S^{k-1})$, which is $0$ for $n\neq k-1$ and $\mathbb{F}$ for $n=k-1$.
        \item[\ref{crl:homology-of-zero-avoiding:complex-pt}] Point \ref{prop:htpy-type-zero-avoiding:entire-contr} of Proposition \ref{prop:htpy-type-zero-avoiding} applies, meaning that $\mathscr{C}_\mathscr{O}$ is contractible, and in particular all of its reduced homology groups vanish.
        \item[\ref{crl:homology-of-zero-avoiding:link-contr}] If $\mathscr{B}^+(V-U)=\emptyset$ too then point \ref{prop:htpy-type-zero-avoiding:void} of Proposition \ref{prop:htpy-type-zero-avoiding} applies, meaning that $\link_{\mathscr{C}_\mathscr{O}}(V-U)$ is the void complex, so by convention $\tilde{H}_n(\link_{\mathscr{C}_\mathscr{O}}(V-U);\mathbb{F})=0$ for all $n$ and $\mathbb{F}$. If on the other hand $\mathscr{B}^+(V-U)\neq\emptyset$, then point \ref{prop:htpy-type-zero-avoiding:non-Farkas} of Proposition \ref{prop:htpy-type-zero-avoiding} applies, saying that $\link_{\mathscr{C}_\mathscr{O}}(V-U)$ is contractible, so in particular all of its reduced homology groups vanish.
    \end{itemize}    
    Let $\mathfrak{A}$ be an essential signed arrangement of pseudospheres in $S^{k-1}$ whose associated oriented matroid is $\mathscr{O}$ (such $\mathfrak{A}$ exists by the Topological Representation Theorem). In case \ref{crl:homology-of-zero-avoiding:link-nontrivial}, points \ref{prop:htpy-type-zero-avoiding:nerve} and \ref{prop:htpy-type-zero-avoiding:boundary} of Proposition \ref{prop:htpy-type-zero-avoiding} apply, so $\link_{\mathscr{C}_\mathscr{O}}(V-U)$ is homotopy equivalent to both of 
    \[
        B^+\langle U\rangle\cap B^+(V-U)\;\;\;\text{~and~}\;\;\;
        S^{\operatorname{rank}(U')-1}*\left(S(U')\cap B^+\langle U-U'\rangle\cap B^+(V-U)\right)
    \]
    for any $U'\subseteq U$ with $\mathscr{D}^+(U')=\mathscr{S}(U')$, where by convention $S^{-1}=\emptyset$.
    \begin{itemize}
        \item[\ref{crl:homology-of-zero-avoiding:link-nontrivial:upper-bound}] $S(U')\cap B^+\langle U-U'\rangle\cap B^+(V-U)$, like any topological space, is at leat homologically $(-2)$-connected, while $S^{\operatorname{rank}(U')-1}$ is homologically $(\operatorname{rank}(U')-2)$-connected, so the K\"unneth formula tells us that $S^{\operatorname{rank}(U')-1}*\left(S(U')\cap B^+\langle U-U'\rangle\cap B^+(V-U)\right)$ is homologically $(\operatorname{rank}(U')-2)$-connected, and thus so is $\link_{\mathscr{C}_\mathscr{O}}(V-U)$.
        \item[\ref{crl:homology-of-zero-avoiding:link-nontrivial:lower-bound}] By $V-U\neq\emptyset$ we have  that $B^+\langle U\rangle\cap B^+(V-U)\subsetneq S^{k-1}$ is open and hence a non-compact (or empty) $(k-1)$-manifold, which by Poincar\'e duality has no homology in degrees $k-1$ and higher. Note that $k\neq 0$ as we are in case \ref{crl:homology-of-zero-avoiding:link-nontrivial}. Consequently, $\link_{\mathscr{C}_\mathscr{O}}(V-U)$ also can not have non-trivial homology in degrees $k-1$ and higher.
    \end{itemize}
    By assumption $\mathscr{B}^+(V-U)\neq\emptyset$ in the last two cases, so by Corollary \ref{crl:disks-in-oms} the space $D^+(V-U)$ is homeomorphic to a $(k-1)$-dimensional disk with interior $B^+(V-U)$ and boundary $\partial D^+(V-U)$, so in particular the quotient space $D^+(V-U)/\partial D^+(V-U)$ is homeomorphic to a $(k-1)$-dimensional sphere. According to Observation \ref{obs:pseudospheres-regular-cw}, $\mathfrak{A}$ induces the structure of a finite regular CW complex on $D^+(V-U)$, and $D^+(V-U)\cap D^-(U)$ and $(D^+(V-U)\cap D^-(U))\cup\partial D^+(V-U)$ are subcomplexes of this. The quotient of a CW complex by a subcomplex can be viewed as a CW complex itself, so 
    \[
        N:=((D^+(V-U)\cap D^-(U))\cup\partial D^+(V-U))/\partial D^+(V-U)
    \] 
    is a CW complex too, and so its (reduced) Čech and singular cohomology theories agree \cite[Chapter VI.8]{Bredon2010}. Apply the following version of Alexander Duality, taken from \cite[Corollary VI.8.7]{Bredon2010}:
    \begin{theorem}
        If $\emptyset\neq N\subseteq S^d$ is closed, then $\tilde{H}_n(S^d-N;G)\cong\tilde{\check{H}}^{d-n-1}(N;G)$ for any abelian group $G$, where $\tilde{\check{H}}(-;G)$ denotes the reduced Čech cohomology group with coefficients in $G$.
    \end{theorem}
    Our $N$ is a compact subspace of the Hausdorff space $D^+(V-U)/\partial D^+(V-U)\cong S^{k-1}$ and hence is closed, while $\mathscr{B}^+(V-U)\cap\mathscr{D}^-(U)\neq\emptyset$ guarantees $N$ to be nonempty. Hence we can apply the theorem above to get the following isomorphism of reduced (co)homology groups with coefficients in $\mathbb{F}$:
    \begin{multline*}
        \tilde{H}_n(\link_{\mathscr{C}_\mathscr{O}}(V-U))
        \cong\tilde{H}_n(B^+(V-U)-B^+(V-U)\cap D^-(U))
        \cong\\\cong\tilde{H}_n(D^+(V-U)/\partial D^+(V-U)-N)
        \cong\tilde{\check{H}}^{k-2-n}(N)
        \cong\tilde{H}^{k-2-n}(N)
    \end{multline*}
    Knowing that $B^+(V-U)\cap D^-(U)$ is nonempty and contractible by Corollary \ref{crl:disk-with-partial-boundary-in-oms}, we can control the reduced cohomology groups of $N$. We will use that non-emptiness and contractibility of $B^+(V-U)\cap D^-(U)$ implies that its closure, $D^+(V-U)\cap D^-(U)$, is nonempty and path-connected.
    \begin{itemize}
        \item[\ref{crl:homology-of-zero-avoiding:link-nontrivial:pass-to-boundary}] If $\mathscr{D}^-(U)\nsubseteq\mathscr{B}^+(V-U)\cup\{0\}$ then $D^-(U)\nsubseteq B^+(V-U)$, so the nonempty and path-connected spaces $\partial D^+(V-U)$ and $D^+(V-U)\cap D^-(U)$ have a nonempty intersection, meaning that their union, and hence also $N$, is nonempty and path-connected. Thus $\tilde{H}^m(N)=0$ for $m\leq 0$. Setting $m:=k-2-n$, $\tilde{H}_n(\link_{\mathscr{C}_\mathscr{O}}(V-U))\cong\tilde{H}^{k-2-n}(N)=0$ for $k-2-n\leq 0$, or equivalently $n\geq k-2$.
        \item[\ref{crl:homology-of-zero-avoiding:link-nontrivial:sphere}] If $\mathscr{D}^-(U)\subseteq\mathscr{B}^+(V-U)\cup\{0\}$ then $D^-(U)\subseteq B^+(V-U)$ in an essential arrangement, meaning that $N\cong D^-(U)\sqcup\operatorname{pt}$, but $D^-(U)=B^+(V-U)\cap D^-(U)$ is contractible, so $N$ is homotopy equivalent to $S^0$. In particular, $\tilde{H}^m(N)$ is $0$ for $m\neq 0$ and $\mathbb{F}$ for $m=0$. Then $\tilde{H}_n(\link_{\mathscr{C}_\mathscr{O}}(V-U))\cong\tilde{H}^{k-2-n}(N)=0$ for $n\neq k-2$ and $\mathbb{F}$ for $n=k-2$.
    \end{itemize}
\end{proof}
We already recalled in Section \ref{sec:the-method} that with coefficients in a fixed field $\mathbb{F}$ we have $\tilde{H}^n(X)\cong\tilde{H}_n(X)$ whenever the latter is finite dimensional. Therefore:
\begin{corollary}\label{crl:cohomology-of-zero-avoiding}
    Corollary \ref{crl:homology-of-zero-avoiding} remains true if all reduced homology groups are replaced by reduced cohomology groups with coefficients in the same field.
\end{corollary}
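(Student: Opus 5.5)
The plan is to deduce Corollary \ref{crl:cohomology-of-zero-avoiding} directly from Corollary \ref{crl:homology-of-zero-avoiding}, using the universal coefficient isomorphism recalled in Section \ref{sec:the-method}. Over a field $\mathbb{F}$ one has $\tilde{H}^n(X;\mathbb{F})\cong\tilde{H}_n(X;\mathbb{F})^*$. Hence $\tilde{H}^n(X;\mathbb{F})\cong\tilde{H}_n(X;\mathbb{F})$ whenever the homology group is finite dimensional. It therefore suffices to check two things: that every space occurring in Corollary \ref{crl:homology-of-zero-avoiding} has finite dimensional reduced homology, and that the conventions for degenerate cases agree on both sides.

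For finite dimensionality, the spaces in question are $\mathscr{C}_\mathscr{O}=\link_{\mathscr{C}_\mathscr{O}}(\emptyset)$ and $\link_{\mathscr{C}_\mathscr{O}}(V-U)$. Each is the geometric realization of a finite abstract simplicial complex on the finite ground set $V$. Its simplicial chain groups are therefore finite dimensional $\mathbb{F}$-vector spaces, so all (reduced) homology groups are finite dimensional. This holds in every degree, including the augmented degree $-1$, where the empty complex has $\tilde{H}_{-1}\cong\mathbb{F}$.

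For the void complex, which arises in case \ref{crl:homology-of-zero-avoiding:link-contr} when $\mathscr{B}^+(V-U)=\emptyset$, the paper adopts the convention that both reduced homology and reduced cohomology vanish in all degrees. The two statements agree there by convention. In every other case, each vanishing statement $\tilde{H}_n=0$ transfers to $\tilde{H}^n\cong(\tilde{H}_n)^*=0$. Each isomorphism $\tilde{H}_n\cong\mathbb{F}$ transfers to $\tilde{H}^n\cong\mathbb{F}^*\cong\mathbb{F}$. This covers points \ref{crl:homology-of-zero-avoiding:complex-sphere}, \ref{crl:homology-of-zero-avoiding:complex-pt}, \ref{crl:homology-of-zero-avoiding:link-contr}, and \ref{crl:homology-of-zero-avoiding:link-nontrivial:upper-bound}--\ref{crl:homology-of-zero-avoiding:link-nontrivial:sphere} verbatim.

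There is no real obstacle. The only point needing care is that the reduced versions of the duality are compatible in degree $-1$. This holds because reduced (co)homology is computed from the augmented chain complex and its dual, which are again finite dimensional and dual to one another.
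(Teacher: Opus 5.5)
Your proposal is correct and follows the paper's argument: the paper also obtains the statement directly from $\tilde{H}^n(X;\mathbb{F})\cong\tilde{H}_n(X;\mathbb{F})^*$, so that $\tilde{H}^n\cong\tilde{H}_n$ whenever the homology is finite dimensional. Your extra checks — finiteness of the complexes, the void-complex convention, and degree $-1$ — make explicit what the paper leaves implicit.
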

It is easy to read off the following from points \ref{crl:homology-of-zero-avoiding:complex-sphere}, \ref{crl:homology-of-zero-avoiding:complex-pt}, \ref{crl:homology-of-zero-avoiding:link-contr}, and \ref{crl:homology-of-zero-avoiding:link-nontrivial:lower-bound} of Corollaries \ref{crl:homology-of-zero-avoiding} and \ref{crl:cohomology-of-zero-avoiding} above:
\begin{corollary}\label{crl:support-near-leray}
    $\mathscr{C}_\mathscr{O}$ for $\operatorname{rank}(\mathscr{O})\leq d$ is a near-$(d-1)$-Leray simplicial complex.
\end{corollary}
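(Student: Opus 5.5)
Write $k:=\operatorname{rank}(\mathscr{O})\le d$. We check the two parts of the definition of near-$(d-1)$-Lerayness one at a time, using Corollaries \ref{crl:homology-of-zero-avoiding} and \ref{crl:cohomology-of-zero-avoiding} (all (co)homology is with coefficients in $\mathbb{F}$). Every link of a face $\sigma\in\mathscr{C}_\mathscr{O}$ can be written as $\link_{\mathscr{C}_\mathscr{O}}(V-U)$ with $U:=V-\sigma$. The case $\sigma=\emptyset$, i.e.\ $U=V$, covers the whole complex. The case $\emptyset\ne\sigma$ gives $U\subsetneq V$, which is exactly the standing hypothesis of Corollary \ref{crl:homology-of-zero-avoiding}. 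We choose an essential signed arrangement of pseudospheres representing $\mathscr{O}$, which exists by Theorem \ref{thm:top-repr}, so that the corollary is available for it.

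First we show $\tilde H^n(\mathscr{C}_\mathscr{O})=0$ for $n\ge d$. If $\mathscr{D}^+(V)\ne\{0\}$, point \ref{crl:homology-of-zero-avoiding:complex-pt} says all reduced cohomology vanishes. Otherwise, point \ref{crl:homology-of-zero-avoiding:complex-sphere} says the only nonzero group sits in degree $k-1$. Since $k-1\le d-1<d$, every degree $n\ge d$ vanishes in this case too.

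Next let $\emptyset\ne\sigma\in\mathscr{C}_\mathscr{O}$ and $U:=V-\sigma\subsetneq V$; we need $\tilde H^n(\link_{\mathscr{C}_\mathscr{O}}(\sigma))=0$ for $n\ge d-1$. If $\mathscr{B}^+(V-U)\cap\mathscr{D}^-(U)=\emptyset$, point \ref{crl:homology-of-zero-avoiding:link-contr} gives vanishing in all degrees. Otherwise point \ref{crl:homology-of-zero-avoiding:link-nontrivial:lower-bound} gives vanishing for $n\ge k-1$, and $d-1\ge k-1$, so every $n\ge d-1$ is covered. Corollary \ref{crl:cohomology-of-zero-avoiding} transfers all of these statements from homology to cohomology.

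The one point that needs care is the degenerate rank $k=0$, where $S^{-1}=\emptyset$ and point \ref{crl:homology-of-zero-avoiding:link-nontrivial} is vacuous. The proof of that point already observes that $k\ne0$ whenever its hypothesis holds: if $k=0$, every covector is $0$, so $\mathscr{B}^+(V-U)=\emptyset$ for nonempty $V-U$. Hence for $k=0$ only points \ref{crl:homology-of-zero-avoiding:complex-sphere}, \ref{crl:homology-of-zero-avoiding:complex-pt} and \ref{crl:homology-of-zero-avoiding:link-contr} occur, and the argument above goes through unchanged. Beyond this check the proof is purely a matter of reading off the corollaries, so I expect no real obstacle.
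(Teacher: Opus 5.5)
Your proposal is correct and follows the paper's argument exactly: the paper reads the statement off from points (1), (2), (3) and (4b) of Corollaries \ref{crl:homology-of-zero-avoiding} and \ref{crl:cohomology-of-zero-avoiding}, treating the whole complex and the links of nonempty faces separately, just as you do. Fixing an essential arrangement is not needed, since those corollaries are stated for the oriented matroid itself, but it does no harm; your check of the case $k=0$ is also fine.
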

Proving just this requires considerably less effort than the proofs of Proposition \ref{prop:htpy-type-zero-avoiding} and Corollary \ref{crl:homology-of-zero-avoiding} did, as in these points \ref{prop:htpy-type-zero-avoiding:boundary} as well as \ref{crl:homology-of-zero-avoiding:link-nontrivial:upper-bound}, \ref{crl:homology-of-zero-avoiding:link-nontrivial:pass-to-boundary}, and \ref{crl:homology-of-zero-avoiding:link-nontrivial:sphere} caused the most trouble respectively. Nevertheless, we emphasize once more that the more complete understanding of the complex $\mathscr{C}_\mathscr{O}$ provided by the pair of this Proposition and Corollary paves the road for future generalizations and the applications of this method to technically more challenging special cases.

\medskip
\subsection{Vector- and element-avoiding complexes}\label{sec:avoiding-complexes:vector}
\medskip

Recall from Section \ref{sec:the-method} that given a nonempty finite set $V$, an element $v_0\notin V$, and a function $A\colon {V\cup\{v_0\}}\to\R^d$, the \emph{vector-avoiding complex} is the simplicial complex
\[
    \mathscr{C}_{v_0,A}:=\{U\subseteq V:A(v_0)\notin\cone(A(U))\},
\]
while for an oriented matroid $\mathscr{O}$ on set of elements $V\sqcup\{v_0\}$ the \emph{element-avoiding complex} is defined to be
\[
    \mathscr{C}_{v_0,\mathscr{O}}:=\{U\subseteq V: v_0\notin\conv(U)\}.
\]
An equivalent definition of the element-avoiding complex can be obtained by applying Lemma \ref{lem:cone-as-covectors} and Corollary \ref{crl:intersections-in-pseudsphere-arrangements-vs-covectors}:
\[
    \mathscr{C}_{v_0,\mathscr{O}}:=\{U\subseteq V: \mathscr{D}^+(U)\cap\mathscr{B}^-_{v_0}\neq\emptyset\}.
\]
As a consequence of Lemma \ref{lem:0-in-conv-interpreted-in-oms}, if $\mathscr{O}$ is the oriented matroid associated to the function $A$, then $\mathscr{C}_{v_0,A}=\mathscr{C}_{v_0,\mathscr{O}}$. Therefore, in the following we only study the element-avoiding complex $\mathscr{C}_{v_0,\mathscr{O}}$, but the results about it can be translated into the language of convex geometry using Corollary \ref{crl:0-in-conv-final-description}. 

First and foremost, the homotopy types of links $\link_{\mathscr{C}_{v_0,\mathscr{O}}}(V-U)$ in this complex can be characterized using the proposition below. Here, $\mathscr{O}/v_0:=(\mathscr{O}^*|_{V})^*$ is defined in terms of the oriented matroid $\mathscr{O}$ on set of elements $V\sqcup\{v_0\}$. It is easily verified that $\mathscr{L}(\mathscr{O}/v_0)=\{\Phi|_V:\Phi\in\mathscr{L}(\mathscr{O}),\Phi(v_0)=0\}$, $\mathscr{V}(\mathscr{O}/v_0)=\{\Psi|_V:\Psi\in\mathscr{V}(\mathscr{O})\}$. Moreover, $\operatorname{rank}(\mathscr{O}/v_0)=\operatorname{rank}(\mathscr{O})-1$ if $v_0$ is not a loop in $\mathscr{O}$, and otherwise $\mathscr{O}/v_0=\mathscr{O}|_V$ both have the same rank as $\mathscr{O}$.
\begin{proposition}\label{prop:htpy-type-elem-avoiding}
    Let $\mathfrak{A}$ be a signed arrangement of pseudospheres in $S^{d-1}$ indexed by the finite set $V\sqcup\{v_0\}$, and let $\mathscr{O}$ be its associated oriented matroid. The link $\link_{\mathscr{C}_{v_0,\mathscr{O}}}(V-U)=(\mathscr{C}_{v_0,\mathscr{O}}^*[U])^*$ can be desribed as follows:
    \begin{enumerate}[label=\normalfont{(\arabic*)}]
        \item\label{prop:htpy-type-elem-avoiding:void} If $\mathscr{D}^+(V-U)\cap\mathscr{B}^-_{v_0}=\emptyset$ then $\link_{\mathscr{C}_{v_0,\mathscr{O}}}(V-U)$ is the void complex.
        \item\label{prop:htpy-type-elem-avoiding:nerve} If $\mathscr{D}^+(V-U)\cap\mathscr{B}^-_{v_0}\neq\emptyset$ then $\link_{\mathscr{C}_{v_0,\mathscr{O}}}(V-U)\simeq D^+\langle U\rangle\cap D^+(V-U)\cap B^-_{v_0}$.
    \end{enumerate}
    In particular, if we are in case \ref{prop:htpy-type-elem-avoiding:nerve} then $\link_{\mathscr{C}_{v_0,\mathscr{O}}}(V-U)$ is homotopy equivalent to a subset of the open hemisphere $B^-_{v_0}$ of $S^{d-1}$. Moreover, if $v_0$ is not a loop in $\mathscr{O}$ then we have the implications:
    \begin{enumerate}[label=\normalfont{(2.\arabic*)}]
        \item\label{prop:htpy-type-elem-avoiding:nonpointed} If $\mathscr{B}^+(V\cup\{v_0\})=\emptyset$ then $\mathscr{C}_{v_0,\mathscr{O}}=\link_{\mathscr{C}_{v_0,\mathscr{O}}}(\emptyset)$ is contractible.
        \item\label{prop:htpy-type-elem-avoiding:pointed} Otherwise $\mathscr{C}_{v_0,\mathscr{O}}=\mathscr{C}_{\mathscr{O}/v_0}$.
    \end{enumerate}
    Therefore in point \ref{prop:htpy-type-elem-avoiding:pointed} the links in $\mathscr{C}_{v_0,\mathscr{O}}$ are described by Proposition \ref{prop:htpy-type-zero-avoiding}.
\end{proposition}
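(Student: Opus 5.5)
The plan is to mirror the proof of Proposition \ref{prop:htpy-type-zero-avoiding}, using the covector description
\[
\mathscr{C}_{v_0,\mathscr{O}}=\{U\subseteq V:\mathscr{D}^+(U)\cap\mathscr{B}^-_{v_0}\neq\emptyset\}.
\]

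\textbf{Case (1).} If $\mathscr{D}^+(V-U)\cap\mathscr{B}^-_{v_0}=\emptyset$, then $V-U\notin\mathscr{C}_{v_0,\mathscr{O}}$. Every face of the link would have to contain $V-U$ after taking the union, so the link is void, exactly as in point \ref{prop:htpy-type-zero-avoiding:void} there.

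\textbf{Case (2).} I would consider the family $(D^+_u\cap D^+(V-U)\cap B^-_{v_0})_{u\in U}$, which covers $D^+\langle U\rangle\cap D^+(V-U)\cap B^-_{v_0}$. For $\emptyset\neq U'\subseteq U$ the intersection is $D^+(V-(U-U'))\cap B^-_{v_0}$. By Corollary \ref{crl:disk-with-partial-boundary-in-oms} and Remark \ref{rmk:reorientation} (flipping $v_0$), this is empty or contractible. By Observation \ref{obs:curly-vs-print} it is nonempty iff $\mathscr{D}^+(V-(U-U'))\cap\mathscr{B}^-_{v_0}\neq\emptyset$, i.e. iff $V-(U-U')\in\mathscr{C}_{v_0,\mathscr{O}}$. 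The latter is equivalent to $U'\in(\mathscr{C}^*_{v_0,\mathscr{O}}[U])^*$. The empty face matches because we are in case (2).

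The main obstacle is that these sets are not open, so Theorem \ref{thm:nerve} does not apply directly. I would first try to use the regular CW structure of Observation \ref{obs:pseudospheres-regular-cw}, restricted to the open hemisphere $B^-_{v_0}$. Each set $D^+_u\cap D^+(V-U)\cap B^-_{v_0}$ is then a union of open cells. One can thicken it to an open set that deformation retracts onto it while preserving the intersection pattern, for instance by taking the union of the $T_\Phi$ over the covectors $\Phi$ in it, in the spirit of Lemma \ref{lem:tubular-neighborhoods-in-oms}. If that proves delicate, the fallback is a closed-cover nerve theorem for subcomplexes of a CW complex: take the closure-finite cells that lie in $B^-_{v_0}$, and note that the pieces are subcomplexes of the induced decomposition relative to $S_{v_0}$. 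Either route yields the homotopy equivalence. Since everything lies in $B^-_{v_0}$, the ``in particular'' clause follows.

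\textbf{Case (2.1).} Take the empty simplex, so $U=V$, and assume $v_0$ is not a loop. Then $\mathscr{D}^+(\emptyset)\cap\mathscr{B}^-_{v_0}=\mathscr{B}^-_{v_0}\neq\emptyset$. Suppose $\mathscr{B}^+(V\cup\{v_0\})=\emptyset$. Then no covector is nonnegative on $V$ and negative on $v_0$... but the claim concerns contractibility. Following point \ref{prop:htpy-type-zero-avoiding:entire-contr}, I would let $\Phi$ be the conformal composition of all covectors in $\mathscr{D}^+(V)\cap\mathscr{S}_{v_0}$, or alternatively argue geometrically. The space $D^+\langle V\rangle\cap B^-_{v_0}$ equals $B^-_{v_0}$ minus $B^-(V)\cap B^-_{v_0}=B^-(V\cup\{v_0\})$. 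The hypothesis, after negation by axiom (V1), says that this removed set is empty. Hence the link is homotopy equivalent to the open hemisphere $B^-_{v_0}$, which is contractible by Corollary \ref{crl:disks-in-oms}.

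\textbf{Case (2.2).} If some $\Theta\in\mathscr{B}^+(V\cup\{v_0\})$ exists, I would show that $\mathscr{D}^+(U)\cap\mathscr{B}^-_{v_0}\neq\emptyset$ iff $\mathscr{B}^+(U)\cap\mathscr{S}_{v_0}\neq\emptyset$. Combined with $\mathscr{L}(\mathscr{O}/v_0)=\{\Phi|_V:\Phi(v_0)=0\}$, this gives $\mathscr{C}_{v_0,\mathscr{O}}=\mathscr{C}_{\mathscr{O}/v_0}$.

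For ``$\Leftarrow$'', given $\Psi\in\mathscr{B}^+(U)\cap\mathscr{S}_{v_0}$, compose $\Psi\circ(-\Theta)$ to make $v_0$ negative. This breaks positivity off $U$, but only coordinates in $U$ matter, so it is fine.

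For ``$\Rightarrow$'', given $\Phi\in\mathscr{D}^+(U)\cap\mathscr{B}^-_{v_0}$, apply elimination (V3') to $\Theta$ and $\Phi$ at $v_0$, which lies in $\Theta^+\cap\Phi^-$. This yields $\Xi$ with $\Xi(v_0)=0$ and $\Xi=\Theta\circ\Phi=+$ on $U$, since $\Phi$ is never negative on $U$.
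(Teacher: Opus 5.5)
Cases (1), (2.1), (2.2), the cover you choose in (2) and your analysis of its intersections all match the paper's proof. In (2.2) you eliminate between $\Theta$ and $\Phi$ directly, whereas the paper eliminates between $\Theta$ and $\Phi\circ\Theta$; yours works too, since $S(\Theta,\Phi)=\Phi^-$ misses $U$.

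The gap is the step you yourself flag as the main obstacle, and it is the real content of point (2): passing from the non-open cover $F_u:=D^+_u\cap D^+(V-U)\cap B^-_{v_0}$ of $F:=D^+\langle U\rangle\cap D^+(V-U)\cap B^-_{v_0}$ to its nerve. Your primary suggestion is to replace $F_u$ by $\bigcup_{\Phi\in\mathfrak{s}_\mathfrak{A}(F_u)}T_\Phi$. This does not preserve the intersection pattern. Take $A(u_1)=e_1$, $A(u_2)=e_2$, $A(v_0)=e_1+e_2$ in $\R^2$, with $U=V=\{u_1,u_2\}$. Write points of $S^1$ as functionals $\varphi=(a,b)$ and list covectors in the order $(u_1,u_2,v_0)$.
\begin{itemize}
\item $F_{u_1}=\{a\geq 0,\ a+b<0\}$ and $F_{u_2}=\{b\geq 0,\ a+b<0\}$ are disjoint. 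Correspondingly $\mathscr{C}_{v_0,\mathscr{O}}=\{\emptyset,\{u_1\},\{u_2\}\}\simeq S^0$.
\item $F_{u_1}$ contains the vertex with covector $(0,-,-)$ and $F_{u_2}$ the vertex with covector $(-,0,-)$.
\item Both $T_{(0,-,-)}$ and $T_{(-,0,-)}$ contain the open arc with covector $(-,-,-)$.
\end{itemize}
So the thickened sets intersect, their nerve is an edge, and you would wrongly conclude that the link is contractible.

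Your fallback does not close the gap as stated either. The $F_u$ are closed in $F$, but they are not subcomplexes of any CW complex at hand, because the condition $B^-_{v_0}$ is open. A nerve theorem for covers by subcomplexes therefore does not apply directly.

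What is needed is exactly Theorem \ref{thm:nerve-closed-cellular}: a nerve theorem for a set that is the intersection of a subcomplex with an open union of cells, covered by its intersections with subcomplexes $Z_u=D^+_u$. The paper proves it in Appendix \ref{sec:nerve-thm-extension} by doing your thickening in a barycentric subdivision instead. There $T\langle F\rangle$ is the union of those open cells of $\operatorname{sd}(X)$ whose closures meet $F$, i.e.\ the chains of cells of $X$ that contain a cell of $F$.
\begin{itemize}
\item $T\langle F\rangle$ deformation retracts onto $F$ (Lemma \ref{lem:operator-T}).
\item $T\langle-\rangle$ now commutes with the relevant intersections. Suppose a chain contains a cell of $F\cap D^+_u$ for each $u\in U'$. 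Its lowest such cell is a face of all the other witnesses, so it lies in every $D^+_u$ (these are subcomplexes), and it lies in $F$.
\item In the example, the barycenter of the arc $(-,-,-)$ is a chain containing neither vertex, and it separates the two thickenings.
\end{itemize}
Applying Theorem \ref{thm:nerve} to the open cover $(T\langle F_u\rangle)_{u\in U}$ then gives point (2).
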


We again present a few examples to contextualize the proposition above before discussing its proof. First, a few special cases:
\begin{example}\label{ex:htpy-type-elem-avoiding-specials}
    Let $\mathfrak{A}$ be a signed arrangement of pseudospheres in $S^{d-1}$ indexed by $V\sqcup\{v_0\}$ where $V$ is finite and nonempty, let $\mathscr{O}$ be the associated oriented matroid, and $U\subseteq V$. Then:
    \begin{enumerate}[label=\normalfont{(\arabic*)}]
        \item\label{ex:htpy-type-elem-avoiding-specials:full} If $\mathscr{B}^+(V\cup\{v_0\})\neq\emptyset$ and $\mathscr{D}^+(V)\cap\mathscr{S}_{v_0}=\{0\}$, or equivalently $\mathscr{B}^+(V)\neq\emptyset$ and $\mathscr{D}^+(V)\cap\mathscr{D}^-_{v_0}=\{0\}$, then $\mathscr{C}_{v_0,\mathscr{O}}=\link_{\mathscr{C}_{v_0,\mathscr{O}}}(\emptyset)\simeq S^{\operatorname{rank}(\mathscr{O})-2}$, and otherwise $\mathscr{C}_{v_0,\mathscr{O}}=\link_{\mathscr{C}_{v_0,\mathscr{O}}}(\emptyset)$ is contractible under the extra assumption that $v_0$ is not a loop in $\mathscr{O}$. 
        The first assumption indeed implies the second, as if $\Pi\in\mathscr{B}^+(V\cup\{v_0\})$ and $0\neq\Xi\in\mathscr{D}^+(V)\cap\mathscr{D}^-_{v_0}$ then either $0\neq\Xi\in\mathscr{D}^+(V)\cap\mathscr{S}_{v_0}$ already (a contradiction) or $\Theta\in\mathscr{B}^+(V)\cap\mathscr{S}_{v_0}\subseteq\mathscr{D}^+(V)\cap\mathscr{S}_{v_0}$ is non-zero ($\emptyset\neq V\subseteq\Theta^+$) for the $\Theta$ given by axiom \ref{def:om:elim} of oriented matroids applied with $v:=v_0$. 
        On the other hand, the second assumption implies the first, as a $\Phi\in\mathscr{B}^+(V)$ can not have $\Phi(v_0)\in\{-,0\}$ because of $\mathscr{D}^+(V)\cap\mathscr{D}^-_{v_0}=\{0\}$ and $\Phi\neq 0$ ($\emptyset\neq V\subseteq\Phi^+$), so $\Phi\in\mathscr{B}^+(V\cup\{v_0\})$. 
        Finally, the implication holds by point \ref{prop:htpy-type-elem-avoiding:pointed} of Proposition \ref{prop:htpy-type-elem-avoiding} and point \ref{prop:htpy-type-zero-avoiding:sphere} of Proposition \ref{prop:htpy-type-zero-avoiding} applied to $\mathscr{O}/v_0$. 
        The ``otherwise'' clause is covered by either point \ref{prop:htpy-type-elem-avoiding:nonpointed} of Proposition \ref{prop:htpy-type-elem-avoiding} or by point \ref{prop:htpy-type-zero-avoiding:entire-contr} of Proposition \ref{prop:htpy-type-zero-avoiding}.
        \item\label{ex:htpy-type-elem-avoiding-specials:link} If $\mathscr{D}^+(V-U)\cap\mathscr{B}^-_{v_0}\neq\emptyset$ but $\mathscr{D}^+(V-U)\cap\mathscr{B}^-(U\cup\{v_0\})=\emptyset$ then $\link_{\mathscr{C}_{v_0,\mathscr{O}}}(V-U)$ is contractible. This follows from point \ref{prop:htpy-type-elem-avoiding:nerve} of Proposition \ref{prop:htpy-type-elem-avoiding}, as that claims that $\link_{\mathscr{C}_{v_0,\mathscr{O}}}(V-U)\simeq D^+\langle U\rangle\cap D^+(V-U)\cap B^-_{v_0}=(S^{d-1}-B^-(U))\cap D^+(V-U)\cap B^-_{v_0}=D^+(V-U)\cap B^-_{v_0}-D^+(V-U)\cap B^-(U\cup\{v_0\})=D^+(V-U)\cap B^-_{v_0}$ using Observation \ref{obs:curly-vs-print}, but this space is empty or contractible using the end of Remark \ref{rmk:reorientation}, and it is nonempty by Observation \ref{obs:curly-vs-print}.
    \end{enumerate}
\end{example}
\begin{example}
    Let $V$ be a nonempty finite set, $v_0\notin V$, $A\colon {V\sqcup\{v_0\}}\to\R^d$ with $A(v_0)\neq 0$, and $U\subseteq V$. Then:
    \begin{enumerate}[label=\normalfont{(\arabic*)}]
        \item If $0\notin\conv(A(V))$ and $A(v_0)\in\relint(\cone(A(V)))$ then $\mathscr{C}_{v_0,A}=\link_{\mathscr{C}_{v_0,A}}(\emptyset)$ is homotopy equivalent to $S^{\dim(\operatorname{span}(A(V)))-2}$, and otherwise it is contractible. This follows from point \ref{ex:htpy-type-elem-avoiding-specials:full} of Example \ref{ex:htpy-type-elem-avoiding-specials} using Corollary \ref{crl:0-in-conv-final-description} and the observation that $\operatorname{span}(A(V))=\operatorname{span}(A(V\cup\{v_0\}))$ because $A(v_0)\in\cone(A(V))$.
        \item If $A(v_0)\notin\cone(A(V-U))$ and $0\in\conv(A(U\cup\{v_0\}))$ then $\link_{\mathscr{C}_{v_0,A}}(V-U)$ is contractible. This follows from point \ref{ex:htpy-type-elem-avoiding-specials:link} of Example \ref{ex:htpy-type-elem-avoiding-specials} using Corollary \ref{crl:0-in-conv-final-description}.
    \end{enumerate}
\end{example}

The proof of Proposition \ref{prop:htpy-type-elem-avoiding} uses analogous techniques to those employed in the proof of Proposition \ref{prop:htpy-type-zero-avoiding}, but the following, different version of the Nerve Theorem is required now, whose proof is presented in Appendix \ref{sec:nerve-thm-extension}.
\begin{theorem}\label{thm:nerve-closed-cellular}
    Let $X$ be a regular CW complex, $A\subseteq X$ an intersection of a subcomplex and an open union of some open cells of $X$, and $(Z_i)_{i\in I}$ a finite collection of subcomplexes of $X$ with $A\subseteq\bigcup_{i\in I}Z_i$. If for all $\emptyset\neq J\subseteq I$ the space $A\cap\bigcap_{j\in J}Z_j$ is empty or contractible, then $A$ is homotopy equivalent to the nerve $\mathscr{N}_{\mathfrak{Z}}$ of the cover $\mathfrak{Z}:=(A\cap Z_i)_{i\in I}$ of $A$.
\end{theorem}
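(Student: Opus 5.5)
The plan is to reduce to the classical Nerve Theorem (Theorem \ref{thm:nerve}) by thickening each $A\cap Z_i$ to an open subset of $A$ that deformation retracts onto it, using the regular CW structure. Since $X$ is regular, pass to its barycentric subdivision $\operatorname{sd}X$, which is a simplicial complex (the order complex of the face poset of cells) homeomorphic to $X$. Each subcomplex $Z_i$ becomes a full subcomplex of $\operatorname{sd}X$, namely the order complex of the cells of $Z_i$. The set $A$ is the intersection of a subcomplex $Y$ with a union $O$ of open cells that is open. Being open means the set of cells in $O$ is an up-set of the face poset. Hence $A$ is a union of open simplices of $\operatorname{sd}X$: those chains lying in $Y$ whose top cell belongs to $O$.

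For each $i$, define the open set $W_i\subseteq A$ as the intersection of $A$ with the open star of $\operatorname{sd}Z_i$ in $\operatorname{sd}X$. Concretely, these are the points of $A$ whose carrier chain contains at least one cell of $Z_i$. The key steps are as follows.
\begin{enumerate}
\item The $W_i$ are open in $A$ and cover $A$. Any point of $A$ lies in some $Z_i$, so its carrier chain has all its cells in $Z_i$.
\item For every $J$, $\bigcap_{j\in J}W_j$ deformation retracts onto $A\cap\bigcap_{j\in J}Z_j$, and in particular is empty exactly when the latter is empty. For this, use the standard straight-line retraction in a simplicial complex of the open star of a full subcomplex $L$ onto $L$: a point $\sum t_c\,\hat c$ is pushed to its normalized part supported on cells of $L$. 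Here $L$ is the full subcomplex spanned by the cells lying in all $Z_j$. One must check two things.
\begin{enumerate}
\item The intersection of the $W_j$ is indeed the open star of $L$ intersected with $A$. This needs care, since a chain meeting each $Z_j$ separately need not contain a cell of $\bigcap Z_j$. The fix is to note that $Z_j$ is closed under faces, so the bottom (smallest) cell of the chain lies in every $Z_j$ that the chain meets.
\item The retraction keeps points inside $A$. The carrier of the retracted point is a subchain of the original chain, so it stays in $Y$. Its top cell, however, is smaller, and membership in the up-set $O$ could fail. This is where the hypothesis has to be used more cleverly. I would instead use as the nerve-relevant spaces the sets $A\cap\bigcap Z_j$ themselves, and show each $W_J$ deformation retracts within $A$ by retracting only in the direction that keeps the top cell, i.e.\ pushing weight off cells not in $L$ except the top one, then a second homotopy.
\end{enumerate}
\item Conclude. $\bigcap_{j\in J}W_j$ is empty or contractible, so the open cover $(W_i)$ of the paracompact (metrizable) space $A$ is good. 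By Theorem \ref{thm:nerve}, $A\simeq\mathscr N_{(W_i)}$, and by step 2 this nerve equals $\mathscr N_{\mathfrak Z}$.
\end{enumerate}

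The main obstacle is step 2(b): making the retraction of the open star stay inside $A$, given that $A$ is only locally closed (a closed subcomplex intersected with an open union of cells). If the direct retraction fails, the fallback is to avoid the retraction altogether. One compares $\mathscr N_{\mathfrak Z}$ and $A$ through the order complex of the poset of simplices of $\operatorname{sd}X$ lying in $A$, mapped to the nerve, and applies Quillen's fiber lemma. The fibers are the sets $A\cap\bigcap_{j\in J}Z_j$, which are empty or contractible by hypothesis. The poset of open simplices of $A$ in turn models $A$ up to homotopy, because $A$ is a union of open simplices closed under passing to cofaces within $Y$.
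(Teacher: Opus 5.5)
Your main route (Steps 1--3) fails at Step 2, and not only for the reason you flag in 2(b). The claim is false that $\bigcap_{j\in J}W_j$ is empty exactly when $A\cap\bigcap_{j\in J}Z_j$ is, and otherwise deformation retracts onto it. The reason is that the open star of $\operatorname{sd}L$ also enters $A$ through cells of $L$ lying in $\operatorname{Cl}(A)\setminus A$.

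Here is a counterexample. Take $X$ a closed triangle with open $2$-cell $\tau$, $Y=X$, $O=\tau$ (so $A=\tau$), $Z_1=X$ and $Z_2=\partial X$. The hypotheses hold and $\mathscr{N}_{\mathfrak{Z}}$ is a point. But every point of $\tau$ except the barycenter $\hat\tau$ has a boundary cell in its carrier chain. So $W_2=\tau\setminus\{\hat\tau\}\simeq S^1$, although $A\cap Z_2=\emptyset$. Hence $(W_1,W_2)$ is not a good cover, and its nerve (an edge) is not $\mathscr{N}_{\mathfrak{Z}}$. No homotopy can repair this. Your 2(b) patch does not even move these points, since the only cell of their chains outside $L$ is the top one.

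The missing idea is to thicken $A\cap Z_i$, not $Z_i$. The paper does this via Lemma \ref{lem:operator-T}. For a cellular subset $B$, $T\langle B\rangle$ is the union of the open simplices of the subdivision whose closures meet $B$. This set has three properties:
\begin{enumerate}
\item It is open in $X$.
\item It strong deformation retracts onto $B$ by pushing toward the face $\Delta\cap\operatorname{Cl}(B)$ of each simplex. This face is an initial segment of the chain, and its top cell lies in $B$ because $\Delta$ meets $B$.
\item $T$ commutes with unions and with intersections of ugly families, such as $(A\cap Z_i)_{i\in I}$.
\end{enumerate}
The open Nerve Theorem is then applied to $(T\langle A\cap Z_i\rangle)_{i\in I}$.

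The same repair works inside your setup if you define $W_i$ using cells of $A\cap Z_i$ instead of cells of $Z_i$. A chain meets every $A\cap Z_j$, $j\in J$, iff its smallest cell in $O$ lies in $\bigcap_{j\in J}Z_j$. Pushing weight onto the block of cells lying in $A\cap\bigcap_{j\in J}Z_j$ then keeps the top cell in $O$.

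Your fallback, by contrast, is sound, and it is a genuinely different proof from the paper's. It needs one lemma. A cellular subset $B=Y'\cap O'$ deformation retracts onto the order complex $\Delta(P_B)\subseteq\operatorname{sd}X$ of its cell poset $P_B$. The retraction pushes weight off the cells of the carrier chain not in $O'$; these form an initial segment, so the top cell is kept. Apply this to $B=A$ and to $B=A\cap Z_J$, where $Z_J:=\bigcap_{j\in J}Z_j$.

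Now define $g(c):=\{i:c\subseteq Z_i\}$ on $P_A$.
\begin{enumerate}
\item $g(c)\neq\emptyset$, since $A\subseteq\bigcup_i Z_i$ and an open cell meeting a subcomplex lies in it.
\item $g(c)$ is a face of $\mathscr{N}_{\mathfrak{Z}}$, since $c\subseteq A\cap\bigcap_{i\in g(c)}Z_i$.
\item $g$ is order-reversing, because the $Z_i$ are closed.
\item For every face $J$, $\{c\in P_A:g(c)\supseteq J\}=P_{A\cap Z_J}$. Its order complex is homotopy equivalent to the nonempty, hence contractible, set $A\cap Z_J$.
\end{enumerate}
Quillen's fiber lemma then gives $A\simeq\Delta(P_A)\simeq\operatorname{sd}\mathscr{N}_{\mathfrak{Z}}$.

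This route is shorter and purely combinatorial, with no ugliness bookkeeping, but it uses closedness of the $Z_i$ essentially, for monotonicity of $g$. The paper's route proves the more general Theorem \ref{thm:nerve-ugly-cellular}. Its Lemma \ref{lem:operator-T} is also reused in Corollary \ref{crl:homology-of-elem-avoiding} to replace a cellular subset of the sphere by an open one for Poincar\'e duality.
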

\begin{proof}[Proof of Proposition \ref{prop:htpy-type-elem-avoiding}]
    Consider $\mathscr{C}_{v_0,\mathscr{O}}$ as the simplicial complex given by $\{U\subseteq V:\mathscr{D}^+(U)\cap\mathscr{B}^-_{v_0}\neq\emptyset\}$. In particular, under the assumption of point \ref{prop:htpy-type-elem-avoiding:void} the set $V-U$ is not a face of $\mathscr{C}_{v_0,\mathscr{O}}$, so its link is the void complex:
    \[
        \link_{\mathscr{C}_{v_0,\mathscr{O}}}(V-U)=\{U'\in\mathscr{C}_{v_0,\mathscr{O}}:(V-U)\cap U'=\emptyset,(V-U)\cup U'\in\mathscr{C}_{v_0,\mathscr{O}}\},
    \]
    but $(V-U)\cup U'\in\mathscr{C}_{v_0,\mathscr{O}}\implies V-U\in\mathscr{C}_{v_0,\mathscr{O}}$, so the latter set is empty, proving point \ref{prop:htpy-type-elem-avoiding:void}.

    For point \ref{prop:htpy-type-elem-avoiding:nerve}, consider the collection
    \[
        \mathfrak{U}:=(D^+_u\cap D^+(V-U)\cap B^-_{v_0})_{u\in U}
    \]
    of subsets of $D^+(V-U)\cap B^-_{v_0}$, which form a cover of $D^+\langle U\rangle\cap D^+(V-U)\cap B^-_{v_0}$. If $\emptyset\neq U'\subseteq U$, then $\bigcap_{u'\in U'}(D^+_{u'}\cap D^+(V-U)\cap B^-_{v_0})=D^+(V-(U-U'))\cap B^-_{v_0}$ is empty or contractible by the end of Remark \ref{rmk:reorientation}. Note that this intersection is nonempty if and only if $\mathscr{D}^+(V-(U-U'))\cap\mathscr{B}^-_{v_0}\neq\emptyset$ (Observation \ref{obs:curly-vs-print}), or equivalently if $V-(U-U')\in\mathscr{C}_{v_0,\mathscr{O}}$ by the description of $\mathscr{C}_{v_0,\mathscr{O}}$, but this holds if and only if $U-U'\notin\mathscr{C}_{v_0,\mathscr{O}}^*$, which is the same as saying $U'\in(\mathscr{C}_{v_0,\mathscr{O}}^*[U])^*$. In particular, to see that the nerve $\mathscr{N}_{\mathfrak{U}}$ of the cover $\mathfrak{U}$ is just $(\mathscr{C}_{v_0,\mathscr{O}}^*[U])^*=\link_{\mathscr{C}_{v_0,\mathscr{O}}}(V-U)$ it suffices to check that $\emptyset\in\mathscr{N}_\mathfrak{U}\iff\emptyset\in\link_{\mathscr{C}_{v_0,\mathscr{O}}}(V-U)$, but nerves are by definition not void and $\mathscr{D}^+(V-U)\cap\mathscr{B}^-_{v_0}\neq\emptyset\implies V-U\in\mathscr{C}_{v_0,\mathscr{O}}\implies\emptyset\in\link_{\mathscr{C}_{v_0,\mathscr{O}}}(V-U)$. Therefore, the only thing needed to prove point \ref{prop:htpy-type-elem-avoiding:nerve} is to verify that the assumptions of the Nerve Theorem (Theorem \ref{thm:nerve-closed-cellular}) hold. 
    Let $X:=S^{d-1}$ with the regular cell decomposition given by $\mathfrak{A}$ (see Observation \ref{obs:pseudospheres-regular-cw}), $A:=D^+\langle U\rangle\cap D^+(V-U)\cap B^-_{v_0}$, and $Z_u:=D^+_u$ for $u\in U$. Then $A\subseteq\bigcup_{u\in U}Z_u$, $Z_u$ is easily seen to be a subcomplex of $X$, $A$ is indeed a union of open cells, and the nerve of the cover $(A\cap Z_u)_{u\in U}$ of $A$ is equal to $\mathscr{N}_\mathfrak{U}$, so Theorem \ref{thm:nerve-closed-cellular} is applicable and gives that $\mathscr{N}_\mathfrak{U}=\mathscr{N}_{(A\cap Z_u)_{u\in U}}\simeq A=D^+\langle U\rangle\cap D^+(V-U)\cap B^-_{v_0}$.

    For point \ref{prop:htpy-type-elem-avoiding:nonpointed} note that when $U:=V$ the condition $\mathscr{D}^+(V-U)\cap\mathscr{B}^-_{v_0}\neq\emptyset$ is automatically satisfied as $v_0$ is not a loop, so 
    \begin{multline*}
    \mathscr{C}_{v_0,\mathscr{O}}=\link_{\mathscr{C}_{v_0,\mathscr{O}}}(\emptyset)\simeq D^+\langle V\rangle\cap D^+(\emptyset)\cap B^-_{v_0}=D^+\langle V\rangle\cap B^-_{v_0}\\ =(S^{d-1}-B^-(V))\cap B^-_{v_0}=B^-_{v_0}-B^-(V\cup\{v_0\}).	
    \end{multline*}
    However, $B^-(V\cup\{v_0\})$ is empty because $\mathscr{B}^-(V\cup\{v_0\})$ is empty (Observation \ref{obs:curly-vs-print}), which holds as $\mathscr{B}^+(V\cup\{v_0\})$ is empty, so in this case $\mathscr{C}_{v_0,\mathscr{O}}\simeq B^-_{v_0}-B^-(V\cup\{v_0\})=B^-_{v_0}$, but this space is homeomorphic to an open hemisphere of $S^{d-1}$ because $v_0$ is not a loop, and hence is contractible.

    It is left to prove point \ref{prop:htpy-type-elem-avoiding:pointed}, namely that $\mathscr{C}_{v_0,\mathscr{O}}=\mathscr{C}_{\mathscr{O}/v_0}$ if $\mathscr{B}^+(V\cup\{v_0\})\neq\emptyset$.
    After expanding the definitions of both complexes, we just have to show that for any $\emptyset\neq U\subseteq V$ we have that $\mathscr{D}^+(U)\cap\mathscr{B}^-_{v_0}\neq\emptyset$ in $\mathscr{O}$ if and only if $\mathscr{B}^+(U)\neq\emptyset$ in $\mathscr{O}/v_0$, but this latter condition is equivalent to saying that $\mathscr{B}^+(U)\cap\mathscr{S}_{v_0}\neq\emptyset$ in $\mathscr{O}$. Hence from this point on we work in $\mathscr{O}$ and aim to show $\mathscr{D}^+(U)\cap\mathscr{B}^-_{v_0}\neq\emptyset\iff\mathscr{B}^+(U)\cap\mathscr{S}_{v_0}\neq\emptyset$.
    By assumption we have a $\Phi\in\mathscr{B}^+(V\cup\{v_0\})$. If $\Psi\in\mathscr{B}^+(U)\cap\mathscr{S}_{v_0}$ then $\Psi\circ(-\Phi)\in\mathscr{B}^+(U)\cap\mathscr{B}^-_{v_0}\subseteq\mathscr{D}^+(U)\cap\mathscr{B}^-_{v_0}$. For the other direction, first take a $\Psi_0\in\mathscr{D}^+(U)\cap\mathscr{B}^-_{v_0}$, then pick a $\Theta$ given by axiom \ref{def:om:elim} for $\Pi:=\Phi$, $\Xi:=\Psi_0\circ\Phi$, and $v:=v_0$. For $u\in U$ both $\Pi(u)=+$ and $\Xi(u)=\Psi_0(u)\circ+=+$, so $\Theta(u)=+$ too, but also $\Theta(v_0)=0$, so $\Theta\in\mathscr{B}^+(U)\cap\mathscr{S}_{v_0}$.
\end{proof}

We can summarize the consequences of Proposition \ref{prop:htpy-type-elem-avoiding} regarding the reduced homology groups of links $\link_{\mathscr{C}_{v_0,\mathscr{O}}}(V-U)$ as follows:
\begin{corollary}\label{crl:homology-of-elem-avoiding}
    Let $\mathscr{O}$ be an oriented matroid of rank $k$ on set of elements $V\sqcup\{v_0\}$ where $V$ is finite and nonempty, $U\subseteq V$, and consider all reduced homology groups to have coefficients in a fixed field $\mathbb{F}$. Then:
    \begin{enumerate}[label=\normalfont{(\arabic*)}]
        \item\label{crl:homology-of-elem-avoiding:sphere} If $\mathscr{B}^+(V)\neq\emptyset$ and $\mathscr{D}^+(V)\cap\mathscr{D}^-_{v_0}=\{0\}$ then $\tilde{H}_n(\mathscr{C}_{v_0,\mathscr{O}})=0$ for $n\neq k-2$ and $\tilde{H}_{k-2}(\mathscr{C}_{v_0,\mathscr{O}})\cong\mathbb{F}$.
        \item\label{crl:homology-of-elem-avoiding:entire-contr} If $\mathscr{B}^+(V)=\emptyset$ or $\mathscr{D}^+(V)\cap\mathscr{D}^-_{v_0}\neq\{0\}$ then $\tilde{H}_n(\mathscr{C}_{v_0,\mathscr{O}})=0$ for all $n$.
        \item\label{crl:homology-of-elem-avoiding:void} If $\mathscr{D}^+(V-U)\cap\mathscr{B}^-_{v_0}=\emptyset$ then $\tilde{H}_n(\link_{\mathscr{C}_{v_0,\mathscr{O}}}(V-U))=0$ for all $n$.
        \item\label{crl:homology-of-elem-avoiding:rest} If $\mathscr{D}^+(V-U)\cap\mathscr{B}^-_{v_0}\neq\emptyset$ then $\tilde{H}_n(\link_{\mathscr{C}_{v_0,\mathscr{O}}}(V-U))=0$ for $n\geq k-1$.
    \end{enumerate}
\end{corollary}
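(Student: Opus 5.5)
The plan is to derive each point of Corollary \ref{crl:homology-of-elem-avoiding} from Proposition \ref{prop:htpy-type-elem-avoiding}, using Example \ref{ex:htpy-type-elem-avoiding-specials} and Corollary \ref{crl:homology-of-zero-avoiding} applied to the contraction $\mathscr{O}/v_0$.

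\textbf{Points (1) and (2).} These concern $\mathscr{C}_{v_0,\mathscr{O}}=\link(\emptyset)$.
\begin{itemize}
\item If $v_0$ is not a loop, point \ref{ex:htpy-type-elem-avoiding-specials:full} of Example \ref{ex:htpy-type-elem-avoiding-specials} already gives the homotopy type. Under the hypothesis of (1), $\mathscr{C}_{v_0,\mathscr{O}}\simeq S^{k-2}$. Under the hypothesis of (2), it is contractible. The homology statements follow at once.
\item If $v_0$ is a loop, then $\mathscr{B}^-_{v_0}=\emptyset$, so $\mathscr{C}_{v_0,\mathscr{O}}$ is the void complex. All of its reduced homology vanishes by convention, even in degree $-1$. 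This case is consistent with (2), since $\mathscr{D}^+(V)\cap\mathscr{D}^-_{v_0}=\mathscr{D}^+(V)$. That set is $\{0\}$ only if $\mathscr{B}^+(V)=\emptyset$ or $V$ consists of loops. Neither is compatible with the hypothesis of (1), because $\mathscr{B}^+(V)\neq\emptyset$ together with $V\neq\emptyset$ forces $\mathscr{D}^+(V)\neq\{0\}$.
\end{itemize}

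\textbf{Point (3).} If $\mathscr{D}^+(V-U)\cap\mathscr{B}^-_{v_0}=\emptyset$, point \ref{prop:htpy-type-elem-avoiding:void} of Proposition \ref{prop:htpy-type-elem-avoiding} says the link is void. Its reduced homology therefore vanishes in every degree.

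\textbf{Point (4).} This is the main step. Choose an essential arrangement $\mathfrak{A}$ in $S^{k-1}$ representing $\mathscr{O}$, which exists by Theorem \ref{thm:top-repr}. By point \ref{prop:htpy-type-elem-avoiding:nerve} of Proposition \ref{prop:htpy-type-elem-avoiding},
\[
\link(V-U)\simeq Y:=D^+\langle U\rangle\cap D^+(V-U)\cap B^-_{v_0}.
\]
Here $v_0$ is not a loop, because $\mathscr{B}^-_{v_0}\neq\emptyset$. Hence $B^-_{v_0}$ is an open $(k-1)$-ball.

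The hard part is that $Y$ is neither open nor closed, so the Poincaré duality argument used for open sets in Corollary \ref{crl:homology-of-zero-avoiding} does not apply directly. Two routes are available.
\begin{itemize}
\item \emph{Direct route.} $Y$ is a union of open cells of the regular CW structure lying inside $B^-_{v_0}$. Hence $Y$ deformation retracts onto the subcomplex of the barycentric subdivision spanned by the barycenters of these cells. That is a finite simplicial complex embedded in the open ball $B^-_{v_0}\cong\R^{k-1}$, so it suffices to show that a compact subset of $\R^{k-1}$ has no homology in degree $k-1$ or above.
\begin{itemize}
\item Degrees above $k-1$ are excluded by dimension.
\item For degree exactly $k-1$, use Alexander duality in $S^{k-1}$: for compact $K\subsetneq S^{k-1}$, $\tilde H_{k-1}(K)\cong\tilde{\check H}^{-1}(S^{k-1}-K)=0$. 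A compact subset of $\R^{k-1}$ misses a point of $S^{k-1}$, so this applies.
\end{itemize}
\item \emph{Alternative route.} If point \ref{prop:htpy-type-elem-avoiding:pointed} applies, reduce to Corollary \ref{crl:homology-of-zero-avoiding} for $\mathscr{O}/v_0$, which has rank $k-1$, giving vanishing in degrees $\geq k-2$ and hence $\geq k-1$. The nonpointed case would then be handled separately via the direct route, so the direct route seems cleaner overall.
\end{itemize}

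The main obstacle I anticipate is making rigorous that $Y$ is homotopy equivalent to a compact polyhedron sitting inside the open hemisphere. I would justify this with the standard fact that, in a regular CW complex, a union $Y$ of open cells that is open in its closure deformation retracts onto the full subcomplex of the order complex on the cells of $Y$. The retraction is the straight-line homotopy in barycentric coordinates, as in the proof of Theorem \ref{thm:nerve-closed-cellular}.
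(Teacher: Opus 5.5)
Your proof is correct. Points (1)--(3) follow the paper's argument: point \ref{ex:htpy-type-elem-avoiding-specials:full} of Example \ref{ex:htpy-type-elem-avoiding-specials}, together with the void complex when $v_0$ is a loop, and point \ref{prop:htpy-type-elem-avoiding:void} of Proposition \ref{prop:htpy-type-elem-avoiding}. Your explicit check that the hypothesis of (1) cannot hold when $v_0$ is a loop spells out what the paper leaves inside the ``equivalently'' clause of that example.

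For (4) you take the dual of the paper's route. The paper thickens the cellular set $Y=D^+\langle U\rangle\cap D^+(V-U)\cap B^-_{v_0}$ to the open set $T\langle Y\rangle$ of Lemma \ref{lem:operator-T}. It then uses that a non-compact $(k-1)$-manifold has no homology in degrees $\geq k-1$. You instead shrink $Y$ onto the full subcomplex of the barycentric subdivision on the barycenters of its cells. This is the same straight-line homotopy as in Lemma \ref{lem:operator-T}, and it is valid because $Y$, being a subcomplex intersected with an open union of cells, is convex in the face poset. You then use that this compact polyhedron has dimension at most $k-1$ and lies in $B^-_{v_0}$, so it is a proper subset of $S^{k-1}$. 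Given the appendix, both routes are equally short. Yours makes the properness needed for the duality step immediate. The paper's reuses the thickening it already needs for Theorem \ref{thm:nerve-closed-cellular}.

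One small correction. The duality statement you need, for a nonempty compact polyhedron $K\subsetneq S^{k-1}$, is $\tilde H_{k-1}(K;\mathbb{F})\cong\tilde H^{k-1}(K;\mathbb{F})\cong\tilde H_{-1}(S^{k-1}-K;\mathbb{F})=0$. It does not involve a \v{C}ech group of the open complement. Alternatively, you can avoid duality entirely: a $(k-1)$-cycle supported in a subcomplex of a triangulated $S^{k-1}$ that misses some top simplex must be zero.
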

\begin{proof}
    We argue separately about each point, always using Proposition \ref{prop:htpy-type-elem-avoiding} or Example \ref{ex:htpy-type-elem-avoiding-specials}.
    \begin{itemize}
        \item[\ref{crl:homology-of-elem-avoiding:sphere}] Point \ref{ex:htpy-type-elem-avoiding-specials:full} of Example \ref{ex:htpy-type-elem-avoiding-specials} applies, so $\mathscr{C}_{v_0,\mathscr{O}}$ is homotopy equivalent to $S^{k-2}$, and thus $\tilde{H}_n(\mathscr{C}_{v_0,\mathscr{O}})\cong\tilde{H}_n(S^{k-2})$, which is $0$ for $n\neq k-2$ and isomorphic to $\mathbb{F}$ for $n=k-2$.
        \item[\ref{crl:homology-of-elem-avoiding:entire-contr}] If $v_0$ is a loop then point \ref{prop:htpy-type-elem-avoiding:void} of Proposition \ref{prop:htpy-type-elem-avoiding} applies with $U:=V$, and therefore all reduced homology groups of the void complex $\mathscr{C}_{v_0,\mathscr{O}}=\link_{\mathscr{C}_{v_0,\mathscr{O}}}(V-U)$ vanish. If $v_0$ is not a loop, the ``otherwise'' clause of point \ref{ex:htpy-type-elem-avoiding-specials:full} of Example \ref{ex:htpy-type-elem-avoiding-specials} applies, so $\mathscr{C}_{v_0,\mathscr{O}}$ is contractible and hence its reduced homology groups all still vanish.
        \item[\ref{crl:homology-of-elem-avoiding:void}] Point \ref{prop:htpy-type-elem-avoiding:void} of Proposition \ref{prop:htpy-type-elem-avoiding} applies and so $\link_{\mathscr{C}_{v_0,\mathscr{O}}}(V-U)$ is the void complex, whose reduced homology groups all vanish by convention.
        \item[\ref{crl:homology-of-elem-avoiding:rest}] Point \ref{prop:htpy-type-elem-avoiding:nerve} of Proposition \ref{prop:htpy-type-elem-avoiding} applies, meaning that $\link_{\mathscr{C}_{v_0,\mathscr{O}}}(V-U)$ is homotopy equivalent to a subset of the space $B^-_{v_0}$, interpreted in an essential signed arrangement $\mathfrak{A}$ of pseudospheres indexed by $V\sqcup\{v_0\}$ to which $\mathscr{O}$ is associated. But an essential $\mathfrak{A}$ gives a regular CW decomposition of the sphere $S^{k-1}$ (see Observation \ref{obs:pseudospheres-regular-cw}), in which the subset homotopy equivalent to $\link_{\mathscr{C}_{v_0,\mathscr{O}}}(V-U)$ is cellular in the sense of Appendix \ref{sec:nerve-thm-extension}. In particular, we can apply Lemma \ref{lem:operator-T} to see that this subset is homotopy equivalent to a proper open subset of $S^{k-1}$, i.e., a non-compact (or empty) $(k-1)$-manifold, which by Poincar\'e duality can not have homology in degrees $k-1$ or higher.   
    \end{itemize}
\end{proof}
As we recalled in Section \ref{sec:the-method}, with coefficients in a fixed field $\mathbb{F}$ and for any space $X$ we have $\tilde{H}^n(X)\cong\tilde{H}_n(X)$ whenever the latter is finite dimensional. Thus we get:
\begin{corollary}\label{crl:cohomology-of-elem-avoiding}
    Corollary \ref{crl:homology-of-elem-avoiding} remains true if all reduced homology groups are replaced by reduced cohomology groups with coefficients in the same field.
\end{corollary}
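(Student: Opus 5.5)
The plan is to deduce Corollary \ref{crl:cohomology-of-elem-avoiding} from Corollary \ref{crl:homology-of-elem-avoiding} by the same finite-dimensionality argument used for Corollary \ref{crl:cohomology-of-zero-avoiding}. Every space occurring in Corollary \ref{crl:homology-of-elem-avoiding} is either the complex $\mathscr{C}_{v_0,\mathscr{O}}$ or a link $\link_{\mathscr{C}_{v_0,\mathscr{O}}}(V-U)$. In either case it is a finite simplicial complex, because its ground set $V$ is finite, or else it is the void complex.

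The first step treats the non-void case. The simplicial chain groups of a finite complex are finite dimensional over $\mathbb{F}$, so each reduced homology group $\tilde{H}_n(-;\mathbb{F})$ is finite dimensional. By the universal coefficient isomorphism over a field recalled in Section \ref{sec:the-method}, namely $\tilde{H}^n(X;\mathbb{F})\cong\tilde{H}_n(X;\mathbb{F})^*$, we then get $\tilde{H}^n\cong\tilde{H}_n$ in every degree $n$, including $n=-1$. Hence each vanishing statement in points \ref{crl:homology-of-elem-avoiding:sphere}--\ref{crl:homology-of-elem-avoiding:rest} carries over unchanged to cohomology. The same isomorphism turns the statement $\tilde{H}_{k-2}(\mathscr{C}_{v_0,\mathscr{O}})\cong\mathbb{F}$ into $\tilde{H}^{k-2}(\mathscr{C}_{v_0,\mathscr{O}})\cong\mathbb{F}^*\cong\mathbb{F}$.

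The second step treats the void complex, which can occur in point \ref{crl:homology-of-elem-avoiding:void}, and in point \ref{crl:homology-of-elem-avoiding:entire-contr} when $v_0$ is a loop. By the convention fixed in Section \ref{sec:the-method:sc-tools}, both its reduced homology and its reduced cohomology vanish in all degrees, so these cases follow directly.

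I expect no real obstacle. The only point needing care is to confirm that the isomorphism is applied to the complexes themselves and not to the non-compact subsets of spheres that are homotopy equivalent to them. Since homotopy equivalences preserve both homology and cohomology, this distinction does no harm in any case.
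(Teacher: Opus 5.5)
Your proposal is correct and follows the paper's route: the paper derives this corollary in one line from the fact, recalled in Section~\ref{sec:the-method}, that over a field $\tilde{H}^n(X)\cong\tilde{H}_n(X)$ whenever the latter is finite dimensional. Your finite-dimensionality check for finite complexes and your handling of the void complex by the stated convention only make explicit what the paper leaves implicit.
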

It is easy to read off the following from Corollaries \ref{crl:homology-of-elem-avoiding} and \ref{crl:cohomology-of-elem-avoiding}, and to extend it to the case when $V=\emptyset$:
\begin{corollary}\label{crl:element-avoiding-leray}
    $\mathscr{C}_{v_0,\mathscr{O}}$ for $\operatorname{rank}(\mathscr{O})\leq d$ is a $(d-1)$-Leray simplicial complex.
\end{corollary}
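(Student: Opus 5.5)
We need to check that for every simplex $\emptyset\subseteq\sigma\in\mathscr{C}_{v_0,\mathscr{O}}$ and every $n\geq d-1$ the group $\tilde{H}^n(\link_{\mathscr{C}_{v_0,\mathscr{O}}}(\sigma);\mathbb{F})$ vanishes. The plan is to write every face as $\sigma=V-U$ with $U\subseteq V$, so that every link, including $\link(\emptyset)=\mathscr{C}_{v_0,\mathscr{O}}$ for $U=V$, is of the form $\link_{\mathscr{C}_{v_0,\mathscr{O}}}(V-U)$. We then read off vanishing from Corollary \ref{crl:cohomology-of-elem-avoiding}, using $k:=\operatorname{rank}(\mathscr{O})\leq d$, so that $n\geq d-1$ implies $n\geq k-1$.

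\emph{The whole complex ($U=V$).} Corollary \ref{crl:cohomology-of-elem-avoiding} splits into two cases:
\begin{itemize}
\item In case (2) all reduced cohomology vanishes.
\item In case (1) the only nonzero group is in degree $k-2<k-1\leq d-1$, so it is irrelevant.
\end{itemize}

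\emph{Proper links ($U\subsetneq V$).} Again there are two cases:
\begin{itemize}
\item If $\mathscr{D}^+(V-U)\cap\mathscr{B}^-_{v_0}=\emptyset$, then $V-U$ is not a face at all, and point (3) gives vanishing anyway.
\item Otherwise point (4) gives $\tilde{H}^n=0$ for $n\geq k-1$, hence for $n\geq d-1$.
\end{itemize}
Point (4) is also valid for $U=V$, which gives a second route for the whole complex. Either way, this settles all faces when $V\neq\emptyset$.

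\emph{The degenerate case $V=\emptyset$.} Here the complex is $\{\emptyset\}$ or void, according to whether $v_0\notin\conv(\emptyset)$, i.e. whether $\mathscr{B}^-_{v_0}\neq\emptyset$, i.e. whether $v_0$ is not a loop. The void complex has vanishing cohomology by convention. The empty complex has $\tilde{H}^{-1}\cong\mathbb{F}$ and nothing else. So we need $-1<d-1$, i.e. $d\geq1$. This holds because $v_0$ is not a loop, which forces $k\geq1$ and hence $d\geq1$. The only face is $\emptyset$, so this completes the check.

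The main obstacle is just bookkeeping. We must be sure the Corollary \ref{crl:homology-of-elem-avoiding} statements (transferred to cohomology by finite-dimensionality, as in Corollary \ref{crl:cohomology-of-elem-avoiding}) cover every $U$, including $U=V$ and the loop case for $v_0$, and we must handle the empty-complex edge case carefully. I expect no genuine difficulty beyond that.
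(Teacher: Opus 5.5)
Your proposal is correct and is the paper's argument. The paper also just reads off vanishing in degrees $n\geq d-1\geq k-1$ from Corollaries \ref{crl:homology-of-elem-avoiding} and \ref{crl:cohomology-of-elem-avoiding} when $V\neq\emptyset$, and handles $V=\emptyset$ separately. Your treatment of that degenerate case is sound: the complex is void when $v_0$ is a loop, and otherwise it is $\{\emptyset\}$ with $k=1\leq d$.
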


\bigskip
\section{Proofs of the colorful Carath\'eodory theorems}\label{sec:cc-proofs}
\bigskip

Here we prove the colorful Carath\'eodory theorems \ref{thm:caratheodory:oriented-matroid}, \ref{thm:caratheodory:constrained}, \ref{thm:caratheodory:om-cone-matroid}, and \ref{thm:caratheodory:om-cone-pair}. As noted in Observation \ref{obs:implications-between-caratheodorys}, Theorems \ref{thm:caratheodory:original} through \ref{thm:caratheodory:om-cone-pair} all follow from these by simple substitutions, except for Theorem \ref{thm:caratheodory:matroid}. In each proof we pick the appropriate complex out of the zero-avoiding, support, vector-avoiding, and element-avoiding complexes to be $\mathscr{C}$, and pick the simplicial complex $\mathscr{K}$ so that the conclusion of the colorful Carath\'eodory theorem becomes equivalent to the statement $\mathscr{K}\nsubseteq\mathscr{C}$. To verify this latter statement, we wish to use Theorem \ref{thm:holmsen-repackaged}. Thankfully, the assumptions of all colorful Carath\'eodory theorems are chosen so that $\mathscr{C}$ is indeed near-$(d-1)$-Leray or $(d-1)$-Leray simplicial complex by Corollary \ref{crl:support-near-leray} or \ref{crl:element-avoiding-leray}: the oriented matroid associated to a function $A$ with codomain $\R^d$ is necessarily of rank at most $d$, so
\begin{itemize}
    \item the zero-avoiding complex of a function $A$ with codomain $\R^d$ is near-$(d-1)$-Leray by Corollary \ref{crl:support-near-leray},
    \item the support complex of an oriented matroid of rank at most $d$ is near-$(d-1)$-Leray by Corollary \ref{crl:support-near-leray}.
    \item the vector-avoiding complex of a function $A$ with codomain $\R^d$ is $(d-1)$-Leray by Corollary \ref{crl:element-avoiding-leray}, and
    \item the element-avoiding complex of an oriented matroid of rank at most $d$ is $(d-1)$-Leray by Corollary \ref{crl:element-avoiding-leray}.
\end{itemize}
However, the vertex set of $\mathscr{K}$ is sometimes not $V$, like demanded by the Theorem, but rather some subset $V'\neq\emptyset$ of it. This can in all cases be remedied by applying Theorem \ref{thm:holmsen-repackaged} to $\mathscr{C}[V']$ instead, as this is just the zero-avoiding, support, vector-avoiding, or element-avoiding complex associated to $A|_{V'}\colon{V'}\to\R^d$, $\mathscr{O}|_{V'}$, $A|_{V'\sqcup\{v_0\}}\colon{V'\sqcup\{v_0\}}\to\R^d$, or $\mathscr{O}|_{V'\sqcup\{v_0\}}$ respectively, so it is also near-$(d-1)$-Leray or $(d-1)$-Leray.

It will be a useful observation throughout that the join of $k$ many nonempty spaces is homologically $(k-2)$-connected: each space is homologically $(-1)$-connected, so the K\"unneth formula computes that the join is homologically $k(-1)+(k-1)2=(k-2)$-connected.

First, we paraphrase the proof of Theorem \ref{thm:caratheodory:oriented-matroid} presented in \cite{Holmsen2016}.
\begin{proof}[Proof of Theorem \ref{thm:caratheodory:oriented-matroid}]
    $\mathscr{M}$ is of rank at least $d+1\geq 1$ so its set $V'$ of vertices is necessarily nonempty. Let $\mathscr{C}:=\mathscr{C}_{\mathscr{O}|_{V'}}$ be the support complex of $\mathscr{O}|_{V'}$, and let $\mathscr{K}:=\mathscr{M}[V']$. Then it suffices to prove that $\mathscr{K}\nsubseteq\mathscr{C}$.

    $\mathscr{C}$ is near-$(d-1)$-Leray, so to apply Theorem \ref{thm:holmsen-repackaged} it suffices to check that $\mathscr{K}$ is homologically $(d-1)$-connected, and that $\mathscr{K}[V'-\sigma]$ is homologically $(d-2)$-connected for all $\sigma\in\mathscr{C}$. First, note that $\mathscr{K}[U]=\mathscr{M}[U]$ is homologically $(\rho_\mathscr{M}(U)-2)$-connected for all $U\subseteq V'$ by \cite[Lemma 3.2]{Holmsen2016}, and so $\mathscr{K}$ is homologically $\operatorname{rank}(\mathscr{M}[V'])-2=\operatorname{rank}(\mathscr{M})-2\geq (d-1)$-connected. On the other hand, $\sigma\in\mathscr{C}$ must have $\rho_{\mathscr{M}[V']}(V'-\sigma)\geq d$, as otherwise we would also have $\rho_\mathscr{M}(V-\sigma)<d$ so by assumption $\sigma$ would contain a positive circuit of $\mathscr{O}$ and hence would not be a face of $\mathscr{C}$. But then $\mathscr{K}[V'-\sigma]=\mathscr{M}[V'-\sigma]$ is homologically $\rho_{\mathscr{M}[V']}(V'-\sigma)-2\geq (d-2)$ connected.
\end{proof}
Next we prove the version of Theorem \ref{thm:caratheodory:constrained} where instead of a function $A\colon V\to\R^d$ we consider an oriented matroid $\mathscr{O}$ on set of elements $V$ or rank at most $d$, and replace ``$0\in\conv(A(U))$'' with ``$U$ contains a positive circuit of $\mathscr{O}$'' everywhere. This is a generalization of the aforementioned theorem (see Corollary \ref{crl:0-in-conv-final-description}), so it suffices to consider only this version. The proof completely mirrors the original one found in \cite{Blagojevic2025}, just written in the language of this paper.
\begin{proof}[Proof of Theorem \ref{thm:caratheodory:constrained}]
    By possibly replacing $\mathscr{O}$ with $\mathscr{O}|_{U_1\sqcup\cdots\sqcup U_r}$ we may assume that $V=U_1\sqcup\cdots\sqcup U_r$. Let $\mathscr{C}:=\mathscr{C}_{\mathscr{O}}$ be the support complex of $\mathscr{O}$, and note that it suffices to prove $\mathscr{K}\nsubseteq\mathscr{C}$. $U_1,\ldots,U_r$ are nonempty because they all contain positive circuits of $\mathscr{O}$, so by $r\geq 2$ the complex $\mathscr{K}$ has the nonempty vertex set $U_1\sqcup U_2\sqcup U_3\sqcup\cdots\sqcup U_r=V$.

    $\mathscr{C}$ is near-$(d-1)$-Leray, so to apply Theorem \ref{thm:holmsen-repackaged} it suffices to check that $\mathscr{K}$ is homologically $(d-1)$-connected, and that $\mathscr{K}[V-\sigma]$ is homologically $(d-2)$-connected for all $\sigma\in\mathscr{C}$. First, observe that $\mathscr{K}$ is a join of the nonempty and path-connected (and therefore homologically $0$-connected) space $\mathscr{K}_{12}$ and of the $r-2$ many nonempty (and therefore homologically $(-1)$-connected) spaces $U_3,\ldots,U_r$. Therefore, the K\"unneth formula says that $\mathscr{K}$ is homologically $0+(r-2)(-1)+(r-2)2=r-2\geq(d-1)$-connected. On the other hand, $U_1,\ldots,U_r\notin\mathscr{C}$ because they all contain positive circuits of $\mathscr{O}$, so no $\sigma\in\mathscr{C}$ may contain them, and consequently $(V-\sigma)\cap U_i\neq\emptyset$ for $1\leq i\leq r$. Therefore 
    \[
    \mathscr{K}[V-\sigma]=(\mathscr{K}_{12}*U_3*\cdots*U_r)[V-\sigma]=\mathscr{K}_{12}[(U_1\cup U_2)\cap (V-\sigma)]*U_3[U_3\cap(V-\sigma)]*\cdots*U_r[U_r\cap(V-\sigma)]
    \] 
    is the join of $r-1$ many nonempty spaces and therefore homologically $(r-1)-2\geq (d-2)$-connected.
\end{proof}
This theorem (Theorem \ref{thm:caratheodory:constrained}) has no cone generalization in the sense of point \ref{obs:no-cone-caratheodory:constrained} of Observation \ref{obs:no-cone-caratheodory} with $r\geq d$.
The proof above can not be modified to yield a proof of a cone version of Theorem \ref{thm:caratheodory:constrained}, because the argument there would still only compute $\mathscr{K}[V-\sigma]$ to be homologically $(r-3)$-connected, but now $r-3\geq d-3\ngeq d-2$.

Next, we present a proof of Theorem \ref{thm:caratheodory:om-cone-matroid}, following the same general structure as previously. However, we would also like to note that this presentation is a somewhat more verbose way of saying that one should simply apply \cite[Theorem 1.6]{KalaiMeshulam2005} to the $(d-1)$-Leray simplicial complex $\mathscr{C}_{v_0,\mathscr{O}}$ (see Corollary \ref{crl:element-avoiding-leray}).
\begin{proof}[Proof of Theorem \ref{thm:caratheodory:om-cone-matroid}]
    If $\mathscr{M}$ is of rank $0$ then $v_0$ must be a loop in $\mathscr{O}$ because otherwise we would need $v_0\in\conv(\emptyset)$ (which is false for non-loops) as $\rho_\mathscr{M}(V-\emptyset)=0<\operatorname{rank}(\mathscr{O})\leq d$. But then $v_0\in\conv(\emptyset)$ and $\emptyset$ is an independent set of $\mathscr{M}$, yielding the conclusion of the theorem, so assume that $\mathscr{M}$ is of positive rank, and therefore has a nonempty vertex set $V'$.
    Let $\mathscr{C}:=\mathscr{C}_{v_0,\mathscr{O}|_{V'\cup\{v_0\}}}$ be the element-avoiding complex of $\mathscr{O}|_{V'\cup\{v_0\}}$, and let $\mathscr{K}:=\mathscr{M}[V']$. Then it suffices to prove that $\mathscr{K}\nsubseteq\mathscr{C}$.

    $\mathscr{C}$ is $(d-1)$-Leray, so to apply Theorem \ref{thm:holmsen-repackaged} it suffices to check that $\mathscr{K}[V'-\sigma]$ is homologically $(d-2)$-connected for all $\emptyset\subseteq\sigma\in\mathscr{C}$. $\mathscr{K}[U]=\mathscr{M}[U]$ is homologically $(\rho_{\mathscr{M}[V']}(U)-2)$-connected for all $U\subseteq V'$ according to \cite[Lemma 3.2]{Holmsen2016}, and $\rho_{\mathscr{M}[V']}(V'-\sigma)=\rho_\mathscr{M}(V-\sigma)\geq d$ for any $\sigma\in\mathscr{C}$ by assumption, so $\mathscr{K}[V'-\sigma]=\mathscr{M}[V'-\sigma]$ is homologically $\rho_{\mathscr{M}[V']}(V'-\sigma)-2\geq (d-2)$-connected.
\end{proof}

\begin{proof}[Proof of Theorem \ref{thm:caratheodory:om-cone-pair}]
    By possibly replacing $\mathscr{O}$ with $\mathscr{O}|_{U_1\sqcup\cdots\sqcup U_r}$ we may assume that $V=U_1\sqcup\cdots\sqcup U_r$. Let $\mathscr{C}:=\mathscr{C}_{v_0,\mathscr{O}}$ be the element-avoiding complex of $\mathscr{O}$, view $U_1,\ldots,U_r$ as discrete collections of vertices, and form $\mathscr{K}:=U_1*\cdots*U_r$. $U_1,\ldots,U_r$ are nonempty by assumption, so as $r\geq d+1\geq 1$ the complex $\mathscr{K}$ has the nonempty vertex set $U_1\sqcup\cdots\sqcup U_r=V$, any face $\sigma\in\mathscr{K}$ is contained in one of the form $\{u_1,\ldots,u_r\}\in\mathscr{K}$ with $u_1\in U_1,\ldots,u_r\in U_r$, and it suffices to prove $\mathscr{K}\nsubseteq\mathscr{C}$.

    $\mathscr{C}$ is $(d-1)$-Leray, so to apply Theorem \ref{thm:holmsen-repackaged} it suffices to check that $\mathscr{K}[V-\sigma]$ is homologically $(d-2)$-connected for all $\emptyset\subseteq\sigma\in\mathscr{C}$. $v_0\in\bigcap_{1\leq i<j\leq r}\conv(U_i\cup U_j)$ implies that $U_i\cup U_j\notin\mathscr{C}$ whenever $1\leq i<j\leq r$, so no $\sigma\in\mathscr{C}$ can contain any of them, but then $(V-\sigma)\cap(U_i\cup U_j)\neq\emptyset$ for $1\leq i<j\leq r$. Therefore 
    \[
    \mathscr{K}[V-\sigma]=(U_1*\cdots*U_r)[V-\sigma]=U_1[U_1\cap(V-\sigma)]*\cdots*U_r[U_r\cap(V-\sigma)]
    \] is the join of at least $r-1$ many nonempty spaces (if two of these were empty then we would have $(V-\sigma)\cap(U_i\cup U_j)$ for those) and is hence homologically $(r-1)-2\geq (d-2)$-connected.
\end{proof}
And finally, we just have to see that Theorem \ref{thm:caratheodory:3-joined} is indeed true. Again, we phrase the proof in the language of oriented matroids, thereby slightly generalizing the original statement.
\begin{proof}[Proof of Theorem \ref{thm:caratheodory:3-joined}]
    Let $\mathscr{C}:=\mathscr{C}_\mathscr{O}$ be the support complex of the oriented matroid $\mathscr{O}$ of rank $d$ on nonempty set of elements $V=U_1\sqcup U_2\sqcup U_3$. This is a $(d-1)$-Leray simplicial complex, so to apply Theorem \ref{thm:holmsen-repackaged} it suffices to check that $\mathscr{K}$ has vertex set $V$ (this is obvious), that $\mathscr{K}$ is homologically $(d-1)$-connected, and that $\mathscr{K}[V-\sigma]$ is homologically $(d-2)$-connected for all $\sigma\in\mathscr{C}$. 
    
    As before, $U_1,U_2,U_3\notin\mathscr{C}$ because they all contain positive circuits of $\mathscr{O}$ by assumption, so no $\sigma\in\mathscr{C}$ may contain them, and consequently $(V-\sigma)\cap U_i\neq\emptyset$ for $1\leq i\leq 3$. Therefore if $u_i\in U_i$ and $u_j\in U_j$ with $i\neq j$ are vertices of $\mathscr{K}[V-\sigma]$, then there is a $i\neq k\neq j$ and a $u_k\in (V-\sigma)\cap U_k$, and knowing the structure of $\mathscr{K}$, either $u_i$ and $u_j$ are directly connected by an edge of $\mathscr{K}$, or both are connected to $u_k$ via edges. In particular, $u_i$ and $u_j$ are in the same connected component of $\mathscr{K}[V-\sigma]$, from which it easily follows that $\mathscr{K}[V-\sigma]$ is path-connected and therefore homologically $d-2\leq 0$-connected.

    $\mathscr{K}$ is easily seen to be homologically $d-1\leq 1$-connected, either by letting a computer perform a direct simplicial homology computation, or using a Mayer-Vietoris argument. In particular, if one considers the open cover of $\mathscr{K}$ by either deleting $U_3$ or $\mathscr{K}[U_1\cup U_2]$, then the corresponding Mayer-Vietoris sequence lets one compute the first homology group of $\mathscr{K}$ by sandwiching it between a surjection and an injection:
    \[\xymatrix@C=5pt@R=8pt{
            H_1(\mathscr{K}[U_1\cup U_2-\{u_2^1\}])
            \oplus H_1(\mathscr{K}[U_1\cup U_2-\{u_1^2\}])
            \oplus H_1(\mathscr{K}[U_1\cup U_2])
                \ar[d]
        \\  H_1(\{u_3^1\})
            \oplus H_1(\{u_3^2\})
            \oplus H_1(\{u_3^3\})
                \ar@{}[d]|{\oplus}
        \\  H_1(\mathscr{K}[U_1\cup U_2])
                \ar[d]
        \\  H_1(\mathscr{K})
                \ar[d]
        \\  H_0(\mathscr{K}[U_1\cup U_2-\{u_2^1\}])
            \oplus H_0(\mathscr{K}[U_1\cup U_2-\{u_1^2\}])
            \oplus H_0(\mathscr{K}[U_1\cup U_2])
                \ar[d]
        \\  H_0(\{u_3^1\})
            \oplus H_0(\{u_3^2\})
            \oplus H_0(\{u_3^3\})
                \ar@{}[d]|{\oplus}
        \\  H_0(\mathscr{K}[U_1\cup U_2])
    }\]
\end{proof}

\appendix
\section{Nerve Theorems for ugly covers}\label{sec:nerve-thm-extension}

Given a regular CW complex $X$, call a subset $A\subseteq X$ a \emph{cellular subset} if it is a union of some open cells of $X$, such that if $\sigma$ and $\sigma''$ are open cells in $A$, and $\sigma'$ is an open cell of $X$ with $\sigma\subseteq\operatorname{Cl}(\sigma')$ and $\sigma'\subseteq\operatorname{Cl}(\sigma'')$, then $\sigma'\subseteq A$. Here $\operatorname{Cl}(\tau)$ stands for the closure of $\tau$ in $X$. Equivalently, a cellular subset is an intersection of a subcomplex and an open union of open cells of $X$. In particular, if a union of open cells of a cellular subset is closed or open within this subset, then it is itself a cellular subset, in which case we call it a closed (respectively open) cellular subset. Say that a collection $(A_i)_{i\in I}$ of cellular subsets of $X$ is \emph{ugly} if whenever $i\neq j\in I$, $\sigma_i$ an open cell of $A_i$, $\sigma_j$ an open cell of $A_j$, and $\sigma_i$ intersects the closure of $\sigma_j$ we have $\sigma_i\subseteq A_j$ or $\sigma_j\subseteq A_i$. Collections of open cellular subsets and collections of closed cellular subsets of a fixed cellular subset $X'$ are always ugly. If $A$ is a cellular subset and $Z$ is a subcomplex of $X$, then $A\cap Z$ is closed in $A$, so Theorem \ref{thm:nerve-closed-cellular} is a consequence of the more general Theorem \ref{thm:nerve-ugly-cellular} below:
\begin{theorem}\label{thm:nerve-ugly-cellular}
    Let $X$ be a regular CW complex and $(A_i)_{i\in I}$ an ugly collection of cellular subspaces of it. If $\bigcap_{j\in J}A_j$ is empty or contractible for all $\emptyset\neq J\subseteq I$ then $\bigcup_{i\in I}A_i$ is homotopy equivalent to the nerve of the cover $(A_i)_{i\in I}$.
\end{theorem}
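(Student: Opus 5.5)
The plan is to reduce to the classical Nerve Theorem for subcomplexes of a simplicial complex, via barycentric subdivision. Let $X'$ be the barycentric subdivision of $X$: its vertices are the open cells of $X$, and its simplices are the chains $\tau_0<\tau_1<\cdots<\tau_k$ in the face poset. Since $X$ is regular, $X'$ is homeomorphic to $X$.

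For a cellular subset $A$ I define the \emph{order complex} $\Delta(A)\subseteq X'$. This is the full subcomplex spanned by the cells lying in $A$, namely all chains consisting of cells of $A$. The first step is to show that $A$ deformation retracts onto $\Delta(A)$, or at least is homotopy equivalent to it. For this I use the description of $A$ as $Z\cap O$, where $Z$ is a subcomplex and $O$ an open union of cells. Each point $p\in A$ lies in the relative interior of a unique simplex of $X'$ with vertices $\tau_0<\cdots<\tau_k$, and $p$ lies in the open cell $\tau_k$.
\begin{itemize}
\item Closedness of $Z$ together with $\tau_k\subseteq Z$ shows that every $\tau_i$ lies in $Z$.
\item The convexity condition in the definition of a cellular subset lets me discard the $\tau_i$ not lying in $O$. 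Because $O$ is upward closed and $\tau_k\in A$, the cells of the chain that lie in $A$ form a nonempty terminal segment $\tau_j<\cdots<\tau_k$.
\end{itemize}
The straight-line homotopy pushing $p$ away from the vertices $\tau_0,\dots,\tau_{j-1}$, onto the face spanned by $\tau_j,\dots,\tau_k$, stays inside $A$ and is continuous. This is the standard argument that a full subcomplex is a deformation retract of the complement of its opposite full subcomplex. It gives $A\simeq\Delta(A)$ naturally, so $\Delta$ commutes with intersections: $\Delta(\bigcap_J A_j)=\bigcap_J\Delta(A_j)$.

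The second step is to show that $\bigcup_i\Delta(A_i)$ is a deformation retract of $\bigcup_i A_i$, or at least homotopy equivalent to it. The union of cellular subsets need not be cellular, and this is exactly where ugliness enters. A chain all of whose cells lie in $\bigcup_i A_i$ may mix cells from different $A_i$. I would use ugliness to show, by induction along the chain, that any such chain lying in the union can be retracted consistently. Concretely, for consecutive cells $\tau\in A_i$ and $\tau'\in A_j$ with $\tau\subseteq\operatorname{Cl}(\tau')$, ugliness forces $\tau\in A_j$ or $\tau'\in A_i$. Hence the retraction above, performed simultaneously for all $i$, is well defined on overlaps. Equivalently, $\bigcup_i A_i$ is covered by the open stars relative to the union, and these retract compatibly.

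Granting this, the proof finishes as follows. The complexes $\Delta(A_i)$ are subcomplexes of the simplicial complex $X'$, their intersections are $\Delta(\bigcap_J A_j)$, and these are empty or contractible by the first step. The Nerve Theorem for finite covers by subcomplexes (equivalently, Theorem~\ref{thm:nerve} applied to open neighbourhood thickenings) then gives $\bigcup_i\Delta(A_i)\simeq\mathscr{N}$. The nerve of $(\Delta(A_i))$ coincides with that of $(A_i)$, since $\Delta(B)=\emptyset$ if and only if $B=\emptyset$.

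I expect the main obstacle to be the second step: showing that the piecewise retractions glue to a continuous homotopy on $\bigcup_i A_i$, and that the retraction of a chain in the union lands in $\bigcup_i\Delta(A_i)$ rather than only in the order complex of the union. The first thing I would try is to prove that for a chain in the union, the maximal cell $\tau_k\in A_i$ together with ugliness yields a terminal segment lying entirely in a single $A_i$. If that fails, I would fall back to applying Quillen's fiber lemma to the poset map from cells of the union to the nerve.
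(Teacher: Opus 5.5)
\textbf{Gap in your second step.} Your first step is sound; the terminal-segment property actually follows from the convexity condition in the definition of a cellular subset alone. The second step, however, is left unproved, and the mechanism you sketch for it does not work. The retractions from your first step, say $r_i\colon A_i\to\Delta(A_i)$, do not agree on overlaps. For instance, let $X=[a,b]$ with vertices $a,b$ and one edge $e$, and take $A_1=\{a\}\cup e$ and $A_2=e$; this is an ugly pair of cellular subsets. A point $p$ of $\operatorname{Cl}(e)$ strictly between $a$ and the barycenter $\hat e$ lies in $A_1\cap A_2$, yet $r_1(p)=p$ while $r_2(p)=\hat e$. So ``the retraction performed simultaneously for all $i$'' is not a well-defined map, and neither fallback (open stars, Quillen's fiber lemma) is carried out.

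\textbf{The missing idea.} Under ugliness, no chain in the union mixes cells of different $A_i$, so your worry about mixed chains is unfounded. Let $\tau_0<\cdots<\tau_k$ be cells of $\bigcup_iA_i$ with $\tau_0\subseteq A_i$ and $\tau_k\subseteq A_j$. Since $\tau_0\subseteq\operatorname{Cl}(\tau_k)$, ugliness (or $i=j$) gives a single index $l$ with $\tau_0,\tau_k\subseteq A_l$. Convexity of $A_l$ then puts every $\tau_m$ into $A_l$. The same argument, applied to cells $\sigma\subseteq\operatorname{Cl}(\sigma')$, $\sigma'\subseteq\operatorname{Cl}(\sigma'')$ with $\sigma,\sigma''$ in the union, shows that $\bigcup_iA_i$ is itself a cellular subset.

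Hence your first step applies verbatim to $A:=\bigcup_iA_i$, using a single vertexwise retraction that pushes off the cells not lying in the union. The chain statement gives $\Delta(\bigcup_iA_i)=\bigcup_i\Delta(A_i)$, which completes your argument.

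\textbf{Comparison with the paper.} So repaired, your proof is a genuine alternative to the paper's, with the roles of unions and intersections swapped.
\begin{itemize}
\item The paper thickens outward. It replaces $A$ by the open set $T\langle A\rangle$ of open simplices of $\operatorname{sd}(X)$ whose closures meet $A$. This commutes with unions for free and with finite intersections by ugliness, and the paper then applies the open-cover Nerve Theorem.
\item You shrink inward to order complexes. These commute with intersections for free and with unions by ugliness, and you then apply the nerve theorem for covers by subcomplexes.
\end{itemize}
Both routes need that $\bigcup_iA_i$ is cellular. The paper uses this tacitly when it retracts $T\langle\bigcup_iA_i\rangle$ onto $\bigcup_iA_i$.
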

We present the proof in relatively broad strokes.
\begin{proof}
    If $Y$ is a refinement of $X$, then cellular subsets of $X$ are also cellular subsets of $Y$, and if a collection of cellular subsets of $X$ is ugly, then the same collection of cellular subsets of $Y$ is still ugly. The barycentric subdivision of any regular CW complex is a simplicial complex, which is a refinement of the CW complex, so without loss of generality we may assume that $X$ is a simplicial complex.

    The theorem hinges on the following lemma, introducing the operator $T\langle -\rangle$. As $A\subseteq T\langle A\rangle$ is a strong deformation retract, the two spaces are homotopy equivalent, and so the Nerve Theorem (Theorem \ref{thm:nerve}) can be applied to the cover $(T\langle A_i\rangle)_{i\in I}$ of $T\langle\bigcup_{i\in I}A_i\rangle$ to prove our theorem.
\end{proof}
\begin{lemma}\label{lem:operator-T}
    There is an operator $T\langle-\rangle$ which assigns to every cellular subset $A$ of $X$ an open subset $T\langle A\rangle$ of $X$, such that $A\subseteq T\langle A\rangle$ is a strong deformation retract, $\bigcup_{i\in I}T\langle A_i\rangle=T\langle\bigcup_{i\in I}A_i\rangle$, and for a finite ugly collection $(A_i)_{i\in I}$ we have $\bigcap_{i\in I}T\langle A_i\rangle=T\langle\bigcap_{i\in I}A_i\rangle$.
\end{lemma}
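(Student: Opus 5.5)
Since the preceding reduction lets us assume that $X$ is a (finite) simplicial complex, I will take for $T\langle A\rangle$ a regular-neighbourhood-type open set built from the second barycentric structure. Concretely, write $\hat\sigma$ for the barycenter of a simplex $\sigma$ and pass to the barycentric subdivision $\mathrm{sd}X$, whose vertices are the simplices of $X$ and whose faces are chains $\sigma_0\subsetneq\cdots\subsetneq\sigma_k$. For a cellular subset $A$ I define
\[
T\langle A\rangle:=\bigcup\{\text{open simplices of }\mathrm{sd}X\text{ spanned by chains }\sigma_0\subsetneq\cdots\subsetneq\sigma_k\text{ with }\sigma_0\in A\},
\]
i.e.\ the union of the open stars in $\mathrm{sd}X$ of the barycenters $\hat\sigma$ with $\sigma\subseteq A$. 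Because open stars are open, $T\langle A\rangle$ is open. The union formula $\bigcup_iT\langle A_i\rangle=T\langle\bigcup_iA_i\rangle$ is immediate from this definition, since membership depends only on the minimal element $\sigma_0$ of the chain.

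For the deformation retraction, I will use the following observation. If a chain has $\sigma_0\in A$, let $j$ be the largest index such that $\sigma_0,\dots,\sigma_j$ all lie in $A$. The convexity property of cellular subsets then shows that the set of indices with $\sigma_i\in A$ is exactly an initial segment $\{0,\dots,j\}$: if $\sigma_0\in A$, $\sigma_0\subseteq\sigma_i\subseteq\sigma_l$ and $\sigma_l\in A$, then $\sigma_i\in A$. A point of the open simplex has barycentric coordinates $(t_0,\dots,t_k)$. I will push it linearly to the point with coordinates $(t_0,\dots,t_j,0,\dots,0)/(t_0+\dots+t_j)$, which lies in the open simplex of $\sigma_0\subsetneq\cdots\subsetneq\sigma_j$ and hence in the open cell $\sigma_j\subseteq A$. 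The straight-line homotopy stays inside the closed simplex; along it the support only shrinks toward the initial segment and never drops $\sigma_0$, so it stays in $T\langle A\rangle$. It fixes $A$, because the points of $A$ are exactly those whose chains lie entirely in $A$. Continuity across faces of $\mathrm{sd}X$ follows because the formula is compatible with passing to subchains, as in the standard proof that a full subcomplex is a deformation retract of its simplicial neighbourhood.

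The main obstacle is the intersection formula, and this is where ugliness enters. The inclusion $\supseteq$ is clear. For $\subseteq$, take a point in an open simplex of a chain whose minimal element $\sigma_0$ lies in $A_1$, and suppose the same point lies in $T\langle A_2\rangle$. Since the open simplex is the same, its minimal element is still $\sigma_0$, so $\sigma_0\in A_2$ as well. Hence the chain's minimum lies in $\bigcap A_i$, which gives the formula for the stars as defined. If instead I use a subtler neighbourhood, ugliness is what forces $\sigma_i\subseteq A_j$ or $\sigma_j\subseteq A_i$ for comparable cells. So I expect the real work to be choosing the definition of $T$ so that it is simultaneously a deformation-retract neighbourhood of the non-closed sets $A$ and compatible with intersections. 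I would first try the minimal-element definition above and check carefully that it is open and retracts correctly when $A$ is not closed. Ugliness would then be invoked only if a modification of the retraction, for example restricting to chains lying in $A_i$ up to a certain level, breaks intersection compatibility.
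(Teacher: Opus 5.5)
\textbf{There is a genuine gap: your minimal-element definition of $T\langle A\rangle$ fails for cellular subsets that are not closed.} Non-closed $A$ is exactly the case the lemma is needed for, e.g.\ $A=D^+\langle U\rangle\cap D^+(V-U)\cap B^-_{v_0}$ in Proposition~\ref{prop:htpy-type-elem-avoiding}.

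A point of the open simplex of $\operatorname{sd}X$ with chain $\sigma_0\subsetneq\cdots\subsetneq\sigma_k$ lies in the open cell $\sigma_k$ of $X$. So it belongs to $A$ if and only if the \emph{maximal} element of its chain lies in $A$. Your claim that the points of $A$ are those whose whole chain lies in $A$ is therefore wrong for non-closed $A$. Concretely, let $X$ be a single edge $e$ with endpoints $a,b$, and let $A=e^\circ$ be the open edge. The only chain whose minimum lies in $A$ is $\{e\}$, so your $T\langle A\rangle=\{\hat e\}$, which is neither open nor contains $A$.

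Your gloss ``i.e.\ the union of the open stars of $\hat\sigma$, $\sigma\subseteq A$'' describes a different set: chains having \emph{some} member in $A$. The openness claim is valid only for that set. The fact that your intersection argument never uses ugliness is a symptom of the problem. With the minimal-element definition the intersection formula is trivial, but $A\subseteq T\langle A\rangle$, openness, and the retraction all fail.

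\textbf{The repair is the paper's definition, which moves the real work into the intersection formula.} Let $T\langle A\rangle$ be the union of the open simplices of $\operatorname{sd}X$ whose closures meet $A$, i.e.\ whose chains contain at least one member of $A$. This set is open, being a union of open stars, and the union formula is still immediate.

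For the retraction, use the maximal initial segment $\sigma_0,\dots,\sigma_j$ of the chain lying in the subcomplex $\operatorname{Cl}(A)$, rather than in $A$. Some $\sigma_i$ with $i\le j$ lies in $A$, and $\sigma_j$ is a face of a cell of $A$, so the convexity property of cellular subsets forces $\sigma_j\subseteq A$. Pushing $t_{j+1},\dots,t_k$ to zero then lands in the open cell $\sigma_j\subseteq A$, and points of $A$ (where $j=k$) stay fixed.

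Now the intersection formula genuinely needs ugliness. For the non-ugly pair $A_1=\{a\}$, $A_2=e^\circ$ one gets $T\langle A_1\rangle\cap T\langle A_2\rangle\supseteq(a,\hat e)\neq\emptyset=T\langle A_1\cap A_2\rangle$. The argument your proposal lacks runs as follows:
\begin{enumerate}
\item For each $i$, let $\tau_i$ be the top member of the chain lying in $\operatorname{Cl}(A_i)$. By the convexity argument above, $\tau_i\subseteq A_i$.
\item If $\tau_i\subseteq\tau_l$, ugliness gives $\tau_i\subseteq A_l$ or $\tau_l\subseteq A_i$. The latter forces $\tau_l=\tau_i$ by maximality of $\tau_i$, so in either case $\tau_i\subseteq A_l$.
\item Hence the smallest $\tau_i$ lies in every $A_l$. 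The chain therefore has a member in $\bigcap_l A_l$, which gives $\bigcap_i T\langle A_i\rangle\subseteq T\langle\bigcap_i A_i\rangle$.
\end{enumerate}
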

\begin{proof}
    Just like before, we may assume that $X$ is actually a simplicial complex. 
    Afterwards, replace $X$ with its barycentric subdivision $\operatorname{sd}(X)$, and let $T\langle A\rangle$ consist of those open cells of $\operatorname{sd}(X)$ whose closures intersect $A$.

    Then clearly $A\subseteq T\langle A\rangle$. To see that $A$ is a deformation retract of $T\langle A\rangle$, first take any simplex $\Delta$ of $\operatorname{sd}(X)$ whose interior $\sigma$ is an open cell of $T\langle A\rangle$. $\sigma$ being an open cell of $T\langle A\rangle$ means that $\Delta$ intersects the cellular subset $A$ of $X$, in particular  $\Delta\cap \operatorname{Cl}(A)$ is a non-empty subcomplex of $\Delta$. In fact, it is always a face of $\Delta$, because the vertices of $\Delta$, like those of any simplex of $\operatorname{sd}(X)$, are totally ordered, and as $\operatorname{Cl}(A)$ is a subcomplex of $X$, if a vertex of $\Delta$ is in $\operatorname{Cl}(A)$ then the simplex spanned by it and all smaller vertices is also in $\operatorname{Cl}(A)$. This implies that any point $x$ of $\sigma$ can be uniquely written as $x=sy+(1-s)z$ for some $s\in(0,1)$, $y\in\Delta\cap\operatorname{Cl}(A)$, and $z$ in the face of $\Delta$ spanned by the vertices not in $\Delta\cap\operatorname{Cl}(A)$. Using the definition of cellular subsets and the fact that $\Delta\cap A$ is non-empty, it can be shown that the relative interior of $\Delta\cap\operatorname{Cl}(A)$ is contained in $A$, so in particular $y\in A$. Define the deformation retraction of $\sigma\cup A$ to $A$ to take the point $x=sy+(1-s)z$ to $(s+t(1-s))y+(1-s)(1-t)z$ at time $t\in[0,1]$. It is straightforward to verify that these deformation retractions glue together to a strong deformation retraction of $T\langle A\rangle$ to $A$. 

    Now assume $(A_i)_{i\in I}$ is any finite ugly collection of cellular subsets of $X$, and let us show that $\bigcap_{i\in I}T\langle A_i\rangle=T\langle\bigcap_{i\in I}A_i\rangle$. The containment $\supseteq$ is clear, and for the other containment it suffices to show that if $\sigma$ is an open cell of $\operatorname{sd}(X)$ whose closure $\Delta$ intersects $\sigma_i\subseteq A_i$ for all $i\in I$, where $\sigma_i$ is an open cell of $\operatorname{sd}(X)$, then there is an open cell $\tau\subseteq\bigcap_{i\in I}A_i$ of $\operatorname{sd}(X)$ which is intersected by $\Delta$. As $\operatorname{sd}(X)$ is the barycentric subdivision of $X$, we can choose the $\sigma_i$ such that if $\sigma_i\subseteq A_i$ and $\sigma_j\subseteq A_j$ then either the closure of $\sigma_j$ intersects $\sigma_i$ or the other way around; namely take $\sigma_i:=\relint(\Delta\cap\operatorname{Cl}(A_i))$. Thus for $|I|=2$ the definition of ugliness guarantees the existence of $\tau$. Induction on $|I|$: we may assume that there is an open cell $\tau'$ of $\bigcap_{i\in I-\{i_0\}}A_i$ intersected by $\Delta$, and that $\tau'=\sigma_i$ for some $i\in I-\{i_0\}$, but then either $\tau'\subseteq A_{i_0}$ or by ugliness $\sigma_{i_0}\subseteq A_i$ for all $i\in I-\{i_0\}$, and consequently $\tau:=\tau'$ or $\sigma_{i_0}$ works.

    Finally, $\bigcup_{i\in I}T\langle A_i\rangle=T\langle\bigcup_{i\in I}A_i\rangle$ clearly holds by definition of $T\langle -\rangle$.
\end{proof}

\begin{small}

\end{small}
\end{document}